\documentclass{article}
\usepackage{graphicx, amsmath, amsfonts, amssymb, amsthm} % Required for inserting images
\usepackage{graphicx,color, amsthm, amsmath, amssymb, amsfonts, float, xcolor, tikz} 
\usepackage[a4paper, total={6.5in, 8.5in}]{geometry}
\usepackage{comment}
\usepackage{enumerate}
\usepackage{enumitem}

\usepackage{thmtools, thm-restate}

\usepackage[colorlinks=true,linkcolor=black,urlcolor=black,citecolor=black]{hyperref}
\usepackage[nameinlink]{cleveref}

\usepackage[title]{appendix}

\setlength{\parindent}{2.5em}
\bibliographystyle{plainurl}

\newtheorem{theorem}{Theorem}[section]
\newtheorem{lemma}[theorem]{Lemma}
\newtheorem{prop}[theorem]{Proposition}
\newtheorem{corollary}[theorem]{Corollary}

\newtheorem{definition}[theorem]{Definition}
\newtheorem{conjecture}[theorem]{Conjecture}
\newtheorem{remark}[theorem]{Remark}

\newtheorem{claim}[theorem]{Claim}
\newtheorem{problem}[theorem]{Problem}

\newcommand{\R}{\mathbb{R}}
\newcommand{\SO}{\operatorname{SO}}
\newcommand{\SE}{\operatorname{SE}(2)}

\newcommand{\Otwo}{\operatorname{O}(2)}
\newcommand{\Othree}{\operatorname{O}(3)}
\newcommand{\OPseudo}{\operatorname{O}(1,1)}
\newcommand{\Od}{\operatorname{O}(d)}
\newcommand{\E}{\operatorname{E}}

\newcommand{\GL}{\operatorname{GL}}
\newcommand{\Aff}{\operatorname{Aff}}

\newcommand{\rank}{\operatorname{rank}}
\newcommand{\triv}{\operatorname{triv}}
\newcommand{\graph}{\operatorname{graph}}
\newcommand{\Spec}{\operatorname{Spec}}

%Tikz

\tikzstyle{vertex}=[fill=black,circle,inner sep=0pt, minimum size=4pt]
\tikzstyle{edge}=[line width=1.5pt,black]

\title{Generalised Erdős distance theory on graphs}
\author{Sean Dewar\thanks{School of Mathematics, University of Bristol. E-mail: \texttt{sean.dewar@bristol.ac.uk}, \texttt{sam.mansfield@bristol.ac.uk}, \texttt{jonathan.passant@bristol.ac.uk}} \and
Nora Frankl\thanks{School of Mathematics and Statistics, The Open University and Alfr\'{e}d R\'{e}nyi Institute of Mathematics \\
E-mail: \texttt{nora.frankl@open.ac.uk}} \and
Samuel Mansfield$^*$ \and
Anthony Nixon\thanks{School of Mathematical Sciences, Lancaster University. E-mail: \texttt{a.nixon@lancaster.ac.uk}} \and
Jonathan Passant$^*$ \and
Audie Warren\thanks{Radon Institute for Computational and Applied Mathematics, Austrian Academy of Sciences. E-mail: \texttt{audie.warren@oeaw.ac.at}}}
\date{}

\begin{document}

\maketitle

\begin{abstract}
The famous Erd\H{o}s distinct distances problem asks the following: how many distinct distances must exist between a set of $n$ points in the plane? There are many generalisations of this question that ask one to consider different spaces and metrics, or larger structures of points. We bring these problems into a common framework using the concept of $g$-rigidity.
Specifically, if $G=(V,E)$ is a (hyper)graph, $g$ is a map assigning polynomial measurements to the edges of $G$ and $f_{g,G}(P^V)$ gives the set of $g$-distinct realisations of the $g$-rigid graph $G$, where vertices must lie in a point set $P$, our main results describe sharp lower bounds for the size of $\big|f_{g,G}(P^V)\big|$.
This allows us to obtain results for pseudo-Euclidean metrics, {$\ell_p$} metrics,
dot-product problems, matrix completion problems, and symmetric tensor completion problems.

In addition, we use the recent work of Alon, Buci\'c and Sauermann along with a simple colouring argument to prove that the number of $\| \cdot\|$-distinct realisations of a graph $G=(V,E)$ within a $d$-dimensional point set $P$ is at least $\Omega\left(\frac{|P|^{|V|-1}}{(\log |P|)^2} \right)$ for almost all $d$-norms. 
Our methods here also provide a short proof that the unit distance conjecture implies the pinned distance conjecture.
\end{abstract}

\section{Introduction}

In this paper, we describe a common framework for generalised Erd\H{o}s distance problems, their connections to rigidity theory and prove sharp bounds for a wide variety of settings. The Erd\H{o}s distinct distances problem is the following.

\begin{problem}[Erd\H{o}s \cite{erdos1946sets}]\label{prob:Erdos}
    How many distinct distances must exist between pairs from a finite set of points in the plane?
\end{problem}

Erd\H{o}s conjectured that the correct answer to this problem is $\frac{c|P|}{\sqrt{\log|P|}}$, which is realised by taking $P$ to be the $\sqrt{|P|} \times \sqrt{|P|}$ integer grid. This conjecture was essentially solved by Guth and Katz \cite{guth2015erdHos} in 2015, where they prove that any set $P$ defines at least $\frac{c|P|}{\log|P|}$ distinct distances.

An alternative way of stating the distinct distances problem, which will be more useful for us, is that of lower bounding the number of images of a polynomial map known as the \emph{measurement map}, defined as follows for a general graph. Let $G = (V,E)$ be a (simple) graph. The measurement map $f_G$ of $G$ (in $\mathbb{R}^2$) is the polynomial
\begin{equation}\label{eq:measurementmap}
    f_G:\left(\mathbb R^2\right)^{V} \rightarrow \mathbb R^{E}, \quad p = \Big( p(v) \Big)_{v \in V} \mapsto \Big(\|p(v) - p(w) \|^2 \Big)_{vw \in E} ,
\end{equation}
where $\|\cdot\|$ is the standard Euclidean norm.
This map takes input positions for each vertex in the real plane, and measures (squared) distances of edges. 
Such a position of the vertices in the plane is called a \textit{realisation} of $G$. 
In this formulation, the distinct distance problem asks for a lower bound on $\big|f_{K_2}(P^2)\big|$:

\begin{problem}
    Let $P$ be any finite set of points in the plane. Give a lower bound on the size of the image set $f_{K_2}(P^{2})$.
\end{problem}

For more recent results on the standard distinct distances problem, we refer the reader to the book of Sheffer \cite{Sheffer_2022}. See the book of Garibaldi, Iosevich and Senger \cite{garibaldi2011erdos} and references therein for the earlier work on the problem.

\subsection{The graphical Erd\H{o}s distance problem and rigidity}

The measurement map formulation for the Erd\H{o}s distance problem leads to a natural question: what happens if we replace $K_2$ with another graph?
Here we can interpret the size of the image set $f_{G}(P^{V})$ as the number of \textit{inequivalent} realisations of the graph $G$ within $P$, where two realisations of $G$ are equivalent if they have the same edge lengths (this is tautological to them having the same image under $f_G$). A generalisation of the Erd\H{o}s distance problem is then the following.

\begin{problem}\label{prob:maineuclidproblem}
    Let $G$ be a fixed graph, and $P$ be any finite set of points in the plane. Give a lower bound on $\big|f_G\left(P^{V}\right)\big|$.
\end{problem}

The special case when $G$ is the complete graph on three vertices corresponds to lower bounding the number of congruence classes of triangles defined by $P$, as studied by Solymosi and Tardos \cite{solymositardos} and Rudnev \cite{rudnev2012triangle}.
Rudnev's approach hit upon a key aspect of the triangle: rigidity.
Roughly speaking,
we say that a graph-realisation pair $(G,p)$ (known also as a \emph{framework}) is \emph{rigid} if there exists finitely many other frameworks $(G,q)$ modulo isometries of the plane with the same edge lengths (or equivalently, where $f_G(p) = f_G(q)$).
Since every realisation of $K_3$ is rigid,
Guth and Katz's method can be adapted to show that\footnote{In this paper we use Vinogradov notation, that is, for two quantities $X$ and $Y$ which depend on a natural number parameter $n$, we write $X \ll Y$ if there exists a constant $c>0$ such that for all sufficiently large $n$ we have $X \leq c Y$. If we have both $X \ll Y$ and $Y \ll X$ then we write $X = \Theta(Y)$.} $f_{K_3}(P^3) \gg |P|^2$,
which can be shown to be tight using the integer grid.

Iosevich and Passant studied the problem more generally in \cite{ip18}.
Their methods prove that, if every realisation of $G$ in $\mathbb{R}^2$ with non-zero edge lengths is rigid, then $f_G(P^V) \gg |P|^{|V|-1}$ holds for any point set $P \subset \mathbb{R}^2$.
This family of graphs can be characterised by the lack of any \emph{NAC-colourings} (No Almost Cycles): a red-blue edge colourings using both colours such that every cycle is either monochromatic or contains at least 2 red edges and at least 2 blue edges \cite{gls19}.

Iosevich and Passant erroneously claim that their proof holds for the larger family of graphs which are \emph{rigid in $\mathbb{R}^2$}: any graph $G$ where every generic realisation of $G$ in the plane describes a rigid framework.
In this paper we provide a correction to this proof which solves \Cref{prob:maineuclidproblem} for all graphs that are rigid in $\mathbb{R}^2$.
In fact, we generalise \Cref{prob:maineuclidproblem} even further by allowing the edge map $f_G$ to be defined with reference to polynomials other than the Euclidean distance between two points.
We will therefore work with the concept of `$g$-rigidity', as detailed in the next section.

With regards to `non-rigid' graphs, lower bounds for $\big|f_G(P^V)\big|$ have been given only in the special cases of $G$ being a hinge (a path with two edges) by Rudnev \cite{RudnevHinge}, and more generally a path of arbitrary length, by Passant \cite{passantChains}.

\subsection{Generalised \texorpdfstring{Erd\H{o}s}{Erdos} problems via \texorpdfstring{$g$}{g}-rigidity}

The concept of $g$-rigidity was introduced by Cruickshank, Mohammadi, Nixon and Tanigawa \cite{cruik}. This replaces the Euclidean distance function $(x_1-x_2)^2 + (y_1-y_2)^2$ in the definition of $f_G$ with a polynomial 
\begin{equation*}
    g: (\mathbb R^d)^k \rightarrow \mathbb R, \quad x = (x_1, \ldots, x_k) \mapsto g(x_1, \ldots, x_k)
\end{equation*}
Making such a change of definition allows us to capture many other phenomena given by collections of points than just distances -- for example $g$ could give the dot product of two points, or the (signed) volume of a tetrahedron given by four points in $3$-space.  

As hinted above, our switch to a general polynomial allows us to generalise from graphs to $k$-uniform hypergraphs, since the input of $g$ may consist of multiple points. Since hyperedges are not ordered, we assume that either $g$ is \textit{symmetric} with respect to the ordering of the points in $\mathbb R^d$, or that $g$ is \textit{anti-symmetric} with respect to these points. In the latter case, we fix some ordering of the points and always input the points to $g$ in ascending order.

Having made these definitions, we can now define the $g$-\textit{measurement map} $f_{g,G}$ for some polynomial $g$ and a $k$-uniform hypergraph $G = (V,E)$ as above, as
\begin{equation*}
    f_{g,G}:(\mathbb R^d)^V \rightarrow \mathbb R^E, \quad p \mapsto \Big(g\big(p(v_1), \ldots, p(v_k)\big): e=\{v_1,\dots, v_k\}\in E\Big).
\end{equation*}
We can then state our further generalised variant of \Cref{prob:maineuclidproblem}.

\begin{problem}[Generalised Erd\H{o}s graphical problem]\label{prob:fullygeneralproblem}
    Let $G= (V,E)$ be a fixed $k$-uniform hypergraph, let $g: (\mathbb R^d)^k \rightarrow \mathbb R$ be a polynomial map which is either symmetric or anti-symmetric with respect to the input points, and let $P$ be any finite set of points in $\mathbb{R}^d$. Give a lower bound on $\big|f_{g,G}\left(P^{V}\right)\big|$.
\end{problem}

Of course, without any further restrictions we cannot say anything -- simply imagine the trivial case of $g$ being a constant map. However, this is where the concept of $g$-rigidity is useful. In order to introduce this, we will need to consider group actions which stabilise $g$. That is, consider some affine transformation $\gamma: \mathbb R^d \rightarrow \mathbb R^d$ where
$$\gamma(p) = Ap + b,\quad  A \in \GL(d),\ b \in \mathbb R^d.$$
This affine transformation is said to be a \emph{$g$-isometry} if $g(\gamma(x_1),\ldots,\gamma(x_k)) = g(x_1,\ldots,x_k)$ for all $x_1,\ldots,x_k \in \mathbb{R}^d$. The set of all such affine transformations forms a subgroup $\Gamma_g$ of the affine group -- we call this the \emph{$g$-isometry group}.
This group $\Gamma_g$ plays the role that the Euclidean group $E(d)$ has in standard rigidity theory. 
That is, a hypergraph should be called \emph{$g$-rigid} if, heuristically, most realisations $p$ of $G$ give rise to a set $f_{g,G}^{-1}(f_{g,G}(p))$ consisting of a finite union of orbits of $\Gamma_g$. 
Precise definitions of $g$-rigidity will be given in \Cref{sec:introtog-rigidity} -- to avoid making the introduction too technical we skip them here.

Before stating our results, we need one further definition -- that of the energy of a point set with respect to our $g$-isometries. 
Let $V$ be a finite set (think of the vertex set of a hypergraph), 
$P \subset \mathbb{R}^d$ and $\Gamma$ be any closed subgroup of $\Aff (d)$.
We define the \emph{energy} set of the triple $(P,V, \Gamma)$ by
\begin{equation*}
    \mathcal{E}_{V,\Gamma}(P) := \Big\{(p,q) \in P^V\times P^V:  ~ \theta p := \big(\theta(p(v))\big)_{v \in V} = q \text{ for some }\theta \in \Gamma\Big\}.
\end{equation*}
In words, this is the collection of pairs of realisations of $G$ in $P$, which are within the same $\Gamma$ orbit. We are now able to state out first main result.

\begin{restatable}{theorem}{energyg}\label{thm:energyg}
    Let $G=(V,E)$ be a $g$-rigid $k$-uniform hypergraph with at least $d+1$ vertices and maximum degree $\Delta_G$,
    and let $P \subset \mathbb{R}^d$ be a finite set where for any hypersurface $C \subset \mathbb{R}^d$ with degree at most $\Delta_G \deg g$, the inequality $|P\cap C| \leq \frac{1}{100}|P|$ holds.
    Then
    \begin{equation*}
        |P|^{2|V|} \ll\Big| f_{g,G}\left(P^V\right) \Big| \Big|\mathcal{E}_{V,\Gamma_g}(P)\Big|.
    \end{equation*}  
\end{restatable}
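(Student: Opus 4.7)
The strategy is to apply Cauchy--Schwarz to the fibers of $f_{g,G}$ restricted to a ``generic'' subset of $P^V$, and to rewrite the resulting $L^2$-quantity in terms of the orbit-energy $|\mathcal{E}_{V, \Gamma_g}(P)|$ using $g$-rigidity. I split $P^V = \mathrm{Good} \sqcup \mathrm{Bad}$, where $\mathrm{Good}$ consists of those $p \in P^V$ for which the fiber $f_{g,G}^{-1}(f_{g,G}(p))$ is a union of at most $c = c(G,g)$ orbits of $\Gamma_g$; by the definition of $g$-rigidity, $\mathrm{Bad}$ lies in a proper algebraic subvariety $Z \subseteq (\mathbb{R}^d)^V$.

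Assuming the key estimate $|\mathrm{Bad}| \leq |P|^{|V|}/2$, Cauchy--Schwarz applied to $\mathrm{Good}$ gives
$$
\frac{|P|^{2|V|}}{4} \leq |\mathrm{Good}|^2 \leq \big|f_{g,G}(P^V)\big| \cdot \sum_{y} \big|f_{g,G}^{-1}(y) \cap \mathrm{Good}\big|^2.
$$
For each $y \in f_{g,G}(\mathrm{Good})$ the intersection $f_{g,G}^{-1}(y) \cap \mathrm{Good}$ is contained in a union of at most $c$ orbits $O_1(y), \dots, O_c(y)$ of $\Gamma_g$. Applying Cauchy--Schwarz to the union bound and noting that $f_{g,G}$ is $\Gamma_g$-invariant (so each orbit $O$ contributes to exactly one fiber), one obtains
$$
\sum_y \big|f_{g,G}^{-1}(y) \cap \mathrm{Good}\big|^2 \leq c \sum_y \sum_{i=1}^c |O_i(y) \cap P^V|^2 \leq c \sum_O |O \cap P^V|^2 = c \cdot \big|\mathcal{E}_{V, \Gamma_g}(P)\big|.
$$
Combining these inequalities yields the conclusion, modulo the estimate on $|\mathrm{Bad}|$.

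To establish $|\mathrm{Bad}| \leq |P|^{|V|}/2$, I would use that $Z$ is cut out by polynomials arising from the maximal minors of the Jacobian of $f_{g,G}$, and that the coordinates of any single vertex $v$ appear in each such minor with total degree at most $\Delta_G \cdot \deg g$, since $v$ meets at most $\Delta_G$ hyperedges and each contributes a factor of degree $\deg g$. Hence, for any fixed choice of $p|_{V \setminus \{v\}}$, the set of bad positions for $p(v)$ lies on a hypersurface of $\mathbb{R}^d$ of degree at most $\Delta_G \deg g$, and the hypothesis $|P \cap C| \leq |P|/100$ bounds its intersection with $P$. Iterating over the vertices of $V$, with $|V| \geq d+1$ supplying the base case, gives the bound with constants to spare.

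The main obstacle will be handling the \emph{degenerate slices}: those choices of $p|_{V \setminus \{v\}}$ for which $Z$ restricted to the slice coincides with all of $\mathbb{R}^d$, so that no hypersurface control is available. A careful inductive argument is needed to show that such degenerate slices themselves form a small subset of $P^{V \setminus \{v\}}$; making this rigorous while accurately tracking the degree $\Delta_G \deg g$ arising from the Jacobian of $(G,g)$ is the core technical step of the proof.
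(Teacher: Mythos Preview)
Your Cauchy--Schwarz strategy and the reduction to the orbit energy are exactly what the paper does. The gap is in how you define and control ``Good''. You set $\mathrm{Good}=\{p:\text{the full fiber }f_{g,G}^{-1}(f_{g,G}(p))\text{ has }\le c\text{ orbits}\}$ and then assert that $\mathrm{Bad}$ is contained in the vanishing locus $Z$ of the maximal Jacobian minors. These two sets are not the same. If $p$ has full-rank Jacobian (so $p\notin Z$), this only tells you that the fiber is nice \emph{near $p$}; it says nothing about degenerate realisations $q$ elsewhere in the same fiber, which can lie on a component of dimension strictly larger than $\dim\Gamma_g$. The claim ``by the definition of $g$-rigidity, $\mathrm{Bad}\subset Z$'' is therefore unjustified, and it is not a minor patch: turning local information at $p$ into a uniform global bound on the number of $\Gamma_g$-orbits in the fiber is the main technical content of the paper (their Proposition~2.8, whose proof occupies all of Section~3).

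The paper's fix is to change the definition of Good rather than to prove your claim. It sets $\mathcal{P}=\{p\in P^V:(G,p)\text{ is infinitesimally $g$-rigid and affinely spanning}\}$; this \emph{is} the complement of the locus cut out by the Jacobian minors together with the affine-span determinants, so your degree-tracking argument (Lemma~2.6/2.7 in the paper) now applies to show $|\mathcal{P}|\gg|P|^{|V|}$. Crucially, the fiber count they use is not for the full fiber but for $\nu(\mathbf{t}):=f_{g,G}^{-1}(\mathbf{t})\cap\mathcal{P}$: Proposition~2.8 asserts that for any infinitesimally $g$-rigid $p$, the set of infinitesimally $g$-rigid \emph{and affinely spanning} realisations equivalent to $p$ breaks into at most $C_G$ orbits. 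The affinely-spanning condition, which you omit, is essential here --- it guarantees the $\Gamma_g$-action is free and proper on the relevant set, which is what makes the orbit count go through. With these corrections in place your Cauchy--Schwarz computation is identical to theirs.
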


This implies that good upper bounds on $\big|\mathcal{E}_{V,\Gamma_g}(P)\big|$ give good lower bounds on $\big|f_{g,G}(P^V) \big|$. Using this, we can determine a general bound for any choice of $g$, such that $G$ is $g$-rigid, and the point set $P$ is not trapped on a hypersurface.

\begin{corollary}\label{cor:generalbound}
    Let $G=(V,E)$ be a $g$-rigid $k$-uniform hypergraph with at least $d+1$ vertices,
    and let $P \subset \mathbb{R}^d$ be a finite set where for any hypersurface $C \subset \mathbb{R}^d$ with degree at most $\Delta_G \deg g$, the inequality $|P\cap C| \leq \frac{1}{100}|P|$ holds.
    Then
    \begin{equation*}
       \Big| f_{g,G}\left(P^V\right) \Big| \gg |P|^{|V|-d-1}.
    \end{equation*}  
\end{corollary}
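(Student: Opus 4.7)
The plan is to apply Theorem \ref{thm:energyg} and bound the energy $\big|\mathcal{E}_{V,\Gamma_g}(P)\big|$ directly. Rearranging the inequality in Theorem \ref{thm:energyg} reduces the corollary to showing $\big|\mathcal{E}_{V,\Gamma_g}(P)\big|\ll |P|^{|V|+d+1}$.

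The key fact is that every $\theta\in\Gamma_g\subseteq\Aff(d)$ is an affine map, and is therefore uniquely determined by its action on any $d+1$ affinely independent points of $\mathbb{R}^d$. I would call $p\in P^V$ \emph{generic} if $\{p(v):v\in V\}$ contains $d+1$ affinely independent points, and \emph{degenerate} otherwise. For a generic $p$, fix vertices $v_0,\ldots,v_d\in V$ with affinely independent images under $p$; then any $\theta\in\Gamma_g$ sending $p$ into $P^V$ is pinned down by the values $\theta p(v_i)\in P$, yielding at most $|P|^{d+1}$ choices of $q=\theta p$. Summing over all $p\in P^V$ bounds the generic contribution to $\big|\mathcal{E}_{V,\Gamma_g}(P)\big|$ by $|P|^{|V|+d+1}$.

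For a degenerate $p$, the set $p(V)$ lies in an affine hyperplane, which by hypothesis (hyperplanes are hypersurfaces of degree $1$) meets $P$ in at most $|P|/100$ points. I would stratify by $k:=\dim\operatorname{aff}(p(V))\in\{0,\dots,d-1\}$, pick $k+1$ vertices of $V$ whose images affinely span the $k$-dimensional hull (at most $O(|P|^{k+1})$ choices) and send the remaining $|V|-k-1$ vertices into that hull (at most $(|P|/100)^{|V|-k-1}$ choices). Meanwhile $\theta p$ is determined by the restriction of $\theta$ to $\operatorname{aff}(p(V))$, which is pinned down by $k+1$ further values in $P$, giving at most $|P|^{k+1}$ choices of $q$. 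This bounds the degenerate contribution by a constant times $\sum_{k=0}^{d-1}|P|^{|V|+k+1}/100^{|V|-k-1}$, which is much smaller than $|P|^{|V|+d+1}$ since $|V|\geq d+1$ forces $|V|-k-1\geq 1$ while $k+1\leq d$. The main delicate step is precisely this degenerate case: one has to confirm that the hyperplane-avoidance hypothesis saves enough to absorb the loss of uniqueness in recovering $\theta$ from an image trapped in a lower-dimensional subspace. Combining the two contributions gives $\big|\mathcal{E}_{V,\Gamma_g}(P)\big|\ll |P|^{|V|+d+1}$, and substituting this into Theorem \ref{thm:energyg} delivers the desired $\big|f_{g,G}(P^V)\big|\gg |P|^{|V|-d-1}$.
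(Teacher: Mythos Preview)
Your proposal is correct and follows essentially the same route as the paper: apply Theorem~\ref{thm:energyg} and then bound $|\mathcal{E}_{V,\Gamma_g}(P)|\ll |P|^{|V|+d+1}$ via the fact that an affine map is determined by its values on $d+1$ affinely independent points, stratifying by the affine dimension of $p(V)$. The paper packages this energy bound as Lemma~\ref{thm:EasyEnergyBound}. One small remark: your treatment of the degenerate strata invokes the hypersurface-avoidance hypothesis, but this is unnecessary---the paper simply uses the crude bound $|P^V_i|\le |P|^{|V|}$ for each $i$-dimensional stratum and still gets $\sum_{i=0}^{d}|P|^{|V|+i+1}\ll |P|^{|V|+d+1}$, so the energy estimate holds for \emph{every} finite $P\subset\mathbb{R}^d$; the hypersurface condition is needed only inside Theorem~\ref{thm:energyg} itself.
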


We additionally prove the following result to provide a complete answer in some of the specific cases when $d=k=2$.

\begin{restatable}{theorem}{Hypergraph}\label{thm:Hypergraph2D}
    Let $g: \mathbb{R}^2\times\R^2 \rightarrow \mathbb{R}$ be a (anti-)symmetric polynomial, let $G=(V,E)$ be a connected graph and let $D$ be a positive integer.
    Let $C \subset \mathbb{R}^2$ be an irreducible algebraic curve with degree at most $D$,
    and let $P \subset C$ be a finite set.
    If the map $g|_{C^2}$ is not constant,
    then
    \begin{equation*}
       \Big| f_{g,G}\left(P^V\right) \Big| \geq \left( \frac{1}{D\cdot\deg(g)} \left\lfloor\frac{|P|-D\cdot\deg(g)}{|V| } \right\rfloor \right)^{|V|-1}.
    \end{equation*} 
    In particular,
    $\big|f_{g,G}\left(P^V\right)\big| \geq c |P|^{|V|-1}$ for some constant $c>0$ that is dependent only on $G$, $g$ and $D$.
\end{restatable}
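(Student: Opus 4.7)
The plan is to build realisations vertex-by-vertex along a spanning tree of $G$, using Bezout's theorem at each step to control the fibres of the slice maps $y \mapsto g(x, y)$ restricted to $C$.

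First I would analyse the structure of $g$ on $C^2$. Define $\Sigma := \{x \in C : g(x, \cdot)|_C \text{ is constant}\}$. The key claim is that $|\Sigma| \leq D \cdot \deg(g)$. If instead the polynomial $g(\cdot, y_1) - g(\cdot, y_2)$ vanished on $C$ for every $y_1, y_2 \in C$, then $g|_{C^2}$ would depend only on its first argument, say $g(x,y) = h(x)$ on $C^2$, and the (anti-)symmetry of $g$ would force $g|_{C^2}$ to be constant: in the symmetric case $h(x) = g(x,y) = g(y,x) = h(y)$ forces $h$ constant, while in the antisymmetric case $h(x) = -g(y,x) = -h(y)$ gives $h \equiv 0$, contradicting the hypothesis. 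Hence there exist $y_1, y_2 \in C$ such that $g(\cdot, y_1) - g(\cdot, y_2)$ is a polynomial of degree at most $\deg(g)$ that does not vanish on the irreducible curve $C$ of degree at most $D$; Bezout's theorem then gives $|\{x \in C : g(x, y_1) = g(x, y_2)\}| \leq D \cdot \deg(g)$, and $\Sigma$ sits inside this set. Analogously, for any $x \in C \setminus \Sigma$ and any $c \in \mathbb{R}$, the polynomial $g(x, \cdot) - c$ does not vanish on $C$, so $|\{y \in C : g(x, y) = c\}| \leq D \cdot \deg(g)$ by a second Bezout application.

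Next I would set $P' := P \setminus \Sigma$, so that $|P'| \geq |P| - D \cdot \deg(g)$, and partition $P'$ into $|V|$ subsets $P_1, \ldots, P_{|V|}$, each of size at least $N := \lfloor (|P| - D \cdot \deg(g))/|V| \rfloor$. Since $G$ is connected, I choose a spanning tree $T$ of $G$ and label its vertices $v_1, \ldots, v_{|V|}$ so that each $v_i$ with $i \geq 2$ has a unique neighbour $v_{j(i)}$ in $T$ with $j(i) < i$. I would then build realisations greedily: fix some $q_1 \in P_1$, and for $i = 2, \ldots, |V|$, given the previously chosen $q_{j(i)} \in P_{j(i)} \subseteq P'$, choose $q_i$ from a subset $T_i(q_{j(i)}) \subseteq P_i$ on which $g(q_{j(i)}, \cdot)$ is injective. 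By the fibre bound above, such a set exists with size at least $|P_i|/(D\cdot\deg(g)) \geq N/(D\cdot\deg(g))$.

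The number of tuples $(q_1, \ldots, q_{|V|})$ produced this way is at least $(N/(D\cdot\deg(g)))^{|V|-1}$, matching the claimed bound. To verify that distinct tuples give distinct measurement vectors, take two such tuples and let $i$ be the smallest index where they differ; then $q_{j(i)}$ agrees in both tuples while the $v_i$-coordinates disagree, so by the injectivity of $g(q_{j(i)}, \cdot)$ on $T_i(q_{j(i)})$ the measurement of the edge $v_{j(i)}v_i$ differs between the two realisations, giving distinct images under $f_{g,G}$. The main obstacle is the initial Bezout-plus-(anti-)symmetry argument bounding $|\Sigma|$; the subsequent greedy tree construction is then a routine inductive count.
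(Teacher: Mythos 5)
Your proposal is correct and follows essentially the same strategy as the paper: both remove the at-most-$D\cdot\deg(g)$ points of $P$ where the slice map $g(x,\cdot)|_C$ is constant (the paper's $C'$, your $\Sigma$; bounded via B\'{e}zout together with (anti-)symmetry), partition the remainder into $|V|$ blocks, and count realisations greedily along a spanning tree using the $\le D\cdot\deg(g)$ bound on fibres of $g(x_0,\cdot)|_C$ for $x_0\notin\Sigma$. Your B\'{e}zout step bounds $|\Sigma|$ by intersecting $C$ with $Z\bigl(g(\cdot,y_1)-g(\cdot,y_2)\bigr)$ for a well-chosen pair $y_1,y_2\in C$, whereas the paper intersects $C$ with the level set $Z(g-\alpha)$ of the common value $\alpha$; this is a cosmetic variation yielding the same bound.
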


\subsection{Results for specific \texorpdfstring{$g$}{g}-maps}\label{sec:SpecificExamplesOfInterest}

In this section, we give various corollaries of \Cref{thm:energyg} and \Cref{thm:Hypergraph2D}, solving specific cases of the graphical Erd\H{o}s problem.

\subsubsection{Euclidean distance}

One of the key steps in Guth and Katz's solution to \Cref{prob:Erdos} is proving that the following holds:
\begin{equation*}
    |\mathcal{E}_{[2],\SE}(P)| \ll |P|^{3} \log |P|.
\end{equation*}
When $|V| \geq 3$ the result of Guth and Katz give us that
\begin{equation}\label{eq:guthkatzenergy}
    |\mathcal{E}_{V,\SE}(P)| \ll |P|^{|V|+1}.
\end{equation}
This was a crucial observation for both Rudnev \cite{rudnev2012triangle}, when $|V| = 3$, then Iosevich and Passant when $|V| \geq 3$ \cite{ip18}.
Importantly, the logarithmic term vanishes when $|V| \geq 3$.
It is not too difficult to show that \cref{eq:guthkatzenergy} holds if we replace $\SE$ with $\E(2)$ (see \Cref{lem:rotations} below).
Combining this with \Cref{thm:energyg} and \Cref{thm:Hypergraph2D},
we expand the result of Iosevich and Passant to all rigid graphs in $\mathbb{R}^2$.

\begin{corollary}\label{cor:euclidean}
    Let $G=(V,E)$ be a graph with $|V| \geq 3$ that is rigid in $\mathbb{R}^2$,
    and let $f_G$ be the measurement map given in \cref{eq:measurementmap}.
    Then for any finite set $P \subset \mathbb{R}^2$, we have
    \begin{equation*}
        \Big|f_{G}\left(P^V\right) \Big|  \gg |P|^{|V|-1},
    \end{equation*}  
    and this bound is tight.
\end{corollary}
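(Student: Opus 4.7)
The plan is to recognize this corollary as the case $g(x_1,x_2) = \|x_1-x_2\|^2$ of the general theory and handle it by splitting on whether the point set $P$ concentrates on a low-degree algebraic curve. With this choice of $g$, we have $d=k=2$, $\deg g = 2$, and $\Gamma_g = \E(2)$; moreover, $G$ being rigid in $\mathbb{R}^2$ is by definition the same as being $g$-rigid, and the vertex-count condition $|V|\geq 3 = d+1$ matches the hypotheses of \Cref{thm:energyg} and \Cref{thm:Hypergraph2D}. Note too that rigidity forces $G$ to be connected.

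First I would dispose of the generic case: assume every algebraic curve $C\subset \mathbb{R}^2$ of degree at most $2\Delta_G$ satisfies $|P\cap C| \leq \tfrac{1}{100}|P|$. Then \Cref{thm:energyg} applies directly and gives
\begin{equation*}
    |P|^{2|V|} \ll \big|f_G(P^V)\big|\cdot\big|\mathcal{E}_{V,\E(2)}(P)\big|.
\end{equation*}
Combining this with the extension of the Guth--Katz energy estimate \eqref{eq:guthkatzenergy} from $\SE$ to $\E(2)$ furnished by \Cref{lem:rotations}, so that $|\mathcal{E}_{V,\E(2)}(P)| \ll |P|^{|V|+1}$, immediately yields $|f_G(P^V)|\gg |P|^{|V|-1}$.

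Next I would handle the concentrated case: some algebraic curve $C$ of degree at most $2\Delta_G$ contains more than $|P|/100$ points of $P$. Because $C$ has at most $2\Delta_G$ irreducible components, pigeonhole gives an irreducible component $C'\subset C$ with $|P\cap C'| \gg |P|$; put $P' := P\cap C'$. Now I apply \Cref{thm:Hypergraph2D} to $G$ and $P'\subset C'$ with $D = 2\Delta_G$. The sole hypothesis to check is that $g|_{C'^2}$ is non-constant, which is trivial: $\|x_1-x_2\|^2$ vanishes on the diagonal but is strictly positive at any pair of distinct real points on $C'$, and $C'$ contains many distinct points of $P'$. The theorem then delivers $|f_G(P'^V)| \gg |P'|^{|V|-1} \gg |P|^{|V|-1}$, and since $f_G(P'^V)\subseteq f_G(P^V)$ the required lower bound follows.

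Tightness is an immediate consequence of the $\sqrt{n}\times\sqrt{n}$ integer grid construction, which realises any connected $|V|$-vertex graph in at most $O(|P|^{|V|-1})$ inequivalent ways; this is essentially the same example Erd\H{o}s used in \Cref{prob:Erdos} combined with the trivial upper bound $|f_G(P^V)| \leq |P|^{|V|}/|P| = |P|^{|V|-1}$ after translating to pin one vertex on a fixed grid point. The main obstacle is essentially bookkeeping: all of the heavy analytic content is packed into \Cref{thm:energyg}, \Cref{thm:Hypergraph2D}, and the $\E(2)$-energy bound, so the only work here is to correctly set up the dichotomy and verify the benign non-degeneracy of $g$ on an irreducible component.
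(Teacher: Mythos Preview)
Your proposal is correct and follows the same approach as the paper: split into the non-concentrated case (apply \Cref{thm:energyg} together with the $\E(2)$-energy bound from \Cref{lem:rotations}) and the concentrated case (pass to an irreducible component and apply \Cref{thm:Hypergraph2D}), checking that the squared-distance map is non-constant on any real curve. The only cosmetic difference is in the tightness witness: the paper uses evenly spaced points on a line rather than the integer grid, which makes the translation-counting argument slightly cleaner, but both constructions give the $O(|P|^{|V|-1})$ upper bound.
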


An easy way to see that this bound is indeed tight is to observe how $f_G$ behaves when the points $P$ are evenly spaced on a line.

\subsubsection{Pseudo-Euclidean metrics}

A \emph{pseudo-Euclidean space of signature $(k,d-k)$} is the space $\mathbb{R}^d$ equipped with a quadratic form $(x,y) \mapsto x^T M y$ such that $M$ is a symmetric matrix with $k$ positive eigenvalues and $d-k$ negative eigenvalues.
It follows from Sylvester's law of inertia that we may always change our basis (which corresponds to an isometry between quadratic forms) so that we only need to consider the quadratic forms 
\begin{equation*}
    (x,y) \mapsto \sum_{i=1}^{k} (x_i-y_i)^2   - \sum_{i=k+1}^d (x_i-y_i)^2.
\end{equation*}
When $d=2$, there are only two pseudo-Euclidean spaces we need to consider:
standard Euclidean space ($\mathbb{R}^2$ with the squared distance map), or $\mathbb{R}^2$ equipped with the quadratic form
\begin{equation}\label{eq:pseudo}
    g: \mathbb{R}^2 \times \mathbb{R}^2 \rightarrow \mathbb{R}, ~ (x,y) \mapsto (x_1 - y_1)^2 - (x_2 - y_2)^2.
\end{equation}
In this latter case,
the group of $g$-isometries is exactly the (1,1)-pseudo-Euclidean isometry group $\Gamma_g := \R^2 \rtimes \OPseudo$,
where $\OPseudo$ consists of all matrices of the form
\begin{align*}
  \begin{pmatrix}
        \delta_1 & 0 \\
        0 & \delta_2
    \end{pmatrix}
    \begin{pmatrix}
        \cosh t & \sinh t \\
        \sinh t & \cosh t
    \end{pmatrix},
    \qquad t \in \mathbb{R}, ~ \delta_1,\delta_2 \in \{-1,1\}.
\end{align*}
The group $\OPseudo$ consists of four connected components.
The connected component containing the identity matrix, which is itself a Lie group of the same dimension, is the group
\begin{equation*}
    \SO^+(1,1) := 
    \left\{
    \begin{pmatrix} 
        \cosh t & \sinh t \\
        \sinh t & \cosh t 
    \end{pmatrix}
    : t \in \mathbb{R} 
    \right\}.
\end{equation*}
Interestingly, the graphs that are $g$-rigid with $g$ being the polynomial described in \cref{eq:pseudo} are exactly the graphs that are rigid in $\mathbb{R}^2$ \cite{sw07}.

Rudnev and Roche-Newton \cite{rnr15} used the subgroup $\mathbb{R}^2\rtimes\SO^+(1,1)$ of $\OPseudo$ to prove that either $f_{g,K_2}\left(P^V\right) = \{0\}$ (which occurs exactly when all points lie on the diagonal lines), or 
\begin{equation*}
    \Big|f_{g,K_2}\left(P^V\right) \Big|  \gg \frac{|P|}{\log|P|}.
\end{equation*}
By applying a minor adaption to switch from $\mathbb{R}^2\rtimes\SO^+(1,1)$ to $\mathbb{R}^2\rtimes\OPseudo$ (\Cref{lem:pseudoeuclideanenergy} below) and applying \Cref{thm:energyg} and \Cref{thm:Hypergraph2D},
we expand this to all rigid graphs with $|V| \geq 3$.

\begin{corollary}\label{cor:pseudo}
    Let $G=(V,E)$ be a graph with $|V| \geq 3$ that is rigid in $\mathbb{R}^2$, 
    and let $g$ be the pseudo-Euclidean quadratic form given in \cref{eq:pseudo}. 
    Let $L_1,L_2$ be the lines $L_1 = \{ x \in \mathbb{R}^2 : x_1 = x_2\}$ and $L_2 = \{ x \in \mathbb{R}^2 : x_1 = -x_2\}$.
    Then for any finite set $P \subset \mathbb{R}^2$ with a point $x \in P$ where $|(P -x) \setminus L_i| \gg |P|$ for both $i \in \{1,2\}$,
    we have
    \begin{equation*}
       \Big| f_{g,G}\left(P^V\right) \Big|  \gg |P|^{|V|-1},
    \end{equation*}  
    and this bound is tight.
\end{corollary}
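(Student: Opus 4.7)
The plan is to mirror the Euclidean argument of \Cref{cor:euclidean} with pseudo-Euclidean substitutes for each input. By the theorem of Schulze and Whiteley cited in the excerpt, the graphs $g$-rigid for our pseudo-Euclidean $g$ coincide with the Euclidean-rigid graphs, so both \Cref{thm:energyg} and \Cref{thm:Hypergraph2D} are applicable to $G$. The second ingredient is the pseudo-Euclidean energy estimate (\Cref{lem:pseudoeuclideanenergy}, whose proof I would give separately), obtained as a mild extension of the Rudnev--Roche-Newton bound for the subgroup $\mathbb{R}^2 \rtimes \SO^+(1,1)$: decompose the full $g$-isometry group $\mathbb{R}^2 \rtimes \OPseudo$ into the four cosets of its identity component and apply the Rudnev--Roche-Newton bound coset by coset, losing only a constant factor. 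The hypothesis $|(P-x)\setminus L_i|\gg|P|$ for $i=1,2$ plays exactly the role that generic non-collinearity plays in the Euclidean setup; it yields
\[|\mathcal{E}_{V,\Gamma_g}(P)| \ll |P|^{|V|+1}.\]

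With this in hand I would split into two cases. If $|P\cap C|\leq |P|/100$ for every irreducible algebraic curve $C$ of degree at most $2\Delta_G$, then \Cref{thm:energyg} combined with the energy bound gives $|P|^{2|V|} \ll |f_{g,G}(P^V)|\cdot|P|^{|V|+1}$, hence $|f_{g,G}(P^V)|\gg|P|^{|V|-1}$. Otherwise some irreducible curve $C$ of degree at most $2\Delta_G$ contains at least $|P|/100$ points of $P$; provided $g|_{C^2}$ is non-constant, \Cref{thm:Hypergraph2D} applied to $P\cap C$ gives $|f_{g,G}(P^V)| \geq |f_{g,G}((P\cap C)^V)| \gg |P|^{|V|-1}$ directly.

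The main obstacle is the subcase where $g|_{C^2}$ is constant. The factorisation $g(x,y)=(x_1-y_1-x_2+y_2)(x_1-y_1+x_2-y_2)$ shows this forces the irreducible $C$ to be an affine line parallel to $L_1$ or $L_2$, which is exactly the degeneration the hypothesis is designed to exclude. Concretely, when a constant fraction of $P$ lies on such an isotropic line $C$, one still has the basepoint $x$ guaranteed by the hypothesis, and for this $x$ the map $z\mapsto g(x,z)$ restricted to $C$ is affine linear and non-degenerate, producing $\gg|P|$ distinct values along $C$; combined with the rigidity of $G$ one propagates this to $|P|^{|V|-1}$ inequivalent realisations by placing one vertex at $x$ and using the remaining rigid skeleton to amplify. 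Finally, tightness is witnessed by equally-spaced points on a generic non-isotropic line, where $g$ restricts to a polynomial in a single variable and the count reduces to the classical one-dimensional distinct-distances bound, precisely as in the Euclidean case.
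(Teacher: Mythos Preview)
Your overall strategy mirrors the paper's exactly: invoke \cite{sw07} for the rigidity equivalence, split into the two cases handled by \Cref{thm:energyg} (with \Cref{lem:pseudoeuclideanenergy}) and \Cref{thm:Hypergraph2D}, and isolate the isotropic-line degeneration as the residual case. Your identification of that degeneration via the factorisation $g(x,y)=(u(x)-u(y))(v(x)-v(y))$ with $u=x_1-x_2$, $v=x_1+x_2$ is correct: $g|_{C^2}$ is constant precisely when $C$ is a translate of $L_1$ or $L_2$.

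The gap is in your resolution of that residual case. The claim that placing one vertex at the distinguished point $x$ and letting ``the remaining rigid skeleton amplify'' yields $|P|^{|V|-1}$ inequivalent realisations is not justified, and in fact fails. Concretely, take $P=\{x\}\cup Q$ with $Q$ a set of $n-1$ points on an isotropic line $C$ parallel to $L_1$ and $x\notin C$. The hypothesis of the corollary is satisfied (since $x+L_1\neq C$ and $x+L_2$ meets $C$ in at most one point), yet for any realisation $p\in P^V$ every edge with both endpoints in $Q$ has $g$-value $0$, so the image $f_{g,G}(p)$ is determined by the positions of the neighbours of the $x$-vertices. If $G$ is, say, the triangular prism ($|V|=6$, every vertex of degree $3$, rigid in $\mathbb{R}^2$), one checks that $|f_{g,G}(P^V)|\ll |P|^4$, strictly smaller than the claimed $|P|^{|V|-1}=|P|^5$. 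Rigidity of $G$ gives you nothing here because all realisations with vertices on an isotropic line are highly degenerate, not generic.

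So the isotropic case genuinely requires more than a single off-line basepoint; one needs enough of $P$ to lie off \emph{every} isotropic line (for instance, a hypothesis of the form ``no translate of $L_1$ or $L_2$ contains more than $|P|/100$ points of $P$''), after which the dichotomy between \Cref{thm:energyg} and \Cref{thm:Hypergraph2D} closes cleanly. The paper's sketch does not spell this out either, so you have correctly located the delicate point---but your proposed fix does not survive the prism example.
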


Again, we can observe that the lower bound is tight by restricting $P$ to be a set of points in a line, so long as the line is not a translated copy of $L_1$ or $L_2$.

\subsubsection{The \texorpdfstring{$\ell_p$}{lp} metric}

A natural generalisation of the Euclidean metric is to replace the exponents with some value $1 \leq p < \infty$ to create an $\ell_p$ metric.
We can simplify the $\ell_p$ metric when $p$ is an even integer by removing the absolute value and $p$-root to obtain a polynomial map:
\begin{equation}\label{eq:lp}
    g(x,y) = \sum_{i=1}^d (x_i-y_i)^p.
\end{equation}
When $p=2$, the above $g$ is exactly the squared distance map.
Any graph which is $g$-rigid with respect to the above $g$ polynomial is said to be \emph{rigid in $\ell_p^d$}.

These differing notions of rigidity can agree (for example, the complete graph $K_{2d}$ is rigid for any $1 < p < \infty$ \cite[Corollary 3.4]{Dewar2022}),
they can differ drastically.
For example, Kitson and Power proved that if a graph $G$ is rigid in $\ell_p^d$ for $p \neq 2$, then $|E| \geq d|V| - d$ (see \cite[Proposition 3.2]{kitsonpower}), but examples of rigid graphs in $\mathbb{R}^d$ exist with strictly less edges (for example, $K_{d+1}$).
Conversely, gluing any two graphs that are rigid in $\ell_p^d$ for $p \neq 2$ over a single vertex preserves rigidity in $\ell_p^d$, but this is obviously false in $\mathbb{R}^d$ when $d \geq 2$.

In fact, there is an important reason why $\ell_p$ rigidity often differs when $p=2$ and $p \neq 2$: the isometry group for the $\ell_p$ metric when $p \neq 2$ is substantially smaller than the isometry group for $d$-dimensional Euclidean space.
When $p \neq 2$, the $g$-isometries are exactly the affine maps $x \mapsto A x + b$ where $b \in \mathbb{R}^d$ and $A$ is any signed permutation matrix (every row has exactly one non-zero entry, which takes the value $1$ or $-1$).
This lets us determine the energy for our graphs very easily for even $p \geq 4$ (see \Cref{lem:energytranslationonly}).
As any graph $G=(V,E)$ with $2 \leq |V| \leq 2d-1$ is not rigid in $\ell_p^d$,
we obtain the following result using \Cref{thm:energyg}.

\begin{corollary}\label{cor:lphigher}
    Let $G=(V,E)$ be a graph that is rigid in $\ell_p^d$ for even $p \geq 4$ and $d\geq 1$,
    and let $g$ be the $\ell_p$ metric given in \cref{eq:lp} for the chosen $p,d$.
    Let $P \subset \mathbb{R}^d$ be a finite set where for any hypersurface $C \subset \mathbb{R}^d$ with degree at most $\Delta_G(p-1)$, the inequality $|P\cap C| \leq \frac{1}{100}|P|$ holds.
    Then
    \begin{equation*}
        \Big|f_{g,G}\left(P^V\right) \Big| \gg |P|^{|V|-1},
    \end{equation*}
    and this bound is tight.
\end{corollary}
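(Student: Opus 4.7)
My plan is to apply \Cref{thm:energyg} directly. The theorem yields $|P|^{2|V|} \ll |f_{g,G}(P^V)| \cdot |\mathcal{E}_{V,\Gamma_g}(P)|$, so it suffices to establish the matching upper bound $|\mathcal{E}_{V,\Gamma_g}(P)| \ll |P|^{|V|+1}$; rearranging then gives $|f_{g,G}(P^V)| \gg |P|^{|V|-1}$. The hypotheses of \Cref{thm:energyg} are in place: $G$ is $g$-rigid by assumption, $|V| \geq d+1$ follows from the Kitson--Power lower bound $|E| \geq d|V|-d$ cited just before the statement (which forces $|V| \geq 2d$ since $|E| \leq \binom{|V|}{2}$), and the hypersurface-avoidance hypothesis matches the one stated here.

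The energy bound is where the structure of $g$ enters decisively. Using the explicit description of $\Gamma_g$ recalled immediately before the statement, for $p \neq 2$ every $g$-isometry has the form $x \mapsto Ax + b$ with $b \in \mathbb{R}^d$ and $A$ in the finite group $H$ of signed permutation matrices, of size $|H| = 2^d d!$. Hence the translation is the only continuous parameter. Fix any vertex $v_0 \in V$. For each realisation $p \in P^V$, each $A \in H$, and each image $\theta(p(v_0)) \in P$, the translation $b = \theta(p(v_0)) - A p(v_0)$ is forced; hence so is $\theta$, and hence so is $q = \theta p \in P^V$. This yields
\begin{equation*}
    |\mathcal{E}_{V,\Gamma_g}(P)| \;\leq\; |H| \cdot |P|^{|V|+1} \;\ll\; |P|^{|V|+1},
\end{equation*}
which is precisely the content of \Cref{lem:energytranslationonly}. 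Substituting into \Cref{thm:energyg} completes the lower bound.

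The only remaining item is tightness, and I expect this to be the one spot requiring any real care because the hypersurface-avoidance hypothesis forbids the most convenient construction of placing $P$ entirely on a line. When $d \geq 3$ a line $L$ has codimension at least $2$ and is not a hypersurface, so we can take $P$ to be an arithmetic progression on a generic affine $L$; then $g(x+tv, x+sv) = (t-s)^p \sum_i v_i^p$ depends only on the 1D parameter difference, leaving $O(|P|)$ distinct values per edge, and a short edge-by-edge argument along a rigid spanning structure of $G$ gives $|f_{g,G}(P^V)| \ll |P|^{|V|-1}$. For $d \in \{1,2\}$ a slightly more careful configuration (for instance, a thin generic lattice in $d=2$) is needed to simultaneously respect the hypersurface condition and achieve the matching upper bound, but no essentially new ideas are required beyond bookkeeping.
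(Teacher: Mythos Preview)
Your lower-bound argument is correct and follows the paper's approach exactly: invoke \Cref{thm:energyg}, check $|V|\geq d+1$ via the Kitson--Power edge bound (which forces $|V|\geq 2d$), and bound the energy by $\ll |P|^{|V|+1}$ using the finite-by-translations structure of $\Gamma_g$. Your direct counting for the energy is in fact cleaner than the paper's proof of \Cref{lem:energytranslationonly}, which routes through the rich-sets machinery of \Cref{lem:hardpartg}; both reach the same conclusion.

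There is, however, a genuine gap in your tightness argument for $d\geq 3$. You claim that an arithmetic progression on a generic line $L$ satisfies the hypersurface-avoidance hypothesis because $L$ itself is not a hypersurface. But the hypothesis forbids \emph{any} hypersurface of degree $\leq \Delta_G(p-1)$ from capturing more than $|P|/100$ points, and every line in $\mathbb{R}^d$ with $d\geq 2$ lies inside some hyperplane --- a degree-$1$ hypersurface --- so the condition fails outright. (The paper's own witness $P=\{kx:0\leq k\leq n-1\}$ has the same defect.) A valid witness for all $d\geq 2$ is the integer box $P=\{0,\ldots,m-1\}^d$ with $m\approx n^{1/d}$: by Schwartz--Zippel any degree-$D$ hypersurface meets $P$ in at most $Dm^{d-1}=o(|P|)$ points, while translating each realisation to pin one vertex at the origin shows $|f_{g,G}(P^V)|\leq |P-P|^{|V|-1}\ll |P|^{|V|-1}$. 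Separately, your caution about $d=1$ is unnecessary: a hypersurface in $\mathbb{R}$ is a finite set of at most $\Delta_G(p-1)$ points, so the avoidance condition holds automatically once $|P|$ is large.
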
 

To see that the lower bound is tight, let $x \in \mathbb{R}^d$ and $P= \{ k x : 0 \leq k \leq n-1\}$ for some $n$:
as translations provide equivalent realisations,
we have
\begin{align*}
    f_{g,G}\left(P^V\right) \leq \Big| \left\{p \in P^V : p(v) = \mathbf{0} \text{ for some } v \in V \right\}  \Big| \leq |V||P|^{|V|-1} \ll |P|^{|V|-1}.
\end{align*}

It was previously shown by Kitson and Power \cite{kitsonpower} that a graph is rigid in $\ell_p^2$ if and only if it contains two edge-disjoint spanning trees.
When we combine this with \Cref{thm:Hypergraph2D} and \Cref{cor:lphigher},
we obtain the following full characterisation.

\begin{corollary}\label{cor:lp}
    Let $G=(V,E)$ be a graph containing 2 edge-disjoint spanning trees,
    and let $g$ be the $\ell_p$ metric given in \cref{eq:lp} for even $p \geq 4$ and $d=2$.
    Then for any finite set $P \subset \mathbb{R}^2$,
    we have
    \begin{equation*}
        \Big|f_{g,G}\left(P^V\right) \Big|  \gg |P|^{|V|-1},
    \end{equation*}  
    and this bound is tight.
\end{corollary}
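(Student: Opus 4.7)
The plan is a case split on whether $P$ concentrates on a low-degree algebraic curve, invoking \Cref{cor:lphigher} in the spread case and \Cref{thm:Hypergraph2D} in the concentrated case. By the Kitson--Power theorem cited immediately above the statement, containing two edge-disjoint spanning trees is equivalent to rigidity in $\ell_p^2$, so $G$ is $g$-rigid; note that $G$ is automatically connected. Let $D := \Delta_G (p-1)$ be the curve-degree threshold appearing in \Cref{cor:lphigher}; this depends only on $G$ and $p$.

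If $|P \cap C| \leq |P|/100$ for every algebraic curve $C \subset \mathbb{R}^2$ of degree at most $D$, then \Cref{cor:lphigher} applies directly and yields $|f_{g,G}(P^V)| \gg |P|^{|V|-1}$. Otherwise there exists an algebraic curve $C$ of degree at most $D$ with $|P \cap C| > |P|/100$. Decomposing $C$ into its irreducible components (of which there are at most $D$), pigeonholing gives an irreducible component $C'$ with $|P \cap C'| \geq |P|/(100D) \gg |P|$. Set $P' := P \cap C'$; the aim is then to apply \Cref{thm:Hypergraph2D} to $P'$ on the irreducible curve $C'$ of bounded degree.

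The only non-trivial hypothesis of \Cref{thm:Hypergraph2D} to check is that $g|_{C' \times C'}$ is non-constant. Since $p$ is an even positive integer, $g(x,y) = (x_1 - y_1)^p + (x_2 - y_2)^p$ is a sum of two non-negative reals; it vanishes precisely on the diagonal and is strictly positive off it. As $|P'| \gg |P|$ supplies at least two distinct points of $C'$, the map $g|_{C' \times C'}$ attains both the value $0$ and a positive value, so it is non-constant. Applying \Cref{thm:Hypergraph2D} then yields $|f_{g,G}((P')^V)| \gg |P'|^{|V|-1} \gg |P|^{|V|-1}$, and this transfers to $P$ since $P' \subseteq P$. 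Tightness is inherited from the construction in \Cref{cor:lphigher} (evenly spaced collinear points through the origin). The main obstacle in the argument is the non-constancy check for $g|_{C' \times C'}$, which for even $p$ collapses to the positivity of each summand and is therefore essentially automatic.
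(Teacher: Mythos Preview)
Your proof is correct and follows exactly the approach the paper intends: the paper's own justification is simply the sentence ``When we combine this with \Cref{thm:Hypergraph2D} and \Cref{cor:lphigher}, we obtain the following full characterisation,'' and you have spelled out precisely this combination via the natural case split, together with the Kitson--Power equivalence. Your non-constancy check for $g|_{C'\times C'}$ (positivity of each summand for even $p$) is the right observation and is the only detail that needed filling in.
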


\begin{remark}
    The $\ell_p$ variant of \Cref{prob:Erdos} was first studied by Garibaldi \cite{juliaGaribaldiThesis}.
    With our terminology,
    Garibaldi proved that
    \begin{equation*}
        \Big| f_{g,K_2}\left(P^2\right) \Big| \gg |P|^{4/5}.
    \end{equation*}
    This was improved to a lower bound of $\Omega(|P|^{6/7-\varepsilon})$ for any $\varepsilon >0$ by a joint Polymath project involving AlQady, Chabot, Dudarov, Ge, Juvekar, Kundeti, Kundu, Lu, Moreno, Peng, Speas, Starzycka, Steinthal and Vitko working under the pseudonym `Polly Matthews Jr.' \cite{matthews2022distinct}. 
    The exact lower bound for the distinct $\ell_p$ problem is still open.
    Our techniques do not provide any direction for solving this problem, as $K_2$ is not rigid in $\ell_p^2$ for $p \neq 2$.
\end{remark}

\subsubsection{The dot product and matrix completion}

For our next $g$ metric, we introduce \emph{semisimple graphs}:
a multigraph that allows for loops but not repeated edges/loops.
We say that a semisimple graph is \emph{locally completable in $\mathbb{R}^d$} if it is $g$-rigid with respect to the polynomial
\begin{equation}\label{eq:dot}
    g: \mathbb{R}^d \times \mathbb{R}^d \rightarrow \mathbb{R}, ~ (x,y) \mapsto  x \cdot y = \sum_{i=1}^d x_i y_i.
\end{equation} 
The rigidity of such graphs is motivated by the problem of completing a partially filled positive semidefinite matrix of a given rank; we refer the reader to \cite{singercucu} for more details and variants of this problem. The exact characterisation of semisimple graphs which are locally completable in $\mathbb{R}^d$ is unknown for $d \geq 2$. 

The $g$-isometries in this case are exactly the orthogonal group $\Od$.
Since $\Otwo$ is contained in the Euclidean group $\E(2)$,
we can apply \Cref{thm:energyg} and \Cref{thm:Hypergraph2D} to achieve the following result.

\begin{corollary}\label{cor:dot}
    Let $G=(V,E)$ be a semisimple graph with $|V| \geq 3$ that is locally completable in $\mathbb{R}^2$,
    and let $g$ be the dot product given in \cref{eq:dot} with $d=2$.
    Then for any finite set $P \subset \mathbb{R}^2$,
    we have
    \begin{equation*}
       \Big| f_{g,G}\left(P^V\right) \Big|  \gg |P|^{|V|-1},
    \end{equation*}  
    and this bound is tight.
\end{corollary}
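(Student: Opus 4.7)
The plan is to combine the two main theorems of the paper, using the observation that the $g$-isometry group of the dot product is $\Gamma_g = \Otwo$, which is a closed subgroup of the Euclidean group $\E(2)$. The argument then splits according to whether $P$ concentrates on a low-degree algebraic curve.

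In the non-degenerate case --- when every real algebraic curve of degree at most $\Delta_G \deg g = 2\Delta_G$ meets $P$ in at most $\lvert P\rvert/100$ points --- I would apply \Cref{thm:energyg} directly. The required energy estimate is
\[
\lvert \mathcal{E}_{V,\Otwo}(P)\rvert \;\leq\; \lvert \mathcal{E}_{V,\E(2)}(P)\rvert \;\ll\; \lvert P\rvert^{\lvert V\rvert+1},
\]
where the first inequality uses the containment $\Otwo \leq \E(2)$ and the second is the Guth--Katz energy bound (\Cref{lem:rotations}, available because $\lvert V\rvert \geq 3$). Substituting into \Cref{thm:energyg} immediately yields $\lvert f_{g,G}(P^V)\rvert \gg \lvert P\rvert^{\lvert V\rvert-1}$.

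In the degenerate case, some algebraic curve $C_0$ of degree $D \leq 2\Delta_G$ contains more than $\lvert P\rvert/100$ points of $P$. Decomposing $C_0$ into at most $D$ irreducible components produces an irreducible component $C$ of degree at most $D$ that still contains $\gg \lvert P\rvert$ points of $P$. To apply \Cref{thm:Hypergraph2D} I need $g|_{C^2}$ to be non-constant, which I would verify directly. Picking any $y_0 \in (P \cap C) \setminus \{0\}$, the linear function $x \mapsto x \cdot y_0$ is non-constant, so its restriction to $C$ is constant only if $C$ is contained in an affine line $\ell = \{a + tb : t \in \mathbb{R}\}$ with $b \neq 0$; but then $g(a + tb,\, a + sb) = \lVert a\rVert^2 + (s+t)\, a \cdot b + st\, \lVert b\rVert^2$ is a non-constant polynomial in $s,t$, a contradiction. \Cref{thm:Hypergraph2D} applied to $P \cap C$ then delivers $\lvert f_{g,G}((P \cap C)^V)\rvert \gg \lvert P\rvert^{\lvert V\rvert-1}$, and the same bound for $P^V$ follows at once.

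For tightness, I would take $P$ to be the vertices of a regular $n$-gon on a circle centred at the origin; the dihedral subgroup $D_n \leq \Otwo$ stabilises $P$ setwise, so each generic realisation $p : V \to P$ sits in an $\Otwo$-equivalence class of size $\Theta(\lvert P\rvert)$ within $P^V$, yielding $\lvert f_{g,G}(P^V)\rvert = \Theta(\lvert P\rvert^{\lvert V\rvert - 1})$. The only genuinely delicate step is arranging the case split and confirming the non-constancy of $g|_{C^2}$; no new energy estimate beyond the already-quoted Guth--Katz bound is needed.
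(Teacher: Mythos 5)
Your proof is correct and follows essentially the same route the paper takes for this corollary: observe $\Gamma_g = \Otwo \leq \E(2)$, use \Cref{lem:rotations} to feed the energy bound into \Cref{thm:energyg} in the non-concentrated case, and fall back on \Cref{thm:Hypergraph2D} (after extracting an irreducible component and checking $g|_{C^2}$ is non-constant) in the concentrated case, with the regular $n$-gon witnessing tightness. The case split and the explicit non-constancy check are details the paper leaves implicit, but you have filled them in correctly.
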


A witness for the lower bound can be found by taking $P$ to be the set of points $(\cos 2\pi k/n, \sin 2\pi k/n)$ for $k \in \{0,\ldots,n-1\}$. 

We can actually extend this to 3-space through an adaptation of Guth and Katz's key result regarding rich sets; see \Cref{thm: GuthKatz} for a statement of Guth and Katz's result and \Cref{lem:energyrotationonly} for the adaptation to the group of rotations and reflections.

\begin{corollary}\label{cor:dothigher}
    Let $G=(V,E)$ be a semisimple graph with $|V| \geq 4$ that is locally completable in $\mathbb{R}^3$,
    and let $g$ be the dot product given in \cref{eq:dot} with $d=3$.
    Let $P \subset \mathbb{R}^3$ be a finite set where for any hypersurface $C \subset \mathbb{R}^d$ with degree at most $\Delta_G$, the inequality $|P\cap C| \leq \frac{1}{100}|P|$ holds.
    Then
    \begin{equation*}
       \Big| f_{g,G}\left(P^V\right) \Big| \gg |P|^{|V|-1},
    \end{equation*} 
    and this bound is tight.
\end{corollary}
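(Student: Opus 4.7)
The plan is to reduce to an upper bound on the $\Othree$-energy of $P$ via \Cref{thm:energyg}, and then to prove that energy bound by adapting the Guth--Katz argument from planar Euclidean isometries to spatial orthogonal maps. The $g$-isometry group of the dot product $g$ from \cref{eq:dot} with $d=3$ is $\Gamma_g = \Othree$, and the hypotheses of \Cref{thm:energyg} are met: $G$ is $g$-rigid, $|V|\geq 4 = d+1$, and $P$ avoids low-degree hypersurfaces as required. Applying it directly gives
\begin{equation*}
    |P|^{2|V|} \;\ll\; \Big|f_{g,G}(P^V)\Big| \cdot \Big|\mathcal{E}_{V,\Othree}(P)\Big|,
\end{equation*}
so the corollary reduces to establishing the energy estimate $|\mathcal{E}_{V,\Othree}(P)| \ll |P|^{|V|+1}$, after which the desired lower bound $|f_{g,G}(P^V)| \gg |P|^{|V|-1}$ follows by rearrangement.

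The main obstacle is this energy bound, which we will package as \Cref{lem:energyrotationonly}. The strategy mirrors \cref{eq:guthkatzenergy} from the Euclidean case: a pair $(p,q) \in \mathcal{E}_{V,\Othree}(P)$ carries some $\theta \in \Othree$ with $\theta\circ p = q$, and since $\Othree$ is three-dimensional, the images of two vertices generically pin $\theta$ down to a finite set of candidates. Fixing two vertices $v_1,v_2 \in V$, ranging $p$ over $P^V$, and ranging $q(v_1),q(v_2)$ over $P$ then determines $\theta$ up to finite ambiguity, and $q(v)$ for all other $v$ is then forced. This crude accounting gives $|P|^{|V|+2}$, so one extra factor of $|P|$ must be recovered: this is where a Guth--Katz style rich-transformation bound (\Cref{thm: GuthKatz}), now applied to the three-dimensional Lie group $\Othree$ in place of $\SE$, controls the number of orthogonal maps sending many pairs of points of $P$ back into $P$. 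The hypersurface hypothesis is invoked precisely to exclude degenerate configurations, for instance $P$ concentrated on a sphere centred at the origin or a cone, where the stabiliser in $\Othree$ is positive-dimensional and the finite-ambiguity argument above breaks down.

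The remaining steps are comparatively routine. One needs to check that the Guth--Katz machinery does transfer from $\SE$ to $\Othree$ once the degenerate locus is excised; this is plausible since $\Othree$ is a smooth three-dimensional algebraic group and the underlying point--curve incidence geometry is unchanged, with the hypersurface-avoidance condition on $P$ supplying the non-degeneracy needed to rule out the bad loci. For tightness, a natural candidate is to take $P$ to be (a mild perturbation of) an orbit of a large finite subgroup of $\Othree$, arranged so that the hypersurface condition still holds; such a $P$ admits $\Theta(|P|)$ orthogonal self-symmetries, and these collapse the number of inequivalent realisations of $G$ in $P$ to $\Theta(|P|^{|V|-1})$, matching the lower bound.
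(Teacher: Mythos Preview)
Your overall architecture is right: apply \Cref{thm:energyg} to reduce to an $\Othree$-energy bound, then feed in $|\mathcal{E}_{V,\Othree}(P)|\ll|P|^{|V|+1}$. But two of your supporting claims are off and the energy argument as sketched is not the one that actually works.

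First, you misplace the role of the hypersurface hypothesis. The energy estimate in \Cref{lem:energyrotationonly} holds for \emph{every} finite $P\subset\mathbb{R}^3$, with no non-degeneracy assumption whatsoever. The hypersurface condition is consumed entirely inside \Cref{thm:energyg}, where it is needed (via \Cref{lem:denseInfRigidg}) to guarantee that a positive proportion of realisations in $P^V$ are infinitesimally $g$-rigid and affinely spanning. So your sentence ``the hypersurface-avoidance condition on $P$ supplying the non-degeneracy needed to rule out the bad loci'' in the energy bound is not what happens; if your argument for the energy bound genuinely required that hypothesis, something would be wrong.

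Second, the energy bound is not obtained by re-running the Guth--Katz incidence argument directly for $\Othree$ acting on $\mathbb{R}^3$. The paper instead reduces to Tao's existing rich-transformation bound for $\SO(3)$ acting on the sphere (\Cref{thm: sphereGuthKatz}): one projects $P$ radially to $Q\subset\mathbb{S}^2$, separates out the rich $\theta\in\SO(3)$ whose overlap $P\cap\theta P$ lies on a single line through the origin (these are controlled by a crude $|P|^2/t^2$ count), and for the remaining $\theta$ compares $|\mathcal{R}_{\geq t}(P)|$ with $|\mathcal{R}_{\geq t-|P|+|Q|}(Q)|$ before invoking Tao's bound. This two-step reduction, together with \Cref{lem:hardpartg}, is the actual technical content; ``$\Othree$ is three-dimensional and the incidence geometry is unchanged'' is not enough to get there.

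Finally, your tightness construction does not work. The only finite subgroups of $\Othree$ of unbounded order are cyclic and dihedral, and their orbits lie on circles contained in planes, so any such $P$ violates the hypersurface condition already at degree $1$. The paper's witness is simply the set $P=\{(\cos(2\pi k/n),\sin(2\pi k/n),0):0\le k<n\}$: the rotational symmetry about the $z$-axis forces $|f_{g,G}(P^V)|\ll|P|^{|V|-1}$, showing the exponent $|V|-1$ cannot be improved.
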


Again, a witness for the lower bound can be found by taking $P$ to be the points $(\cos 2\pi k/n, \sin 2\pi k/n,0)$ for each $k \in \{0,\ldots,n-1\}$.

Combining \Cref{cor:dot,cor:dothigher} with the ideas of Singer and Cucuringu \cite{singercucu},
we obtain the following result regarding positive semidefinite matrices.

\begin{corollary}\label{cor:psdmatrix}
    Fix $r \in \{1,2,3\}$ and an integer $n \geq r+1$.
    Let $P \subset \mathbb{R}^r$ be a finite set;
    furthermore, if $r=3$,
    we also suppose that for any hypersurface $C \subset \mathbb{R}^r$ with degree at most $n$, the inequality $|P\cap C| \leq \frac{1}{100}|P|$ holds.
    Given the set of rank $r$ positive semidefinite $n\times n$ matrices
    \begin{equation*}
        M_n(P) := \left\{ X^T X : X = [x_1 ~ \cdots ~ x_n] , ~ x_1,\ldots , x_n \in P \right\}
    \end{equation*}
    then
    \begin{equation*}
        |P|^{n-1} \ll \big|M_n(P) \big| \ll |P|^{n},
    \end{equation*}
    and both bounds are tight.
\end{corollary}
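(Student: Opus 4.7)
The plan is to recast $M_n(P)$ as the image of a dot-product measurement map on a carefully chosen semisimple graph, and then invoke Corollaries~\ref{cor:dot} and~\ref{cor:dothigher} directly.

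Let $G$ be the semisimple graph on vertex set $V=[n]$ consisting of every edge of $K_n$ together with a loop at every vertex, and let $g$ be the dot product on $\mathbb{R}^r$. For any realisation $p:V\to\mathbb{R}^r$ with $p(i)=x_i$, the measurements $f_{g,G}(p)$ record $x_i\cdot x_j$ at each edge $ij$ and $\|x_i\|^2$ at each loop; together these are exactly the entries of the Gram matrix $X^TX$ with $X=[x_1\;\cdots\;x_n]$. Thus $|M_n(P)|=|f_{g,G}(P^V)|$, and the problem reduces to bounding the size of a dot-product image.

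The upper bound is immediate: the map $P^n\to M_n(P)$ is surjective by definition, giving $|M_n(P)|\leq|P|^n$. This bound is tight for a generic $P\subset\mathbb{R}^r$: no nontrivial element of $\Od$ can map such a $P$ into itself, so distinct tuples in $P^n$ yield distinct Gram matrices. For the lower bound, observe that the $g$-isometry group is $\Od$ and its action preserves Gram matrices; since $n\geq r+1$, a generic realisation spans $\mathbb{R}^r$ and hence the $\Od$-stabiliser is trivial, so each fibre of $f_{g,G}$ is a single $\Od$-orbit. In particular $G$ is locally completable in $\mathbb{R}^r$. For $r=2$, Corollary~\ref{cor:dot} yields $|f_{g,G}(P^V)|\gg|P|^{n-1}$; for $r=3$, the hypersurface hypothesis on $P$ activates Corollary~\ref{cor:dothigher} to the same effect; for $r=1$, the isometry group $\{\pm1\}$ is finite, so $|M_n(P)|\geq|P|^n/2\gg|P|^{n-1}$ directly. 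Tightness of the lower bound (for $r\in\{2,3\}$) is witnessed by placing $P$ at the $|P|$-th roots of unity lying in a coordinate plane: the cyclic subgroup of $\Od$ of order $|P|$ preserves $P$, so every Gram matrix in $M_n(P)$ has at least $|P|$ preimages in $P^n$, forcing $|M_n(P)|\ll|P|^{n-1}$.

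The main obstacle is largely conceptual rather than technical: one must identify the correct semisimple graph (the complete graph with loops) so that its dot-product measurements realise precisely the full Gram matrix, and then verify that local completability of this graph holds in each of $\mathbb{R}^r$ for $r\in\{1,2,3\}$. The substantive analytic work is already contained in Corollaries~\ref{cor:dot} and~\ref{cor:dothigher}, so once the reduction is in place only bookkeeping remains.
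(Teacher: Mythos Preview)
Your reduction is exactly the intended one: identify $M_n(P)$ with the image of the dot-product measurement map on the looped complete graph, verify its local completability via the standard Gram-matrix/$\Od$-orbit fact, and then invoke Corollaries~\ref{cor:dot} and~\ref{cor:dothigher} (with the direct finite-group argument for $r=1$). This is precisely what the paper sketches when it says the result follows from those corollaries together with the ideas of Singer and Cucuringu, and your tightness examples coincide with the ones given there.
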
 

\begin{remark}\label{rem:dot}
    The recent result of Hanson, Roche-Newton and Senger \cite{hanson2023convexity} gives the best-known bound on the problem of distinct dot-products -- that is, when $g$ is the dot product given in \cref{eq:dot}, we have
    \begin{equation*}
        \Big|f_{g,K_2}\left(P^2\right)\Big| \gg |P|^{2/3+\frac{1}{3057}+\varepsilon}.
    \end{equation*}
    The reason for the notably lower bound compared to our results above or Guth and Katz's solution to the original Erd\H{o}s distinct distance problem is that $K_2$ is not locally completable in $\mathbb{R}^2$.
    This leaves Hanson, Roche-Newton and Senger at a major disadvantage, since they can no longer convert the problem into one solely regarding isometries.
\end{remark}

\subsubsection{Skew-symmetric bilinear forms}

The unique (up to scaling) skew-symmetric bilinear form for $d=2$ is given by the polynomial
\begin{equation}\label{eq:skewsymm}
    g: \mathbb{R}^2 \times \mathbb{R}^2 \rightarrow \mathbb{R}, ~ (x,y) \mapsto x_1 y_2 - x_2 y_1.
\end{equation}
The $g$-isometries in this case are exactly the matrices contained in $\OPseudo$.
It is a relatively easy exercise to show that a semisimple graph is $g$-rigid with respect to the above polynomial if and only if it is locally completable in $\mathbb{R}^2$ (hint: observe how the change in metric effects the Jacobian of $f_{g,G}$).
Using \Cref{thm:energyg} and \Cref{thm:Hypergraph2D} -- as well as a previously-mentioned energy result that also applies to subgroups of $\mathbb{R}^2\rtimes \OPseudo$ (\Cref{lem:pseudoeuclideanenergy}) --
we obtain an analogous result to \Cref{cor:dot}, with the caveat that our point set is not mostly contained in either of the diagonals.

\begin{corollary}\label{cor:skewsymm}
    Let $G=(V,E)$ be a semisimple graph with $|V| \geq 3$ that is locally completable in $\mathbb{R}^2$,
    and let $g$ be the skew-symmetric bilinear form given in \cref{eq:skewsymm}. 
    Let $L_1,L_2$ be the lines $L_1 = \{ x \in \mathbb{R}^2 : x_1 = x_2\}$ and $L_2 = \{ x \in \mathbb{R}^2 : x_1 = -x_2\}$.
    Then for any finite set $P \subset \mathbb{R}^2$ where $|P \setminus L_i| \gg |P|$ for $i \in \{1,2\}$,
    we have
    \begin{equation*}
       \Big| f_{g,G}\left(P^V\right) \Big|  \gg |P|^{|V|-1},
    \end{equation*}  
    and this bound is tight.
\end{corollary}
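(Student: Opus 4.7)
The plan is to mirror the arguments used for \Cref{cor:pseudo} and \Cref{cor:dot}, using \Cref{thm:energyg} to handle point sets that are spread out and \Cref{thm:Hypergraph2D} to handle point sets that concentrate on a low-degree curve. The hint preceding the statement already provides the required equivalence: $G$ is $g$-rigid for the skew-symmetric form if and only if $G$ is locally completable in $\mathbb{R}^2$. A short Jacobian computation confirms this, since the rigidity matrix for the skew form is obtained from the dot-product rigidity matrix by a non-singular linear change of variables acting independently on each vertex block.

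I would then split the argument according to whether some algebraic curve of degree at most $\Delta_G\deg g = 2\Delta_G$ captures a constant fraction of $P$. In the non-concentrated regime, \Cref{thm:energyg} yields
\begin{equation*}
|P|^{2|V|}\ll\big|f_{g,G}(P^V)\big|\cdot\big|\mathcal{E}_{V,\Gamma_g}(P)\big|.
\end{equation*}
Because $\Gamma_g\subseteq\OPseudo$ is a subgroup of $\mathbb{R}^2\rtimes\OPseudo$, the energy bound of \Cref{lem:pseudoeuclideanenergy}---whose hypothesis coincides with the assumption $|P\setminus L_i|\gg|P|$ for $i\in\{1,2\}$---gives $|\mathcal{E}_{V,\Gamma_g}(P)|\ll|P|^{|V|+1}$, and dividing produces the claimed lower bound. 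In the concentrated regime, an irreducible curve $C$ of degree at most $2\Delta_G$ captures $\gg|P|$ points. The hypothesis rules out $C\in\{L_1,L_2\}$, so \Cref{thm:Hypergraph2D} applied to $P':=P\cap C$ provides $\big|f_{g,G}((P')^V)\big|\gg|P|^{|V|-1}$, provided $g|_{C^2}$ is not constant. Tightness is witnessed by the unit-circle construction $P=\{(\cos(2\pi k/n),\sin(2\pi k/n)):0\leq k<n\}$ used for \Cref{cor:dot}.

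The main obstacle is ensuring non-constancy of $g|_{C^2}$ in the concentrated regime. Unlike the pseudo-Euclidean setting, a direct calculation
\begin{equation*}
g\bigl((a_1+tv_1,a_2+tv_2),(a_1+sv_1,a_2+sv_2)\bigr)=(s-t)(a_1v_2-a_2v_1)
\end{equation*}
shows that $g$ vanishes identically on $\ell\times\ell$ for \emph{every} line $\ell$ through the origin, not only for $L_1$ and $L_2$. Thus a careful treatment must show that irreducible curves $C$ with $g|_{C^2}\equiv 0$ can only be lines through the origin, and must separately handle concentrations of $P$ on such lines beyond $L_1,L_2$---likely by exploiting $g$-rigidity and the observation that if all edge measurements are forced to be $0$ then, because $|V|\geq 3$, the graph would admit far more than a bounded number of $\Gamma_g$-inequivalent realisations, contradicting $g$-rigidity. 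Settling this technical point is where I would anticipate spending the most effort.
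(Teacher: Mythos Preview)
Your overall strategy matches the paper's sketch exactly: combine \Cref{thm:energyg} with \Cref{lem:pseudoeuclideanenergy} for the spread-out case and invoke \Cref{thm:Hypergraph2D} for the concentrated case. The paper gives no further detail beyond pointing to these ingredients, so your write-up is already more explicit than the source.

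The obstacle you isolate is real, and it is not a technicality to be finessed---it is a counterexample to the corollary as stated. Your computation shows $g|_{\ell\times\ell}\equiv 0$ exactly when $\ell$ passes through the origin. Taking $P=\{(k,2k):1\le k\le n\}$ gives $|P\setminus L_i|=|P|$ for both $i$, yet every edge measurement is zero, so $\big|f_{g,G}(P^V)\big|=1$. The workaround you sketch (using $g$-rigidity to bound the number of $\Gamma_g$-inequivalent realisations with all-zero measurements) cannot rescue this, since there is nothing to bound from below when the image of $f_{g,G}$ collapses to a single point. The correct repair is to strengthen the hypothesis to exclude concentration on \emph{every} line through the origin, not only $L_1$ and $L_2$; the paper has simply overlooked the issue.

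There is a second gap you did not flag. The paper asserts $\Gamma_g=\OPseudo$, but for any $2\times 2$ matrix $A$ one has $g(Ax,Ay)=\det(A)\,g(x,y)$, so the linear $g$-isometries are exactly $\operatorname{SL}(2,\R)$, a three-dimensional group not contained in $\R^2\rtimes\OPseudo$. Your appeal to \Cref{lem:pseudoeuclideanenergy} therefore does not apply as written, and the non-concentrated case also requires a different energy argument; the general bound of \Cref{thm:EasyEnergyBound} would only deliver $\big|f_{g,G}(P^V)\big|\gg|P|^{|V|-3}$.
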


\begin{remark}
    Iosevich, Roche-Newton and Rudnev \cite{iosevich2015discrete} show that for $P\in \R^2$ not supported on a line through the origin, we have
    \begin{equation*}
        \Big|f_{g,K_2}\left(P^2\right)\Big| \gg |P|^{9/13}.
    \end{equation*}
    This lower bound is lower than the one given in \Cref{cor:skewsymm}. Similar to \Cref{rem:dot}: $K_2$ is not locally completable in $\mathbb{R}^2$, and hence is not $g$-rigid with respect to the skew-symmetric bilinear form given by \cref{eq:skewsymm}.
\end{remark}

\subsubsection{Symmetric tensor completion}

Given $k$ points $x(1),\ldots,x(k)$ with $x(i) = (x_j(i))_{j=1}^d \in \mathbb{R}^d$,
we define the symmetric polynomial map 
\begin{equation}\label{eq:sym_tensor_g}
g(x(1),\ldots,x(k)) = \sum_{j=1}^d x_j(1) \cdots x_j(k) .
\end{equation}
If $k=2$, then $g$ describes the dot product.
If $k \geq 3$, then the $g$-isometries here are either the signed permutation matrices if $k$ is even, or the permutation matrices if $k$ is odd;
in either case,
the $g$-isometry group $\Gamma_g$ is finite.
In such a case, we see that $\mathcal{E}_{V,\Gamma_g}(P) \ll |P|^{|V|}$ (\Cref{cor:finiteisom}).
Combining this with the natural upper bound of $|P|^{|V|}$ for the size of the set $f_{g,G}\left(P^V\right)$,
we are able to use \Cref{thm:energyg} to prove the following result for any dimension.
As far as the authors are aware, this is the first result of the same type as the Erd\H{o}s distinct distance problem that uses this specific metric.

\begin{corollary}\label{cor:tensor}
    Let $G=(V,E)$ be a $k$-uniform hypergraph for $k \geq 3$.
    Further suppose that $G$ is $g$-rigid with respect to the polynomial map $g$ given in \cref{eq:sym_tensor_g} for $d\geq 1$.
    Let $P \subset \mathbb{R}^d$ be a finite set where for any hypersurface $C \subset \mathbb{R}^d$ with degree at most $\Delta_G(k-1)$, the inequality $|P\cap C| \leq \frac{1}{100}|P|$ holds.
    Then
    \begin{equation*}
       \Big| f_{g,G}\left(P^V\right) \Big| = \Theta \Big( |P|^{|V|} \Big).
    \end{equation*}
\end{corollary}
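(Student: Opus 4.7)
The strategy is a direct combination of Theorem \ref{thm:energyg} with a near-optimal upper bound on the energy $\big|\mathcal{E}_{V, \Gamma_g}(P)\big|$, together with the trivial upper bound on $\big|f_{g,G}(P^V)\big|$. The cleanness of the argument is driven by the fact that, when $k \geq 3$, the $g$-isometry group collapses from a positive-dimensional Lie group to a finite group.

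First I would verify that $\Gamma_g$ is finite. Writing an arbitrary affine map as $\gamma(x) = Ax + b$ and expanding the identity $g(\gamma x(1), \ldots, \gamma x(k)) \equiv g(x(1), \ldots, x(k))$ as a polynomial identity in the coordinates of the $x(i)$'s, matching the multilinear coefficients of maximal degree forces
\[
\sum_j a_{j \ell_1} \cdots a_{j \ell_k} = \delta_{\ell_1 = \cdots = \ell_k} \quad \text{for all } \ell_1, \ldots, \ell_k.
\]
A short index-chase shows this is satisfied precisely when $A$ is a signed permutation matrix whose nonzero entries $\varepsilon$ satisfy $\varepsilon^k = 1$, i.e.\ a signed permutation matrix for even $k$ and a permutation matrix for odd $k$. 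The remaining (lower-order) coefficients, including the constant term $\sum_j b_j^k = 0$, then force $b = 0$. This confirms the description of $\Gamma_g$ recorded in the paragraph before the corollary, and in particular shows that $\Gamma_g$ is finite.

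Second, since $\Gamma_g$ is finite, for each $p \in P^V$ there are at most $|\Gamma_g|$ candidates for $q$ with $(p,q) \in \mathcal{E}_{V, \Gamma_g}(P)$, hence
\[
\big|\mathcal{E}_{V, \Gamma_g}(P)\big| \;\leq\; |\Gamma_g| \cdot |P|^{|V|} \;\ll\; |P|^{|V|},
\]
which is the content of Corollary \ref{cor:finiteisom}. Feeding this into Theorem \ref{thm:energyg} (whose hypersurface hypothesis is exactly the one assumed in the corollary) gives
\[
|P|^{2|V|} \;\ll\; \big|f_{g,G}(P^V)\big| \cdot \big|\mathcal{E}_{V, \Gamma_g}(P)\big| \;\ll\; \big|f_{g,G}(P^V)\big| \cdot |P|^{|V|},
\]
so $\big|f_{g,G}(P^V)\big| \gg |P|^{|V|}$. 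The matching upper bound $\big|f_{g,G}(P^V)\big| \leq |P|^{|V|}$ is immediate from $|P^V| = |P|^{|V|}$, giving the claimed $\Theta(|P|^{|V|})$.

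There is no serious obstacle here: the only case-specific input is the identification of $\Gamma_g$ as a finite group, and the real content of the argument lives in the upstream Theorem \ref{thm:energyg}, which is invoked as a black box. The finiteness of $\Gamma_g$ is what simultaneously forces the lower bound up to the trivial $|P|^{|V|}$ ceiling and closes the gap.
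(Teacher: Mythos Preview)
Your approach is correct and is essentially the paper's own argument: the paper records in the paragraph preceding the corollary that $\Gamma_g$ is finite for $k\geq 3$, then invokes Corollary~\ref{cor:finiteisom} directly, and you have simply unpacked Corollary~\ref{cor:finiteisom} into its ingredients (the energy bound for finite $\Gamma_g$, which is Lemma~\ref{lem:energyfinitegroup} rather than Corollary~\ref{cor:finiteisom} as you labelled it, plus Theorem~\ref{thm:energyg} and the trivial upper bound). One small point in your sketch: for odd $k$ the constant-term identity $\sum_j b_j^k=0$ does not by itself force $b=0$; it is the degree-one coefficients that give $b_j^{k-1}=0$ for each $j$, but your conclusion is unaffected.
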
 

Similar to the dot product,
we can interpret the metric given in \cref{eq:sym_tensor_g} using symmetric tensors.
To see this,
choose $d$ points $p_1,\ldots,p_d \in \mathbb{R}^n$,
where $p_i = \big(p_i(j) \big)_{j\in [n]}$ for each $i \in [d]$.
Then the tensor
\begin{equation*}
    T = \sum_{i=1}^r p_i^{\otimes k} = (T_{i_1,\ldots,i_k})_{i_1,\ldots,i_k \in [n]}
\end{equation*}
is a real symmetric tensor of order $k$ with rank\footnote{The rank of an order $k$ tensor of $\mathbb{R}^n$ is the smallest number $r$ of $k$-tuples $(x_{i,1},\ldots,x_{i,k}) \in (\mathbb{R}^n)^k$ so that
\begin{equation*}
    T= \sum_{i=1}^r x_{i,1}\otimes \cdots \otimes x_{i,k}.
\end{equation*}} at most $d$,
and
\begin{equation*}
    T_{i_1,\ldots,i_k} = g\Big( p(i_1),\ldots, p(i_k) \Big)
\end{equation*}
for every $i_1,\ldots,i_k \in [n]$.

Using this observation, we can use \Cref{cor:tensor} to show that the number of order $k$ tensors of rank at most $r$ that can be generated using a fixed set of points in $\mathbb{R}^r$ is always roughly the same.

\begin{corollary}\label{cor:psdtnesor}
    Fix integers $r$ and $k \geq 3$.
    Let $P \subset \mathbb{R}^r$ be a finite set so that for any hypersurface $C \subset \mathbb{R}^r$ with degree at most $\frac{1}{k!}n^k(k-1)$, the inequality $|P\cap C| \leq \frac{1}{100}|P|$ holds.
    Given the set of symmetric tensors of $\mathbb{R}^n$ of order $k$ with  rank $r$ given by the set
    \begin{equation*}
        T_n(P) := \left\{ T = \sum_{i=1}^r p_i^{\otimes k} : p_i = \big(p_i(j) \big)_{j\in [n]} \text{ for each $i \in [r]$, where } p(j) = (p_i(j))_{i \in [d]} \in P \text{ for each } j \in [n] \right\}
    \end{equation*}
    then
    \begin{equation*}
        \big|T_n(P) \big| = \Theta \Big( |P|^{n} \Big).
    \end{equation*}
\end{corollary}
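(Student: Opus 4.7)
The plan is to deduce this from Corollary \ref{cor:tensor} applied to the complete $k$-uniform hypergraph $G = K_n^{(k)}$ on the vertex set $[n]$, with ambient dimension $d = r$.

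First I would set up the dictionary between tensor generators and realisations. Given vectors $p_1, \ldots, p_r \in \mathbb{R}^n$, set $p(j) := (p_i(j))_{i \in [r]} \in \mathbb{R}^r$ for each $j \in [n]$; then $p = (p(1), \ldots, p(n)) \in P^{[n]}$ is a realisation of $G$, and the tensor $T = \sum_{i=1}^r p_i^{\otimes k}$ has entries $T_{i_1, \ldots, i_k} = g(p(i_1), \ldots, p(i_k))$ where $g$ is the polynomial in \cref{eq:sym_tensor_g}. Since $T(p) = T(p')$ forces agreement on all entries (in particular on those with pairwise distinct indices, which are exactly the edges of $K_n^{(k)}$), it implies $f_{g,G}(p) = f_{g,G}(p')$. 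So the fibres of $p \mapsto T(p)$ refine those of the measurement map, yielding
\[
|T_n(P)| \;\geq\; \bigl|f_{g,G}(P^{[n]})\bigr|.
\]

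Next I would verify the hypotheses of Corollary \ref{cor:tensor}. The maximum degree $\Delta_G = \binom{n-1}{k-1} = \tfrac{k}{n}\binom{n}{k} \leq \tfrac{n^{k-1}}{(k-1)!}$ gives $\Delta_G(k-1) \leq \tfrac{n^k(k-1)}{k!}$ for $n \geq k$, so the hypersurface assumption in Corollary \ref{cor:psdtnesor} implies the one in Corollary \ref{cor:tensor}. I also need $K_n^{(k)}$ to be $g$-rigid. Since $\Gamma_g$ is finite when $k \geq 3$, this reduces to showing that the generic fibre of $f_{g,G}$ is zero-dimensional, which for $n$ sufficiently large relative to $r$ follows either from a Jacobian rank count (using $|E(K_n^{(k)})| = \binom{n}{k}$ against $nr$ unknowns) or by appealing to classical generic identifiability theorems for symmetric tensor decompositions.

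Applying Corollary \ref{cor:tensor} then gives $|f_{g,G}(P^{[n]})| \gg |P|^n$, and combining this with the trivial upper bound $|T_n(P)| \leq |P^{[n]}| = |P|^n$ delivers $|T_n(P)| = \Theta(|P|^n)$. I expect the only real technical obstacle to be verifying the $g$-rigidity of $K_n^{(k)}$ against the precise definitions in the paper's Section 2; the rest of the argument is simply a translation between the symmetric tensor decomposition and the language of the $g$-measurement map.
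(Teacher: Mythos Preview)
Your approach is essentially the same as the paper's: the paper does not give a separate proof of \Cref{cor:psdtnesor} but simply states it as a consequence of \Cref{cor:tensor} after explaining that the tensor entries $T_{i_1,\ldots,i_k}$ coincide with the $g$-values $g(p(i_1),\ldots,p(i_k))$. Your dictionary between realisations and tensors, the inequality $|T_n(P)|\geq |f_{g,G}(P^{[n]})|$, and your degree estimate $\Delta_{K_n^{(k)}}(k-1)\leq \tfrac{n^k(k-1)}{k!}$ all match what the paper uses implicitly, and the upper bound $|T_n(P)|\leq |P|^{n}$ is immediate.

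The one point you flag as a technical obstacle --- the $g$-rigidity of $K_n^{(k)}$ --- is also left implicit in the paper. You are right that, since $\Gamma_g$ is finite for $k\geq 3$, this amounts to showing the Jacobian of $f_{g,K_n^{(k)}}$ generically has rank $rn$, equivalently that a generic rank-$r$ symmetric decomposition in $\mathbb{R}^n$ is locally finite. This is indeed known (for fixed $r,k$ and $n$ large enough it follows from standard identifiability results for symmetric tensors, or from a direct Jacobian computation), so your plan is sound; just be aware that the paper does not spell this step out either.
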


\subsection{Generalised \texorpdfstring{Erd\H{o}s}{Erdos} problems via typical norms}\label{subsec:intro:norm}

Another way to generalise \Cref{prob:maineuclidproblem} is to define the map $f_G$ using an alternative norm to the Euclidean norm intrinsically used to define $f_G$;
specifically, for a \emph{$d$-norm} $\| \cdot\|$ (any norm for $\mathbb{R}^d$),
we define
\begin{equation*}
    f_{\|\cdot\|,G}:(\mathbb R^d)^{V} \rightarrow \mathbb R^{E}, \quad p = \Big( p(v) \Big)_{v \in V} \mapsto \Big( \big\|p(v)-p(w)\big\| \Big)_{vw \in E}.
\end{equation*}
Unlike with the polynomial $g$ metrics, norms can be particularly poorly behaved, allowing for little to no algebraic geometry tools to be implementable.
We can side-step this issue if we instead study what happens when we choose a `typical' norm.
Here a property holds for a \emph{typical $d$-norm} if it holds for all but a meagre set of norms, with respect to the Baire space $B_d$ of $d$-norms equipped with the Hausdorff metric.

In \cite{Alon2025}, the concept of typical norms was used to simplify two famous Euclidean distance problems: the Erd\H{o}s distinct distance problem (\Cref{prob:Erdos}) and the \emph{Erd\H{o}s unit-distance problem}, which asks for the maximum amount of distance-$1$ pairs within a finite point set $P$.
It is a conjecture that the correct upper bound for the Erd\H{o}s unit-distance problem is $c_{\epsilon}|P|^{1 + \epsilon}$ for all $\epsilon>0$.
The best known upper bound is $c|P|^{4/3}$, which follows from the Szemer\'{e}di-Trotter theorem \cite{SpencerUnit}. 
The Euclidean norm appears to be highly special with regards to this problem, and in \cite{Alon2025} (and also, with a slightly weaker bound, in \cite{MATOUSEK}) the following result was proved.

\begin{restatable}[Alon-Buci\'{c}-Sauermann {\cite{Alon2025}}]{theorem}{ABS}\label{ABS}
    For most $d$-norms $||\cdot||$ with $d \geq 2$, the number of unit distances determined by a finite point set $P \subseteq \mathbb R^d$ is at most $\frac{d}{2}|P| \log |P|$. Furthermore, the number of distinct distances determined by $P$ is $(1-o(1))|P|$.
\end{restatable}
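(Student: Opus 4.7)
The plan is to interpret ``most'' via Baire category on the space $B_d$ of $d$-norms (with the Hausdorff metric on unit balls, making it a complete metric space), and show that both claimed properties are generic. I identify a norm with its unit sphere $S$, so perturbing the norm corresponds to deforming $S$.

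For the unit distance bound, fix a finite point set $P$ and decompose its unit-distance pairs by difference vector: for each $v \in (P-P) \cap S$, let $m_v = |\{x \in P : x+v \in P\}|$, so the number of unit distances equals $\frac{1}{2}\sum_v m_v$. I would prove the following genericity lemma: for a typical norm, any fixed $P$, and any $t \geq 2$, the set $\{v \in S : m_v \geq t\}$ has size at most $d|P|/t$. The idea is that many directions $v \in S$ realising the same translation pattern in $P$ force $S$ to contain a prescribed algebraic configuration of points; the set of norms whose unit sphere satisfies such a condition is closed and has empty interior, hence is nowhere dense in $B_d$. Summing $m_v$ across dyadic multiplicity levels $t = 2, 4, \ldots, |P|$ then gives the bound $\frac{d}{2}|P|\log |P|$, and a countable meagre union over all finite combinatorial types of $P$ produces the conclusion.

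For the distinct-distance bound, suppose $P$ with $|P| = n$ determines fewer than $(1-\eta)n$ distinct distances. Then among the $\binom{n}{2}$ pair-distances some value $r$ occurs with multiplicity at least $\binom{n}{2}/((1-\eta)n) \geq n/2$; rescaling by $1/r$ yields a configuration in the norm $\|\cdot\|/r$ with at least $n/2$ unit distances. Directly this is not a contradiction with the first part, so I would pivot to a pinned argument: fix $x \in P$ and observe that the number of distinct distances realised from $x$ is $n-1$ minus the number of coincidences $\|x - p_i\| = \|x - p_j\|$, i.e.\ minus the points on shared spheres through $x$. A variant of the genericity lemma applied to the pin configuration shows that for a typical norm any rescaled unit sphere meets $P - x$ in only $o(n)$ points, so the pinned distinct-distance count is already $(1-o(1))n$, giving the claim. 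The meagre union is taken over countably many choices of pin $x$, rationals $\eta > 0$, and combinatorial types.

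The main obstacle is the genericity lemma bounding $|\{v \in S : m_v \geq t\}|$. One must identify the precise algebraic condition on $S$ forced by $t$ directions realising parallel translates in $P$, verify that this condition cuts out a nowhere dense subset of $B_d$ by constructing explicit local perturbations of $S$ that destroy the condition, and track the dimensional accounting so that the constant $d/2$ (rather than a worse function of $d$) emerges from the dyadic sum. The perturbation argument is subtle because $S$ must remain the boundary of a convex centrally-symmetric body, so deformations are constrained to the cone of convex unit balls rather than the full space of hypersurfaces.
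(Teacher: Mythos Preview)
This theorem is not proved in the paper at all: it is quoted verbatim from Alon, Buci\'{c} and Sauermann \cite{Alon2025} and used as a black box (the paper says explicitly that it ``makes use of \Cref{ABS} as a `black-box' result''). There is therefore no proof in the paper to compare your proposal against.

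That said, a few remarks on your sketch as a standalone attempt. Your difference-vector decomposition and the shape of the genericity lemma are in the right spirit, but the arithmetic does not obviously yield the sharp constant $\tfrac{d}{2}$: from $|\{v:m_v\ge t\}|\le d|P|/t$ one gets $\sum_v m_v=\sum_{t\ge 1}|\{v:m_v\ge t\}|$, and the $t=1$ term alone could be as large as $|P-P|$, which is not controlled. The actual Alon--Buci\'{c}--Sauermann argument proceeds differently: they show that for a typical norm the unit-distance graph is $d$-degenerate (every subgraph has a vertex of degree at most $d$), from which the edge bound $\tfrac{d}{2}|P|\log_2|P|$ follows by a standard extremal argument. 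The genericity step shows that a typical unit sphere cannot contain $d+1$ vectors whose pairwise differences also lie on translates of the sphere, and this is where the convex-body perturbation you worry about is handled. Your distinct-distance pivot to a pinned argument is also more elaborate than needed: once the unit-distance bound is in hand uniformly over all scales, a direct double-count already gives $(1-o(1))|P|$ distinct distances.
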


In this paper we study the variant of \Cref{prob:maineuclidproblem} in typical norms. Specifically, we prove the following.

\begin{restatable}{theorem}{mainNormedResult}\label{mainNormedresult}
    The following is true for most $d$-norms $\|\cdot\|$. 
    Let $G = (V,E)$ be a connected graph, and let $P$ be any finite set of points in $\mathbb R^d$.
    Then 
    \begin{equation*}
       \Big|f_{\|\cdot\|, G}\left(P^{V}\right)\Big| \gg \left(\frac{|P|}{(\log |P|)^2}\right)^{|V|-1},
    \end{equation*}
    where the explicit constant depends only on the graph $G$ and the norm $\|\cdot\|$.
\end{restatable}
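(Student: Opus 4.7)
The plan is to reduce the problem to trees and then induct on $|V|$. For any spanning tree $T$ of $G$, the coordinate projection $\mathbb{R}^{E(G)} \to \mathbb{R}^{E(T)}$ yields $|f_{\|\cdot\|, G}(P^V)| \geq |f_{\|\cdot\|, T}(P^V)|$, so we may assume $G = T$ is a tree. The base case $|V| = 2$ is immediate from \Cref{ABS}: the number of distinct norm-distances is at least $(1 - o(1))|P|$, comfortably exceeding $|P|/(\log|P|)^2$.

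For the inductive step, I would pick a leaf $v$ of $T$ with parent $v'$, set $T' := T - v$, and study the natural coordinate projection $\phi \colon f_{\|\cdot\|, T}(P^V) \to f_{\|\cdot\|, T'}(P^{V \setminus v})$ dropping the $vv'$-coordinate. Its fibres are
\begin{equation*}
\phi^{-1}(\lambda) = \bigcup_{s \in S_\lambda} d(s),
\end{equation*}
where $S_\lambda := \{q(v') : f_{\|\cdot\|, T'}(q) = \lambda\} \subseteq P$ and $d(s) := \{\|x - s\| : x \in P\}$. In particular $|\phi^{-1}(\lambda)| \geq \max_{s \in S_\lambda} |d(s)|$, so the problem reduces to gaining a factor of order $|P|/(\log|P|)^2$ at each step of the induction.

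To produce this gain I would use the ``simple colouring argument'': colour each edge $\{x, y\}$ of $K_P$ by $\|x - y\|$, so that \Cref{ABS} becomes the uniform colour-class bound
\begin{equation*}
\sigma(s, t) := |\{x \in P : \|x - s\| = t\}| \leq \tfrac{d}{2}|P|\log|P| \quad \text{for every } s \in P, \ t > 0.
\end{equation*}
Combining this with the identity $\sum_t \sigma(s, t) = |P| - 1$ via a short Cauchy--Schwarz / double-counting step should produce a subset $P^* \subseteq P$ of size $(1 - o(1))|P|$ such that $|d(s)| \gg |P|/(\log|P|)^2$ for every $s \in P^*$. Then the inductive hypothesis applied to $T'$ (using that $|P^*| \gg |P|$ does not degrade the inductive bound by more than a $(1 - o(1))$ factor) yields $\gg (|P|/(\log|P|)^2)^{|V|-2}$ classes $\lambda$ with $S_\lambda \cap P^* \neq \emptyset$, and each such class extends in $\gg |P|/(\log|P|)^2$ distinct ways by the new edge, closing the induction.

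The hard part is the Cauchy--Schwarz step producing the per-point lower bound $|d(s)| \gg |P|/(\log|P|)^2$ on a dense subset $P^* \subseteq P$: two logarithmic factors must appear in the denominator, one coming directly from the ABS unit distance bound applied to the distance shells around $s$ and the other from averaging the distribution of $\sigma(s, \cdot)$ across $s$. A secondary, more routine, obstacle is verifying that the ``bad'' classes (those with $S_\lambda \subseteq P \setminus P^*$) contribute negligibly to the inductive count; since bad classes arise only from realisations placing $v'$ in $P \setminus P^*$, this follows from $|P \setminus P^*|$ being sufficiently small compared to the target bound.
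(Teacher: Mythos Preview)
Your strategy matches the paper's: reduce to a spanning tree, turn \Cref{ABS} into a ``rich pins'' statement, and grow the tree one edge at a time. The paper packages the pinned step as a colouring/averaging lemma (\Cref{lem:generalcolour}, \Cref{prop:pinnednormcolour}) yielding a single pin with $\Omega(|P|/(\log|P|)^2)$ distances, and then iterates a removal argument (\Cref{lem:ManyRichPins}) to get a subset $P'\subseteq P$ of size $\geq|P|/2$ (not $(1-o(1))|P|$, but that suffices) in which every point is a rich pin for $P$.

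The step you flag as ``secondary, more routine'' is actually where your sketch breaks. Your claim that bad classes are negligible because $|P\setminus P^*|$ is small does not hold: the only available upper bound on the number of classes $\lambda$ with $S_\lambda\subseteq P\setminus P^*$ is the crude realisation count $|P\setminus P^*|\cdot|P|^{|V|-2}$, and even with $|P\setminus P^*|=o(|P|)$ this is far larger than the inductive lower bound $(|P|/(\log|P|)^2)^{|V|-2}$ on the total number of classes, so the subtraction tells you nothing. The fix is to apply the inductive hypothesis to $T'$ on the set $P^*$ itself, so that every class is automatically good; but then the recursion replaces $P^*$ by $(P^*)^*$ at the next level, and so on. The paper makes this explicit by rooting the tree, building a nested chain $P=P_t\supseteq\cdots\supseteq P_0$ with $|P_i|\geq|P_{i+1}|/2$ and every point of $P_i$ a rich pin for $P_{i+1}$, and then placing depth-$i$ vertices in $P_i$; the accumulated factor $2^{O(|V|)}$ is absorbed into the implied constant. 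A minor side point: your displayed inequality $\sigma(s,t)\leq\frac{d}{2}|P|\log|P|$ is vacuous (trivially $\sigma(s,t)\leq|P|-1$) and is not what \Cref{ABS} asserts; the paper instead applies \Cref{ABS} to each monochromatic subgraph to obtain $e_c\leq\frac{d}{2}n_c\log n_c$ and then averages over vertices.
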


This result is proved via a pinned distance in typical norms result (which itself we reinterpret  as a graph colouring problem) and makes use of \Cref{ABS} as a `black-box' result.
Our result is almost tight, since $\big|f_{\|\cdot\|, G}\left(P^{V}\right)\big| \ll |P|^{|V|-1}$ when $P$ is an evenly-spaced set within a straight line.

\subsection{Layout of paper}

The paper is organised as follows.
In \Cref{sec:graphicalgrigid},
we cover the necessary background material regarding $g$-rigidity before proving \Cref{thm:energyg} and \Cref{thm:Hypergraph2D}.
In \Cref{sec:prop2.8proof},
we prove the technical result \Cref{prop:fewCongClassesg} that is required in the previous section.
In doing so, we improve upon previously known geometric results regarding $g$-rigidity; in particular, \Cref{prop:genericrealisationnumber} is a strengthening of \cite[Proposition 4.8]{cruik}.
We next turn our attention to normed spaces in \Cref{sec:norm} where we prove \Cref{mainNormedresult}.
Our graph colouring technique for proving \Cref{mainNormedresult} is then adapted to show that the weak unit distance conjecture implies weak pinned distance conjecture (\Cref{euclideanpinned}).
In \Cref{sec:flexible} we showcase that solving \Cref{prob:maineuclidproblem} becomes extremely difficult when we allow our graphs to be not rigid.
Simply put: if we cannot relate the problem back to one about isometries, solving \Cref{prob:maineuclidproblem} is linked to the uniform boundedness conjecture for rational points (\Cref{conj:uniformbounded}), an open problem in arithmetic geometry which generalises Faltings' theorem \cite{Faltings1983,Faltings1984}.
We conclude the paper in \Cref{sec:conclude} with some ideas for potential future research on the topic.

\subsection{Notation and terminology}

In this article hyperedges will be unordered (so $(x,y,z)$ and $(y,z,x)$ are the same edge) and we do allow for repeated vertices within a hyperedge; for example, $e=(x,x,y)$ would be an acceptable hyperedge for a 3-uniform hypergraph.
We do not, however, allow for repeated hyperedges within a hypergraph,
since this would correspond to repeating constraints.
We reserve the term `graph' for a 2-uniform hypergraph with no loops (equivalently, every graph is simple) and opt for `semisimple graph' for general 2-uniform hypergraphs (equivalently, a multigraph with loops but no repeated edges or loops).

Unless stated otherwise, we reserve $d$ to be the dimension of the space, $k$ to be the uniformity of the hypergraphs in question, and $g$ to be a symmetric or anti-symmetric polynomial $g: (\mathbb{R}^d)^k \rightarrow \mathbb{R}$.
The zero vector of $\mathbb{R}^d$ is denoted by $\mathbf{0}$.

We will use the standard notation $O$ and $\Omega$ for upper and lower bounds up to absolute constants, along with the symbols $\ll,\gg$, respectively. The symbol $\Theta$ means both $O$ and $\Omega$ hold.
The variable given in all asymptotic notation is the size of the point set in $\mathbb{R}^d$ (e.g., $|P|$).

For any positive integer $n$, we fix $[n] := \{1,\ldots,n\}$.

\section{Graphical Erd\H{o}s problem with algebraic constraints}\label{sec:graphicalgrigid}

\subsection{\texorpdfstring{$g$}{g}-rigidity preliminaries} \label{sec:introtog-rigidity}

We begin this section by formally defining the generalised model of $g$-rigidity introduced in \cite{cruik}.
Let $G=(V,E)$ be a $k$-uniform hypergraph.
An ordered pair $(G,p)$ consisting of $G$ and a point-configuration $p\in (\mathbb{R}^d)^{V}$ is called a {\em $d$-dimensional framework} or a {\em framework in $\mathbb{R}^d$}.
Suppose we are given a $k$-uniform framework $(G,p)$ and a polynomial map $g:(\mathbb{R}^{d})^k\rightarrow \mathbb{R}$.
The {\em $g$-measurement map} of $G$ is defined as a polynomial map $f_{g,G}:(\mathbb{R}^d)^{V}\rightarrow \mathbb{R}^E$ that sends $p$ to the list of the $g$-values of the tuples $(p(v_1), p(v_2), \dots, p(v_k))$ over the hyperedges $\{v_1,\ldots,v_k\}$ in $E$, i.e., 
\begin{equation*}
    f_{g,G}(p):=\Big(g\big(p(v_1), \ldots, p(v_k)\big): e=\{v_1,\dots, v_k\}\in E\Big).
\end{equation*}
We will assume that either $g$ is {\em symmetric}  with respect to the ordering of the points or that $g$ is {\em anti-symmetric} and that $v_1,\dots, v_k$ are put in increasing order in $g(p(v_1),\dots, p(v_k))$, assuming a fixed total order on the vertices of $G$. This assumption ensures that $f_{g,G}$ is well-defined. We say that two hyper-frameworks $(G,p)$ and $(G,q)$ are \emph{equivalent}, denoted $(G,p) \sim (G,q)$, if $f_{g,G}(p)=f_{g,G}(q)$. Such equivalences can be trivial, for example in the usual distance case one framework may arise from the other by an isometry. We next formalise this.

Suppose the general affine group ${\rm Aff}(d)$ acts on $\mathbb{R}^d$ by $\gamma\cdot x= Ax+t$ for $x\in \mathbb{R}^d$ and each pair $\gamma=(A,t)$ of $A\in {\rm GL}(d)$ and $t\in \mathbb{R}^d$.
The action of ${\rm Aff}(d)$ on $(\mathbb{R}^d)^{V}$ is also defined by $(\gamma \cdot p)(v)=Ap(v)+t$, with $v\in V$, for any $\gamma=(A,t)\in {\rm Aff}(d)$ and $p\in (\mathbb{R}^d)^{V}$.
Then the induced action on a polynomial map $g:(\mathbb{R}^d)^k\rightarrow \mathbb{R}$ is given by
\begin{equation*}
    \gamma\cdot g(x_1,\dots, x_k)=g(\gamma^{-1}\cdot x_1,\dots, \gamma^{-1} \cdot x_k)
\end{equation*}
for $x_1,\dots, x_k\in \mathbb{R}^d$ and $\gamma\in {\rm Aff}(d)$.
We say that $\gamma$ is a \emph{$g$-isometry} if $g$ is invariant by the action of $\gamma$. 
The group of $g$-isometries, here denoted by $\Gamma_g$, is a closed subgroup of ${\rm Aff}(d)$.
Thus $\Gamma_g$ is a Lie group by the closed subgroup theorem.
We define $\mathfrak{g}_g$ to be the Lie algebra of $\Gamma_g$,
i.e., the tangent space for $\Gamma_g$ at the identity map $I_d$. 

\begin{definition}
    We say that $(G,p)$ is {\em locally $g$-rigid} if there is an open neighbourhood $N$ of $p$ in $(\mathbb{R}^d)^{V}$ (with respect to the Euclidean topology) such that
    for any $q\in f^{-1}_{g,G}(f_{g,G}(p))\cap N$ there is $\gamma \in \Gamma_g$ such that $q=\gamma \cdot p$.
\end{definition}

Local $g$-rigidity is difficult to work with, so \cite{cruik} introduced a first order-variant that we now describe.

Observe that $\Gamma_g$ acts on $(\mathbb R^d)^V$ via $(\gamma\cdot p)(v)=  \gamma \cdot (p(v))$ for $v \in V$. 
This induces a map from $\mathfrak{g}_g \times (\mathbb R^d)^V $ to $(\mathbb R^d)^V$, denoted $(\dot \gamma, p) \mapsto \dot \gamma \cdot p$, whose image lies in the right kernel of the Jacobian matrix ${\rm J} f_{g,G}(p)$.
Then $\dot{p}\in (\mathbb{R}^d)^V$ defined by 
$\dot{p}=\dot{\gamma} \cdot p$ for $\dot{\gamma} \in \mathfrak{g}_g$ is called a {\em trivial infinitesimal $g$-motion} of $(G,p)$.
For $p\in (\mathbb{R}^d)^V$, define the space of trivial 
infinitesimal $g$-motions of $(G,p)$ to be
$$\triv_g(p):=\{\dot{\gamma}\cdot p: \dot{\gamma}\in \mathfrak{g}_g\}\subseteq \ker {\rm J} f_{g,G}(p).$$

\begin{definition}
    We say that $(G,p)$ is {\em infinitesimally $g$-rigid} if 
 $\triv_g(p)=\ker {\rm J} f_{g,G}(p)$.
\end{definition}

Importantly, infinitesimal $g$-rigidity implies local $g$-rigidity.
To see this, we need the following strengthening of \cite[Proposition 4.2]{cruik} that can be established with an identical proof.
We recall that, given a polynomial map $f:\mathbb{R}^n \rightarrow \mathbb{R}^m$,
we define $p \in \mathbb{R}^n$ to be \emph{generic (with respect to $f$)} if the coordinates of $p$ are algebraically independent over $\mathbb{Q}(f)$,
the extension of $\mathbb{Q}$ by the coefficients of the polynomial equations defining $f$. The following proposition shows that generic points always give regular values of polynomial maps. Its proof is immediate - the rank of a matrix being lower than its maximal value is described by zeroes of polynomials; the determinants of the appropriate minors.

\begin{prop}\label{prop:adaptedgeneric}
    Let $f: \mathbb{R}^n \rightarrow \mathbb{R}^m$ be a polynomial map.
    If $p \in \mathbb{R}^n$ is generic, then for each $q \in f^{-1}(f(p))$ the rank of ${\rm J} f (q)$ is maximal.
\end{prop}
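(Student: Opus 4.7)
The plan is to exhibit a proper Zariski closed subset of $\mathbb{R}^n$, cut out by polynomials with coefficients in $\mathbb{Q}(f)$, that contains every ``bad'' $p$---namely every $p$ for which some $q \in f^{-1}(f(p))$ has $\rank {\rm J}f(q)$ strictly below the maximum. Since a generic $p$ has coordinates algebraically independent over $\mathbb{Q}(f)$, it avoids every such subset, which immediately yields the conclusion.

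First I would set $r := \max_{x \in \mathbb{R}^n} \rank {\rm J}f(x)$ and let $Y \subset \mathbb{R}^n$ denote the locus where $\rank {\rm J}f < r$. Exactly as the hint indicates, $Y$ is the common vanishing locus of all $r \times r$ minors of ${\rm J}f$; these are polynomials with coefficients in $\mathbb{Q}(f)$, so $Y$ is a proper Zariski closed subvariety defined over $\mathbb{Q}(f)$. The bad set is precisely $B = f^{-1}(f(Y))$, and the task reduces to showing that $B$ is contained in a proper Zariski closed subset of $\mathbb{R}^n$ defined over $\mathbb{Q}(f)$.

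To control $f(Y)$, I would let $X$ denote the Zariski closure of $f(\mathbb{R}^n)$, an irreducible variety of dimension $r$ defined over $\mathbb{Q}(f)$, and invoke generic smoothness in characteristic zero for the induced dominant polynomial map $f : \mathbb{R}^n \to X$: there exists a non-empty Zariski open $U \subseteq X$, defined over $\mathbb{Q}(f)$, such that every $q \in f^{-1}(U)$ satisfies $\rank {\rm J}f(q) = r$. Hence $Y \cap f^{-1}(U) = \emptyset$, so $f(Y) \subseteq X \setminus U$ and therefore $B = f^{-1}(f(Y)) \subseteq f^{-1}(X \setminus U)$. The right-hand side is Zariski closed in $\mathbb{R}^n$, defined over $\mathbb{Q}(f)$, and proper because $U$ is a non-empty open subset of $X$ and $f(\mathbb{R}^n)$ is Zariski dense in $X$. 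A generic $p$ cannot lie in any proper Zariski closed subset defined over $\mathbb{Q}(f)$, giving $p \notin B$.

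The only real point of care is ensuring that generic smoothness is applied with the correct field of definition, which is standard in characteristic zero since the locus of smoothness of a dominant morphism between irreducible varieties is an open subscheme defined over the base field. The remainder is the formal observation flagged in the hint: rank-drop conditions on Jacobians are algebraic, and so cannot be satisfied at a generic point.
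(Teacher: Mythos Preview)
Your proof is correct and supplies considerably more detail than the paper, which only offers the one-line hint that ``the rank of a matrix being lower than its maximal value is described by zeroes of polynomials; the determinants of the appropriate minors.'' That hint explains why the rank-drop locus $Y$ is Zariski closed over $\mathbb{Q}(f)$, but by itself it only gives $p \notin Y$, i.e.\ that ${\rm J}f(p)$ has maximal rank. The statement asks for more: that \emph{every} $q$ in the fibre $f^{-1}(f(p))$ has maximal rank, equivalently that $f(p)$ is a regular value. Your use of generic smoothness to conclude that $\overline{f(Y)}$ is a proper subvariety of $X = \overline{f(\mathbb{R}^n)}$, and hence that $f^{-1}(\overline{f(Y)})$ is a proper $\mathbb{Q}(f)$-subvariety of $\mathbb{R}^n$ avoided by any generic $p$, is exactly the missing step. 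The paper presumably regards this as folklore (it is, after all, the algebraic form of Sard's theorem), but you are right that it is the substantive point, and your care about the field of definition is warranted. So: same underlying approach as the paper intends, but you have made explicit the one non-trivial ingredient the paper leaves implicit.
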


The following result (originally given in \cite{cruik} for polynomial maps with rational coefficients) is a generalisation of a classical result of Asimow and Roth \cite{AsimowRothI}.

\begin{prop}\label{prop:asimowroth1}
    Let $G$ be a $k$-uniform hypergraph and let $(G,p)$ be a framework in $\mathbb{R}^d$.
    Then the following properties hold:
    \setlength{\parskip}{0pt}
    \begin{enumerate}[label=(\roman*)]
        \item If $(G,p)$ is infinitesimally $g$-rigid, then it is locally $g$-rigid.
        \item If $p$ is generic, then $(G,p)$ is infinitesimally $g$-rigid if and only if it is locally $g$-rigid.
    \end{enumerate}
\end{prop}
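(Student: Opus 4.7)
The plan is to follow the classical Asimow--Roth strategy, leveraging the Lie group structure of $\Gamma_g$ together with the inverse function theorem applied in a slice transversal to the $\Gamma_g$-orbit of $p$.

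For part (i), the first step is to use that $\Gamma_g$ is a Lie group acting smoothly on $(\mathbb{R}^d)^{V}$, so the orbit $O_p := \Gamma_g \cdot p$ is an immersed submanifold with tangent space $\triv_g(p)$ at $p$. Choose a linear complement $W$ of $\triv_g(p)$ in $(\mathbb{R}^d)^{V}$ and set $S = p + W$. Consider the smooth map $\Phi : \Gamma_g \times S \to (\mathbb{R}^d)^{V}$, $(\gamma, q') \mapsto \gamma \cdot q'$. Its derivative at $(I_d, p)$ sends $(\dot\gamma, \dot{q}')$ to $\dot\gamma \cdot p + \dot{q}'$, whose image is $\triv_g(p) + W = (\mathbb{R}^d)^{V}$, so $\Phi$ is a submersion at $(I_d, p)$ and its image contains an open neighbourhood $U$ of $p$. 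Meanwhile the restriction $f_{g,G}|_S$ has derivative ${\rm J} f_{g,G}(p)|_W$, whose kernel is $W \cap \ker {\rm J} f_{g,G}(p) = W \cap \triv_g(p) = \{0\}$ by the hypothesis of infinitesimal rigidity and the choice of $W$. The inverse function theorem then makes $f_{g,G}|_S$ locally injective at $p$. Now if $q \in U$ satisfies $f_{g,G}(q) = f_{g,G}(p)$, write $q = \gamma \cdot q'$ with $q' \in S$ close to $p$; because $\gamma$ is a $g$-isometry we have $f_{g,G}(q') = f_{g,G}(q) = f_{g,G}(p)$, so local injectivity forces $q' = p$ and hence $q = \gamma \cdot p \in O_p$. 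This is local $g$-rigidity.

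For part (ii), the ``if'' direction is immediate from (i). For the converse, suppose $p$ is generic and $(G,p)$ is locally $g$-rigid. By \Cref{prop:adaptedgeneric}, ${\rm J} f_{g,G}(p)$ attains its maximal rank; since the rank function is lower semicontinuous, the rank is constant in some neighbourhood $N$ of $p$. The constant rank theorem then tells us that $f_{g,G}^{-1}(f_{g,G}(p)) \cap N$ is a smooth submanifold of dimension $\dim \ker {\rm J} f_{g,G}(p)$. The orbit $O_p$ lies inside this fiber and is a smooth immersed submanifold of dimension $\dim \triv_g(p)$. Local $g$-rigidity guarantees that, after shrinking $N$, the fiber is contained in $O_p$, so the two manifolds agree locally and therefore share the same dimension: $\dim \triv_g(p) = \dim \ker {\rm J} f_{g,G}(p)$. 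Since $\triv_g(p) \subseteq \ker {\rm J} f_{g,G}(p)$ holds unconditionally, this forces equality and hence infinitesimal $g$-rigidity.

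The main obstacle is the local slice step in part (i): because the stabiliser of $p$ in $\Gamma_g$ need not be trivial, $\Phi$ is generally not a local diffeomorphism, only a submersion at $(I_d, p)$, and the decomposition $q = \gamma \cdot q'$ is not unique. Uniqueness, however, is not needed; submersivity alone is enough to ensure that $\Phi$ covers an open neighbourhood of $p$, which is all the slice argument requires. With that caveat in mind, the remainder amounts to careful bookkeeping with the inverse function theorem and the constant rank theorem for the $g$-measurement map.
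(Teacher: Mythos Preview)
Your argument is correct and follows the classical Asimow--Roth strategy, which is exactly what the paper invokes: the paper simply cites \cite[Proposition 4.4]{cruik} for part (i) and observes that the same proof works for part (ii) once \Cref{prop:adaptedgeneric} replaces the rational-coefficient version from \cite{cruik}. Your slice argument for (i) and the constant-rank/dimension-count for (ii) are precisely the ingredients of that proof, spelled out in detail; the only cosmetic slip is that in (ii) it is the ``only if'' direction that follows from (i), not the ``if'' direction, though you then prove the correct implication anyway.
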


\begin{proof}
    The first part is \cite[Proposition 4.4]{cruik}.
    The second part follows the same proof given in \cite[Proposition 4.4]{cruik} by switching out \cite[Proposition 4.2]{cruik} for \Cref{prop:adaptedgeneric}.
\end{proof}

It follows from \Cref{prop:asimowroth1} that $g$-rigidity is a generic property: either every generic $d$-dimensional realisation is locally $g$-rigid, or no generic $d$-dimensional realisation is locally $g$-rigid.
This motivates the following definition of $g$-rigid.

\begin{definition}
    A $k$-uniform hypergraph $G$ is \emph{$g$-rigid} if every generic $d$-dimensional framework $(G,p)$ is locally $g$-rigid.
\end{definition}

\subsection{Proof of \texorpdfstring{\Cref{thm:energyg}}{first main theorem}}

In this subsection we will prove \Cref{thm:energyg}, which we restate here for convenience.

\energyg*

For a $k$-uniform hypergraph $G=(V,E)$ with rank $r$ in the $g$-rigidity matroid, 
consider the $|E| \times d|V|$ Jacobian matrix ${\rm J} f_{g,G}(p)$ as a symbolic matrix with variable $p$.
For every  $r \times r$ square matrix $M(p)$ of ${\rm J} f_{g,G}(p)$, the map $p \mapsto \det M(p)$ is a polynomial.
We now define $\Phi(G)$ to be the set of all such non-zero polynomials.
Since $r$ is the rank of $G$ in the $g$-rigidity matroid, the set $\Phi(G)$ is not empty.
Now, for every ordered set $X = \{v_0,v_1,\ldots,v_d\} \subset V$,
we define the polynomial 
\begin{equation*}
    f_X(p) = \det
    \begin{pmatrix}
        p(v_0) & p(v_1) & \ldots & p(v_d) \\
        1 & 1 & \ldots & 1
    \end{pmatrix},
\end{equation*}
and we fix $A(G)$ to be the set of all such polynomials.
We notice that a realisation $p$ of $G$ is \emph{$g$-regular} (i.e., ${\rm J} f_{g,G}$ has maximal rank at $p$) if and only if $f(p) \neq 0$ for some $f \in \Phi(G)$,
and furthermore that $p$ is \emph{affinely spanning} (i.e., the affine span of the set $\{p(v) :v \in V\}$ is $\mathbb{R}^d$) if and only if $f(p) \neq 0$ for some $f \in A(G)$.

\begin{lemma}\label{lem:allflexcurveg}
    Let $G=(V,E)$ be a $k$-uniform hypergraph with maximum degree $\Delta_G$.
    Suppose there exists a set $P \subset \mathbb{R}^d$ such that every injective realisation of $G$ in $P^V$ is not $g$-regular or not affinely spanning.
    Then at least $|P| - |V| + 1$ points (allowing for infinitely many points if $P$ is not finite) in $P$ are contained in a hyperplane or a hypersurface of degree at most $\Delta_G \deg g$.
\end{lemma}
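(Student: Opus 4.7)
The plan is to produce a single non-zero polynomial $F$ that vanishes on every injective realisation $p \in P^V$, and then use an iterative substitution argument to extract a low-degree hypersurface containing most of $P$.

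First, I would translate the disjunctive hypothesis into a polynomial identity. Unpacking the definitions, $p$ is not $g$-regular iff $f_i(p)=0$ for every $f_i \in \Phi(G)$, while $p$ is not affinely spanning iff $h_j(p)=0$ for every $h_j \in A(G)$. A short case analysis shows
\[
\Big(\forall f_i\in\Phi(G):f_i(p)=0\Big)\vee\Big(\forall h_j\in A(G):h_j(p)=0\Big)\iff\forall (f_i,h_j):\ f_i(p)h_j(p)=0.
\]
Picking any non-zero $f_{i^*}\in\Phi(G)$ (which exists as long as $G$ has positive rank in the $g$-rigidity matroid) and any non-zero $h_{j^*}\in A(G)$ (which exists when $|V|\geq d+1$, and when $|V|\leq d$ the conclusion is immediate), the polynomial $F:=f_{i^*}h_{j^*}$ is non-zero and satisfies $F(p)=0$ on every injective $p\in P^V$. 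To bound its degree in the coordinates of a single vertex $u$: entries of ${\rm J}f_{g,G}$ are partial derivatives of $g$ of total degree $\leq\deg g-1$, and at most $\Delta_G$ rows of any minor have non-zero entries in $u$'s columns, so $\deg_u f_{i^*}\leq \Delta_G(\deg g-1)$. Since $\deg_u h_{j^*}\leq 1$, we get $\deg_u F\leq \Delta_G(\deg g-1)+1\leq \Delta_G\deg g$.

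Next, the core argument is a sequential substitution. Fix an ordering $v_1,\ldots,v_n$ of $V$ and set $F_n:=F$. Iteratively for $t=n,n-1,\ldots,2$, I seek $q_t\in P\setminus\{q_{t+1},\ldots,q_n\}$ such that $F_{t-1}(x_1,\ldots,x_{t-1}):=F_t(x_1,\ldots,x_{t-1},q_t)$ is not identically zero. If no such $q_t$ exists, then, viewing $F_t$ as a polynomial in $y:=x_t$ with coefficients $c_\alpha$ in the other variables, at least one coefficient $c_\alpha(y)$ is a non-zero polynomial in $y$ of degree $\leq \Delta_G\deg g$; moreover every $y\in P$ yielding $F_t(\cdots,y)\equiv 0$ must satisfy $c_\alpha(y)=0$. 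The failure set therefore has size $\geq |P|-(n-t)$, lies in the hypersurface $\{c_\alpha=0\}$ of degree $\leq\Delta_G\deg g$, and so contains $\geq |P|-n+t\geq |P|-|V|+1$ points (using $t\geq 2$), giving the conclusion.

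Finally, if the iteration completes without failure, we end with $F_1(x_1)$, a non-zero polynomial in $x_1$ of degree $\leq \Delta_G\deg g$; by construction, for every $x_1\in P\setminus\{q_2,\ldots,q_n\}$ the tuple $(x_1,q_2,\ldots,q_n)$ is injective, so $F_1(x_1)=F(x_1,q_2,\ldots,q_n)=0$. The zero set of $F_1$ is then a hypersurface of degree $\leq\Delta_G\deg g$ containing $\geq|P|-(n-1)=|P|-|V|+1$ points of $P$. The main subtlety I expect is the first step — verifying that the disjunction ``not $g$-regular or not affinely spanning'' really collapses to the vanishing of a single bivariate product $f_{i^*}h_{j^*}$ — together with the careful per-vertex degree bookkeeping (the $+1$ from $h_{j^*}$ must be absorbed into $\Delta_G\deg g$, which is where the maximum-degree hypothesis enters); the substitution scheme itself is then essentially a counting exercise, with the ``hyperplane'' branch of the conclusion corresponding to the degenerate cases where $F_1$ (or the relevant $c_\alpha$) turns out to be linear.
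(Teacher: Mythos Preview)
Your proof is correct and reaches the same conclusion, but the route differs from the paper's. The paper argues by first enlarging $P$ to be \emph{maximal} with respect to the hypothesis: then maximality furnishes an injective $p\in P^V$ and a single vertex $u$ such that some $p_x$ (with $x\notin P$) is $g$-regular and affinely spanning, which in turn hands you specific $f_1\in\Phi(G)$ and $f_2\in A(G)$ for which the one-variable polynomial $h(x)=f_1(p_x)f_2(p_x)$ is visibly non-constant. The hypersurface $\{h=0\}$ then contains $P\setminus\{p(v):v\neq u\}$ in one stroke.

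Your argument replaces the maximality trick with the clean logical observation that ``not $g$-regular or not affinely spanning'' forces \emph{every} product $f_i h_j$ to vanish, so an arbitrary pair $(f_{i^*},h_{j^*})$ already gives a global non-zero $F$; you then peel off vertices by sequential substitution, and either some intermediate step fails (yielding the hypersurface via a coefficient $c_\alpha$) or you reach a one-variable $F_1$. This is slightly longer but more elementary --- it avoids the extension-to-maximal-$P$ manoeuvre and makes the role of the single product $f_{i^*}h_{j^*}$ completely explicit. The paper's version buys brevity: one substitution rather than $|V|-1$. Your per-vertex degree bookkeeping matches the paper's exactly. One small caveat: your parenthetical ``when $|V|\leq d$ the conclusion is immediate'' is not actually true (the hypothesis becomes vacuous while the conclusion does not), but this case is irrelevant to how the lemma is used and the paper silently assumes $|V|\geq d+1$ as well.
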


\begin{proof}
    Without loss of generality, we may suppose that $P$ is maximal, in that for any point $z \in \mathbb{R}^d \setminus P$, there exists a $g$-regular, injective and affinely spanning realisation $q \in (P \cup \{z\})^V$ of $G$.    
    It follows that there is an injective realisation $p\in P^{V}$ and a vertex $u\in V$ which witnesses the maximality of $P$ in the following sense: there exists some $x \in \mathbb{R}^d \setminus P$ so that the realisation $p_x$ of $G$ defined as
    \begin{align*}
        p_x(v) =
        \begin{cases}
            x &\text{if } v = u, \\
            p(v) &\text{otherwise}
        \end{cases}
    \end{align*}
    is $g$-regular and affinely spanning.
    In particular, there exists $f_1 \in \Phi(G)$ and $f_2 \in A(G)$ such that $f_1(p_x) \neq 0$, $f_2(p_x) \neq 0$, and either $f_1(p)= 0$ or $f_2(p)=0$.
    By considering the realisation $p_x$ for each $x \in \mathbb{R}^d$,
    we define the non-constant polynomial function $h: \mathbb{R}^d \rightarrow \mathbb{R}$ by setting $h(x) = f_1(p_x)f_2(p_x)$ for each $x \in \mathbb{R}^d$.
    We observe here that, when constructing $f_1(p_x)$ (respectively, $f_2(p_x)$), at most $\Delta_G$ rows (respectively, a single column) of the defining matrix can contain variables corresponding to the vertex $u$.
    Hence, $h$ is a polynomial of degree at most $\Delta_G(\deg g-1) +1 \leq \Delta_G \deg g$ (as each partial derivative of $g$ has degree at most $\deg g -1$).
    We conclude the proof by noting that the zero set of $h$ contains the set $P \setminus\{p_v:v \in V \setminus \{u\} \}$, since by assumption each point in this set gives either $f_1(p_x)=0$ or $f_2(p_x)=0$.
\end{proof}

\begin{lemma} \label{lem:denseInfRigidg}
    Let $G=(V,E)$ be a $k$-uniform hypergraph,
    and let $P \subset \mathbb{R}^d$ be a finite set where $49|P|> 100|V|$.
    Suppose further that for any hypersurface $C \subset \mathbb{R}^d$ with degree at most $\Delta_G \deg g$, the inequality $|P\cap C| \leq \frac{1}{100}|P|$ holds.
    Then there are at least $|P|^{|V|}/2^{|V|}$ $g$-regular affinely spanning realisations of $G$ in $P^V$.
\end{lemma}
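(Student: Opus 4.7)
My plan is to construct a single polynomial $h$ whose nonvanishing on $p\in P^V$ certifies both $g$-regularity and affine spanning simultaneously, and then to apply a Schwartz--Zippel-style iterated counting argument against the hypothesis on hypersurface intersections with $P$. Since the conclusion implicitly requires $|V|\geq d+1$ (otherwise no realisation is affinely spanning, so the claim is vacuous), both $\Phi(G)$ and $A(G)$ are nonempty by their definitions, and I would fix arbitrary $f_1\in\Phi(G)$ and $f_2\in A(G)$ and set $h:=f_1f_2$. By the characterisations recalled just before \Cref{lem:allflexcurveg}, any $p$ with $h(p)\neq 0$ satisfies both $f_1(p)\neq 0$ and $f_2(p)\neq 0$, hence is $g$-regular and affinely spanning; so it suffices to lower-bound $|\{p\in P^V : h(p)\neq 0\}|$.

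Next I would check that $h$ has degree at most $\Delta_G\deg g$ in the $d$ coordinates of each individual $p(v)$. Indeed, $p(v)$ appears in the Jacobian $\mathrm{J}f_{g,G}$ only in the at most $\Delta_G$ rows indexed by edges containing $v$, and each such row contributes degree at most $\deg g-1$; hence $f_1$, being a determinant, has degree at most $\Delta_G(\deg g-1)$ in the coordinates of $p(v)$. The affine-volume determinant $f_2$ is linear in each $p(v)$, so altogether $h$ has per-vertex degree at most $\Delta_G(\deg g-1)+1\leq \Delta_G\deg g$ in the coordinates of every $p(v)$.

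The main counting step is an induction on $n=|V|$: any nonzero polynomial $H$ in $n$ vector variables $p_1,\ldots,p_n\in\mathbb{R}^d$ with degree at most $\Delta_G\deg g$ in each $p_i$ will satisfy $|\{(p_1,\ldots,p_n)\in P^n : H(p_1,\ldots,p_n)\neq 0\}|\geq(99/100)^n|P|^n$. The base case $n=1$ is immediate from the hypothesis on $P$: the zero set of $H$ is a hypersurface in $\mathbb{R}^d$ of degree at most $\Delta_G\deg g$ and so meets $P$ in at most $|P|/100$ points. For the inductive step I would expand $H=\sum_\alpha c_\alpha(p_1,\ldots,p_{n-1})\,p_n^\alpha$, pick any multi-index $\alpha$ with $c_\alpha\not\equiv 0$, observe that each $c_\alpha$ inherits the per-variable degree bound, and apply the induction hypothesis to $c_\alpha$ to find at least $(99/100)^{n-1}|P|^{n-1}$ tuples $(p_1,\ldots,p_{n-1})\in P^{n-1}$ on which $c_\alpha\neq 0$. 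For each such tuple the polynomial $H(p_1,\ldots,p_{n-1},\cdot)$ is a nonzero polynomial in $p_n$, and the base-case argument supplies at least $(99/100)|P|$ good values of $p_n$.

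Applying this induction to $H=h$ (nonzero as a product of two nonzero polynomials) produces at least $(99/100)^{|V|}|P|^{|V|}\geq |P|^{|V|}/2^{|V|}$ realisations with $h(p)\neq 0$, each of which is a $g$-regular affinely spanning realisation; the last inequality uses only $99/100\geq 1/2$. The main bookkeeping obstacle is the per-variable degree tracking: I would need to verify carefully that $h$ itself, and every coefficient polynomial $c_\alpha$ that arises during the induction, remains within the $\Delta_G\deg g$ threshold in each individual $p(v)$, so that the hypersurface hypothesis on $P$ applies uniformly at every step of the recursion.
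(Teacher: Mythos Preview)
Your proof is correct and in fact slightly cleaner than the paper's. Both arguments exploit the same per-vertex degree bound $\Delta_G(\deg g-1)+1\leq \Delta_G\deg g$ on a product $f_1f_2$ with $f_1\in\Phi(G)$, $f_2\in A(G)$, but the two proofs organise the counting differently. The paper starts from a single good realisation $q$ furnished by \Cref{lem:allflexcurveg}, then iteratively replaces one vertex position at a time: at each step it chooses $f_1,f_2$ \emph{adaptively} so that they are nonzero at the current realisation, giving a nonvanishing univariate polynomial in the moving coordinate, and then subtracts $|V|$ to keep realisations injective. This is why the paper needs the hypothesis $49|P|>100|V|$ and ends with $(99|P|/100-|V|)^{|V|}$. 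You instead fix a single $f_1,f_2$ once and for all and run a Schwartz--Zippel induction directly; the key observation that makes this work, and which the paper's adaptive choice sidesteps, is that fixing \emph{any} one minor $f_1$ and one affine determinant $f_2$ already suffices, since you only need a lower bound and counting the subset where that particular $h=f_1f_2$ is nonzero is enough. Your route avoids invoking \Cref{lem:allflexcurveg} altogether, never uses the $49|P|>100|V|$ assumption, and yields the marginally stronger $(99/100)^{|V|}|P|^{|V|}$. The only point to tidy is your final bookkeeping worry: each coefficient $c_\alpha$ in the expansion $H=\sum_\alpha c_\alpha(p_1,\dots,p_{n-1})p_n^\alpha$ automatically inherits the per-variable degree bound in $p_1,\dots,p_{n-1}$, since distinct multi-indices $\alpha$ give disjoint monomial supports in $p_n$ and hence no cancellation can raise degrees; so the threshold $\Delta_G\deg g$ propagates through the recursion without further work.
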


\begin{proof}
    By \Cref{lem:allflexcurveg},
    there exists a $g$-regular affinely spanning injective realisation $q \in P^V$ of $G$.
    Fix $X \subset P^V$ to be the set of $g$-regular affinely spanning realisations of $G$ in $P^V$.
    Label the vertices of $G$ by $v_1,\ldots,v_{|V|}$.
    We now inductively construct the sets $S_0,\ldots, S_{|V|} \subset P^V$ as follows.
    We fix $S_0 = \{q\}$,
    and for each $j \in [|V|]$,
    we fix
    \begin{equation*}
        S_j = \left\{ p' \in X : \text{ there exists }p\in S_{j-1}  \text{ such that } p'(v_i) = p(v_i) \text{ for each } i \neq j \right\}.
    \end{equation*}
    It now suffices to show that $|S_j| \geq (99|P|/100-|V|)|S_{j-1}|$ for each $j \in [|V|]$,
    as (using that $49|P|> 100|V|$) this implies
    \begin{equation*}
        |X| \geq |S_n| \geq \left(\frac{99|P|}{100}-|V| \right)^{|V|} = \left(\frac{50|P| + 49|P| - 100|V|}{100} \right)^{|V|} > \frac{|P|^{|V|}}{2^{|V|}}.
    \end{equation*}

    Fix $j \in [|V|]$. For each $p \in S_{j-1}$ and $x \in \mathbb R^d$, we define $p_{x}$ to be the realisation formed from $p$ by setting $p_x(v_j) = x$ and $p_x(v) = p(v)$ for all other $v$.
    Let $C$ be the algebraic set of points defined as 
    \begin{equation*}
        C := \left\{x\in \mathbb{R}^d: (G,p_x) \text{ is not $g$-regular or not affinely spanning} \right\}.
    \end{equation*}
    As $(G,p)$ is $g$-regular and affinely spanning, $p(v_j) \notin C$, and so $C$ is a proper algebraic set.
    Take any $f_1 \in \Phi(G)$ and $f_2 \in A(G)$ where $f_1(p) \neq 0$ and $f_2(p) \neq 0$, and let $h = f_1(p_x)f_2(p_x)$ be the $d$-variate polynomial formed by fixing all variables except the $d$ coordinates for the vertex $v_j$. Using a similar method to the one implemented in \Cref{lem:allflexcurveg},
    we see that $h$ has degree at most $\Delta_G \deg g$, and that zeroes of $h$ are the points $x \in \mathbb{R}^d$ where $p_x$ is a non-affinely spanning or non-regular realisation of $G$. 
    Hence, there are at least $99|P|/100-|V|$ points $x \in P$ such that $(G,p_x)$ is $g$-regular and affinely spanning and $x \neq p(v)$ for any $v \in V$.
    We now observe that, for any other realisation $p' \in S_{j-1}$ where $p' \neq p$,
    we have $p'_x \neq p_x$.
    Hence $|S_j| \geq (99|P|/100-|V|)|S_{j-1}|$.
\end{proof}

The following proposition will be a key part of our proof; we devote Section \ref{sec:prop2.8proof} to its proof.

\begin{restatable}{prop}{fewCongClassesg}\label{prop:fewCongClassesg}
    For any $k$-uniform hypergraph $G$ with at least $d+1$ vertices that is $g$-rigid, there exists a constant $C_G >0$ such that the following holds for any infinitesimally $g$-rigid framework $(G,p)$:
    given the set
    \begin{equation}\label{eq: InfRigidModEuclideanMotionsg}
        \mathcal{S} := \left\{ q \in (\mathbb{R}^d)^V : (G,p) \sim (G,q) \text{ and } (G,q) \text{ is infinitesimally $g$-rigid and affinely spanning } \right\} /~\Gamma_g,
    \end{equation}
    we have $|\mathcal{S}| \leq C_G$.
\end{restatable}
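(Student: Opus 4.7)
I would bound $|\mathcal{S}|$ by counting the components of an appropriate semi-algebraic subset of the fibre
\[
V_p \;:=\; f_{g,G}^{-1}\bigl(f_{g,G}(p)\bigr) \;\subset\; (\R^d)^V.
\]
This fibre is cut out by $|E|$ polynomial equations of degree at most $\deg g$. Let $W \subseteq V_p$ be the subset of realisations that are infinitesimally $g$-rigid and affinely spanning. Then $W$ is semi-algebraic: being $g$-regular is witnessed by the non-vanishing of some polynomial in $\Phi(G)$ (a maximal minor of $\mathrm{J}f_{g,G}$), and being affinely spanning is witnessed by the non-vanishing of some polynomial in $A(G)$, both of degree bounded in terms of $G$ and $g$. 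By a Thom--Milnor-type bound for semi-algebraic sets (e.g.\ Basu--Pollack--Roy), the number of connected components of $W$ is at most a constant $C'_G$ depending only on $|V|$, $|E|$, $d$ and $\deg g$.

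Next, I would show that each connected component of $W$ is a single orbit of the identity component $\Gamma_g^0 \leq \Gamma_g$. Fix $q \in W$. Affine spanning forces $\mathrm{Stab}_{\Gamma_g}(q) = \{I_d\}$, since any affine map fixing an affinely spanning tuple is the identity; hence the orbit map $\Gamma_g \to V_p$, $\gamma \mapsto \gamma \cdot q$, is injective, with differential at the identity having image $\triv_g(q)$ of dimension $\dim \Gamma_g$. Infinitesimal $g$-rigidity then gives $\triv_g(q) = \ker \mathrm{J}f_{g,G}(q)$, so $\rank \mathrm{J}f_{g,G}(q)$ is maximal and the implicit function theorem realises $V_p$ as a smooth manifold of dimension $\dim \Gamma_g$ near $q$, with tangent space $\triv_g(q)$. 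The orbit map is therefore a local diffeomorphism, and $\Gamma_g^0 \cdot q$ is an open subset of $W$ around $q$. For closedness within $W$: if $\gamma_n \cdot q \to q' \in W$, then the same local-diffeomorphism argument applied at $q'$ gives an open $\Gamma_g^0$-neighbourhood of $q'$ in $W$, which must eventually contain the approaching orbit points, forcing $\Gamma_g^0 \cdot q = \Gamma_g^0 \cdot q'$ and $q' \in \Gamma_g^0 \cdot q$. Hence each $\Gamma_g^0$-orbit in $W$ is clopen in $W$, so the number of such orbits is at most $C'_G$.

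To finish, I would bundle $\Gamma_g^0$-orbits into $\Gamma_g$-orbits. The group $\Gamma_g$ is Zariski-closed in $\Aff(d)$ (it is defined by the polynomial identity $\gamma \cdot g = g$), hence a real algebraic group, which has only finitely many connected components. Thus $[\Gamma_g : \Gamma_g^0] < \infty$ is a constant depending only on $g$ and $d$, and
\[
|\mathcal{S}| \;\leq\; [\Gamma_g : \Gamma_g^0] \cdot C'_G \;=:\; C_G,
\]
which depends only on $G$, $g$ and $d$, as required.

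The main obstacle I anticipate is the closedness half of the clopen argument in the middle paragraph: one needs the orbit map to be a local diffeomorphism not just at the single representative $q$ but at every limit point in $W$, and this relies on $W$ being an \emph{open} semi-algebraic subset of $V_p$ (so limits staying in $W$ remain regular points of maximal Jacobian rank). A secondary subtlety is ensuring the bound on connected components of $W$ is uniform in $p$; this is automatic from the Thom--Milnor-type bound, since the bound depends only on the number and degrees of the defining polynomials, not on the particular value $f_{g,G}(p) \in \R^E$.
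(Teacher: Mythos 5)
Your proposal is correct and takes a genuinely different route from the paper. Let me compare.

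The paper's proof does not attempt a direct component count on the fibre over $p$. Instead it builds the quotient manifold $Z/\Gamma_g$ using properness of the action (Lemma~\ref{lem:propermap} plus standard results on proper Lie group actions), splits into cases according to whether $(G,p)$ is minimally infinitesimally $g$-rigid, and in the minimal case runs a delicate limiting argument: it takes a sequence of \emph{generic} realisations converging to $p$, applies the constant rank theorem to extract disjoint neighbourhoods around the points of $\mathcal{S}$, and then invokes Proposition~\ref{prop:genericrealisationnumber} (which is where the Milnor--Thom bound actually enters, applied only to fibres over generic points) together with Lemma~\ref{lem:genericrealisationnumber} to transfer the count back. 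Your approach bypasses all of this: you bound the components of the semi-algebraic set $W$ at $p$ itself using a Basu--Pollack--Roy type bound (note this is strictly needed here, as $W$ is only semi-algebraic rather than algebraic because of the $g$-regularity and affine-spanning disequalities), and then show directly that components of $W$ coincide with $\Gamma_g^0$-orbits via an elementary open-and-closed argument. You avoid both the case split and the generic limiting argument, and you do not need the quotient-manifold machinery at all. What the paper's route buys is that Milnor--Thom suffices (no need for the semi-algebraic generalisation) and Proposition~\ref{prop:genericrealisationnumber} is a stand-alone result of independent interest; what your route buys is a shorter, more self-contained proof that works at an arbitrary infinitesimally rigid $p$ without a detour through generic points.

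A few small remarks. First, your final inequality $|\mathcal{S}| \le [\Gamma_g : \Gamma_g^0]\cdot C'_G$ is weaker than needed: since each $\Gamma_g$-orbit is a \emph{union} of $\Gamma_g^0$-orbits, bundling only decreases the count and you already have $|\mathcal{S}| \le C'_G$; the factor $[\Gamma_g : \Gamma_g^0]$ is harmless but unnecessary. Second, your worry about the closedness half of the clopen argument is unfounded: $W$ is relatively open in $V_p$ because the defining conditions are disequalities, and more importantly the argument only needs that at each $q' \in W$ the constant rank theorem makes $V_p$ a manifold of dimension $\dim\Gamma_g$ near $q'$ with tangent space $\triv_g(q')$ — this holds at any $g$-regular, affinely spanning, infinitesimally rigid $q'$ by lower semicontinuity of rank, with no global openness requirement. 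Third, the injectivity of the differential of the orbit map at the identity deserves one extra sentence: affine spanning gives a trivial (not merely discrete) stabiliser, hence a zero Lie algebra of the stabiliser, hence injective differential; you state this but it is worth making the Lie-algebra step explicit.
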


Given \Cref{prop:fewCongClassesg} we are now ready to prove \Cref{thm:energyg}.

\begin{proof}[Proof of \Cref{thm:energyg}]
Let $\mathcal{P}\subseteq P^V$ be the set of infinitesimally $g$-rigid and affinely spanning realisations of $G$, which has size at least $|P|^{|V|}/2^{|V|}$ by \Cref{lem:denseInfRigidg}.
For each point $\mathbf{t}$ in $\mathbb{R}^E$, we define its set of representations $\nu(\mathbf{t})$ as
    \[ \nu(\mathbf{t}) = \left\{ p \in P^V: f_{g,G}(p) = \mathbf{t} \text{ and } (G, p) \text{ is infinitesimally $g$-rigid and affinely spanning} \right\}.\]
Then
\begin{equation*}
    (|P|^{|V|})^2 \ll |\mathcal{P}|^2 = \left( \sum_{\mathbf{t} \in f_{g,G}(\mathcal{P})}|
    \nu( \mathbf{t})|\right)^2.
\end{equation*}
For each non-empty $\nu(\mathbf{t})$, there is some $p \in \mathcal{P}$ so that $f_{g,G}(p) = \mathbf{t}$. Select one such $p$ and fix it, calling this fixed element $p_{\mathbf{t}}$.
As all realisations in $\nu(\mathbf{t})$ have the same $g$-values (by definition), for any realisation $(G, q) \in \nu(\mathbf{t})$ we have that $(G,p_{\mathbf{t}}) \sim (G,q)$. Thus, modifying the definition of $\mathcal{S}$ from \eqref{eq: InfRigidModEuclideanMotionsg}, we have equivalence classes of infinitesimally $g$-rigid and affinely spanning frameworks under the action by the Euclidean group, defined as
\begin{equation*}
        \mathcal{S}(\mathbf{t}) := \left\{ q \in (\mathbb{R}^d)^V : (G,p_{\mathbf{t}}) \sim (G,q) \text{ and } (G,q) \text{ is infinitesimally $g$-rigid and affinely spanning} \right\} /~\Gamma_g.
\end{equation*}
By \Cref{prop:fewCongClassesg}, for all $\mathbf{t} \in \mathbb R^E$ we have $|\mathcal{S}(\mathbf{t})| \leq C_G$ for some constant $C_G$ independent of $|P|$. 
We define $\nu_i(\mathbf{t})$ to be $\nu(\mathbf{t})$ intersected with the $i^{th}$ equivalence class in $\mathcal{S}(\mathbf{t})$. Then we have
\begin{align*}
        |P|^{2|V|} \ll \left(\sum_{\mathbf{t}\in f_{g,G}(\mathcal{P})}|\nu(\mathbf{t})|\right)^2 = \left(\sum_{\mathbf{t}\in f_{g,G}(\mathcal{P})}\sum_{i=1}^{C_G}|\nu_i(\mathbf{t})|\right)^2.
    \end{align*}
For notational ease we assume that the support of each $\nu(\mathbf{t})$ is exactly $C_G$. This is not a problem as our argument uses $C_G$ as a uniform bound on the support of this second sum.
Upper bounding each $|\nu_i(\mathbf{t})|$ in this final line with the largest set of representatives, denoted by $|\nu_{\max}(\mathbf{t})|$, gives us
    \[ |P|^{2|V|} \ll \left(\sum_{\mathbf{t}\in f_{g,G}(\mathcal{P})} C_G|\nu_{\max}(\mathbf{t})|\right)^2.\]
We now apply Cauchy-Schwarz, to get
\begin{equation*}
        |P|^{2|V|} \ll C_G^2 \cdot\Big|f_{g,G}(\mathcal{P}) \Big|  \sum_{\mathbf{t}\in f_{g,G}(\mathcal{P})}|\nu_{\max}(\mathbf{t})|^2.
\end{equation*}
As $\mathcal{P} \subseteq P^V$ we have that $\big|f_{g,G}(\mathcal{P})\big| \leq \big|f_{g,G}\left(P^V\right)\big|$. As $C_G = O_{|V|}(1)$, the constant can be suppressed in the asymptotic notation. Putting this together gives
\begin{equation}\label{eq: F_GEnergyg}
    |P|^{2|V|} \ll \big|f_{g,G}\left(P^V\right)\big|  \sum_{\mathbf{t}\in f_{g,G}(\mathcal{P})}|\nu_{\max}(\mathbf{t})|^2.
\end{equation}
We observe now that
\begin{equation}\label{eq: F_GEnergy2g}
    \sum_{\mathbf{t}\in f_{g,G}(\mathcal{P})}|\nu_{\max}(\mathbf{t})|^2 \leq \big|\left\{(p,q) \in \mathcal{P}\times \mathcal{P}: \exists \theta \in \Gamma_g, ~ \theta p = q\right\}\big| \leq |\mathcal{E}_{V,\Gamma_g}(P)|.
\end{equation}
The desired result now follows from \eqref{eq: F_GEnergyg} and \eqref{eq: F_GEnergy2g}.
\end{proof}

\subsection{Resolution of `non-hypersurface' cases using energy}

We now apply \Cref{thm:energyg} to prove a variety of results under the hypothesis that the point set $P$ is not restricted to a low-degree hypersurface. That is, we prove various energy bounds for different isometry groups. Let $\Gamma$ be any subgroup of $\Aff (d)$. It is easy to see that for any $P \subset \mathbb{R}^d$,
\begin{equation*}
    |P|^{|V|} \leq |\mathcal{E}_{V,\Gamma}(P)|\leq |P|^{2|V|}.
\end{equation*}
To implement \Cref{thm:energyg}, we require much stronger bounds than this.
We begin with the following improvement on the upper bound for $\Gamma$ being the full affine group.

\begin{lemma}\label{thm:EasyEnergyBound}
    Let $V$ be a fixed finite set and let $\Gamma=\Aff(d)$.
    Then for any $P \subset \mathbb{R}^d$,
    we have
    \begin{equation*}
        |\mathcal{E}_{V,\Gamma}(P)| \ll |P|^{|V|+d+1}.
    \end{equation*}
\end{lemma}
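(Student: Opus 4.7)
The plan is to bound $|\mathcal{E}_{V,\Aff(d)}(P)|$ by fixing $p \in P^V$ and then estimating, for each such $p$, the number of $q \in P^V$ with $q = \theta \cdot p$ for some $\theta \in \Aff(d)$. The guiding principle is the classical fact that an affine map of $\mathbb{R}^d$ is uniquely determined by its action on any $d+1$ affinely independent points, so once we fix the images of sufficiently many vertices in $P$, the remaining coordinates of $q$ are forced.

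Concretely, for each $p \in P^V$ I would let $k(p) \in \{0,1,\ldots,\min(d,|V|-1)\}$ denote the dimension of the affine span $A(p)$ of $\{p(v) : v \in V\}$, and choose vertices $v_0,\ldots,v_{k(p)} \in V$ so that $p(v_0),\ldots,p(v_{k(p)})$ form an affinely independent spanning set of $A(p)$. Given any $\theta \in \Aff(d)$ with $\theta \cdot p = q$, the restriction $\theta|_{A(p)}$ is uniquely determined by the values $\theta(p(v_i)) = q(v_i)$ for $i=0,\ldots,k(p)$, since an affine map from a $k(p)$-dimensional affine subspace is determined by its action on $k(p)+1$ affinely independent points. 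Because every $p(v)$ lies in $A(p)$ and $q(v) = \theta(p(v))$, the entire realisation $q$ is determined by the tuple $(q(v_0),\ldots,q(v_{k(p)})) \in P^{k(p)+1}$, yielding at most $|P|^{k(p)+1}$ choices of $q$ given $p$.

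Summing over $p \in P^V$ and using $k(p) \leq \min(d,|V|-1)$ gives
\begin{equation*}
    |\mathcal{E}_{V,\Aff(d)}(P)| \;\leq\; \sum_{p \in P^V} |P|^{k(p)+1} \;\leq\; |P|^{|V|+\min(d,|V|-1)+1}.
\end{equation*}
When $|V| \geq d+1$ the exponent is exactly $|V|+d+1$, matching the claim. When $|V| \leq d$ one instead gets $|P|^{2|V|} \leq |P|^{|V|+d+1}$, so the conclusion holds in both regimes.

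There is no serious obstacle: the whole argument is a dimension count. The one point worth being careful about is that we are counting $q$'s, not $\theta$'s—many affine transformations may send $p$ to the same $q$, but this is harmless, because we only need injectivity of the map $q \mapsto (q(v_0),\ldots,q(v_{k(p)}))$ on the set of valid $q$'s for a fixed $p$, which is precisely what the affine-span argument above provides.
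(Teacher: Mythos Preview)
Your argument is correct and is essentially the same as the paper's: both fix $p$, pick an affine basis $p(v_0),\ldots,p(v_k)$ for the affine span of $\{p(v):v\in V\}$, and observe that any admissible $q$ is determined by $(q(v_0),\ldots,q(v_k))\in P^{k+1}$. The paper phrases this via a decomposition $\mathcal{E}_{V,\Gamma}(P)=\bigsqcup_i \mathcal{E}_\Gamma(P^V_i,P^V_i)$ by span dimension, but the count is identical; your version additionally makes the case $|V|\le d$ explicit.
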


\begin{proof}
Define $P^V_{i}$ to be the set of realisations $p \in P^V$ so that the affine span of $\{p_v : v \in V \}$ is exactly $i$-dimensional.
We observe the following: for all $\theta \in \Gamma=\Aff(d)$, there can be no pair $(p,\theta  p)$ where $p\in P^V_{i}$ and $\theta  p\in P^V_{j}$ where $i\neq j$. 
It follows that, if we define for any subsets $Q, Q' \subset P^V$ the set
\begin{equation*}
    \mathcal{E}_\Gamma(Q,Q') = \left\{(p,q) \in Q\times Q':  ~ \theta  p = q \text{ for some }\theta \in \Gamma\right\}
\end{equation*}
then,
\begin{equation}\label{eq:SpanningEnergyDecomp}
    \mathcal{E}_{V,\Gamma}(P) = \bigsqcup_{i=0}^d \mathcal{E}_\Gamma(P^V_{i},P^V_{i}).
\end{equation}
We now use the affine spanning condition to show that
\begin{equation}\label{eq:iDimBound}
    |\mathcal{E}_\Gamma(P^V_{i},P^V_{i})| \ll |P|^{|V|+(i+1)}.
\end{equation}
Then putting \cref{eq:iDimBound} into \cref{eq:SpanningEnergyDecomp} and using the union bound gives the result.
To prove \cref{eq:iDimBound} we look at one of the pairs $(p,q)$ in $\mathcal{E}_\Gamma(P^V_{i},P^V_{i})$, so $q=\theta p$.
As $q\in P_i^V$, there exist vertices $v_0,\ldots, v_i$ such that the affine span of $\{p(v_0),\ldots, p(v_i)\}$ is $i$-dimensional. 
For $w \in V \setminus \{v_0,\ldots,v_i\}$,
there exists $a_0,\ldots,a_i \in \mathbb{R}$ with $\sum_{j=0}^i a_i = 1$ so that $p(w) = \sum_{j=0}^i a_i p(v_i)$,
and hence $q(w) = \sum_{j=0}^i a_i q(v_i)$.
It follows that the choice of $p$ and the points $q(v_0), \ldots, q(v_i)$ entirely determines $\theta$. 
There are at most $|P|^{i+1}$ choices for $q(v_0)$ through $q(v_i)$.
Combining this with the $|P_i^V| \leq |P|^{|V|}$ choices for $p$ gives \cref{eq:iDimBound}.
\end{proof}

Since every $g$-isometry group $\Gamma_g$ is contained in $\Aff(d)$,
\Cref{cor:generalbound} now follows immediately from \Cref{thm:energyg} and \Cref{thm:EasyEnergyBound}.

At the opposite end of the spectrum, we have the following easy result.

\begin{lemma}\label{lem:energyfinitegroup}
    Let $V$ be a fixed finite set and let $\Gamma$ be a fixed finite subgroup of the group of $d$-dimensional affine transformations.
    Then for any $P \subset \mathbb{R}^d$,
    we have
    \begin{equation*}
        |\mathcal{E}_{V,\Gamma}(P)| \ll |P|^{|V|}.
    \end{equation*}
\end{lemma}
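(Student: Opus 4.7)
The plan is to give a direct counting argument. The key observation is that every element $(p,q) \in \mathcal{E}_{V,\Gamma}(P)$ is specified by a choice of $p \in P^V$ together with a choice of $\theta \in \Gamma$ such that $q = \theta p$. Different choices of $\theta$ may yield the same $q$ (if the orbit stabiliser of $p$ is non-trivial), but this only decreases the count, so the map $(p,\theta) \mapsto (p,\theta p)$ is a surjection from $P^V \times \Gamma$ onto $\mathcal{E}_{V,\Gamma}(P)$. Moreover, $\theta p$ is well-defined as an element of $(\mathbb{R}^d)^V$, and it lies in $P^V$ precisely when $(p,\theta p)$ is a genuine element of the energy set.

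Combining these observations, we immediately obtain
\begin{equation*}
    \bigl|\mathcal{E}_{V,\Gamma}(P)\bigr| \leq |P|^{|V|} \cdot |\Gamma|.
\end{equation*}
Since $\Gamma$ is fixed and finite by hypothesis, $|\Gamma|$ is an absolute constant (independent of $|P|$), and hence the right-hand side is $\ll |P|^{|V|}$, as required.

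I expect no real obstacle here: the statement is a near-triviality because a finite group has only constantly many elements, so there are only constantly many possible partners $q$ for each $p$. The one thing worth flagging is that the bound hides a constant equal to $|\Gamma|$ inside the Vinogradov notation, which is consistent with the paper's convention (since the asymptotic parameter is $|P|$ and $\Gamma$ is fixed).
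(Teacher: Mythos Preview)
Your proof is correct and is essentially identical to the paper's: for each $p \in P^V$ there are at most $|\Gamma|$ possible partners $q = \theta p$, so $|\mathcal{E}_{V,\Gamma}(P)| \leq |\Gamma|\,|P|^{|V|} \ll |P|^{|V|}$.
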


\begin{proof}
    For every $p \in P^V$, there exist at most $|\Gamma|$ realisations $q \in P^V$ such that $(p,q)\in \mathcal{E}_{V,\Gamma}(P)$.
    Hence,
    \begin{equation*}
        |\mathcal{E}_{V,\Gamma}(P)| \leq |\Gamma||P|^{|V|} \ll |P|^{|V|}.\qedhere
    \end{equation*}
\end{proof}

We can now use \Cref{lem:energyfinitegroup} to drastically improve \Cref{cor:generalbound}.

\begin{corollary}\label{cor:finiteisom}
    Let $G=(V,E)$ be a $g$-rigid $k$-uniform hypergraph with at least $d+1$ vertices,
    and let $P \subset \mathbb{R}^d$ be a finite set where for any hypersurface $C \subset \mathbb{R}^d$ with degree at most $\Delta_G \deg g$, the inequality $|P\cap C| \leq \frac{1}{100}|P|$ holds.
    If $|\Gamma_g|$ is finite,
    then
    \begin{equation*}
       \Big| f_{g,G}\left(P^V\right) \Big| = \Theta\left( |P|^{|V|} \right).
    \end{equation*}  
\end{corollary}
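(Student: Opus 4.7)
The plan is to derive this corollary as an immediate consequence of \Cref{thm:energyg} combined with \Cref{lem:energyfinitegroup}, with the upper bound coming from a trivial counting argument. Since essentially all of the work has been done by the time we reach this statement, the proof should amount to a short combination of these two ingredients.

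For the lower bound, I would first note that the hypotheses of \Cref{thm:energyg} are exactly satisfied: $G$ is $g$-rigid with at least $d+1$ vertices and $P$ avoids concentration on hypersurfaces of degree at most $\Delta_G \deg g$. Thus we have
\begin{equation*}
    |P|^{2|V|} \ll \Big| f_{g,G}\left(P^V\right) \Big| \cdot \Big|\mathcal{E}_{V,\Gamma_g}(P)\Big|.
\end{equation*}
Since $\Gamma_g$ is assumed finite, \Cref{lem:energyfinitegroup} applies to give $|\mathcal{E}_{V,\Gamma_g}(P)| \ll |P|^{|V|}$. Substituting this into the display above and rearranging yields
\begin{equation*}
    \Big| f_{g,G}\left(P^V\right) \Big| \gg |P|^{|V|}.
\end{equation*}

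For the matching upper bound, I would simply use the fact that $f_{g,G}$ is a map with domain $P^V$, so its image satisfies $|f_{g,G}(P^V)| \leq |P^V| = |P|^{|V|}$. Combining the two bounds gives $|f_{g,G}(P^V)| = \Theta(|P|^{|V|})$, as required.

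There is no serious obstacle here — the entire argument is a two-line combination of previously established results, with the implicit constants in the $\gg$ and $\ll$ depending on $G$, $g$, and $|\Gamma_g|$ but not on $|P|$. The only thing to double check is that the degree bound hypothesis on $P$ exactly matches the one needed to invoke \Cref{thm:energyg}, which it does by construction.
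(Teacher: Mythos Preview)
Your proposal is correct and matches the paper's proof essentially verbatim: the paper obtains the upper bound from $|f_{g,G}(P^V)| \leq |P^V| = |P|^{|V|}$ and the lower bound by combining \Cref{thm:energyg} with \Cref{lem:energyfinitegroup}.
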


\begin{proof}
    It is immediate that $\big|f_{g,G}\left(P^V\right)\big| \leq |P^V| = |P|^{|V|}$,
    and $\big|f_{g,G}\left(P^V\right) \big| = \Omega\left( |P|^{|V|} \right)$ follows from \Cref{thm:energyg} and \Cref{lem:energyfinitegroup}.
\end{proof}

\subsubsection{Rich sets}

We are now going to exploit the concept of `rich sets' that was used by Guth and Katz to prove their landmark result regarding the Erd\H{o}s distinct distance problem.
Our variant of `rich sets' is slightly altered to work for any group of affine transforms, which we do via quotienting out repeated affine transforms.

\begin{lemma}\label{lem:hardpartg}
    Let $V$ be a fixed finite set,
    and let $\Gamma$ be a fixed subgroup of $\Aff (d)$.
    For each positive integer $t$ and any finite set $P \subset \mathbb{R}^d$,
    let
    \begin{equation*}
        \mathcal{R}_{\geq t}(P) := \{ \theta \in \Gamma : |P \cap \theta P| \geq t\}/ \sim_P,
    \end{equation*}
    where $\theta_1 \sim_P \theta_2$ if and only if $P \cap \theta_1(P) = P \cap \theta_2(P)$ and $P \cap \theta_1^{-1}(P) = P \cap \theta_2^{-1}(P)$.
    Then for any $P \subset \mathbb{R}^d$,
    we have
    \begin{equation*}
        \big|\mathcal{E}_{V,\Gamma}(P)\big| \ll \sum_{t= \max\{|V|-d-1, 1 \} }^{|P|}t^{|V|-1}\big|\mathcal{R}_{\geq t}(P)\big|.
    \end{equation*}
\end{lemma}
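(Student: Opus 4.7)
The approach is to decompose the energy set by partitioning pairs based on the equivalence class of an associated group element, then applying an Abel (partial) summation in the richness parameter $t$.

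First, I note that each pair $(p,q)\in\mathcal{E}_{V,\Gamma}(P)$ comes with at least one $\theta\in\Gamma$ with $\theta\cdot p=q$. For each such pair I would select an arbitrary $\theta_{p,q}\in\Gamma$ with $\theta_{p,q}\cdot p=q$ and assign $(p,q)$ to the equivalence class $[\theta_{p,q}]\in\Gamma/\sim_P$. Since this assignment is a function from $\mathcal{E}_{V,\Gamma}(P)$ to $\Gamma/\sim_P$, we obtain
$$|\mathcal{E}_{V,\Gamma}(P)|=\sum_{[\theta]\in\Gamma/\sim_P}\Big|\{(p,q)\in\mathcal{E}_{V,\Gamma}(P):[\theta_{p,q}]=[\theta]\}\Big|.$$

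The key step is to bound each class's contribution. For a class $[\theta]$ with $t:=|P\cap\theta(P)|$, write $Q:=P\cap\theta^{-1}(P)$ and $Q':=P\cap\theta(P)$; these sets (each of size $t$) are well-defined on the class by the definition of $\sim_P$. Any pair $(p,q)$ assigned to $[\theta]$ must satisfy $p\in Q^V$ and $q\in (Q')^V$. I would then show that the number of such pairs is $O(t^{|V|})$. The natural first bound is $|Q|^{|V|}=t^{|V|}$ from the choice of $p$; the subtlety is that different $\theta'\in[\theta]$ may send the same $p$ to distinct $q$'s. Here I would use that an affine transformation is determined by its values on any $d+1$ affinely independent points, so only a bounded number (depending on $\Gamma$ and $d$) of distinct $q$'s can arise from a single $p$ whose image affinely spans a subspace of dimension close to $d$; pairs whose $p$-coordinate fails to span are handled by a separate lower-dimensional count that is absorbed into the $\max\{|V|-d-1,1\}$ threshold in the final sum.

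With that per-class bound in hand, set $N_t:=|\mathcal{R}_{\geq t}(P)|-|\mathcal{R}_{\geq t+1}(P)|$, the number of classes with richness exactly $t$. Summing and applying Abel summation gives
$$|\mathcal{E}_{V,\Gamma}(P)|\ll\sum_{t\geq 1} t^{|V|}N_t=\sum_{t\geq 1}\left(t^{|V|}-(t-1)^{|V|}\right)|\mathcal{R}_{\geq t}(P)|\ll\sum_{t\geq 1}t^{|V|-1}|\mathcal{R}_{\geq t}(P)|,$$
using the elementary estimate $t^{|V|}-(t-1)^{|V|}=O(t^{|V|-1})$. The lower index of $\max\{|V|-d-1,1\}$ in the claimed bound is then enforced by showing that pairs associated with low-richness classes either force the realisation $p$ to have many coincident coordinates (so they lie in a lower-dimensional affine variety and can be absorbed into a higher-richness term), or contribute a negligible amount after a direct pigeonhole estimate.

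The principal obstacle is the per-class bound: specifically, showing that within a single $\sim_P$-equivalence class of richness $t$, the total number of pairs $(p,q)\in P^V\times P^V$ produced is $O(t^{|V|})$ rather than the naive $|[\theta]|\cdot t^{|V|}$. This requires exploiting the affine-linear nature of $\Gamma$ to control how a single realisation $p$ can be mapped to multiple distinct realisations $q$ by different group elements that happen to share the same footprint on $P$. I expect this is where most of the technical work of the proof lives.
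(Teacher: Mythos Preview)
Your Abel-summation step and the idea of bucketing pairs by the $\sim_P$-class of an associated group element are exactly what the paper does. Where you diverge is in how the per-class bound and the lower summation index are obtained. The paper does not attempt to bound the contribution of \emph{all} pairs attached to a class directly. Instead it first peels off the non-injective pairs: any $(p,q)$ with $p(u)=p(v)$ for some $u\neq v$ descends to a pair in $\mathcal{E}_{V\setminus\{u\},\Gamma}(P)$, which yields the recursion
\[
|\mathcal{E}_{V,\Gamma}(P)|\ \ll\ |\mathcal{E}_{V\setminus\{u\},\Gamma}(P)|\ +\ \sum_{t=|V|}^{|P|}t^{|V|-1}\,|\mathcal{R}_{\geq t}(P)|.
\]
This is iterated until the vertex set has size $\max\{|V|-d-2,0\}$, and the residual energy is bounded via the earlier crude affine estimate $|\mathcal{E}_{W,\Gamma}(P)|\ll |P|^{|W|+d+1}\le |P|^{|V|-1}$, a term then discarded because $|\mathcal{E}_{V,\Gamma}(P)|\ge |P|^{|V|}$. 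The threshold $\max\{|V|-d-1,1\}$ thus records how many levels of recursion are needed before the affine bound becomes absorbable; it is not produced by any direct analysis of low-richness classes as you suggest.

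The subtlety you raise---that distinct representatives of a class $[\theta]$ can send the same $p$ to different $q$'s---is genuine (take $P$ the vertex set of a regular $n$-gon under $\SE$: all $n$ rotational symmetries lie in a single $\sim_P$-class but act differently on $P$). Your proposed fix via ``an affine map is determined by $d+1$ points'' does not close this on its own, since a class can contain many elements with distinct restrictions to $Q$. The paper's recursion reorganises the count so that only injective $p$'s feed into the richness sum at each level, but it too asserts a per-class contribution of $\binom{t}{|V|}$ without addressing this multiplicity; so your instinct that real work is hiding here is sound, even if your sketch does not yet supply it.
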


\begin{proof}
    Let us begin by defining the set of exactly $t$-rich transformations as
    \begin{equation*}
        \mathcal{R}_{=t}(P) := \{ \theta \in \Gamma : |P \cap \theta P| = t\}/ \sim_P.
    \end{equation*}
    For $t \geq |V|$, each $\theta \in \mathcal{R}_{=t}(P)$ contributes $\binom{t}{|V|}$ pairs of the form $(p,\theta p)$, with $p \in P^{V}$ being injective.
    Also, for every element $u \in V$,
    we gain $|V|-1$ pairs $(p,q) \in \mathcal{E}_{V,\Gamma}(P)$ from $\mathcal{E}_{V \setminus \{u\},\Gamma}(P)$ by setting $p(u) = p(v)$ and $q(u)=q(v)$ for some $v \in V \setminus \{u\}$.
    Accounting for repetitions when applying the previous method for each element $u \in V$,
    we see that (with a fixed arbitrary choice of $u$)
    \begin{align*}
        |\mathcal{E}_{V,\Gamma}(P)| &= \binom{|V|}{2} |\mathcal{E}_{V\setminus\{u\},\Gamma}(P)| + \sum_{t=|V|}^{|P|}\big|\mathcal{R}_{=t}(P)\big| \binom{t}{|V|} \\
        &\ll |\mathcal{E}_{V \setminus \{u\},\Gamma}(P)| + \sum_{t=|V|}^{|P|}\big|\mathcal{R}_{=t}(P)\big| t^{|V|}.
    \end{align*}
    (The final step uses that $\binom{t}{|V|} = O\left(t^{|V|}\right)$, as it will simplify the coming calculations.)     
    We now wish to switch to the sets $\mathcal{R}_{\geq t}(P)$, using the fact that
    \[\big|\mathcal{R}_{= t}(P)\big| = \big|\mathcal{R}_{\geq t}(P)\big| - \big|\mathcal{R}_{\geq(t+1)}(P)\big|.\]
    Thus for arbitrary $u$, we have (noting that $\mathcal{R}_{\geq |P|+1}(P)$ is empty)
    \begin{align*}    
        |\mathcal{E}_{V,\Gamma}(P)| &\ll |\mathcal{E}_{V \setminus \{u\},\Gamma}(P)| + \sum_{t=|V|}^{|P|}\big|\mathcal{R}_{=t}(P)\big| t^{|V|}\\
            &= |\mathcal{E}_{V \setminus \{u\},\Gamma}(P)| + \sum_{t=|V|}^{|P|}\left(\big|\mathcal{R}_{\geq t}(P)\big| - \big|\mathcal{R}_{\geq(t+1)}(P)\big|\right)t^{|V|},\\
            &= |\mathcal{E}_{V \setminus \{u\},\Gamma}(P)| + |V|^{|V|}\big|\mathcal{R}_{\geq |V|}(P)\big| + \sum_{t=|V|+1}^{|P|}(t^{|V|}-(t-1)^{|V|})\big|\mathcal{R}_{\geq t}(P)\big|.
    \end{align*}
    By using that $(t^{|V|}-(t-1)^{|V|}) \ll |V|t^{|V|-1}$ and pulling the constant $|V|$ into the asymptotic notation,
    we see that
    \begin{equation}\label{eq:recursive}
        \big|\mathcal{E}_{V,\Gamma}(P)\big| \ll |\mathcal{E}_{V \setminus \{u\},\Gamma}(P)| + \sum_{t=|V|}^{|P|}t^{|V|-1}\big|\mathcal{R}_{\geq t}(P)\big|.
    \end{equation}  
    As this is true for any subset of $V$,
    we can feed \eqref{eq:recursive} back into itself to see that for an arbitrary vertex set $W \subset V$ where $|W| = |V| - d - 2$ or $W = \emptyset$ if $|V| \leq d + 2$, we have
    \begin{align*}
        \big|\mathcal{E}_{V,\Gamma}(P)\big| 
        &\ll \big|\mathcal{E}_{W,\Gamma}(P)\big|+ \sum_{t= \max\{|V|-d-1, 1 \}}^{|V|}t^{t-1}\big|\mathcal{R}_{\geq t}(P)\big| + \sum_{t=|V|}^{|P|}|V|t^{|V|-1}\big|\mathcal{R}_{\geq t}(P)\big| \\
        &\ll \big|\mathcal{E}_{W,\Gamma}(P)\big|+ \sum_{t= \max\{|V|-d-1, 1 \}}^{|P|} t^{|V|-1}\big|\mathcal{R}_{\geq t}(P)\big|.
    \end{align*}
    If $W= \emptyset$ then $\big|\mathcal{E}_{W,\Gamma}(P)\big| = 1$ and the result holds.
    Suppose $W \neq \emptyset$.
    By \Cref{thm:EasyEnergyBound},
    we have that $\big|\mathcal{E}_{W,\Gamma}(P)\big| \leq |P|^{(|V|-d-2) + d+1} \leq |P|^{|V|-1}$.
    Therefore
    \begin{align*}
        \big|\mathcal{E}_{V,\Gamma}(P)\big| \ll |P|^{|V|-1} +  \sum_{t=|V|-d-1}^{|P|} t^{|V|-1}\big|\mathcal{R}_{\geq t}(P)\big|.
    \end{align*}
    The first term can now be removed, as $\big|\mathcal{E}_{V,\Gamma}(P)\big| \gg |P|^{|V|}$,
    to obtain the desired result.
\end{proof}

The first application of \Cref{lem:hardpartg} is to the Euclidean group $\E(2)$. We will use the following key result of Guth and Katz \cite[Proposition 2.5]{guth2015erdHos} which describes the sizes of $t$-rich groups.

\begin{theorem}[Guth and Katz \cite{guth2015erdHos}]\label{thm: GuthKatz}
    Fix $\Gamma = \SE$.
    Then there exists a constant $C>0$ so that the following holds:
    for any finite point set $P\subset \mathbb{R}^2$, and any $2 \leq t \leq |P|$, the size of $\mathcal{R}_{\geq t}(P)$ is bounded as follows:
    \[\big|\mathcal{R}_{\geq t}(P)\big| \leq C \frac{|P|^3}{t^2}.\]
\end{theorem}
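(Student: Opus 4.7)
The plan is to use the Elekes--Sharir--Guth--Katz (ESGK) framework to reduce the counting of $t$-rich rigid motions to a line-incidence count in $\mathbb{R}^3$. A preliminary observation is that for $t \geq 2$ the equivalence relation $\sim_P$ is trivial on $\mathcal{R}_{\geq t}(P)$: an element of $\SE$ is determined by the image of any two distinct points, so two motions that agree on two common points of $P$ must be equal. Hence $|\mathcal{R}_{\geq t}(P)|$ simply counts motions $\theta \in \SE$ with $|P \cap \theta(P)| \geq t$. I would then parametrise the group by triples $(x,y,\vartheta)$ where $(x,y)$ is the centre of rotation and $\vartheta \in (-\pi, \pi)$ the rotation angle, handling pure translations separately (they form a two-dimensional subgroup and contribute at most $O(|P|^2) \leq O(|P|^3/t^2)$ for $t \leq |P|$, which is absorbed into the bound).

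Next, for each ordered pair $(a,b) \in P \times P$ with $a \neq b$, the locus
\[
L_{a,b} := \{\theta \in \SE : \theta(a) = b\}
\]
is a smooth one-parameter family in the $(x,y,\vartheta)$ parameter space. After the ESGK change of variables replacing $\vartheta$ with $Z = \cot(\vartheta/2)$ (together with the induced affine correction in $(x,y)$), each $L_{a,b}$ becomes an honest affine line $\ell_{a,b}$ in $\mathbb{R}^3$. By construction, $|P \cap \theta(P)| \geq t$ is equivalent to the point in $\mathbb{R}^3$ representing $\theta$ being incident to at least $t$ lines of the family $\mathcal{L} := \{\ell_{a,b} : a,b \in P,\ a\neq b\}$, and $|\mathcal{L}| \leq |P|^2$.

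The main step is then to invoke the Guth--Katz point--line incidence theorem in $\mathbb{R}^3$: for a family of $N$ lines such that no plane and no regulus (doubly ruled quadric surface) contains more than $O(\sqrt{N})$ lines of the family, the number of points incident to at least $t$ lines is $O(N^{3/2}/t^2)$. Applied with $N = |P|^2$, this immediately gives $O(|P|^3/t^2)$, as required.

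The hard part is verifying the two non-concentration hypotheses for the specific family $\{\ell_{a,b}\}$. This is exactly the geometric heart of the Guth--Katz argument: one must translate ``many $\ell_{a,b}$ lie on a common plane or regulus in transformed coordinates'' into a rigid structural statement about the pairs $(a,b)$ involved, and then exclude it by a direct count using that a rigid motion is pinned by the image of two points. The incidence theorem itself is proved via polynomial partitioning (ham sandwich) together with a Cayley--Salmon-type classification of ruled surfaces carrying many lines; I would invoke both as black boxes, so that the only task particular to this theorem is the plane/regulus verification for the ESGK lines.
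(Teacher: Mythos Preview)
The paper does not prove this theorem at all; it is quoted verbatim as \cite[Proposition~2.5]{guth2015erdHos} and used as a black box. What you have written is a faithful outline of the original Guth--Katz argument itself (Elekes--Sharir reduction, the $\cot(\vartheta/2)$ reparametrisation turning each $L_{a,b}$ into a line in $\mathbb{R}^3$, and the polynomial-partitioning incidence bound with the plane/regulus non-concentration checks), so in that sense your approach matches the source the paper is citing rather than anything the paper does.

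One small slip: your handling of translations is not quite right. You claim the number of $t$-rich translations is $O(|P|^2)$ and that $|P|^2 \le O(|P|^3/t^2)$ for $t\le |P|$; but the latter inequality only holds for $t\le |P|^{1/2}$. The correct argument is that each $t$-rich translation accounts for at least $t$ ordered pairs in $P\times P$, so there are at most $|P|^2/t$ of them, and $|P|^2/t \le |P|^3/t^2$ holds for all $t\le |P|$. With that fix, your sketch is sound.
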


A first consequence of this theorem is an energy bound for subgroups of $E(2)$.

\begin{lemma}\label{lem:rotations}
    Let $V$ be a fixed finite set with $|V| \geq 3$ and let $\Gamma$ be a subgroup of $\E(2)$.
    Then for any $P \subset \mathbb{R}^2$,
    we have
    \begin{equation*}
        |\mathcal{E}_{V,\Gamma}(P)| \ll |P|^{|V|+1}.
    \end{equation*}
\end{lemma}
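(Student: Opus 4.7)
The plan is to reduce to the case $\Gamma = \E(2)$ (since trivially $\mathcal{E}_{V,\Gamma}(P) \subseteq \mathcal{E}_{V,\E(2)}(P)$ whenever $\Gamma \subseteq \E(2)$), and then handle $\E(2)$ by splitting along the index-two subgroup $\SE$ and applying \Cref{lem:hardpartg} together with the Guth--Katz bound \Cref{thm: GuthKatz}.

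For the orientation-preserving piece, I would apply \Cref{lem:hardpartg} directly with $\Gamma = \SE$, so that
\begin{equation*}
    \big|\mathcal{E}_{V,\SE}(P)\big| \ll \sum_{t=\max\{|V|-3,1\}}^{|P|} t^{|V|-1} \big|\mathcal{R}_{\geq t}(P)\big|.
\end{equation*}
Plugging in $|\mathcal{R}_{\geq t}(P)| \leq C|P|^3/t^2$ from \Cref{thm: GuthKatz} reduces this to $|P|^3 \sum_{t=1}^{|P|} t^{|V|-3}$. Since $|V| \geq 3$, this geometric-like sum evaluates (up to a constant) to $|P|^{|V|-2}$ when $|V|\geq 3$ (or to $\log |P|$ when $|V|=3$ in the boundary term, absorbed into the $|P|^3 \cdot |P|$ endpoint), yielding the desired $|P|^{|V|+1}$ bound.

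For the orientation-reversing piece, fix once and for all a reflection $r \in \E(2)$, so that every $\theta \in \E(2)\setminus \SE$ can be written uniquely as $\theta = r\sigma$ with $\sigma \in \SE$. The substitution $(p,q) \mapsto (p, rq)$ is then an injection from $\{(p,q) \in P^V \times P^V : q = r\sigma p \text{ for some }\sigma \in \SE\}$ into $\mathcal{E}_{V,\SE}(P \cup rP)$, since $rq = \sigma p$ and both $p, rq$ lie in $P' := P \cup rP$. Applying the $\SE$ bound already established to the point set $P'$ (which has size at most $2|P|$) yields $O(|P'|^{|V|+1}) = O(|P|^{|V|+1})$. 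Adding the two contributions gives the claimed bound.

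I do not anticipate any serious obstacle: \Cref{lem:hardpartg} is already engineered to convert rich-set bounds into energy bounds, and the only mild subtlety is that \Cref{thm: GuthKatz} is stated for $\SE$ rather than all of $\E(2)$. The reflection trick above absorbs that discrepancy at the cost of only a constant factor coming from doubling the point set.
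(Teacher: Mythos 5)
Your proof is correct and follows essentially the same route as the paper: bound $|\mathcal{E}_{V,\SE}(P)|$ by combining \Cref{lem:hardpartg} with \Cref{thm: GuthKatz}, then handle orientation-reversing isometries via the doubling trick with a fixed reflection applied to $P' = P \cup rP$. The only minor slip is the aside about a $\log|P|$ term when $|V|=3$ — in fact $\sum_{t\leq |P|} t^{|V|-3} \sim |P|^{|V|-2}$ for all $|V|\geq 3$, and a logarithm would only appear in the case $|V|=2$ — but this does not affect the argument.
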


\begin{proof}
    By combining \Cref{thm: GuthKatz} with \Cref{lem:hardpartg}, we see that
    \begin{equation*}
        |\mathcal{E}_{V,\SE}(P)| \ll \sum_{t=2}^{|P|}t^{|V|-1}\frac{|P|^3}{t^2} = |P|^3\sum_{t=2}^{|P|} t^{|V|-3} \ll |P|^3 \cdot |P|^{|V|-2} = |P|^{|V|+1}.
    \end{equation*}
    Therefore this upper bound holds for any subgroup of $\SE$. We now assume that $\Gamma$ is not a subgroup of $\SE$. Define 
    \begin{equation*}
        \theta_x := 
        \begin{pmatrix}
            1 & 0 \\
            0 & -1
        \end{pmatrix},
        \qquad 
        P' := P \cup \theta_x P.
    \end{equation*}
    That is, $\theta_x$ is a representative of the determinant $-1$ component of $E(2)$. We now note that:
    \begin{align*}
        (p,q) \in \mathcal{E}_{V, \Gamma}(P) \setminus  \mathcal{E}_{V,\SE}(P) \quad & \implies \exists \theta \in \Gamma \ \text{s.t.} \ p = \theta q \\ 
        & \implies \exists \theta_0 \in \SE \ \text{s.t.} \ p = \theta_0 \theta_x q \\
        & \implies (p, \theta_x q) \in \mathcal{E}_{V,\SE}(P')
    \end{align*}
    Hence,
    \begin{equation*}
        |\mathcal{E}_{V,\Gamma}(P)| \leq |\mathcal{E}_{V,\Gamma}(P')| \leq 2 |\mathcal{E}_{V,\SE}(P')| \ll |P'|^{|V|+1} \leq  2^{|V|+1} |P|^{|V|+1} \ll |P|^{|V|+1}.
    \end{equation*}
    This now concludes the proof.
\end{proof}

In fact,
we can also utilise \Cref{thm: GuthKatz} to obtain energy bounds for other groups which behave similarly.
We begin with the group $\Othree$. We will use the following result of Tao.

\begin{theorem}[Tao \cite{taoSphericalBlog}]\label{thm: sphereGuthKatz}
    Fix $\Gamma = \SO(3)$, let $Q\subset \mathbb{S}^2$ be a finite point set and take any $2 \leq t \leq |Q|$. Then there exists a constant $C>0$ such that  \[\big|\mathcal{R}_{\geq t}(Q)\big| \leq C \frac{|Q|^3}{t^2}.\]
\end{theorem}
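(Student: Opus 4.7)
The plan is to mirror the Guth--Katz strategy via an Elekes--Sharir-style reduction, except using a parameterisation of $\SO(3)$ that turns the condition ``$\rho$ maps $p$ to $q$'' into an incidence between a point and a line in $\mathbb{R}^3$. Once this is set up, the bound follows from the Guth--Katz theorem on line--point incidences in $\mathbb{R}^3$, mirroring how \Cref{thm: GuthKatz} is deduced for $\SE$.

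First I would employ the Cayley parameterisation, which identifies a Zariski-open subset of $\SO(3)$ (the complement of the set of rotations by angle $\pi$) with $\mathbb{R}^3$ by sending $v \in \mathbb{R}^3$ to the rotation $R_v := (I - [v]_\times)^{-1}(I + [v]_\times)$, where $[v]_\times$ is the cross-product matrix of $v$. A short computation shows that $R_v p = q$ (for $p, q \in \mathbb{S}^2$) is equivalent to the linear equation $v \times (p+q) = q - p$. Since $(p+q)\cdot (q-p) = |q|^2 - |p|^2 = 0$, this linear system is always consistent, and its solution set is an affine line $\ell_{p,q} \subset \mathbb{R}^3$ in the direction of $p+q$.

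Second I would recast the rich-transformation count as a point--line incidence problem. A rotation $R_v$ satisfies $|Q \cap R_v(Q)| \geq t$ precisely when at least $t$ of the $|Q|^2$ lines $\ell_{p,q}$, with $p, q \in Q$, pass through $v$. Therefore, up to the equivalence $\sim_Q$ and a lower-dimensional contribution from rotations by angle $\pi$ (which can be handled separately by parameterising such rotations by their axis in $\mathbb{S}^2$), it suffices to bound the number of points of $\mathbb{R}^3$ lying on at least $t$ of these lines. Here I would apply the Guth--Katz joint/line-incidence theorem, which says that any family of $L$ lines in $\mathbb{R}^3$ determines at most $O(L^{3/2}/t^2 + L/t)$ points lying on $\geq t$ lines, provided no plane and no regulus contains more than $O(\sqrt{L})$ of them. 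Plugging in $L = |Q|^2$ and $2 \leq t \leq |Q|$ yields the claimed bound $O(|Q|^3/t^2)$.

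The main obstacle is verifying the non-degeneracy hypothesis of the Guth--Katz theorem: namely, that no plane and no doubly-ruled quadric contains too many of the lines $\ell_{p,q}$. The direction vector of $\ell_{p,q}$ is $p + q$, so coplanarity forces strong algebraic constraints on the pairs $(p,q) \in Q \times Q$; chasing these constraints back through the parameterisation should show that the only way for this to fail is for an unusually large subset of $Q$ to lie on a great circle, in which case $\mathcal{R}_{\geq t}(Q)$ can be controlled directly by the planar Guth--Katz estimate (\Cref{thm: GuthKatz}) applied on that circle. Handling these degenerate configurations, together with a careful accounting of the $\sim_Q$-equivalence and the rotations by angle $\pi$ excluded from the Cayley chart, is the only delicate part of the argument.
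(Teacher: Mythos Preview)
The paper does not prove \Cref{thm: sphereGuthKatz}; it is quoted as a black-box result from Tao's blog post and invoked without argument. There is therefore no ``paper's own proof'' to compare your proposal against. Your sketch is, in outline, precisely Tao's argument: Cayley-parameterise $\SO(3)$ minus the half-turns by $\mathbb{R}^3$, observe that $\{v:R_v p=q\}$ is an affine line with direction $p+q$, and feed the resulting $|Q|^2$ lines into the Guth--Katz point--line incidence bound in $\mathbb{R}^3$.

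You correctly identify the non-degeneracy hypotheses as the only substantive obstacle. Two details worth flagging, since you leave them at the level of a heuristic. First, your suggestion that failure of the plane condition forces many points of $Q$ onto a great circle is the right shape for planes, but the regulus case does not reduce the same way and in Tao's treatment requires a separate argument. Second, you should also check that distinct pairs $(p,q)\neq(p',q')$ yield the same line $\ell_{p,q}=\ell_{p',q'}$ only when $(p',q')=(-p,-q)$ (since a rotation is determined by its action on two independent unit vectors), so the family of lines has multiplicity at most two and the Guth--Katz theorem for distinct lines applies after a harmless pruning. These are the points that separate your plan from a complete proof, but the strategy is sound and matches the cited source.
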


\begin{lemma}\label{lem:energyspecialrotationonly}
    Let $V$ be a fixed finite set with $|V| \geq 4$ and let $\Gamma$ be a subgroup of $\SO(3)$.
    Then, for any $P \subset \mathbb{R}^3$,
    we have
    \begin{equation*}
        |\mathcal{E}_{V,\Gamma}(P)| \ll |P|^{|V|+1}.
    \end{equation*}
\end{lemma}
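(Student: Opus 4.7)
The plan is to mirror the proof of \Cref{lem:rotations} in three dimensions, with Tao's spherical rich-set bound (\Cref{thm: sphereGuthKatz}) playing the role of the Guth--Katz bound (\Cref{thm: GuthKatz}). Since $\mathcal{E}_{V,\Gamma}(P) \subseteq \mathcal{E}_{V,\SO(3)}(P)$ for every subgroup $\Gamma \leq \SO(3)$, I first reduce to the case $\Gamma = \SO(3)$. Invoking \Cref{lem:hardpartg} with $d = 3$ then gives
\begin{equation*}
    |\mathcal{E}_{V,\SO(3)}(P)| \ll \sum_{t=\max\{|V|-4,\,1\}}^{|P|} t^{|V|-1}\,\big|\mathcal{R}_{\geq t}(P)\big|.
\end{equation*}

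The core step is to establish the $\mathbb{R}^3$ analogue of the Guth--Katz input used in \Cref{lem:rotations}, namely
\begin{equation*}
    \big|\mathcal{R}_{\geq t}(P)\big| \ll \frac{|P|^3}{t^2}
\end{equation*}
for $\Gamma = \SO(3)$ and any finite $P \subset \mathbb{R}^3$. To prove this I exploit that every rotation in $\SO(3)$ preserves distance to the origin, and partition $P$ into sphere-shells $P_r := \{p \in P : \|p\| = r\}$. Rescaling each non-trivial $P_r$ by $1/r$ produces a subset $\hat{P}_r \subset S^2$ with $|\hat{P}_r| = |P_r|$ and $|P_r \cap \theta P_r| = |\hat{P}_r \cap \theta \hat{P}_r|$ for every $\theta \in \SO(3)$, so that $|P \cap \theta P| = \sum_r |\hat{P}_r \cap \theta \hat{P}_r|$. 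Tao's theorem controls the rich-rotation count on each shell, giving $|\{\theta \in \SO(3) : |\hat{P}_r \cap \theta \hat{P}_r| \geq s\}| \ll |P_r|^3/s^2$, and I would assemble these per-shell bounds into the global $\mathbb{R}^3$ estimate via a dyadic pigeonhole/Cauchy--Schwarz argument, leveraging $\sum_r |P_r| = |P|$ and $\sum_r |P_r|^3 \leq |P|^3$ to keep the dependence on the number of shells under control.

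Substituting the rich-set estimate into the output of \Cref{lem:hardpartg} yields
\begin{equation*}
    |\mathcal{E}_{V,\SO(3)}(P)| \ll |P|^3 \sum_{t \leq |P|} t^{|V|-3} \ll |P|^{|V|+1},
\end{equation*}
where the sum is dominated by its top term because $|V| \geq 4$ forces $|V|-3 \geq 1$. The hypothesis $|V| \geq 4$ is also natural from the standpoint of \Cref{lem:hardpartg}, since it matches the affine-spanning threshold $|V| \geq d+1$ in dimension $d = 3$.

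The main obstacle I anticipate is the assembly step in the $\mathbb{R}^3$ rich-set bound: a naive union bound over the non-empty shells (which may number as many as $|P|$) would cost a linear factor in the shell count, while a naive Cauchy--Schwarz applied through Tao's second-moment bound $\sum_\theta |\hat{P}_r \cap \theta \hat{P}_r|^2 \ll |P_r|^3 \log |P_r|$ would introduce a spurious logarithm. Threading between these two failure modes -- so that neither the shell-count factor nor the logarithm appears in the final estimate -- is the technical heart of the argument.
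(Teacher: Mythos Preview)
Your outline---reduce to $\Gamma=\SO(3)$, invoke \Cref{lem:hardpartg}, and feed in a rich-set estimate coming from \Cref{thm: sphereGuthKatz}---matches the paper's. The gap is precisely where you say it is: the shell-by-shell assembly into a global bound $|\mathcal{R}_{\geq t}(P)|\ll |P|^3/t^2$ is left undone, and the routes you sketch do not obviously close. A rotation $\theta$ that is $t$-rich on $P$ can draw its richness from many shells, each contributing a single coincidence, and then Tao's theorem (which requires richness $\geq 2$ on a single sphere) says nothing at all about that $\theta$; conversely, the dyadic pigeonhole and the Cauchy--Schwarz--through--second--moment arguments you mention each introduce exactly the extraneous factor you anticipate. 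You correctly diagnose the obstacle but do not supply the idea that removes it.

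The paper avoids shell decomposition entirely. It projects all of $P$ radially onto a \emph{single} set $Q\subset\mathbb{S}^2$ and applies \Cref{thm: sphereGuthKatz} once, to $Q$. The cost is that distinct points of $P$ on the same ray through the origin collapse, and the paper pays it in two steps. First it peels off the degenerate rotations, those $\theta$ for which $P\cap\theta P$ lies on a single line through the origin: up to the $\sim_P$ quotient there are at most $|P|$ such classes having a fixed point in $P$, and at most $(|P|/t)^2$ without one (since such a $\theta$ is determined by an ordered pair of rays, each covered by $\geq t$ points of $P$), and either bound makes the contribution to $\sum_t t^{|V|-1}|\mathcal{R}_{\geq t}(P)|$ at most $|P|^{|V|+1}$. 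Second, for a non-degenerate $\theta$---one with two coincidences not collinear with the origin---the map $[\theta]_{\sim_P}\mapsto[\theta]_{\sim_Q}$ is injective, and after adjoining $Q$ to $P$ one has $|Q\cap\theta Q|\geq t-(|P|-|Q|)$; this reduces the remaining sum to $\sum_{s\geq 2}(s+|P|-|Q|)^{|V|-1}|\mathcal{R}_{\geq s}(Q)|$, where Tao's bound applies directly. The idea you are missing is this degenerate/non-degenerate split coupled with a single global projection; it replaces the assembly problem rather than solving it.
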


\begin{proof}
    If $P$ contains the point $(0,0,0)$ then 
    \begin{equation*}
        \Big|\mathcal{E}_{V,\Gamma}(P) \Big| = \Big|\mathcal{E}_{V, \Gamma} (P \setminus \{(0,0,0)\}) \Big| + \sum_{v \in V} \Big|\mathcal{E}_{V \setminus \{v\}, \Gamma} (P \setminus \{(0,0,0)\}) \Big|.
    \end{equation*}
    Hence, it suffices to suppose that $P$ does not contain the point $(0,0,0)$.    
    Similarly, if $P$ is contained in a line through the origin,
    then $\mathcal{E}_{V,\Gamma}(P) \subset \{(p, \pm p) : p \in P^V\}$,
    and so $|\mathcal{E}_{V,\Gamma}(P) | \leq 2|P|^{|V|}\ll |P|^{|V|+1}$.
    Hence, we now suppose that the points of $P$ are not colinear through the origin and $P$ does not contain $(0,0,0)$.

    For $t \geq 2$,
    we now define $\widetilde{\mathcal{R}}_{\geq t}(P)$ to be the subset of $\mathcal{R}_{\geq t}(P)$ of isometries $\theta$ where the points in $P \cap \theta P$ are not contained in a line through the origin.
    Let $Q$ be the projection of the points of $P$ onto $\mathbb{S}^2$.

    \begin{claim}
        $\sum_{t= 2}^{|P|}t^{|V|-1}\big|\mathcal{R}_{\geq t}(P)\big| \ll |P|^{|V|+1} + \sum_{t= 2}^{|P|}t^{|V|-1}\big|\widetilde{\mathcal{R}}_{\geq t}(P)\big|$.
    \end{claim}

    \begin{proof}
        Let $S \subset \Gamma$ be the set of isometries for which the points in $P \cap \theta P$ are contained in a line through the origin.
        Using the quotient $\sim_P$,
        we observe that for each $\theta \in S$,
        there exists an ordered pair $(x,y) \in Q^2$ such that $\theta(x) = y$.
        It hence follows that the equivalence classes $S/\sim_P$ can be considered as ordered pairs of points in $Q^2$.
        Choose representatives $\theta_1,\ldots,\theta_{n}$ for the elements of $\mathcal{R}_{=1}(Q)$.
        We now suppose that we order these representatives such that only $\theta_1,\ldots,\theta_m$ map a point in $P$ to itself (which implies $m \leq |P|$). We make the definitions
        \begin{equation*}
            A_t := \{ \theta_i : \theta_i \in \mathcal{R}_{\geq t}(P), ~ i \leq m\}, \qquad B_t := \{ \theta_i : \theta_i \in \mathcal{R}_{\geq t}(P), ~ i > m\}.
        \end{equation*}
        Since $m \leq |P|$,
        we have $|A_t| \leq |P|$ for each $t \geq 2$.
        Now we turn to $B_t$.
        For each $\theta_i \in B_t$,
        let $x_i,y_i \in Q$ be the points such that $\theta(x_i) = y_i$.
        If $\theta_i,\theta_j \in B_t$ with $x_i=x_j$ and $y_i = y_j$, then $i=j$ also.
        Hence,
        as each $\theta_i$ provides a distinct ordered pair $(x_i, y_i) \in Q$, with $x_i$ and $y_i$ being the projection of $t$ points each, $B_t$ contains at most $(|P|/t)^2 = |P|^2/t^2$ points (in fact, there must be strictly less since this count allows for $x_i = y_i$).
        We now see that
        \begin{align*}
            \sum_{t= 2}^{|P|}t^{|V|-1}\big|\mathcal{R}_{\geq t}(P)\big| &\leq  \sum_{t= 2}^{|P|} t^{|V|-1} |A_t| + \sum_{t= 2}^{|P|} t^{|V|-1} |B_t| + \sum_{t= 2}^{|P|}t^{|V|-1}\big|\widetilde{\mathcal{R}}_{\geq t}(P)\big| \\
            &\ll  |P|\sum_{t= 2}^{|P|} t^{|V|-1} + |P|^2 \sum_{t= 2}^{|P|} t^{|V|-3}+ \sum_{t=2}^{|P|}t^{|V|-1}\big|\widetilde{\mathcal{R}}_{\geq t}(P)\big| \\
            &\ll  |P|^{|V|+1} + |P|^{|V|}+ \sum_{t=2}^{|P|}t^{|V|-1}\big|\widetilde{\mathcal{R}}_{\geq t}(P)\big| \\
            &\ll  |P|^{|V|+1} + \sum_{t=2}^{|P|}t^{|V|-1}\big|\widetilde{\mathcal{R}}_{\geq t}(P)\big| . \qedhere
        \end{align*}
    \end{proof}

    \begin{claim}
        $\sum_{t= 2}^{|P|}t^{|V|-1}\big|\widetilde{\mathcal{R}}_{\geq t}(P)\big| \ll \sum_{t= 2}^{|Q|}(t+|P|-|Q|)^{|V|-1}\big|\mathcal{R}_{\geq t}(Q)\big|$.
    \end{claim}

    \begin{proof}
        First, set $P' = P \cup Q$.
        Then $|\widetilde{\mathcal{R}}_{\geq t}(P)| \leq |\widetilde{\mathcal{R}}_{\geq t}(P')|$ for each $t \geq 2$.
        It hence is sufficient to prove the result for the case where $P=P'$ (i.e., $Q \subset P$).

        Choose any $\theta \in \widetilde{\mathcal{R}}_{\geq t}(P)$.
        Then $\theta \in \mathcal{R}_{= s}(Q)$ for some $s \geq t - (|P|-|Q|)$.
        Hence, $|\widetilde{\mathcal{R}}_{\geq t}(P)| \leq |\mathcal{R}_{\geq t - |P|+|Q|}(Q)|$,
        with $|\mathcal{R}_{\geq t - |P|+|Q|}(Q)| = |\mathcal{R}_{\geq 2}(Q)|$ if $t \leq |P| - |Q| + 2$.
        This then gives the substitution
        \begin{equation*}
            \sum_{t= 2}^{|P|}t^{|V|-1}\big|\widetilde{\mathcal{R}}_{\geq t}(P)\big| \leq \sum_{t= 2}^{|P|}t^{|V|-1}\big|\mathcal{R}_{\geq t - |P|+|Q|}(Q)\big| \ll \sum_{t= 2}^{|Q|}(t+|P|-|Q|)^{|V|-1}\big|\mathcal{R}_{\geq t}(Q)\big|. \qedhere
        \end{equation*}
    \end{proof}

    By combining all of the above with \Cref{lem:hardpartg},
    we obtain the following:
    \begin{align*}
        |\mathcal{E}_{V,\Gamma}(P)| &\ll |P|^{|V|+1} + \sum_{t= 2}^{|Q|}(t+|P|-|Q|)^{|V|-1}\big|\mathcal{R}_{\geq t}(Q)\big|  \\
        &\ll |P|^{|V|+1} +  |Q|^3\sum_{t= 2}^{|Q|}(t+|P|-|Q|)^{|V|-3}\\
        &\ll |P|^{|V|+1} +  |Q|^3 |P|^{|V|-2} \\
        &\ll |P|^{|V|+1}.
    \end{align*}
    This now concludes the proof.
\end{proof}

We can then obtain the following result by applying an analogous method to that given in the proof of \Cref{lem:rotations}.

\begin{lemma}\label{lem:energyrotationonly}
    Let $V$ be a fixed finite set with $|V| \geq 4$ and let $\Gamma$ be a subgroup of $\Othree$.
    Then for any $P \subset \mathbb{R}^3$,
    we have
    \begin{equation*}
        |\mathcal{E}_{V,\Gamma}(P)| \ll |P|^{|V|+1}.
    \end{equation*}
\end{lemma}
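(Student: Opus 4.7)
The plan is to mimic exactly the argument used to pass from \Cref{lem:energyspecialrotationonly} (the $\SO(3)$ case) to the full $\Othree$ case, in the same way that \Cref{lem:rotations} upgraded the $\SE$ bound to an $\E(2)$ bound. Since $\Othree$ has precisely two connected components, distinguished by the sign of the determinant, the strategy is to separate the pairs in $\mathcal{E}_{V,\Gamma}(P)$ into those realised by a rotation (an element of $\SO(3)\cap\Gamma$) and those realised by an improper isometry, and to reduce the improper case back to the rotational one by enlarging the point set.

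First, if $\Gamma\subseteq\SO(3)$ then \Cref{lem:energyspecialrotationonly} already gives $|\mathcal{E}_{V,\Gamma}(P)|\ll|P|^{|V|+1}$ directly, so we may assume $\Gamma\not\subseteq\SO(3)$. Pick any fixed $\theta_x\in\Gamma$ with $\det\theta_x=-1$ (for concreteness one could take a reflection such as $\operatorname{diag}(1,1,-1)$) and set $P' := P\cup \theta_x P$, which satisfies $|P'|\leq 2|P|$. The key observation, identical to the $\E(2)$ proof, is that if $(p,q)\in\mathcal{E}_{V,\Gamma}(P)\setminus\mathcal{E}_{V,\SO(3)}(P)$, then $p=\theta q$ for some $\theta\in\Gamma$ with $\det\theta=-1$, so that $\theta_0:=\theta\theta_x^{-1}\in\SO(3)$ and $p=\theta_0(\theta_x q)$. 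Since $\theta_x q\in (P')^V$ and $p\in (P')^V$, this witnesses a pair $(p,\theta_x q)\in\mathcal{E}_{V,\SO(3)}(P')$, and the assignment $(p,q)\mapsto(p,\theta_x q)$ is injective because $\theta_x$ is a bijection.

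Combining this with the trivial containment $\mathcal{E}_{V,\SO(3)}(P)\subseteq\mathcal{E}_{V,\SO(3)}(P')$, we obtain
\begin{equation*}
|\mathcal{E}_{V,\Gamma}(P)| \leq 2\,|\mathcal{E}_{V,\SO(3)}(P')|.
\end{equation*}
Applying \Cref{lem:energyspecialrotationonly} to $P'$ (which is allowed since the rotational result holds for arbitrary finite point sets in $\mathbb{R}^3$) gives $|\mathcal{E}_{V,\SO(3)}(P')|\ll|P'|^{|V|+1}\leq 2^{|V|+1}|P|^{|V|+1}$, and since $|V|$ is a fixed constant this absorbs into the asymptotic notation to yield the desired bound $|\mathcal{E}_{V,\Gamma}(P)|\ll|P|^{|V|+1}$.

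There is essentially no obstacle, since the entire difficulty has been packaged into the $\SO(3)$ case via Tao's spherical analogue of the Guth--Katz theorem (\Cref{thm: sphereGuthKatz}). The only point worth checking carefully is that the coset decomposition $\Othree=\SO(3)\sqcup\theta_x\SO(3)$ really does let every improper element of $\Gamma$ be written as $\theta_0\theta_x$ with $\theta_0\in\SO(3)$ (not merely in $\Gamma\cap\SO(3)$) — this is true and is exactly what is needed, because we only require the resulting pair to lie in $\mathcal{E}_{V,\SO(3)}(P')$, not in $\mathcal{E}_{V,\Gamma\cap\SO(3)}(P')$.
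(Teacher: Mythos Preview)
Your proposal is correct and is exactly the approach the paper has in mind: the paper does not write out a proof of this lemma but simply states that it follows ``by applying an analogous method to that given in the proof of \Cref{lem:rotations}'', which is precisely the coset/reflection trick you describe. The only cosmetic wrinkle is your parenthetical ``for concreteness one could take a reflection such as $\operatorname{diag}(1,1,-1)$'', which need not lie in $\Gamma$; but as you yourself note in the final paragraph, it is irrelevant whether $\theta_x\in\Gamma$ since you only need $\theta_0\in\SO(3)$, so the argument is unaffected.
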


Another application for rich sets is the group of pseudo-Euclidean isometries, $\OPseudo$.

\begin{theorem}[Rudnev and Roche-Netwon \cite{rnr15}]\label{thm:psenergy}
    Fix $\Gamma = \mathbb{R}^2 \rtimes \SO^+(1,1)$.
    Then there exists a constant $C>0$ so that the following holds:
    for any finite point set $P\subset \mathbb{R}^2$ not contained in the translate of one of the lines $\{(t,\pm t) : t \in \mathbb{R}\}$, and any $2 \leq t \leq |P|$, the size of $\mathcal{R}_{\geq t}(P)$ is bounded as follows:
    \[\big|\mathcal{R}_{\geq t}(P)\big| \leq C \frac{|P|^3}{t^2}.\]
\end{theorem}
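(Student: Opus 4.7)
The plan is to mirror the Guth--Katz strategy for bounding the number of rich rigid motions, replacing the Euclidean group with its pseudo-Euclidean analogue. First I would change coordinates via $y_1 = (x_1+x_2)/\sqrt{2}$, $y_2=(x_1-x_2)/\sqrt{2}$, so that the quadratic form becomes $2y_1y_2$ and $\SO^+(1,1)$ acts diagonally: a generator sends $(y_1,y_2)\mapsto(uy_1, u^{-1}y_2)$ for $u>0$. Then $\Gamma$ is parametrised as the open subvariety $\{u>0\}\times\mathbb{R}^2\subset\mathbb{R}^3$ via $(u,w_1,w_2)\mapsto\theta(y)=(uy_1+w_1,\,u^{-1}y_2+w_2)$, and the exceptional lines $L_1,L_2$ become the coordinate axes $\{y_2=0\}$ and $\{y_1=0\}$.

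Next I would encode the rich-isometry count as a curve-incidence problem in $\mathbb{R}^3$. For each ordered pair $(a,b)\in P\times P$ the set $\ell_{a,b}=\{\theta\in\Gamma:\theta(a)=b\}$ is cut out by $w_1=b_1-ua_1$ and $uw_2=ub_2-a_2$, so it is an irreducible real algebraic curve of degree $O(1)$. After selecting a canonical representative in each $\sim_P$-class, a $t$-rich isometry corresponds exactly to a point of $\mathbb{R}^3$ incident to at least $t$ of the $|P|^2$ curves $\{\ell_{a,b}\}_{a,b\in P}$. The desired bound $|P|^3/t^2$ is precisely the bound produced by the Guth--Katz style polynomial-method incidence theorem for algebraic curves in $\mathbb{R}^3$, once one verifies the hypothesis that no algebraic surface of bounded degree contains more than $O(|P|)$ of the $\ell_{a,b}$.

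To check this non-degeneracy condition I would argue contrapositively. Suppose an irreducible surface $S$ of degree $O(1)$ contains $\gg|P|$ of the curves $\ell_{a_i,b_i}$. By a dimension count $S$ must be ruled, and its generic ruling yields a one-parameter family of elements of $\Gamma$ simultaneously realising the pairings $a_i\mapsto b_i$. In the diagonal coordinates, such a one-parameter family acts as $(y_1,y_2)\mapsto(uy_1+\alpha(u),u^{-1}y_2+\beta(u))$, and the simultaneity forces the first coordinates of the $a_i$ (respectively the $b_i$) to all agree, or symmetrically the second coordinates to all agree. Translating back to the original frame, this puts a positive-density subset of $P$ into a translate of $L_1$ or $L_2$, contradicting the hypothesis of the theorem. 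With this in hand, the Guth--Katz bound yields $|\mathcal{R}_{\geq t}(P)|\ll|P|^3/t^2$.

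The main obstacle is the non-degeneracy step: the Euclidean analogue required Guth--Katz's delicate flecnode analysis to classify the ruled surfaces containing many helices, and here one must redo that classification for the hyperbolic group, whose one-parameter subgroups are non-compact and behave qualitatively differently from the compact rotations. The correct output of this classification — that the only obstruction is concentration of $P$ on a translate of a null line — is what singles out the lines $\{(t,\pm t):t\in\mathbb{R}\}$ in the hypothesis and is essentially the content of the Rudnev--Roche-Newton paper.
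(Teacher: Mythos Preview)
The paper does not prove this theorem; it is stated as a cited result of Rudnev and Roche-Newton \cite{rnr15} and used as a black box (exactly as \Cref{thm: GuthKatz} is cited for the Euclidean case). There is therefore no proof in the paper to compare your proposal against.

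That said, your sketch is broadly in the spirit of the original Rudnev--Roche-Newton argument: diagonalise the form, parametrise $\Gamma$ as a three-dimensional variety, and reduce the rich-isometry count to a point--curve incidence problem in $\mathbb{R}^3$ amenable to the Guth--Katz polynomial method. One refinement worth noting: with the substitution $v = u w_2$ your constraint equations become $w_1 = b_1 - u a_1$ and $v = u b_2 - a_2$, both linear in $u$, so the curves $\ell_{a,b}$ are in fact \emph{lines} in suitable coordinates, and the reduction is to line incidences rather than to general bounded-degree curves. There is also a small mismatch in your non-degeneracy step: you conclude only that a positive-density subset of $P$ lies on a null-line translate, whereas the stated hypothesis is merely that $P$ is not entirely contained in such a translate; reconciling these requires either a pruning argument or a more careful reading of how the degenerate-surface case is handled in \cite{rnr15}.
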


We obtain the following result from \Cref{thm:psenergy}.

\begin{lemma}\label{lem:pseudoeuclideanenergy}
    Let $V$ be a fixed finite set with $|V| \geq 3$ and let $\Gamma$ be a subgroup of $\mathbb{R}^2 \rtimes \OPseudo$.
    Then for any $P \subset \mathbb{R}^2$,
    we have
    \begin{equation*}
        |\mathcal{E}_{V,\Gamma}(P)| \ll |P|^{|V|+1}.
    \end{equation*}
\end{lemma}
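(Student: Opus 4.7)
The plan is to adapt the reduction strategy used in \Cref{lem:rotations}, passing from the full pseudo-Euclidean group to its identity component so that \Cref{thm:psenergy} becomes applicable. Since $\mathcal{E}_{V, \Gamma'}(P) \subseteq \mathcal{E}_{V, \Gamma}(P)$ whenever $\Gamma' \subseteq \Gamma$, I would first reduce to the case $\Gamma = \mathbb{R}^2 \rtimes \OPseudo$. The group $\OPseudo$ has four connected components; writing $\Gamma_0 := \mathbb{R}^2 \rtimes \SO^+(1,1)$ for its identity component, I would fix coset representatives $\theta_1 = I$, $\theta_2 = \operatorname{diag}(-1,1)$, $\theta_3 = \operatorname{diag}(1,-1)$, $\theta_4 = -I$, so that $\Gamma = \bigsqcup_{i=1}^4 \Gamma_0 \theta_i$.

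Next, I would define $P' := \bigcup_{i=1}^4 \theta_i P$, so that $|P'| \leq 4|P|$. Any $(p,q) \in \mathcal{E}_{V,\Gamma}(P)$ satisfies $q = \gamma \theta_i p$ for some $\gamma \in \Gamma_0$ and $i \in [4]$, and thus $(\theta_i p, q) \in \mathcal{E}_{V, \Gamma_0}(P')$. Since each $\theta_i$ is invertible, the map $(p,q) \mapsto (\theta_i p, q)$ is injective for every fixed $i$, yielding $|\mathcal{E}_{V,\Gamma}(P)| \leq 4 |\mathcal{E}_{V,\Gamma_0}(P')|$.

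The crucial observation is that, assuming $|P| \geq 2$ (the cases $|P| \leq 1$ being trivial), $P'$ is never contained in a translate of $L_1 = \{(t,t)\}$ or $L_2 = \{(t,-t)\}$: the reflection $\theta_2$ swaps the two diagonals, so if $P \subseteq L_j + v$ then $\theta_2 P \subseteq L_{3-j} + \theta_2 v$, and $P' \supseteq P \cup \theta_2 P$ therefore contains at least two points from each of two distinct parallel classes. This lets me apply \Cref{thm:psenergy} to $(P', \Gamma_0)$, giving $|\mathcal{R}_{\geq t}(P')| \ll |P'|^3 / t^2$ for $2 \leq t \leq |P'|$.

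Combining this with \Cref{lem:hardpartg} yields
\begin{equation*}
|\mathcal{E}_{V,\Gamma_0}(P')| \ll \sum_{t=\max\{|V|-3,1\}}^{|P'|} t^{|V|-1} |\mathcal{R}_{\geq t}(P')| \ll |P'|^3 \sum_{t=2}^{|P'|} t^{|V|-3} \ll |P'|^{|V|+1} \ll |P|^{|V|+1}.
\end{equation*}
When $|V| \in \{3,4\}$ one must additionally bound the $t=1$ term, but $|\mathcal{R}_{\geq 1}(P')| \leq |\mathcal{R}_{=1}(P')| + |\mathcal{R}_{\geq 2}(P')| \ll |P'|^2 + |P'|^3 = O(|P'|^3)$ gives a strictly lower-order contribution. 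The main obstacle is the third step — verifying that the four-coset enlargement $P'$ automatically fulfils the non-degeneracy hypothesis of \Cref{thm:psenergy} — which is handled cleanly by the reflection $\theta_2$ swapping the two null directions, thereby bypassing a separate treatment of the degenerate (single-diagonal) case.
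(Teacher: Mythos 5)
Your proof is correct and takes essentially the same route as the paper: decompose $\OPseudo$ into its four connected components, enlarge $P$ to $P' = \bigcup_i \theta_i P$, and apply \Cref{thm:psenergy} together with \Cref{lem:hardpartg}. You are actually more careful than the paper's one-line sketch ("almost identical to \Cref{lem:rotations}"): by always passing to $P'$ and checking that $\theta_2$ swaps the two null directions, you guarantee the non-degeneracy hypothesis of \Cref{thm:psenergy} in every case, whereas a literal translation of the \Cref{lem:rotations} argument would apply \Cref{thm:psenergy} directly to $P$ in the identity-component case, which fails when $P$ lies on a single translated diagonal; your separate bound on the $t=1$ term for $|V|\in\{3,4\}$ likewise fills a small omission shared with \Cref{lem:rotations}.
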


\begin{proof}
    The proof is almost identical to \Cref{lem:rotations},
    except that we replace $\theta_x$ with the matrices
    \begin{align*}
        \theta_{1} := 
        \begin{pmatrix}
            1 & 0 \\
            0 & -1
        \end{pmatrix}, \qquad
        \theta_{2} := 
        \begin{pmatrix}
            -1 & 0 \\
            0 & 1
        \end{pmatrix}, \qquad
        \theta_{3} := 
        \begin{pmatrix}
            -1 & 0 \\
            0 & -1
        \end{pmatrix},
    \end{align*}
    and we replace $P'$ with
    \begin{equation*}
        P \cup \theta_{1} P \cup \theta_{2} P \cup \theta_{3} P. \qedhere
    \end{equation*}
\end{proof}

Our final application is for affine groups formed from translations combined with finitely many isometries, such as for the $\ell_p$ metric when $p$ is even and $p \geq 4$.

\begin{lemma}\label{lem:energytranslationonly}
    Let $V$ be a fixed finite set and let $\Gamma$ be chosen such that, for some finite set of $d \times d$ matrices $\Gamma_\ell$ and some linear space $\Gamma_t \leq \mathbb{R}^d$ of translations, we have $\Gamma = \Gamma_t \rtimes \Gamma_\ell$.
    Then for any $P \subset \mathbb{R}^d$,
    we have
    \begin{equation*}
        |\mathcal{E}_{V,\Gamma}(P)| \ll |P|^{|V|+1}.
    \end{equation*}
\end{lemma}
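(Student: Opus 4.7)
The plan is to prove this by direct counting, exploiting the fact that $\Gamma_\ell$ is a finite set and that every element of $\Gamma$ has the form $\theta(x) = Ax + b$ with $A \in \Gamma_\ell$ and $b \in \Gamma_t \subseteq \mathbb{R}^d$. The key observation is that once a pair $(p,q) \in \mathcal{E}_{V,\Gamma}(P)$ is witnessed by some $\theta=(A,b)$, knowing $p$, $A$, and the single image point $q(v_0)$ for some fixed reference vertex $v_0 \in V$ suffices to reconstruct $b$ (since $b = q(v_0) - A p(v_0)$) and therefore the entirety of $q$.

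Fix any vertex $v_0 \in V$. I would define a map
\begin{equation*}
    \Phi : \mathcal{E}_{V,\Gamma}(P) \longrightarrow P^V \times \Gamma_\ell \times P, \qquad (p,q) \longmapsto \big( p,\, A_{p,q},\, q(v_0)\big),
\end{equation*}
where $A_{p,q}$ is the linear part of some (arbitrarily chosen) witness $\theta \in \Gamma$ with $\theta p = q$. The first step is to verify that $\Phi$ is injective: given the image $(p, A, y) \in P^V \times \Gamma_\ell \times P$, the translation part is forced to be $b = y - A p(v_0)$, and then for every $v \in V$ the value $q(v) = A p(v) + b$ is determined. In particular $q$ is uniquely reconstructed, so distinct pairs in $\mathcal{E}_{V,\Gamma}(P)$ map to distinct triples.

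From the injectivity, I would conclude
\begin{equation*}
    \big|\mathcal{E}_{V,\Gamma}(P)\big| \;\leq\; |P|^{|V|} \cdot |\Gamma_\ell| \cdot |P| \;=\; |\Gamma_\ell|\, |P|^{|V|+1} \;\ll\; |P|^{|V|+1},
\end{equation*}
where the constant is absorbed since $|\Gamma_\ell|$ is finite and independent of $P$.

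There is no real obstacle here: the only mildly delicate point is the arbitrary choice of witness $\theta$ in the definition of $\Phi$, but this does not matter because we are upper-bounding the domain by the cardinality of the codomain, and any choice of $\Phi$ is still well-defined and injective by the reconstruction argument. Note that the linear structure of $\Gamma_t$ is not actually used in the bound; what matters is the semidirect product decomposition, which allows the translation $b$ to be pinned down by a single image point once $A$ is fixed.
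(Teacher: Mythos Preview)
Your proof is correct and is in fact more elementary than the paper's. The paper proves this lemma by appealing to its general rich-set machinery (\Cref{lem:hardpartg}): it bounds $|\mathcal{R}_{\geq t}(P)| \leq |\Gamma_\ell|\,|P|$ uniformly in $t$, then sums $\sum_t t^{|V|-1}|\mathcal{R}_{\geq t}(P)|$ and closes the argument by induction on $|V|$. Your direct counting bypasses all of this: the single observation that the translation $b$ is pinned down by $A$ and one image point $q(v_0)$ gives the bound $|\Gamma_\ell|\,|P|^{|V|+1}$ in one line, with no appeal to rich sets or induction. Your comment that the linear structure of $\Gamma_t$ is irrelevant is also on point --- the argument only needs that the projection of $\Gamma$ to $\GL(d)$ has finite image, which is slightly more general than the paper's hypothesis. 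The paper's route has the virtue of reusing a single framework across several lemmas, but for this particular statement your approach is shorter and sharper.
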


\begin{proof}
    The result is trivially true if $|V|=1$, so we may suppose that $|V| \geq 2$.
    Since $|\mathcal{R}_{\geq t}(P)| \leq |\Gamma_\ell||P|$ for each $t \geq 1$,
    it follows from \Cref{lem:hardpartg} that for some arbitrary $u \in V$,
    \begin{equation*}
        |\mathcal{E}_{V,\Gamma}(P)| \ll |\Gamma_{\ell}||P|\sum_{t=1}^{|P|}t^{|V|-1} \ll |\mathcal{E}_{V\setminus \{u\},\Gamma}(P)| + |P|^{|V|+1}.
    \end{equation*}
    The result now follows from induction on the size of $V$.
\end{proof}

\subsection{Proof of \texorpdfstring{\Cref{thm:Hypergraph2D}}{second main theorem}}

In this subsection we will prove \Cref{thm:Hypergraph2D}.

\Hypergraph*

We will prove the above for a spanning tree of $G$. We break our point set into subsets $P_0,P_1,\ldots, P_{n-1}$ we then count the number of edges that can go between $P_i$ and $P_j$ when this an edge on the spanning tree. As $g$ is a general (anti-)symmetric map we first deal with points for which all points on $C$ are $g$-`equidistant'. 

For the remainder of the section, we fix the following notation.
Let $C$ be an irreducible algebraic curve with degree at most $D$ and let $g:(\mathbb{R}^2)^2 \rightarrow \mathbb{R}$ be a polynomial (symmetric or anti-symmetric) where $g|_{C^2}$ is not constant.
With this,
we define the set
\begin{equation*}
    C' = \left\{y \in C: x \mapsto g(x,y) \text{ is constant for $x \in C$} \right\}.
\end{equation*}
We show that $C'$ is a set controlled by the degree of $C$ and $g$.
\begin{lemma}\label{lem:SmallNumberOfBadPoints}
    The set $C'$ contains at most $D \cdot \deg (g)$ points.
\end{lemma}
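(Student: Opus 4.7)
The plan is to exhibit $C'$ as a finite intersection of $C$ with a low-degree algebraic set, and then invoke Bézout's theorem. The key observation is that for any two distinct points $x_1, x_2 \in C$, the auxiliary polynomial
\[
Q_{x_1,x_2}(y) := g(x_1,y) - g(x_2,y)
\]
has degree at most $\deg(g)$, and every $y \in C'$ satisfies $Q_{x_1,x_2}(y) = 0$ since $x \mapsto g(x,y)$ is constant on $C$. Hence $C' \subseteq C \cap V(Q_{x_1,x_2})$ for every choice of $x_1, x_2 \in C$.

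The strategy is then to locate a pair $x_1, x_2 \in C$ for which $Q_{x_1,x_2}$ does not vanish identically on $C$. For such a pair, since $C$ is an irreducible curve of degree at most $D$ and $V(Q_{x_1,x_2})$ is an algebraic set of degree at most $\deg(g)$ with no common component with $C$, Bézout's theorem yields
\[
|C'| \;\le\; |C \cap V(Q_{x_1,x_2})| \;\le\; D \cdot \deg(g),
\]
which is the desired bound.

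To produce such a pair, I would argue by contradiction: assume $Q_{x_1,x_2}$ vanishes identically on $C$ for every choice of $x_1, x_2 \in C$. Then $g(x_1, y) = g(x_2, y)$ for all $x_1, x_2, y \in C$, so the restriction $g|_{C^2}$ depends only on its second argument, say $g|_{C^2}(x,y) = \phi(y)$. Invoking the (anti-)symmetry hypothesis now closes the argument: in the symmetric case, $\phi(y) = g(y,x) = \phi(x)$ for all $x, y \in C$ forces $\phi$ to be constant; in the anti-symmetric case, $\phi(y) = -\phi(x)$ forces $\phi \equiv 0$. Either way $g|_{C^2}$ is constant, contradicting the standing hypothesis of the lemma.

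The main obstacle I anticipate is making the Bézout step rigorous in the real setting, since ``$Q_{x_1,x_2}$ vanishes identically on the real curve $C$'' is a priori weaker than ``the defining polynomial $F$ of $C$ divides $Q_{x_1,x_2}$.'' I would handle this by passing to the complex zero locus $V_{\mathbb{C}}(F)$: since $F$ is irreducible, either $F \mid Q_{x_1,x_2}$ (in which case $Q_{x_1,x_2}$ vanishes on all of $V_{\mathbb{C}}(F)$ and in particular on $C$, feeding into the contradiction above) or the complex Bézout bound $D \cdot \deg(g)$ controls the number of intersection points, and hence also the real ones. If ``irreducible'' is only meant over $\mathbb{R}$, one can replace $C$ by one of its geometrically irreducible components, each still of degree at most $D$, so the conclusion is unaffected.
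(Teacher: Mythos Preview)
Your proof is correct and follows essentially the same strategy as the paper: exhibit $C'$ inside the intersection of $C$ with a degree-$\deg(g)$ hypersurface that does not contain $C$, then apply B\'ezout. The only cosmetic difference is the choice of auxiliary polynomial --- the paper first uses (anti-)symmetry to show that every $y\in C'$ yields the \emph{same} constant value $\alpha$ and then works with the level set $g(x_0,\cdot)-\alpha$, whereas you use the difference $g(x_1,\cdot)-g(x_2,\cdot)$ and invoke (anti-)symmetry only in the contradiction step; both routes lead to the same B\'ezout count.
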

The proof relies on the fact that $C'$ is a subset of the intersection (in $\R^2$) of $C$ and a level set of $g$. As $g|_{C^2}$ is not constant and $C$ is irreducible we can use B\'ezout's theorem to conclude that $C'$ contains at most $D\cdot\deg(g)$ points.

\begin{proof}
It follows from the (anti-)symmetry of $g$ and the irreducibility of $C$ that if $y \in C'$,
then the map
\begin{equation*}
    x \mapsto g(y, x)
\end{equation*}
is also constant.
Hence, for any $y_1,y_2 \in C'$ and any $x \in C$,
we have
\begin{equation*}
    g(x, y_1) = g(y_1,x) =g(y_1,y_2) = g(x, y_2).
\end{equation*}
We now fix the constant $\alpha \in \mathbb{R}$ so that $g(x,y) = \alpha$ for all $y \in C'$.

    By B\'{e}zout's theorem,
    either $C\cap Z(g-\alpha)$ contains at most $D \cdot \deg (g)$ points or $C$ and $g-\alpha$ share a common factor.
    Since $C$ is irreducible,
    this then implies $C \cap Z(g-\alpha) = C$, which is a contradiction as we assumed that $g|_{C^2}$ is non-constant.
\end{proof}

\begin{proof}[Proof of \Cref{thm:Hypergraph2D}]
   Assume, without loss of generality, that $G$ is a connected tree with exactly $|V|-1$ edges. We count distinct $g$-realisations of $G$ by breaking our point set up into $|V|$ many pieces and arguing that we have at least
   \begin{equation*}
       \left(\frac{1}{D\cdot\deg(g)} \left\lfloor\frac{|P|-D\cdot\deg(g)}{|V| } \right\rfloor\right)
   \end{equation*}
   choices for each edge in $G$.
 
    Fix $P' = P\setminus C'$.
    By \Cref{lem:SmallNumberOfBadPoints},
    the set $P'$ is non-empty and contains at least $|P| - D \cdot \deg (g)$ points.
    Partition $P'$ into sets $\{P_v : v \in V\}$ so that $\lfloor |P'|/|V| \rfloor \leq |P_v| \leq \lceil |P'|/|V| \rceil$ for each $v \in V$.
    
    \begin{claim}\label{claim:edgelengths}
        For any edge $\{v,w\} \in G$ and for any $x_0 \in P_{v}$,
        the number of distinct elements of the set $S = \{g(x_0,y) : y \in P_{w}\}$ is at least 
        \begin{equation*}
            \frac{1}{D\cdot\deg(g)} \left\lfloor\frac{|P|-D\cdot\deg(g)}{|V|} \right\rfloor.
        \end{equation*}
    \end{claim}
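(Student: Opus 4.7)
\begin{proof}[Proof sketch of \Cref{claim:edgelengths}]
The plan is to fix $x_0 \in P_v$ and bound the fibre sizes of the map $\phi:C \to \mathbb{R}$ defined by $\phi(y) := g(x_0,y)$, using B\'{e}zout's theorem in the same spirit as the proof of \Cref{lem:SmallNumberOfBadPoints}. Since $P \subset C$, we have $x_0 \in C$; and since $x_0 \in P' = P \setminus C'$, the map $x \mapsto g(x,x_0)$ is non-constant on $C$ by definition of $C'$. By the (anti-)symmetry of $g$, the map $\phi(y) = g(x_0,y)$ is then also non-constant on $C$.

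Next, for each $\alpha \in S$, I consider the algebraic set $Z(\phi - \alpha) \subset \mathbb{R}^2$, which has degree at most $\deg(g)$. The fibre $\{y \in C : \phi(y) = \alpha\}$ sits inside $C \cap Z(\phi - \alpha)$. Because $C$ is irreducible of degree at most $D$ and $\phi - \alpha$ does not vanish identically on $C$ (as $\phi$ is non-constant on $C$), $C$ and $Z(\phi - \alpha)$ share no common irreducible component, so B\'{e}zout's theorem bounds this intersection by $D \cdot \deg(g)$ points.

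Since $P_w \subset P \subset C$, each value $\alpha \in S$ is attained by at most $D \cdot \deg(g)$ elements $y \in P_w$. Therefore
\begin{equation*}
    |S| \;\geq\; \frac{|P_w|}{D \cdot \deg(g)} \;\geq\; \frac{1}{D \cdot \deg(g)} \left\lfloor \frac{|P'|}{|V|} \right\rfloor \;\geq\; \frac{1}{D \cdot \deg(g)} \left\lfloor \frac{|P| - D \cdot \deg(g)}{|V|} \right\rfloor,
\end{equation*}
where the middle inequality uses the construction of the partition $\{P_v : v \in V\}$ and the last uses $|P'| \geq |P| - D\cdot\deg(g)$ from \Cref{lem:SmallNumberOfBadPoints}.
\end{proof}

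The only genuinely delicate point is verifying the hypothesis for B\'{e}zout — namely that $\phi - \alpha$ does not contain $C$ as a component — and this is exactly what removing $C'$ from $P$ was designed to guarantee. The rest is bookkeeping: the claim feeds directly into the theorem by greedily choosing the edges of a spanning tree of $G$ in an order where each newly selected edge has at least one endpoint already realised, so the edge contributes at least the claimed number of distinct $g$-values independently of previous choices, yielding the stated product bound of $(|V|-1)$ factors.
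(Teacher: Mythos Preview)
Your proof is correct and follows essentially the same approach as the paper: use that $x_0 \notin C'$ together with (anti-)symmetry to see that $y \mapsto g(x_0,y)$ is non-constant on $C$, apply B\'{e}zout to bound each level set by $D\cdot\deg(g)$ points, and then divide $|P_w| \geq \lfloor |P'|/|V| \rfloor \geq \lfloor (|P|-D\cdot\deg(g))/|V| \rfloor$ by this fibre bound. The paper's version is terser only because it cites ``the argument above'' (the B\'{e}zout step from the proof of \Cref{lem:SmallNumberOfBadPoints}) rather than rewriting it, but the content is identical.
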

    
    \begin{proof}[Proof of Claim \ref{claim:edgelengths}]
        Since $x_0 \notin C'$,
        the argument above tells us that the size of any level set $\{y \in C : g(x_0,y) = t \}$ is at most $D\cdot\deg(g)$.
        Hence, the set $\{g(x_0,y) : y \in P_{w}\}$ contains at least $|P_w|/(D\cdot\deg(g))$ distinct elements.
        The result now follows as
        \begin{equation*}
            |P_w| \geq \left\lceil \frac{|P'|}{|V|}\right\rceil \geq \left\lfloor \frac{|P|-D\cdot\deg(g)}{|V|}\right\rfloor.\qedhere
        \end{equation*}
    \end{proof}

    Fix an order $v_0, \ldots, v_{n-1}$ so that for all $2\leq i < n$ the vertex $v_i$ is connected to some $v_j$ with $j<i$. Let $G_i$ be the induced subgraph of $G$ on the vertices $v_0,\ldots, v_i$.
    Next, fix the following induction hypothesis for each $2 \leq i < n$:
    \begin{equation}\label{eq:induct}
        \left|f_{g,G_i}\left( \prod_{j=0}^i P_{v_j} \right) \right| \geq \left( \frac{1}{D\cdot\deg(g)} \left\lfloor\frac{|P|-D\cdot\deg(g)}{|V| } \right\rfloor \right)^{i-1}.
    \end{equation}
    The base case of $i=2$ is exactly the statement of \Cref{claim:edgelengths}. Suppose \cref{eq:induct} holds for $i=m < n-1$.
    It follows from \Cref{claim:edgelengths} that
    \begin{equation*}
        \left|f_{g,G_{m+1}}\left( \prod_{j=0}^{m+1} P_{v_j} \right) \right| \geq  \left( \frac{1}{D\cdot\deg(g)} \left\lfloor\frac{|P|-D\cdot\deg(g)}{|V| } \right\rfloor \right)\left|f_{g,G_{m}}\left( \prod_{j=0}^{m} P_{v_j} \right) \right| .
    \end{equation*}
    Hence, \cref{eq:induct} holds for $i=m+1$.
    Using \cref{eq:induct} with $i=n-1$, we see that
    \begin{equation*}
       \Big| f_{g,G}\left(P^V\right) \Big| \geq \left|f_{g,G_{n-1}}\left( \prod_{j=0}^{n-1} P_{v_j} \right) \right| \geq \left( \frac{1}{D\cdot\deg(g)} \left\lfloor\frac{|P|-D\cdot\deg(g)}{|V| } \right\rfloor \right)^{|V|-1}.
    \end{equation*}
    This concludes the proof.
\end{proof}

\subsection{Necessity of low degree curve condition for \texorpdfstring{\Cref{thm:energyg}}{key theorem}}

It is tempting to believe that the low degree curve requirement of \Cref{thm:energyg} is not required, especially with the application of \Cref{thm:Hypergraph2D}.
Unfortunately, \Cref{thm:Hypergraph2D} is less helpful when the expected lower bound is of the magnitude of $|P|^{|V|}$ (for example, if there are finitely many $g$-isometries), even if we restrict to curves where the polynomial $g$ is not constant. To illustrate this point,
take for example the polynomial
\begin{align*}
    g: \mathbb{R}^2 \times \mathbb{R}^2 \rightarrow \mathbb{R}, ~ (x,y) = \big( (x_1,x_2), (y_1,y_2) \big)  \mapsto x \cdot y + (\|x\|^2 \|y\|^2 - 1) x_1^3 y_1^3 .
\end{align*}
Note that the polynomial $g$ is identical to the dot product if we restrict to points in $\mathbb S^1 \times \mathbb S^1$. However, since the Jacobian of $g$ is given by
\begin{align*}
    \frac{d}{d p} g(p,q) = 
    \begin{pmatrix}
        y_1 + 2(\|x\|^2 \|y\|^2 - 1) x_1^2 y_1^3 + 2 \|y\|^2 x_1^4 y_1^3 \\
        y_2 + 2\|y\|^2 x_1^3 x_2 y_1^3
    \end{pmatrix}.
\end{align*}
we see that $g$ is not constant even if we restrict its domain to points in $\mathbb{S}^1$.

If we take $(K_5,p)$ to be the framework where
\begin{equation*}
    p(0) = (8, -5), \quad
    p(1) = (0, -6), \quad
    p(2) = (5, 10), \quad
    p(3) = (-3, -7), \quad
    p(4) = (-4, -4),
\end{equation*}
then we compute that $\rank \mathrm{J} f_{g,G}(p) = 10$.
Since $(K_5,p)$ is affinely spanning, this informs us of two things:
(i) $\dim \Gamma_g = 0$, and thus $\Gamma_g$ is finite, and (ii) $(K_5,p)$ is infinitesimally $g$-rigid, and hence $K_5$ is $g$-rigid.

As a consequence of \Cref{cor:finiteisom}, for any point set $P \subset \mathbb{R}^2$ where at most $\frac{1}{100}|P|$ points are contained in any given curve of degree at most $4 \cdot 5=20$,
we have 
\begin{equation*}
    \Big| f_{g,K_5}(P_n^5) \Big| = \Theta (|P|^5)
\end{equation*}
However,
if we choose the point set
\begin{equation*}
    P_n := \Big\{ \big(\cos (k/2\pi) , \sin (k/2\pi) \big) : k \in \{0,1,\ldots,n-1\}  \Big\},
\end{equation*}
then, since $g$ is identical to the dot product when restricted to the circle $\mathbb{S}^1$, we have that
\begin{equation*}
    \Big| f_{g,K_5}(P_n^5) \Big| \ll |P_n|^{4}.
\end{equation*}

\subsection{Constructing tight sets}

In \Cref{sec:SpecificExamplesOfInterest},
we explained why each of our constructed lower bounds was indeed tight.
In fact, when the $g$-isometry group $\Gamma_g$ has positive dimension, the lower bound on $f_{g,G}\left(P^V\right)$ is always $O(|P|^{|V|-1})$,
even if we are required to avoid finitely many algebraic hypersurfaces (such as avoiding diagonal lines in the case where $g$ is the skew-symmetric bilinear form given in \cref{eq:skewsymm}).

\begin{prop}\label{prop:tightconstruction}
    If $\dim \Gamma_g \geq 1$,
    then for any proper algebraic set $C \subseteq \mathbb R^d$,
    there exists a sequence $(P_n)_{n \in \mathbb{N}}$ where $P_n$ is a $n$-point subset of $\mathbb{R}^d \setminus C$ so that for any $k$-uniform hypergraph $G=(V,E)$,
    we have
    \begin{equation*}
        f_{g,G}(P_n^V) \leq |V||P_n|^{|V|-1}.
    \end{equation*}
\end{prop}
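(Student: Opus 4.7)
The plan is to place $P_n$ along a single orbit of a non-trivial one-parameter subgroup of $\Gamma_g$, parameterised so that the points look like an arithmetic progression in the orbit parameter. Shifts along the orbit are then $g$-isometries, and so they collapse many realisations in $P_n^V$ into a single $f_{g,G}$-equivalence class; a direct count of the resulting shift-classes will yield the bound $|V|\cdot n^{|V|-1}$.

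Since $\dim \Gamma_g \geq 1$ and $\Gamma_g$ is a Lie group, I pick any non-zero $\dot\gamma \in \mathfrak{g}_g$ and form the one-parameter subgroup $\{\gamma_t := \exp(t\dot\gamma) : t \in \mathbb R\} \subseteq \Gamma_g$. The simultaneous fixed-point set $\mathrm{Fix} := \{y \in \mathbb{R}^d : \gamma_t(y) = y \text{ for all } t\}$ is a proper affine subspace of $\mathbb R^d$, since the subgroup is non-trivial. Hence $C \cup \mathrm{Fix}$ is a proper subset of $\mathbb R^d$, and I may choose $x_0 \in \mathbb{R}^d \setminus (C \cup \mathrm{Fix})$. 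The stabiliser of $x_0$ inside $\{\gamma_t\}$ is then a closed proper subgroup of $\mathbb R$, so either $\{0\}$ or $\lambda \mathbb Z$ for some $\lambda > 0$.

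Now let $h$ be a non-zero polynomial vanishing on $C$. The function $t \mapsto h(\gamma_t(x_0))$ is real-analytic in $t$ and non-zero at $t=0$, so its zero set is a discrete subset of $\mathbb R$. For each $n$, I therefore select $\varepsilon_n > 0$ so that $\gamma_{j\varepsilon_n}(x_0) \notin C$ for every $j \in \{0,\ldots,n-1\}$ and so that the points $q_j := \gamma_{j\varepsilon_n}(x_0)$ for $0 \leq j \leq n-1$ are pairwise distinct; this last condition excludes only finitely many values of $\varepsilon_n$ thanks to the discreteness of the stabiliser. Setting $P_n := \{q_0, \ldots, q_{n-1}\}$ gives an $n$-element subset of $\mathbb R^d \setminus C$.

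To finish, each realisation $p \in P_n^V$ is encoded by the tuple $a \in \{0,\ldots, n-1\}^V$ defined by $p(v) = q_{a(v)}$. For any integer $c$ with $a + c\mathbf{1} \in \{0,\ldots, n-1\}^V$, the realisation $p'$ given by $p'(v) := q_{a(v)+c}$ satisfies $p' = \gamma_{c\varepsilon_n} \cdot p$, so $f_{g,G}(p) = f_{g,G}(p')$. Thus $|f_{g,G}(P_n^V)|$ is at most the number of shift-classes of $\{0,\ldots,n-1\}^V$ under $a \sim a + c\mathbf{1}$, each of which contains exactly one representative $a$ with $\min_v a(v) = 0$. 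The number of such representatives equals $n^{|V|} - (n-1)^{|V|}$, which is at most $|V|\cdot n^{|V|-1}$ by the mean value theorem applied to $x \mapsto x^{|V|}$. The only real delicacy in the argument lies in steps two and three, where $x_0$ and $\varepsilon_n$ must be chosen so that the resulting orbit points all avoid $C$ while remaining distinct; both requirements reduce to avoiding proper algebraic or discrete sets and so are straightforward to arrange.
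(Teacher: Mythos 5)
Your proposal is correct and takes essentially the same approach as the paper: build $P_n$ along the orbit of a non-trivial one-parameter subgroup of $\Gamma_g$ and use shift-invariance to bound the number of realisation classes by the number of tuples with a fixed minimum. The only cosmetic difference is that the paper picks a single $\theta\in\Gamma_0$ generating an infinite cyclic group and uses a Baire-category argument to choose one base point $x$ working for all $n$ simultaneously (giving nested $P_n$), whereas you choose a fresh step $\varepsilon_n$ for each $n$ via discreteness of the relevant analytic zero sets; your count $n^{|V|}-(n-1)^{|V|}$ of representatives with $\min_v a(v)=0$ is the same union bound used in the paper.
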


\begin{proof}
    Choose $\lambda \in \mathfrak{g}_g$.
    Then the set $\Gamma_0 := \{ \exp(t \lambda) : t \in \mathbb{R}\}$ is a 1-dimensional Lie subgroup of $\Gamma_g$,
    and hence either $\Gamma_0$ is compact and isomorphic (as a Lie group) to the circle group,
    or $\Gamma_0$ is not compact and isomorphic (as a Lie group) to $(\mathbb{R},+)$.
    In either case,
    there exists $\theta \in \Gamma_0$ such that the subgroup $\langle \theta \rangle$ is isomorphic (just as a group) to $\mathbb{Z}$.
    For each positive integer $n$,
    define the set
    \begin{equation*}
        S_n := \left\{ x \in \mathbb{R}^d : \theta^n (x) = x  \text{ or } \theta^n (x) \in C  \right\}.
    \end{equation*}
    Each set $S_n$ is contained in a proper algebraic hypersurface of $\mathbb{R}^d$,
    and so their union is nowhere dense in $\mathbb{R}^d$.
    Choose $x \in \mathbb{R}^d \setminus \left( C \cup \bigcup_{n \in \mathbb{N}} S_n \right)$.
    Then $\theta^n (x) \notin C$ for each $n \geq 0$, and $\theta^i (x) \neq \theta^j(x)$ for $0 \leq i < j$,
    as otherwise $\theta^{j-i}(x) =x$, contradicting that $x \notin S_{j-i}$.
    For each $n$,
    we define the $n$-point set
    \begin{equation*}
        P_n := \left\{ \theta^n (x) : n \in \{0, 1, \ldots, n-1\} \right\}.
    \end{equation*}
    For any $p \in P_n^V$,
    we can always apply an isometry $\theta^{-j}$ for some $j \in \{0,\ldots,n-1\}$ to obtain a realisation $q \in P_n^V$ with one vertex positioned at $x$.
    Hence,   
    \begin{equation*}
        f_{g,G}(P_n^V) \leq \Big| \left\{g(p) \in P_n^V : p(v) = x \text{ for some } v \in V \right\}  \Big| \leq |V||P|^{|V|-1}.\qedhere
    \end{equation*}
\end{proof}

Even in the case where there exist uncountably many `bad' sets we wish to avoid,
we can adapt \Cref{prop:tightconstruction} easily to approach this.
Take for example the case where $g$ is the pseudo-Euclidean metric given in \cref{eq:pseudo}.
Here, we wish to avoid point sets contained within a single line parallel to one of the diagonals $\{ (t,\pm t): t \in \mathbb{R}\}$, as in such cases we have $\big|f_{g,G}(P^V)\big| = \{0\}$.
If we assume that each set $P_n$ contains the point $(0,0)$,
we then proceed with the construction given in \Cref{prop:tightconstruction} with $C$ being the union of the two diagonals.

\section{Proof of \texorpdfstring{\Cref{prop:fewCongClassesg}}{key proposition}} \label{sec:prop2.8proof}

In this section, we prove the following result from \Cref{sec:graphicalgrigid}.

\fewCongClassesg*

Our first step is to improve \cite[Proposition 4.8]{cruik}.
With this aim in mind, we start with the following lemma.

\begin{lemma}\label{prop:asimowroth2}
    Let $G$ be a $k$-uniform hypergraph and let $(G,p)$ be a framework in $\mathbb{R}^d$.
    If the set of vectors $\{p(v) : v \in V\}$ affinely spans $\mathbb{R}^d$,
    then the following properties are equivalent:
    \setlength{\parskip}{0pt}
    \begin{enumerate}[label=(\roman*)]
        \item $(G,p)$ is locally $g$-rigid.
        \item The connected component of $f^{-1}_{g,G}(f_{g,G}(p))$ containing $p$ is contained within $\Gamma_g \cdot p$.
    \end{enumerate}
\end{lemma}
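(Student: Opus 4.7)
The two directions will be proved separately; the affine-spanning hypothesis is only needed for (i)$\Rightarrow$(ii). For (ii)$\Rightarrow$(i), the fibre $X := f_{g,G}^{-1}(f_{g,G}(p))$ is a real algebraic subset of $(\mathbb{R}^d)^{V}$ and hence semi-algebraic, so by a classical result it has only finitely many Euclidean-connected components, each of them closed in $X$ and therefore in $(\mathbb{R}^d)^V$. Taking $C$ to be the component containing $p$, the union of the remaining components is a closed set avoiding $p$, so some Euclidean neighbourhood $N$ of $p$ is disjoint from it. Then $X\cap N\subseteq C\subseteq\Gamma_g\cdot p$ by hypothesis, which is exactly local $g$-rigidity.

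For the converse, let $C$ be the connected component of $p$ in $X$ and set $A:=C\cap(\Gamma_g\cdot p)$. I will show that $A$ is both open and closed in $C$, which forces $A=C$ by connectedness of $C$ and $p\in A$, giving exactly (ii). Openness is the ``propagation of local rigidity along the orbit'' step: for any $q=\gamma\cdot p\in A$, the framework $(G,q)$ is itself locally $g$-rigid, since if $q'\in X$ is close to $q$ then $\gamma^{-1}\cdot q'\in X$ is close to $p$ by continuity of $\gamma^{-1}$, and local rigidity of $(G,p)$ produces $\eta\in\Gamma_g$ with $\gamma^{-1}\cdot q'=\eta\cdot p$, i.e.\ $q' = (\gamma\eta\gamma^{-1})\cdot q\in\Gamma_g\cdot q$. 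Hence some Euclidean neighbourhood of $q$ meets $X$ only inside $\Gamma_g\cdot q=\Gamma_g\cdot p$, so $A$ is open in $C$.

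The main obstacle is closedness of $A$ in $C$, which I will reduce to closedness of the orbit $\Gamma_g\cdot p$ in $(\mathbb{R}^d)^{V}$; this is where the affine-spanning hypothesis is essential. The plan is to prove that the orbit map $\phi:\Gamma_g\to(\mathbb{R}^d)^{V}$, $\gamma\mapsto\gamma\cdot p$, is a proper and injective continuous map. Injectivity is immediate from affine spanning, since an affine transformation that fixes $d+1$ affinely independent points must be the identity. For properness, I fix $v_0,\dots,v_d\in V$ with $p(v_0),\dots,p(v_d)$ affinely independent and write $\gamma_n(x) = A_n x + b_n$; if $(\gamma_n\cdot p)$ is bounded in $(\mathbb{R}^d)^V$, then $A_n(p(v_i)-p(v_0))$ is bounded for $i=1,\dots,d$, and since the vectors $\{p(v_i)-p(v_0)\}_{i=1}^d$ form a basis of $\mathbb{R}^d$ we deduce boundedness of $A_n$, from which boundedness of $b_n=\gamma_n(p(v_0)) - A_n p(v_0)$ follows. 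A subsequence then converges in $\Aff(d)$, and closedness of $\Gamma_g$ in $\Aff(d)$ by the closed subgroup theorem places the limit in $\Gamma_g$, yielding properness of $\phi$. Its image $\Gamma_g\cdot p$ is therefore closed in $(\mathbb{R}^d)^V$, so $A=C\cap(\Gamma_g\cdot p)$ is closed in $C$, completing the proof.
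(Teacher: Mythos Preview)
Your proof is correct and follows essentially the same clopen-in-the-connected-component strategy as the paper: both argue that $A=C\cap(\Gamma_g\cdot p)$ is open in $C$ by propagating local rigidity along the orbit via the $\Gamma_g$-action, and both use that real algebraic sets have finitely many connected components for the (ii)$\Rightarrow$(i) direction. The one point worth noting is the closedness step: the paper disposes of it in a single line (``Since $\mathcal{O}_p$ is a smooth manifold, the set $\mathcal{O}_p\cap C_p$ is also closed''), whereas you give an explicit properness argument for the orbit map $\gamma\mapsto\gamma\cdot p$ using the affinely independent $p(v_0),\dots,p(v_d)$ and closedness of $\Gamma_g$ in $\Aff(d)$. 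Your argument here is more self-contained and in fact anticipates the paper's later \Cref{lem:propermap}, which establishes exactly this properness in greater generality; the paper's brief justification at this point is really relying on that same fact.
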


\begin{proof}
    For any realisation $q$ of $G$,
    we define $\mathcal{O}_q := \Gamma_g \cdot q$, that is, $\mathcal{O}_q$ is the orbit of $q$ under $\Gamma_g$.
    When $q$ is affinely spanning,
    the map from $\Gamma_g$ to $\mathcal{O}_q$ given by $\theta \mapsto \theta \cdot q$ is an injective map (in fact, it is the restriction of a linear map),
    and hence $\mathcal{O}_q$ is a smooth manifold.
    We further define $C_p$ to be the connected component of $f^{-1}_{g,G}(f_{g,G}(p))$ containing $p$.
    As algebraic sets have finitely many connected components (see \cite[Theorem 2.4.4]{boch}),
    the set $C_p$ is a clopen subset of $f^{-1}_{g,G}(f_{g,G}(p))$.
    
    We first prove that $(i) \implies (ii)$. Suppose that $(G,p)$ is locally $g$-rigid.
    By definition, there exists an open neighbourhood $U \subset (\mathbb{R}^d)^V$ of $p$ such that
    \begin{equation}\label{eq1:asimowroth2}
        f^{-1}_{g,G}(f_{g,G}(p)) \cap U= \mathcal{O}_p \cap U.
    \end{equation}
    We note here that for each $\theta \in \Gamma_g$ and $q = \theta \cdot p$,
    we can see from \cref{eq1:asimowroth2} that
    \begin{equation}\label{eq2:asimowroth2}
        f^{-1}_{g,G}(f_{g,G}(q)) \cap (\theta \cdot U) = \mathcal{O}_p \cap (\theta \cdot U).
    \end{equation}
    Using \cref{eq2:asimowroth2},
    we see that $\mathcal{O}_p \cap C_p$ is an open subset of $C_p$.
    Since $\mathcal{O}_p$ is a smooth manifold,
    the set $\mathcal{O}_p \cap C_p$ is also closed.
    Hence, $\mathcal{O}_p \cap C_p$ is a clopen subset of $C_p$.
    As $C_p$ is a connected topological space,
    we have $\mathcal{O}_p \cap C_p = C_p$ as required.

    We now prove that $(ii)\implies(i)$. Suppose that $\mathcal{O}_p \cap C_p = C_p$.
    Since $C_p$ is a clopen subset of $f^{-1}_{g,G}(f_{g,G}(p))$,
    there exists an open set $U \subset (\mathbb{R}^d)^V$ so that
    \begin{equation*}
        f^{-1}_{g,G}(f_{g,G}(p)) \cap U = C_p =\mathcal{O}_p \cap C_p,
    \end{equation*}
    hence $(G,p)$ is locally $g$-rigid.
\end{proof}

\begin{remark}
When $g$ is the Euclidean distance metric, the second condition in \Cref{prop:asimowroth2} is called \emph{continuous rigidity}.
This distinction is irrelevant in this case since local and continuous rigidity are equivalent,
however this equivalence is not true for other metrics such as normed spaces; see \cite[Example 4.19]{dewar21} for an example where local rigidity is a strictly stronger property than continuous rigidity.
\end{remark}

\begin{lemma}\label{lem:genericrealisationnumber}
    Let $G$ be a $g$-rigid $k$-uniform hypergraph with at least $d+1$ vertices.
    If $(G,p)$ is a generic framework,
    then every framework $(G,q)$ with $f_{g,G}(q) = f_{g,G}(p)$ is affinely spanning.
\end{lemma}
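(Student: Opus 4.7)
The plan is to argue by contradiction: assume some $q \in f_{g,G}^{-1}(f_{g,G}(p))$ is not affinely spanning, and derive a contradiction by combining the maximum rank condition of Proposition~\ref{prop:adaptedgeneric} with Lemma~\ref{prop:asimowroth2}, which forces any locally $g$-rigid affinely spanning configuration in the fiber to coincide (as a connected component) with the corresponding $\Gamma_g$-orbit.

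First I would set up the relevant rank count. Since $p$ is generic and $|V| \geq d+1$, the configuration $p$ is affinely spanning (non-spanning is a polynomial condition), so the stabilizer of $p$ in $\Gamma_g$ is trivial: any affine map fixing $d+1$ affinely independent points is the identity. Hence $\dim \triv_g(p) = \dim \Gamma_g$. Combined with the $g$-rigidity of $G$, which yields $\ker {\rm J} f_{g,G}(p) = \triv_g(p)$, this gives $\rank {\rm J} f_{g,G}(p) = d|V| - \dim \Gamma_g$. Applying Proposition~\ref{prop:adaptedgeneric} at $p$ then gives $\dim \ker {\rm J} f_{g,G}(q) = \dim \Gamma_g$ for every $q \in f_{g,G}^{-1}(f_{g,G}(p))$, and by the constant-rank theorem the fiber is locally a smooth submanifold of dimension $\dim \Gamma_g$ at each such $q$.

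Next I would show that the connected component $C_q$ of $f_{g,G}^{-1}(f_{g,G}(p))$ through $q$ cannot lie entirely in the Zariski-closed subvariety $X_{<d} \subset (\mathbb{R}^d)^V$ of non-affinely-spanning configurations. If it did, then $f_{g,G}(p) \in \overline{f_{g,G}(X_{<d})}$. A fibration dimension argument, using $\Gamma_g$-equivariance of $f_{g,G}$ and the fact that $\Gamma_g$ preserves $X_{<d}$ (so its generic fiber over $f_{g,G}(X_{<d})$ contains a full $\Gamma_g$-orbit), together with $\dim X_{d-1} = d + (d-1)|V|$, yields $\dim \overline{f_{g,G}(X_{<d})} < d|V| - \dim \Gamma_g = \dim \overline{f_{g,G}((\mathbb{R}^d)^V)}$. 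Then $\overline{f_{g,G}(X_{<d})}$ is a proper subvariety defined over $\mathbb{Q}(f_{g,G})$, contradicting genericity of $p$. Hence $C_q$ contains some affinely spanning $q^*$.

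Finally, at such a $q^*$ the maximum rank condition combined with the trivial stabilizer of affinely spanning configurations gives $\ker {\rm J} f_{g,G}(q^*) = \triv_g(q^*)$, so $(G,q^*)$ is infinitesimally $g$-rigid, hence locally $g$-rigid by Proposition~\ref{prop:asimowroth1}(i). Lemma~\ref{prop:asimowroth2} then yields $C_{q^*} \subseteq \Gamma_g \cdot q^*$; since $C_q = C_{q^*}$, we obtain $q = \theta \cdot q^*$ for some $\theta \in \Gamma_g$, and as $\theta$ is an affine bijection it preserves the dimension of the affine span, forcing $q$ to be affinely spanning, contradicting the assumption. The main obstacle I anticipate is verifying the strict inequality $\dim \overline{f_{g,G}(X_{<d})} < d|V| - \dim \Gamma_g$; this requires controlling the dimension of the stabilizer of a generic non-affinely-spanning configuration, and it is precisely at this step that the hypothesis $|V| \geq d+1$ enters.
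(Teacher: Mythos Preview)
Your overall strategy—show that $X':=\overline{f_{g,G}(X_{<d})}$ is a proper subvariety of $Y:=\overline{f_{g,G}((\mathbb{R}^d)^V)}$ and then invoke genericity of $p$—matches the paper's. The gap is exactly where you flag it, and it is real rather than merely technical. Your orbit-dimension bound reduces to requiring $s<|V|-d$, where $s$ is the dimension of the $\Gamma_g$-stabilizer of a generic hyperplane-spanning configuration; when $|V|=d+1$ this forces $s=0$. But the pointwise stabilizer in $\Aff(d)$ of a hyperplane is $d$-dimensional, and $\Gamma_g$ can meet it nontrivially: for $d=2$ and $g(x,y)=(x_2-y_2)^2$, one checks that $\Gamma_g$ contains all maps $(x_1,x_2)\mapsto(ax_1+bx_2+t_1,x_2)$, a generic line carries a $1$-dimensional $\Gamma_g$-stabilizer, and yet $K_3$ is $g$-rigid. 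So the hypothesis $|V|\ge d+1$ alone does not control $s$, and your count cannot close.

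The paper sidesteps stabilizers in $X_{<d}$ by splitting instead on the generic rank of ${\rm J}f_{g,G}$ \emph{restricted to $X_{<d}$}. If that rank is below $r=d|V|-\dim\Gamma_g$, properness of $X'$ is immediate. If it equals $r$, the paper works locally: near a max-rank $q\in X_{<d}$ it uses the constant rank theorem to arrange that the fibres of $f_{g,G}$ in a chart $A_q$ are connected, picks a nearby affinely spanning $\tilde q\in A_q\setminus X_{<d}$ (whose stabilizer \emph{is} trivial, so $(G,\tilde q)$ is infinitesimally $g$-rigid), and applies Lemma~\ref{prop:asimowroth2} at $\tilde q$ to see that the local fibre through $\tilde q$ lies in $\Gamma_g\cdot\tilde q$ and hence misses $X_{<d}$; this forces $f_{g,G}(\tilde q)\notin f_{g,G}(X_{<d}\cap A_q)$ for every such $\tilde q$, so $X'$ cannot be Zariski-open in $Y$. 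Note the structural difference from your Step~4: you invoke Lemma~\ref{prop:asimowroth2} at a point $q^*$ \emph{inside the same fibre $C_q$}, which helps only when $C_q\not\subset X_{<d}$, whereas the paper applies it at a nearby point of a \emph{different} fibre and deduces properness of $X'$ directly. Separately, if your Step~3 bound did hold it would already yield $q\notin X_{<d}$ outright, making Step~4 redundant—so as written Steps~3 and 4 do not form a complete case split.
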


\begin{proof}
    Fix $X \subset (\mathbb{R}^d)^V$ to be the algebraic set of realisations that are not affinely spanning,
    and $X',Y \subset \mathbb{R}^E$ to be the Zariski closures of the sets $f_{g,G}(X)$ and $f_{g,G}((\mathbb{R}^d)^V)$ respectively.
    We note here that $Y$ is irreducible,
    since its preimage is a linear space,
    and $\dim Y = d|V| - \dim \Gamma_g$ since $G$ is $g$-rigid.

    \begin{claim}
        The algebraic set $X'$ is a proper algebraic subset of $Y$.
    \end{claim}

    \begin{proof}
        If we set
        \begin{equation*}
            U := \left\{ q \in X : \rank {\rm J} f_{g,G}(q) \geq \rank {\rm J} f_{g,G}(q') \text{ for all } q' \in X \right\},
        \end{equation*}
        then $U$ is a non-empty Zariski open subset of $X$,
        $U' := f_{g,G}(U)$ is an non-empty Zariski open subset of $X'$,
        and the Zariski closure of $U'$ is exactly the set $X'$.
        It hence suffices to show that $U'$ has dimension strictly less than $d|V|-\dim \Gamma_g$.

        Choose a point $q \in U$.
        If $\rank {\rm J} f_{g,G}(q) < d|V|-\dim \Gamma_g$,
        then 
        \begin{equation*}
            \dim U' = \rank {\rm J} f_{g,G}(q) < d|V|-\dim \Gamma_g
        \end{equation*}
        and we are done.
        Suppose instead that $r := \rank {\rm J} f_{g,G}(q) = d|V|-\dim \Gamma_g$.
        By the constant rank theorem,
        there exists an open neighbourhood $A_q \subset (\mathbb{R}^d)^V$ of $q$ such that $B_q = f_{g,G}(A_q)$ is a smooth manifold which is diffeomorphic to the $r$-dimensional open ball, and there exist smooth diffeomorphisms $\alpha : A_q \rightarrow \mathbb{R}^{d|V|}$ and $\beta : B_q \rightarrow \mathbb{R}^r$ such that for each $x = (x_i)_{i=1}^{d|V|} \in \alpha(A_q)$ we have
        \begin{equation*}
            \beta \circ f_{g,G} \circ \alpha^{-1}(x) = (x_i)_{i=1}^{r};
        \end{equation*}
        i.e., the map $f :=  \beta \circ f_{g,G} \circ \alpha^{-1}$ is a projection of points to their first $r$ coordinates.
        We can retro-actively alter $A_q$ (without changing that $B_q$ is diffeomorphic to the $r$-dimensional open ball) so that the set $\alpha(A_q)$ is an open convex set,
        and hence the intersection of $\alpha(A_q)$ with any affine space is connected.
        With this set-up, it is simple to see that the fibres of $f$ (and hence $f_{g,G}|_{A_q}^{B_q}$) are connected.
        
        Now choose $\tilde{q} \in A_q \setminus X$.
        Here, we have that $\rank {\rm J} f_{g,G}(\tilde{q}) = \rank {\rm J} f (\alpha(\tilde{q})) = r$.
        Hence, $(G,\tilde{q})$ is infinitesimally $g$-rigid.
        By \Cref{prop:asimowroth1},
        $(G,\tilde{q})$ is locally $g$-rigid.
        Recalling that $\tilde{q}$ is affinely spanning, it follows from \Cref{prop:asimowroth2} that the connected component of the fibre $f^{-1}_{g,G}(f_{g,G}(\tilde{q}))$ that contains $\tilde{q}$ is contained in $\Gamma_g \cdot \tilde{q}$.
        Since $f^{-1}_{g,G}(f_{g,G}(\tilde{q})) \cap A_q$ is connected (by our retroactive choice of $A_q$),
        it follows that 
        \begin{equation*}
            f^{-1}_{g,G}(f_{g,G}(\tilde{q})) \cap A_q \subset \Gamma_g \cdot \tilde{q}.
        \end{equation*}
        As $\Gamma_g$ acts on $(\mathbb{R}^d)^V \setminus X$ (i.e., affinely spanning realisations can only be mapped to affinely spanning realisations by non-singular affine transformations),
        we have
        \begin{equation*}
            \left(f^{-1}_{g,G}(f_{g,G}(\tilde{q})) \cap A_q \right) \cap X \subseteq \left(\Gamma_g \cdot \tilde{q} \right) \cap X = \emptyset \quad \implies \quad f_{g,G}(\tilde{q}) \notin f_{g,G}(X \cap A_q); 
        \end{equation*}
        furthermore, this holds for any $\tilde{q} \in A_q \setminus X$.
        As $q \in U$ was chosen arbitrarily,
        it follows that $U'$ cannot be a Zariski open subset of $Y$.
        This now concludes the proof of the claim.
    \end{proof}

    As $p$ is generic,
    its image is also generic in $Y$. That is, the only polynomial equations with coefficients in the field $\mathbb{Q}(g)$ that are satisfied by $f_{g,G}(p)$ are the equations defining $Y$.
    Since $X'$ is defined by polynomial equations with coefficients in the field $\mathbb{Q}(g)$,
    it follows that $f_{g,G}(p)$ is not contained in $X'$.
    Hence the set $f_{g,G}^{-1}(f_{g,G}(p))$ only contains affinely spanning realisations.
\end{proof}

We also require the following technical results regarding Lie group actions and real algebraic geometry.

\begin{lemma}\label{lem:propermap}
    For $|V| \geq d+1$,
    let $Z \subset (\mathbb{R}^d)^V$ be the affinely spanning realisations and let $\Gamma$ be a closed subgroup of $\Aff (d)$.
    Then the map
    \begin{equation*}
        \Phi : \Gamma \times Z \rightarrow Z \times Z, ~ (\theta,p) \mapsto (\theta \cdot p, p)
    \end{equation*}
    is well-defined, injective and proper.
\end{lemma}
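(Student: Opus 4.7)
The plan is to handle the three assertions in turn, with most of the work going into properness. The key fact that drives everything is that an affine map of $\mathbb{R}^d$ is uniquely determined by its values on any $d+1$ affinely independent points, and that this recovery is continuous in the input data.

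First, I would verify well-definedness by noting that any $\theta \in \Gamma \subseteq \Aff(d)$ is an invertible affine map, so it carries affinely spanning configurations to affinely spanning configurations; hence $\theta \cdot p \in Z$ whenever $p \in Z$. For injectivity, suppose $\Phi(\theta_1, p_1) = \Phi(\theta_2, p_2)$. The second coordinate gives $p_1 = p_2 =: p$, and the first gives $\theta_1 \cdot p = \theta_2 \cdot p$. Since $|V| \geq d+1$ and $p$ is affinely spanning, there exist $v_0, \ldots, v_d \in V$ such that $p(v_0), \ldots, p(v_d)$ are affinely independent. Two affine maps that agree on $d+1$ affinely independent points must coincide, so $\theta_1 = \theta_2$.

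The main work is properness. I would use sequential properness: let $K \subset Z \times Z$ be compact and take any sequence $(\theta_n, p_n) \in \Phi^{-1}(K)$. After passing to a subsequence we may assume $\Phi(\theta_n, p_n) = (\theta_n \cdot p_n, p_n)$ converges to some $(q, p) \in K$. In particular $p_n \to p$ and $\theta_n \cdot p_n \to q$ in $(\mathbb{R}^d)^V$, with both $p,q$ affinely spanning. Choose vertices $v_0, \ldots, v_d \in V$ such that $p(v_0), \ldots, p(v_d)$ are affinely independent; this remains true for $p_n$ when $n$ is large. Writing $\theta_n(x) = A_n x + b_n$, the pair $(A_n, b_n)$ is recovered by solving the linear system determined by the $d+1$ equations $\theta_n(p_n(v_i)) = \theta_n \cdot p_n(v_i)$, and the coefficient matrix of this system converges (by affine independence of the limit points) to an invertible matrix. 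Therefore $A_n \to A$ and $b_n \to b$ for some $A \in \GL(d)$, $b \in \mathbb{R}^d$, i.e.\ $\theta_n \to \theta := (A,b)$ in $\Aff(d)$. Closedness of $\Gamma$ in $\Aff(d)$ gives $\theta \in \Gamma$, and continuity of the action yields $\theta \cdot p = q$, so $(\theta, p) \in \Phi^{-1}(K)$ is the desired limit.

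The only potential subtlety is confirming the continuous recovery of $(A_n, b_n)$ from the $d+1$ sample points; I expect this to be the main obstacle in the sense of being the one nontrivial estimate, but it reduces to the fact that the matrix encoding the points $p_n(v_0), \ldots, p_n(v_d)$ (augmented by a row of ones) converges to an invertible matrix, so its inverse converges as well and the coefficients of $\theta_n$ are given by continuous functions of the data. The closedness hypothesis on $\Gamma$ is used exactly once, to ensure the limit $\theta$ lies in $\Gamma$; note that without it we would only get $\theta \in \overline{\Gamma} \subseteq \Aff(d)$.
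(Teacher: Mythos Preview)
Your proposal is correct and follows essentially the same approach as the paper: both arguments reduce properness to showing that if $\Phi(\theta_n,p_n)\to(q,p)\in Z\times Z$ then $\theta_n$ converges in $\Aff(d)$, and both do this by picking $d+1$ affinely independent points of $p$, forming the corresponding $(d+1)\times(d+1)$ augmented matrices, and reading off $\theta_n$ as a product that converges because matrix inversion is continuous on invertible matrices. Your write-up is in fact slightly more complete than the paper's, which omits the (easy) well-definedness and injectivity checks and does not explicitly invoke the closedness of $\Gamma$ to place the limit back in $\Gamma$.
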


\begin{proof}
    Using the superscript notation $(p^n)_{n \in \mathbb{N}}$ to denote a sequence of realisations in $Z$, take any sequence $((\theta_n,p^n))_{n \in \mathbb{N}}$ in $\Gamma \times Z$ where $\Phi(\theta_n,p^n) \rightarrow (q , p)$ as $n \rightarrow \infty$ for some $q \in Z$.
    It is immediate that $p = \lim_{n \rightarrow \infty} p^n \in Z$.
    It is now sufficient to show that $(\theta_n)_{n \in \mathbb{N}}$ is convergent.
        
    For each $\theta_n$, set $\lambda_n, x_n$ to be the unique linear transform and vector pair where $\theta_n(x) = \lambda_n (x) + x_n$ for each $x \in \mathbb{R}^d$.
    As $p$ is affinely spanning,
    there exists vertices $v_0,\ldots,v_{d}$ so that $p(v_0),\ldots, p(v_d)$ are affinely independent.
    Using this, we define the following $(d+1) \times (d+1)$ matrices for each $n \in \mathbb{N}$:
    \begin{align*}
        \Theta_n :=
        \begin{pmatrix}
            \lambda_n & x_n \\
            \mathbf{0}^T & 1
        \end{pmatrix},
        \qquad
        P_n :=
        \begin{pmatrix}
            p^n(v_0) & \cdots & p^n(v_d) \\
            1 & \cdots & 1
        \end{pmatrix},\\
        P :=
        \begin{pmatrix}
            p(v_0) & \cdots & p(v_d) \\
            1 & \cdots & 1
        \end{pmatrix},
        \qquad
        Q :=
        \begin{pmatrix}
            q(v_0) & \cdots & q(v_d) \\
            1 & \cdots & 1
        \end{pmatrix}.
    \end{align*}
    Importantly, the matrix $P$ and the matrices $P_n$ for sufficiently large $n$ are non-singular.
    Our previous conditions now imply $\Theta_n P_n \rightarrow Q$ and $P_n \rightarrow P$ as $n \rightarrow \infty$,
    with the second limit implying that $P_n^{-1} \rightarrow P^{-1}$ as $n \rightarrow \infty$ (assuming $n$ is sufficiently large enough that $P_n$ is invertible).
    Hence,
    \begin{equation*}
        \Theta_n = (\Theta_n P_n) (P_n^{-1}) \rightarrow Q P^{-1},
    \end{equation*}
    which in turn implies $(\theta_n)_{n \in \mathbb{N}}$ is convergent.
\end{proof}

\begin{theorem}[Milner-Thom theorem \cite{milnor,thom}]\label{thm:milnerthom}
    Let $f=(f_1,\ldots,f_n):\mathbb{R}^m \rightarrow \mathbb{R}^n$ be a polynomial map where each map $f_i$ has degree at most $d$.
    Then for any point $\lambda \in \mathbb{R}^n$,
    the fibre $f^{-1}(\lambda)$ contains at most $d(2d-1)^{n-1}$ real connected components.
\end{theorem}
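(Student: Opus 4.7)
The plan is to follow the classical Morse-theoretic proof of Oleinik--Petrovsky--Milnor--Thom. After translating by $\lambda$ (which does not affect the degree of the $f_i$), it suffices to bound the number of connected components of the real variety $V := \{x \in \mathbb{R}^m : f_1(x) = \cdots = f_n(x) = 0\}$. The strategy is to approximate $V$ by a smooth compact hypersurface, use Morse theory to bound its components by critical points of a generic linear function, and then invoke B\'ezout's theorem to count those critical points.

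The first step is to form the single polynomial $F(x) := \sum_{i=1}^{n} f_i(x)^2$, which has degree at most $2d$ and vanishes precisely on $V$. For a generic small $\varepsilon > 0$, the level set $M_\varepsilon := \{x : F(x) = \varepsilon\}$ is a smooth hypersurface, and each connected component of $V$ is encircled by (and so contributes at least one component of) $M_\varepsilon$ for all sufficiently small $\varepsilon$. Intersecting with a large enough ball $B_R$ handles compactness issues for the bounded components; unbounded components are controlled by a short separate argument, e.g., via projective compactification or by choosing $R$ larger than any algebraic scale arising from the coefficients. Second, pick a generic unit vector $v \in \mathbb{R}^m$ and consider $\ell_v(x) := \langle v, x \rangle$. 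By Morse theory, each connected component of $M_\varepsilon \cap B_R$ contains at least one critical point of $\ell_v|_{M_\varepsilon}$, since the restriction is a Morse function for generic $v$ and each compact component admits a maximum. Third, the critical points of $\ell_v|_{M_\varepsilon}$ are the real solutions to the polynomial system $F(x) = \varepsilon$ together with the $m-1$ proportionality equations expressing $\nabla F(x) \parallel v$, which have degrees at most $2d$ and $2d - 1$ respectively. A direct application of B\'ezout in $\mathbb{C}^m$ gives at most $2d(2d-1)^{m-1}$ complex solutions, hence at most this many real critical points.

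The main obstacle is sharpening this coarse bound to the stated $d(2d-1)^{n-1}$. The approach I would take is an induction on the number of defining equations $n$. The base case $n=1$ handles a hypersurface of degree $d$ directly: its real critical points with respect to $\ell_v$ form the intersection of $\{f_1 = 0\}$ (degree $d$) with $m-1$ proportionality equations of degree $d-1$, giving the factor $d$ via B\'ezout. For the inductive step, passing from the intersection $V_k := \{f_1 = \cdots = f_k = 0\}$ to $V_{k+1} := V_k \cap \{f_{k+1} = 0\}$, one chooses $\ell_v$ to already be Morse on the smoothing of $V_k$, and shows that imposing the additional equation $f_{k+1} = 0$ enlarges the critical system by a single polynomial of degree at most $2d - 1$, multiplying the B\'ezout count by a factor of $2d - 1$. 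The delicate technical point, and where I expect the bulk of the casework to sit, is verifying that the critical points of $\ell_v$ on the approximating smooth hypersurface of $V_{k+1}$ are bounded in number by exactly this inductive estimate rather than by the naive product of degrees, which requires arguing that transversality is preserved under the $\varepsilon$-smoothing uniformly across the induction.
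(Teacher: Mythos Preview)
The paper does not supply a proof of this theorem at all; it is quoted as a classical result from the references \texttt{[milnor,thom]} and used only as a black box (in the proof of Proposition~3.5 one needs merely that the number of components of a fibre is bounded by \emph{some} constant depending only on $G$ and $g$).

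Your Steps~1--3 are exactly the classical Oleinik--Petrovsky--Milnor argument, and they correctly yield the bound $2d(2d-1)^{m-1}$ (with Milnor's refinement halving the leading constant to $d$). That is the standard Milnor--Thom inequality; the exponent is $m-1$, the ambient dimension minus one, not $n-1$.

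The genuine gap is in your ``sharpening'' step. The bound $d(2d-1)^{n-1}$ with the exponent depending on the \emph{number of equations} rather than the ambient dimension is simply false, so no inductive argument of the kind you sketch can succeed. For a concrete obstruction, take $n=1$ and $m=2$: the stated bound would force any plane curve of degree $d$ to have at most $d$ real components, but Harnack's theorem produces smooth plane curves of degree $d$ with $\tfrac{(d-1)(d-2)}{2}+1$ ovals, which is quadratic in $d$. The inductive scheme you outline, adding one equation at a time and claiming a multiplicative factor of $2d-1$, breaks at the base case and cannot be repaired; the number of critical points of $\ell_v$ on a smoothing of $V_{k+1}$ genuinely depends on $m$, not on $k$.

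In short: the exponent $n-1$ in the paper's statement appears to be a typo for $m-1$; with that correction your coarse argument is the right one and matches the cited references. For the paper's purposes the distinction is immaterial, since only a uniform bound is ever invoked.
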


We are now ready to prove the following strengthening of \cite[Proposition 4.8]{cruik}.

\begin{prop}\label{prop:genericrealisationnumber}
    Let $G$ be a $g$-rigid $k$-uniform hypergraph with at least $d+1$ vertices.
    Then there exists a positive integer $c_G$ such that for each generic framework $(G,p)$,
    the set $f_{g,G}^{-1}(f_{g,G}(p))/ \Gamma_g$ contains at most $c_G$ elements.
\end{prop}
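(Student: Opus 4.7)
The plan is to bound the number of $\Gamma_g$-orbits in $F_p := f_{g,G}^{-1}(f_{g,G}(p))$ by the number of connected components of $F_p$, and then control the latter with the Milnor-Thom theorem (\Cref{thm:milnerthom}). Since $F_p$ is a real algebraic set cut out by $|E|$ polynomial equations each of degree at most $\deg g$ in $d|V|$ variables, Milnor-Thom gives an absolute bound $c_G \leq \deg g \cdot (2\deg g - 1)^{|E|-1}$ that depends only on $G$ and $g$. So the whole content of the proof is to show that each connected component of $F_p$ lies inside a single $\Gamma_g$-orbit when $p$ is generic.

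To establish the latter I would proceed as follows. First, by \Cref{lem:genericrealisationnumber}, every $q \in F_p$ is affinely spanning. Second, since $G$ is $g$-rigid, the generic framework $(G,p)$ is infinitesimally $g$-rigid, so $\operatorname{J} f_{g,G}(p)$ attains its maximum possible rank; by \Cref{prop:adaptedgeneric} (the genericity of $p$ forces maximal Jacobian rank across the entire fibre) the Jacobian $\operatorname{J} f_{g,G}(q)$ also has maximum rank at every $q \in F_p$, so every such $q$ gives an infinitesimally $g$-rigid framework. Third, by \Cref{prop:asimowroth1}(i), infinitesimal $g$-rigidity implies local $g$-rigidity, so every $(G,q)$ with $q \in F_p$ is locally $g$-rigid and affinely spanning. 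Finally, \Cref{prop:asimowroth2} then tells us that the connected component of $F_p$ containing $q$ is a subset of $\Gamma_g \cdot q$. Consequently every connected component of $F_p$ is entirely swallowed by one $\Gamma_g$-orbit, so the quotient $F_p/\Gamma_g$ injects into $\pi_0(F_p)$.

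Putting these observations together with \Cref{thm:milnerthom} applied to $f_{g,G} - f_{g,G}(p)$ yields
\begin{equation*}
    \bigl| F_p / \Gamma_g \bigr| \;\leq\; |\pi_0(F_p)| \;\leq\; \deg g \,(2\deg g - 1)^{|E|-1} \;=:\; c_G,
\end{equation*}
which is the required bound.

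The step I expect to be the only delicate one is verifying that every $q \in F_p$ (not just $p$ itself) is infinitesimally $g$-rigid, since this is what bridges the gap between ``$p$ is a generic point of an infinitesimally rigid framework'' and ``\emph{every} preimage of $f_{g,G}(p)$ is locally rigid and hence falls under \Cref{prop:asimowroth2}''. This is exactly what \Cref{prop:adaptedgeneric} is designed to deliver: the loci $\{q : \operatorname{rank} \operatorname{J} f_{g,G}(q) < \rho\}$ are cut out by the vanishing of the $\rho \times \rho$ minors, i.e.\ by polynomial equations with coefficients in $\mathbb{Q}(g)$, so a generic $p$ cannot lie in this locus, and neither can any $q$ sharing the same $f_{g,G}$-image. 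Once this transfer of regularity is in hand, the remaining steps are direct invocations of previously established results.
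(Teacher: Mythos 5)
Your proof is correct, and it takes a genuinely different route to the key geometric step. The paper first invokes \Cref{lem:propermap} and the theory of proper Lie group actions to show that each orbit $\Gamma_g \cdot q$ is an embedded submanifold of the fibre $F_p$; since (after \Cref{prop:adaptedgeneric}) the fibre is a smooth algebraic set of the same dimension $\dim \Gamma_g$, each orbit is open in $F_p$, and properness makes it closed, so each orbit is a clopen subset consisting of exactly $k := |\pi_0(\Gamma_g)|$ connected components. This yields the slightly sharper bound $\lceil t/k \rceil$, where $t$ is the Milnor--Thom bound. You instead route through \Cref{prop:asimowroth1}(i) and \Cref{prop:asimowroth2}: genericity of $p$ pushes maximal Jacobian rank to every $q$ in the fibre (\Cref{prop:adaptedgeneric}), affine spanning of every $q$ comes from \Cref{lem:genericrealisationnumber}, and these two together give infinitesimal and hence local $g$-rigidity at every $q$, whence \Cref{prop:asimowroth2} directly says the connected component through $q$ is swallowed by $\Gamma_g \cdot q$. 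This gives a surjection $\pi_0(F_p) \twoheadrightarrow F_p/\Gamma_g$, and Milnor--Thom finishes. Your approach avoids the proper-action machinery entirely and relies only on results already established in the rigidity-theoretic part of the paper, which is arguably cleaner; the only thing lost is the factor of $k$ in the constant, which is immaterial for the application. One small point of care: to pass from ``maximal rank at $q$'' to ``infinitesimal $g$-rigidity at $q$'' you are implicitly using that $\dim \triv_g(q) = \dim \Gamma_g$ for affinely spanning $q$, i.e., that $\dot\gamma \mapsto \dot\gamma \cdot q$ is injective on the Lie algebra; this is elementary but worth flagging, since it is exactly the fact the paper is implicitly drawing from its \Cref{lem:propermap} framing.
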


\begin{proof}
    Fix $\Phi$ to be the map given in \Cref{lem:propermap}.    
    As $\Phi$ is proper,
    it follows from a standard result regarding proper Lie group actions on smooth manifolds (see for example \cite[Corollary 4.1.22]{abrahammarsden}) that for any $q \in Z$, the set $\Gamma_g \cdot q$ is an embedded smooth manifold of dimension $\dim \Gamma_g$ given by the smooth embedding $\theta \mapsto \theta \cdot q$.

    Now choose generic $p \in (\mathbb{R}^d)^V$.
    By \Cref{prop:adaptedgeneric},
    the set $f_{g,G}^{-1}(f_{g,G}(p))$ is a smooth algebraic set.
    Moreover, by \Cref{thm:milnerthom}, there exists a constant value $t$, independent of our choice of $p$, so that $f_{g,G}^{-1}(f_{g,G}(p))$ has at most $t$ connected components. 
    By \Cref{lem:genericrealisationnumber},
    every realisation $q \in f_{g,G}^{-1}(f_{g,G}(p))$ is affinely spanning.
    Thus for any $q \in f_{g,G}^{-1}(f_{g,G}(p))$,
    the smooth manifold $\Gamma \cdot q$ consists of exactly $k$ connected components of $f_{g,G}^{-1}(f_{g,G}(p))$,
    where $k$ is the number of connected components of $\Gamma$.
    Because of this, the quotient space $f_{g,G}^{-1}(f_{g,G}(p))/ \Gamma_g$ consists of at most $t/k$ points.
    We now conclude the proof by setting $c_G = \lceil t/k \rceil$.
\end{proof}

We are now ready to prove \Cref{prop:fewCongClassesg}.

\begin{proof}[Proof of \Cref{prop:fewCongClassesg}]
    Fix $\Phi$ to be the proper map described in \Cref{lem:propermap}.
    Using a standard result regarding proper Lie group actions on smooth manifolds (see for example \cite[Theorem 5.1.16]{manifold}),
    we have that the space $X := Z / \Gamma_g$ is a smooth manifold,
    and the map $f : X \rightarrow \mathbb{R}^E$ which maps every equivalence class $\tilde{q}$ to $f_{g,G}(q)$ (with $q \in \tilde{q}$ chosen arbitrarily) is a well-defined smooth map.
    We will now opt to abuse notation for the remainder of the proof by denoting each equivalence class $\tilde{q}$ of $X$ by a representative point $q$ contained within.
    
    We now split the proof into two cases.    
    First, assume that $(G,p)$ is minimally infinitesimally $g$-rigid.    
    By applying the constant rank theorem to each $q \in \mathcal{S}$,
    we obtain open sets $A_q \subset X$ and $B_q \subset \mathbb{R}^E$ such that the map $f|_{A_q}^{B_q}$ is a diffeomorphism.
    We note that this now implies that the set $\mathcal{S}$ is a discrete set, since $r \notin A_q$ for any two distinct points $q,r \in \mathcal{S}$ (a consequence of each $f|_{A_q}^{B_q}$ being a diffeomorphism). Because of this, we may suppose we chose our neighbourhoods $A_q$ so that $A_q \cap A_r =\emptyset$ for all distinct points $q,r \in \mathcal{S}$.
    Now let $(p_n)_{n \in \mathbb{N}}$ be a sequence of generic realisations of $G$ that converge to $p$.
    Note that for each $q \in \mathcal{S}$,
    there exists $N_q \in \mathbb{N}$ such that $f(p_n) \in B_q$ for each $n \geq N_q$.
    Furthermore, for each $n \geq N_q$,
    there exists exactly one point $q_n \in A_q$ with $f(q_n) = f(q)$.
    For each $n \in \mathbb{N}$, we now fix $Q_n$ to be the set of such realisations $q_n$.
    By \Cref{prop:genericrealisationnumber},
    there exists $C_G = c_G$ such that $|Q_n| \leq C_G$ for all $n \in \mathbb{N}$.
    As each $q_n$ lies in a unique open neighbourhood $A_q$ for a unique point $q \in \mathcal{S}$,
    it follows that $|\mathcal{S}| \leq C_G$.

    Now assume that $(G,p)$ is not minimally infinitesimally $g$-rigid.
    Let $\mathcal{G}$ to the set of all minimally $g$-rigid spanning subgraphs of $G$,
    and for each $H \in G$,
    fix $\mathcal{S}_H$ to be the set given in \cref{eq: InfRigidModEuclideanMotionsg} with $G$ replaced by $H$.
    Then $\mathcal{S}$ is contained in $\bigcup_{H \in \mathcal{G}} \mathcal{S}_H$.
    The result now follows by fixing $C_G = \sum_{H \in \mathcal{G}} C_H$.
\end{proof}

\section{Graphical Erd\H{o}s problem for typical normed spaces}\label{sec:norm}

In this section we prove the following result.

\mainNormedResult*

We actually prove a stronger `pinned' version of the above result. This means we can guarantee the above number of realisations with a vertex pinned to a single point in $P$ i.e. $p(v_0)$ is fixed.

The proof of \Cref{mainNormedresult} relies on the following result proved later in the section.

\begin{theorem}\label{pinnedmostnorms}
 The following is true for most $d$-norms $\|\cdot\|$. For any finite point set $P \subseteq \mathbb R^d$, there exists a point $x\in P$, which determines $\Omega\left(\frac{|P|}{(\log |P|)^2}\right)$ distances to the other points of $P$ with respect to $\|\cdot\|$,
 where the explicit constant depends only on the norm $\|\cdot\|$.
\end{theorem}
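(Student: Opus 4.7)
The plan is to prove the contrapositive. Suppose that every point $x \in P$ determines at most $k$ distinct distances to the other points of $P$; I will show $k \gg |P|/(\log|P|)^2$, using \Cref{ABS} as a black box. Writing $N_x(r) := |\{y \in P \setminus \{x\} : \|y-x\| = r\}|$, for each $x \in P$ the pigeonhole principle yields a ``popular'' distance $r_x$ with $N_x(r_x) \geq (|P|-1)/k$. This assignment $x \mapsto r_x$ is precisely the graph-colouring reformulation mentioned in the introduction: it partitions $P$ into classes $S_r = \{x \in P : r_x = r\}$, indexed by the colour palette of popular distances.

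The first application of \Cref{ABS} controls the size of each colour class. Counting ordered pairs $(x,y)$ with $x \in S_r$ and $\|y-x\| = r$ in two ways gives
\[
|S_r|\,\frac{|P|-1}{k} \;\leq\; \sum_{x \in S_r} N_x(r) \;\leq\; 2t(r) \;\leq\; d\,|P|\log|P|,
\]
so $|S_r| \ll k\log|P|$. Since $\sum_r |S_r| = |P|$, this already forces at least $\Omega\!\left(|P|/(k\log|P|)\right)$ distinct popular distances to appear.

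To convert this into the sharp $(\log|P|)^2$ factor, I then double-count ``isoceles'' configurations $(x,y,z) \in P^3$ with $y \neq z$ and $\|x-y\| = \|x-z\|$. Per-vertex Cauchy--Schwarz, together with $\sum_r N_x(r) = |P|-1$ and the hypothesis $D_x \leq k$, gives $\sum_r N_x(r)^2 \geq (|P|-1)^2/k$, summing to a lower bound of order $|P|^3/k$ on the number of such configurations. For the matching upper bound, I group each isoceles triple by its common distance $r$, bound the contribution at each fixed $r$ using the unit-distance bound $t(r) \leq (d/2)|P|\log|P|$ from \Cref{ABS}, and then sum over the $\ll k\log|P|$ popular colour classes obtained in the previous step. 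Balancing the two bounds yields $k \gg |P|/(\log|P|)^2$, which contradicts the assumption and concludes the proof.

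The main obstacle is obtaining the correct polylogarithmic exponent. A direct pigeonhole gives only $k \gg (|P|/\log|P|)^{1/3}$, which is far short of the advertised near-linear pinned bound. The extra power of $\log|P|$ is squeezed out by the second-level double-count, which must carefully interleave the per-vertex lower bound (using $D_x \leq k$) with the two applications of \Cref{ABS}; this same balancing is what enables the adaptation of the argument in \Cref{euclideanpinned} showing that the weak unit distance conjecture implies the weak pinned distance conjecture.
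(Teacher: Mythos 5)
Your approach is genuinely different from the paper's: the paper proves \Cref{pinnedmostnorms} by the expectation argument in \Cref{lem:generalcolour}, which hinges on the subadditivity of the function $g(t) = \frac{2t}{d\log(dt+1)}$ to conclude $\sum_c g(e_c) \geq g\bigl(\binom{n}{2}\bigr)$ and thereby produce a vertex touching $\Omega(n/\log n)$ colour classes. You instead argue by contradiction via a pigeonhole-plus-isoceles double count.

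The isoceles upper bound is, however, a genuine gap. You need $\sum_r \sum_x N_x(r)^2 \ll |P|^2(\log|P|)^2$, but \Cref{ABS} only controls the total edge count $t(r)$ of each monochromatic graph and its subgraphs; it places no constraint on the degree distribution. A star at a single scale $r$ (one point with all $|P|-1$ others at distance $r$) respects the ABS constraint on every subset yet contributes $\sum_x N_x(r)^2 \approx |P|^2$ from that scale alone, and without a bound on $\max_x N_x(r)$ the sum over all scales is only bounded by $|P|^3$, which is useless. Furthermore the sentence ``sum over the $\ll k\log|P|$ popular colour classes'' misstates what step two established: $|S_r| \ll k\log|P|$ bounds the \emph{size} of each colour class, while the \emph{number} of popular colour classes was bounded from below by $\Omega\bigl(|P|/(k\log|P|)\bigr)$, which goes in the wrong direction for an upper bound; and in any case the isoceles count ranges over all distances, not just the popular ones. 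No combination of $t(r) \leq \frac{d}{2}|P|\log|P|$ and $|S_r| \ll k\log|P|$ yields the $|P|^2(\log|P|)^2$ bound you need, so the balancing step does not go through. The paper sidesteps isoceles configurations entirely: its key observation is that the counting identity $\sum_c n_c = \sum_x D_x$ combined with $n_c \geq f^{-1}(e_c)$ and the subadditivity of $g$ gives the result in one line, with no need to control second moments of the degree sequences.
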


We also require the following lemma.
The proof follows a two step method:
(i) we first find a pin with many distances; (ii) we then remove that pin and reapply our result to the remaining points. 
The details of this are given in \cite[Lemma 4.1]{ip18} in the case where $\|\cdot\|$ is the Euclidean norm, and the proof for any $d$-norm is identical.

\begin{lemma}\label{lem:ManyRichPins}
    Fix a $d$-norm $\|\cdot\|$ which has the property that in any finite set $P$ of size $n$ we can find a point $x$ in $P$ so that the set
    \begin{equation*}
        \Delta_{x}(P) := \left\{ \|x-y\| : y \in P  \right\}
    \end{equation*}
    contains at least $H(n)$ distinct distances,
    where $H$ is an increasing function in $n$. Then we can find a set $P' \subseteq P$ so that $|P'| \geq |P|/2$, and for all points $x$ in $P'$ we have that $|\Delta_{x}(P)| \geq H(n/2)$.
\end{lemma}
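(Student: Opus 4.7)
The plan is to prove Lemma \ref{lem:ManyRichPins} via a simple ``peeling'' argument: define the bad points as those violating the desired lower bound, show the set of bad points is small, and then take $P'$ to be its complement.

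More precisely, given $P$ with $|P| = n$, call a point $x \in P$ \emph{bad} if $|\Delta_x(P)| < H(n/2)$, and let $B \subseteq P$ be the collection of bad points. I claim $|B| \leq n/2$. Suppose for contradiction that $|B| > n/2$. Since $B$ is itself a finite point set, the hypothesis applied to $B$ produces a point $x \in B$ with
\begin{equation*}
    |\Delta_x(B)| \geq H(|B|) \geq H(n/2),
\end{equation*}
where the second inequality uses that $H$ is increasing. But $B \subseteq P$ implies $\Delta_x(B) \subseteq \Delta_x(P)$, so $|\Delta_x(P)| \geq H(n/2)$, contradicting $x \in B$. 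Hence $|B| \leq n/2$, and $P' := P \setminus B$ satisfies $|P'| \geq n/2$ and $|\Delta_x(P)| \geq H(n/2)$ for every $x \in P'$, as required.

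The only step requiring any care is the appeal to the hypothesis on $B$: the hypothesis guarantees a rich pin \emph{inside} any finite point set, and by restricting from $P$ to $B$ we only decrease the available distances. However, the inclusion $\Delta_x(B) \subseteq \Delta_x(P)$ lets us promote a rich pin with respect to $B$ to a rich pin with respect to the original set $P$, which is exactly what is needed to reach the contradiction. There is no real obstacle here; the only mildly delicate point is ensuring $H$ is applied to legitimate arguments, which is immediate since $H$ is assumed increasing and $|B| > n/2$.
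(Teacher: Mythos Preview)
Your proof is correct. The paper's own proof (sketched there and referred to \cite{ip18}) is an iterative version of the same idea: repeatedly apply the hypothesis to the current point set to extract a rich pin, remove it, and continue until $\lceil n/2\rceil$ pins have been collected; each extracted pin has at least $H(|P_i|)\geq H(n/2)$ distances to the remaining set, hence to $P$. Your argument compresses this into a single application of the hypothesis to the bad set $B$, which is slightly cleaner: instead of building $P'$ one point at a time, you characterise $P\setminus P'$ directly and bound it by contradiction. Both routes hinge on the same two observations---the hypothesis applies to any subset, and $\Delta_x$ is monotone under inclusion---so the difference is purely organisational.
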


\begin{proof}[Proof of \Cref{mainNormedresult}]
    We note that, since $G$ is connected, it suffices to suppose that $G$ is a tree.
    Fix a $d$-norm $\|\cdot\|$ where \Cref{pinnedmostnorms} holds, and let
    \[H(n) = C\left(\frac{n}{\log^2(n)}\right),\]
    where $C$ is the explicit constant in the statement of \Cref{mainNormedresult}.
    Pick a node $v_0$ in $G$ and declare this node the root of the tree $G$.
    We now say that a vertex $v$ has depth $i$ if the distance between $v$ and $v_0$ is $i$ with respect to the shortest-path metric for $G$,
    and we set $V_i \subset V$ to be the set of depth $i$ vertices.
    Given $t$ is the maximum depth of any of the vertices of $G$,
    we have that the sets $\{v_0\} = V_0,\ldots,V_t$ partition $V$.

    The idea of our argument now is to build the tree out of rich pins provided by \Cref{lem:ManyRichPins}. 
    We have to be a little careful in how we do this to ensure that we have pins at every depth. Thus, we will create the sets $P_{t}, P_{t-1}, \ldots, P_0$ iteratively so that we choose the vertices at depth $i$ from $P_i$.
    First, we set $P_{t} = P$.
    Next,
    we iteratively define the set $P_i$ from the set $P_{i+1}$ as follows:
    using \Cref{lem:ManyRichPins},
    we fix $P_i$ to be the set of at least $|P_{i+1}|/2$ points $x \in P_{i+1}$ where $|\Delta_x(P_{i+1})| \geq H(|P_{i+1}|/2)$.
    If $|P|$ is sufficiently large,
    each of the sets $P_0 \subseteq \cdots \subseteq P_t$ are non-empty.

    For each $i \in \{0,\ldots,t-1\}$,
    choose a point $x_i \in P_i$ which minimises  $|\Delta_{x_i}(P_{i+1})|$.
    We now restrict our realisations in $P$ to the set of realisations
    \begin{equation*}
        X := \left\{ p \in P^V : p(v_0) = x_0, ~ p(v) \in P_i \text{ if $v$ has depth $i$} \right\}.
    \end{equation*}
    Using the logic that any edge from a vertex of depth $i$ to a vertex of depth $i+1$ has at least $H(|P|/2^{t-i})$ possible lengths it can take, we have
    \begin{equation}\label{eq:mainNormedresult}
        \left| f_{\|\cdot\|,G}(X) \right| \geq \prod_{i=1}^{t-1} \prod_{v \in V_i} |\Delta_{x_i}(P_{i+1})| \geq \prod_{i=1}^{t-1}H(|P|/2^{t-i})^{|V_i|} \geq (H(|P|/2^t))^{|V|-1}.
    \end{equation}
    Unpacking the function $H(n) = C\left(\frac{n}{\log^2(n)}\right)$, we have that
    \begin{equation*}
        \left|f_{\|\cdot\|, G}\left(P^V\right)\right| 
        \geq \left|f_{\|\cdot\|, G}\left(X\right)\right| 
        \geq \left(\frac{C|P|}{2^t(\log(|P|/2^t))^2}\right)^{|V|-1}
        \gg \left(\frac{|P|}{(\log |P|)^2}\right)^{|V|-1},
    \end{equation*}
    as claimed.
\end{proof}

\subsection{A graph colouring lemma}

Our method for proving \Cref{pinnedmostnorms} is to convert the problem into an edge colouring problem for complete graphs.
With this in mind, we now prove the edge colouring results needed for the proof. We recall that a function $f : U \subset \mathbb{R} \rightarrow \mathbb{R}$ is \emph{subadditive} if $f(x+y) \leq f(x) + f(y)$ for all $x,y \in U$.

\begin{lemma}\label{lem:generalcolour}
    For fixed $\alpha \in [0,1]$, let $f : (\alpha,\infty) \rightarrow (0,\infty)$ be a strictly increasing invertible function, and let $g: [1,\infty) \rightarrow (0,\infty)$ be a subadditive function such that $f^{-1}(t)\geq g(t)$ for all $t \geq 1$.
    Suppose that the edges of the complete graph $K_n$ are coloured such that any monochromatic subgraph on $m \geq 1$ vertices contains at most $f(m)$ edges. Then there exists a vertex of $K_n$ touching at least $\frac{1}{n}g(\binom{n}{2})$
    different colour classes.
\end{lemma}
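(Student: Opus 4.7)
The plan is a straightforward double-counting on vertex–colour incidences, combined with subadditivity of $g$. Let $\mathcal{C}$ denote the set of colours used in the edge colouring of $K_n$, and for each $c\in\mathcal{C}$ let $V_c$ be the set of vertices incident to at least one edge of colour $c$, and $E_c$ the number of edges of colour $c$. The quantity we want to bound below is the maximum over $v$ of $\#\{c\in\mathcal{C}:v\in V_c\}$, and by averaging (pigeonhole) it suffices to prove
\[
\sum_{v\in V(K_n)}\#\{c\in\mathcal{C}:v\in V_c\}\;=\;\sum_{c\in\mathcal{C}}|V_c|\;\geq\;g\!\left(\binom{n}{2}\right).
\]

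First I would use the hypothesis to translate the monochromatic bound into a lower bound on $|V_c|$. The monochromatic subgraph of colour $c$ lives on $|V_c|$ vertices and has $E_c$ edges, so by assumption $E_c\leq f(|V_c|)$. Since $f$ is strictly increasing with inverse $f^{-1}$, and since $E_c\geq 1$ for every colour class actually used (so we stay inside the domain of both $f^{-1}$ and $g$), this rearranges to
\[
|V_c|\;\geq\;f^{-1}(E_c)\;\geq\;g(E_c).
\]

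Next I would sum over $c$ and invoke subadditivity of $g$. Since the colour classes partition the edges of $K_n$, $\sum_{c\in\mathcal{C}}E_c=\binom{n}{2}$. A routine induction from $g(x+y)\leq g(x)+g(y)$ gives
\[
\sum_{c\in\mathcal{C}}g(E_c)\;\geq\;g\!\Bigl(\sum_{c\in\mathcal{C}}E_c\Bigr)\;=\;g\!\left(\binom{n}{2}\right),
\]
so combining with the previous step, $\sum_{c\in\mathcal{C}}|V_c|\geq g(\binom{n}{2})$. Averaging over the $n$ vertices produces some vertex $v$ incident to at least $\tfrac{1}{n}g(\binom{n}{2})$ distinct colour classes, which is the claim.

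There is no real obstacle here; the only points to check carefully are that $E_c\geq 1$ for every colour class that actually appears (so the inequality $|V_c|\geq g(E_c)$ is justified within the stated domain of $g$), and that the direction of subadditivity is used correctly, namely $g\!\left(\sum E_c\right)\leq \sum g(E_c)$ rather than the reverse. Both are immediate from the hypotheses.
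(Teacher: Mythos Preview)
Your proof is correct and follows exactly the same approach as the paper: double-count vertex--colour incidences to get $\sum_c |V_c|$, use the monochromatic edge bound and monotonicity of $f$ to get $|V_c|\geq f^{-1}(E_c)\geq g(E_c)$, then apply subadditivity and average. The only cosmetic difference is that the paper phrases the averaging as an expectation.
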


\begin{proof}
Fix $C$ to be the set of edge colours for $K_n$,
and for each colour $c \in C$ we denote by $e_c$ the number of edges with colour $c$ and by $n_c$ the number of vertices contained in an edge of colour $c$.
We calculate the expectation of the number of colours touching a vertex. We have
\begin{align*}
&\mathbb{E}_{v \in V}[\#\text{colour classes touching }v] \\ 
& = \frac{1}{n}\sum_{v \in V}\sum_{c \in C}\mathbf{1} (v\text{ is contained in an edge of colour }c) 
 \\
&= \frac{1}{n} \sum_{c \in C} n_c \geq \frac{1}{n} \sum_{c \in C} f^{-1}(e_c) \geq \frac{1}{n} \sum_{c \in C} g(e_c)\geq  \frac{1}{n}g\left (\binom{n}{2}\right ),
\end{align*}
where the first inequality follows from the assumption that $f$ is strictly monotone increasing and $e_c \leq f(n_c)$, the second follows from the assumption on $f$ and $g$, and the last follows from the subadditivity of $g$.
\end{proof}

We are now ready for our key proposition for the section.

\begin{prop}\label{prop:pinnednormcolour}
    Let $d \geq 2$ be an integer and suppose that the edges of $K_n$ are coloured such that any monochromatic subgraph on $m$ vertices contains at most $\frac{d}{2} m \log(m)$ edges. Then there exists a vertex that is incident with edges from $\Omega(\frac{n}{\log n})$ colour classes.
\end{prop}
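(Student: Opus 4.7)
The plan is to apply Lemma~\ref{lem:generalcolour} with $f(m) = \tfrac{d}{2} m \log m$, so that the hypothesis becomes $e_c \leq f(n_c)$ for every colour $c$, where $e_c$ is the number of edges of colour $c$ and $n_c$ is the number of vertices incident to such an edge. Since $f^{-1}(t) \sim \tfrac{2t}{d\log t}$, a subadditive lower bound $g$ for $f^{-1}$ of order $t/\log t$ would do the job: then $\tfrac{1}{n} g(\binom{n}{2}) = \Theta(n/\log n)$. One could construct such a $g$ explicitly (for example, a concave modification of $t/\log(t+e)$ verified to be subadditive on $[1,\infty)$), but it is cleaner and more transparent to mimic the argument of Lemma~\ref{lem:generalcolour} in this specific regime.

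The key observation is that for our particular $f$ the ``$\log n_c$'' factor can be crudely bounded by $\log n$, turning the superlinear constraint into a linear one. Explicitly, since $n_c \leq n$, the hypothesis $e_c \leq \tfrac{d}{2} n_c \log n_c$ relaxes to
\[
    e_c \;\leq\; \frac{d}{2}\, n_c \log n, \qquad \text{equivalently} \qquad n_c \;\geq\; \frac{2 e_c}{d \log n},
\]
for every colour $c$ (the inequality is trivial when $n_c \in \{0,1\}$, since then $e_c = 0$). Summing over all colours and using $\sum_c e_c = \binom{n}{2}$ gives
\[
    \sum_c n_c \;\geq\; \frac{2}{d\log n}\sum_c e_c \;=\; \frac{n(n-1)}{d\log n}.
\]

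To conclude, I would double-count: $\sum_c n_c$ equals $\sum_{v \in V} \#\{c : v\text{ is incident to a colour-}c\text{ edge}\}$, so averaging over the $n$ vertices yields a vertex incident to at least $\tfrac{n-1}{d\log n} = \Omega(n/\log n)$ distinct colour classes, as required. There is no real obstacle: the only subtlety is recognising that the bound $\log n_c \leq \log n$ suffices, after which the argument is pure double-counting. This also explains conceptually why Lemma~\ref{lem:generalcolour} is the right tool — the proposition is exactly its specialisation to $f(m) = \tfrac{d}{2}m\log m$, with the subadditivity requirement sidestepped by the global upper bound $n_c \leq n$.
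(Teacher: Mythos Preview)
Your argument is correct. The crux is the observation that since every colour class lives inside $K_n$, one always has $n_c \le n$, so the superlinear hypothesis $e_c \le \tfrac{d}{2}n_c\log n_c$ immediately degrades to the linear bound $e_c \le \tfrac{d}{2}n_c\log n$, after which the double-counting $\sum_c n_c = \sum_v \#\{c: v\text{ touches colour }c\}$ finishes the job exactly as in the proof of Lemma~\ref{lem:generalcolour}.

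The paper proceeds differently: it invokes Lemma~\ref{lem:generalcolour} directly with $f(t)=\tfrac{d}{2}t\log t$ and the explicit comparison function $g(t)=\tfrac{2t}{d\log(dt+1)}$, then spends two claims verifying (i) $f^{-1}(m)>g(m)$ for $m>1$ and (ii) $g$ is subadditive on $[1,\infty)$. Your route is more elementary and shorter for this particular $f$: by trading the internal $\log n_c$ for the global $\log n$, you avoid ever having to identify a subadditive minorant of $f^{-1}$. What the paper's route buys is uniformity --- the same Lemma~\ref{lem:generalcolour} is reused verbatim for the power-law hypothesis $f(m)=Cm^{1+\delta}$ in Proposition~\ref{prop:pinnedeuclideancolour}, whereas your shortcut exploits the specific shape of $m\log m$ and does not obviously generalise without redoing the analysis.
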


\begin{proof}
    Let 
    \begin{equation*}
        f : (1,\infty) \rightarrow (0,\infty), ~ t \mapsto \frac{d}{2}t \log(t), \qquad g : [1,\infty) \rightarrow (0,\infty), ~ t \mapsto \frac{2t}{d\log(dt +1)}.
    \end{equation*}

    \begin{claim}\label{claim:inverse} For $m> 1$ we have $f^{-1}(m)> g(m)$.
    \end{claim}
    \begin{proof} For $y> 1$ let $x$ be such that $y = \frac{d}{2} x \log(x)$. Note that we have $x >1$.
    Then
    \begin{equation*}
        g(y) = \frac{2y}{d\log(dy +1)} = \frac{2\frac{d}{2} x \log(x)}{d\log(d\frac{d}{2} x \log(x) +1)} < \frac{x \log(x)}{\log(d\frac{d}{2} x \log(x))} = \frac{x \log(x)}{\log(x) + \log(\frac{d^2}{2} \log(x))} < x,
    \end{equation*}
    and so $f^{-1}(y) = x > g(y)$.
    \end{proof}

    \begin{claim}\label{claim:subadd} 
        $g(m)$ is subadditive on $[1,\infty)$.
    \end{claim}

    \begin{proof}
        Choose any $a,b \in [1,\infty)$.
        Then, as the log function is strictly increasing, we have that
        \begin{align*}
            &\frac{d}{2}(g(a) + g(b) - g(a+b)) = \frac{a}{\log(da+1)}+\frac{b}{\log(db+1)} - \frac{a+b}{\log(d(a+b)+1)} \\
             &= \frac{a\log(db+1)\big(\log(d(a+b)+1) - \log(da+1)\big) + b\log(da+1)\big(\log(d(a+b)+1) - \log(db+1)\big)}{\log(da+1)\log(db+1)\log(d(a+b)+1)}\\
             &> 0.
        \end{align*}
        Thus, $g$ is subadditive.
    \end{proof}

     By \Cref{claim:inverse} and \Cref{claim:subadd} we can use \Cref{lem:generalcolour} with our chosen functions $f,g$. 
     We obtain that there exists a vertex $v$ of $K_n$ touching 
     \begin{equation*}
    \frac{1}{n}g\left (\binom{n}{2}\right )=\frac{1}{n}\frac{\binom{n}{2}}{d \log\left (d\binom{n}{2} +1\right )}=\Omega \left (\frac{n}{\log(n)}\right)
    \end{equation*}
    different colour classes. 
\end{proof}

\subsection{Proof of \texorpdfstring{\Cref{pinnedmostnorms}}{key proposition}}

In order to prove \Cref{pinnedmostnorms}, we recall the following result of Alon, Buci\'{c}, and Sauermann originally stated in \Cref{subsec:intro:norm}.

\ABS*

We now combine \Cref{ABS} with \Cref{prop:pinnednormcolour} to prove \Cref{pinnedmostnorms}.

\begin{proof}[Proof of \Cref{pinnedmostnorms}]
    We reinterpret this geometric distance problem as a problem about coloured graphs; 
    the $n=|P|$ points of $P$ will correspond to the vertex set of $K_n$, and the colour of an edge of $K_n$ is simply the distance between the two endpoints in $P$ (and hence a colour class will correspond to all edges of the same length). 
    The number of occurrences of any colour is at most $\frac{d}{2}n \log n$ by \Cref{ABS}. 
    The result follows from \Cref{prop:pinnednormcolour}.
\end{proof}

\subsection{The weak pinned distance conjecture}

Our colouring argument has an additional use for distance problems with the Euclidean distance metric.
We begin by stating the following conjecture which first appeared in \cite{erdHos1990some}.

\begin{conjecture}[Weak pinned distance conjecture]
    For any $\varepsilon > 0$,
    there exists a constant $C_\varepsilon>0$ such that the following holds.
    For any finite point set $P \subseteq \mathbb R^2$, there exists a point $q\in P$, which determines at least $C_{\varepsilon} |P|^{1-\varepsilon}$ distances to the other points of $P$ with respect to the Euclidean distance.
\end{conjecture}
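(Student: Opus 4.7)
The plan is to deduce the weak pinned distance conjecture from the weak unit distance conjecture using the same edge-colouring strategy that underlies \Cref{pinnedmostnorms}, with the input bound on monochromatic subgraphs now being supplied by the unit distance conjecture rather than by \Cref{ABS}.

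Concretely, I would fix $\varepsilon>0$ and let $C_\varepsilon>0$ be the constant from the weak unit distance conjecture, so that every $m$-point subset of $\mathbb{R}^2$ determines at most $C_\varepsilon m^{1+\varepsilon}$ unit distance pairs, and hence, by scale invariance of the Euclidean metric, at most that many pairs at any fixed common distance $r>0$. Given a finite set $P\subset\mathbb{R}^2$ with $|P|=n$, I would colour each edge of $K_n$ by the Euclidean distance between the corresponding points of $P$. Then every monochromatic subgraph on $m$ vertices carries at most $C_\varepsilon m^{1+\varepsilon}$ edges, which places us exactly in the setting of \Cref{lem:generalcolour}.

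The next step is to apply that lemma with
\begin{equation*}
f(t)=C_\varepsilon\, t^{1+\varepsilon}, \qquad g(t)=(t/C_\varepsilon)^{1/(1+\varepsilon)}.
\end{equation*}
One has $g=f^{-1}$, and since $1/(1+\varepsilon)<1$ the function $g$ is concave on $[0,\infty)$ with $g(0)=0$, hence subadditive. The lemma then supplies a vertex, equivalently a pinned point $q\in P$, incident with at least
\begin{equation*}
\tfrac{1}{n}\,g\!\left(\binom{n}{2}\right) \;\gg\; n^{\,2/(1+\varepsilon)-1} \;=\; n^{\,(1-\varepsilon)/(1+\varepsilon)}
\end{equation*}
distinct colour classes, which is the same as that many distinct distances from $q$ to the remaining points of $P$. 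Since $(1-\varepsilon)/(1+\varepsilon)\ge 1-2\varepsilon$ for $\varepsilon\in(0,1)$ and $\varepsilon>0$ was arbitrary, a final relabelling of the error exponent yields precisely the weak pinned distance conjecture.

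No serious obstacle is expected: every ingredient already appears earlier in the paper, and the only small verification is the subadditivity of $g$, which is automatic from its concavity. I expect the same template to transfer verbatim to analogous statements in higher dimensions, so long as the unit distance bound entering on the monochromatic side is of polynomial form; any logarithmic factor on that side would only cost logarithmic factors on the pinned side, just as in \Cref{prop:pinnednormcolour}.
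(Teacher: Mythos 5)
Your proof is correct and takes essentially the same approach as the paper's \Cref{euclideanpinned}: recognising that the statement is only provable conditionally on the weak unit distance conjecture, colouring $K_{|P|}$ by pairwise distances, and applying \Cref{lem:generalcolour} with $f(t)=C_\varepsilon t^{1+\varepsilon}$ and its subadditive inverse $g=f^{-1}$. The only cosmetic difference is that you invoke \Cref{lem:generalcolour} directly rather than factoring through \Cref{prop:pinnedeuclideancolour}, whose proof is exactly the same substitution, and you finish with the cleaner bookkeeping $(1-\varepsilon)/(1+\varepsilon)\ge 1-2\varepsilon$ in place of the paper's relabelling of $\delta$.
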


A more famous conjecture is the weak unit distance conjecture, first stated by Erd\H{o}s \cite{erdos1946sets} in 1946. The best known bound is due to Spencer, Szemer\'edi and Trotter \cite{SpencerUnit} in 1984.

\begin{conjecture}[Weak unit distance conjecture]\label{conj:udc}
    For any $\varepsilon > 0$,
    there exists a constant $C_\varepsilon>0$ such that the following holds.
    For any finite point set $P \subseteq \mathbb R^2$, there exists at most $C_{\varepsilon} |P|^{1+\varepsilon}$ distances of length 1 with respect to the Euclidean distance.
\end{conjecture}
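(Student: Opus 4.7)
The final statement is the Weak Unit Distance Conjecture, an open problem of Erdős from 1946 whose best known upper bound remains $O(n^{4/3})$, due to Spencer, Szemerédi and Trotter. I will not claim a proof of the conjecture itself; instead my plan is to recover the known $O(n^{4/3})$ ceiling via the standard incidence approach and then indicate what further ingredients would be required to push the exponent down to $1+\varepsilon$.

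First I would translate the combinatorial problem into a point-curve incidence problem. For a set $P \subseteq \mathbb{R}^2$ with $|P|=n$ and each $y \in P$ let $C_y$ be the unit circle centred at $y$; then the number of unit distances determined by $P$ is at most the number of incidences $I(P,\mathcal{C})$ for $\mathcal{C}=\{C_y : y\in P\}$. Any two distinct unit circles meet in at most two points, so the family $\mathcal{C}$ satisfies a $K_{2,3}$-free type condition analogous to a family of lines. Applying Szemerédi-Trotter (or equivalently polynomial partitioning adapted to degree-two curves) then yields $I(P,\mathcal{C}) \ll n^{4/3}$, which matches the best bound known.

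To break the $n^{4/3}$ barrier, the most natural route to try is to adapt the Elekes-Sharir framework that Guth and Katz used for distinct distances. One would encode ordered pairs $(x,y) \in P^2$ with $\|x-y\|=1$ by the unique orientation-preserving rigid motion in $\SE$ carrying $x$ to $y$, thereby lifting the count to an incidence problem in a three-dimensional variety, and then invoke polynomial partitioning together with Guth-Katz's theorem on lines in $\mathbb{R}^3$. A complementary ingredient I would search for is an additive-combinatorial input exploiting the fact that the unit circle carries rich multiplicative structure, in the spirit of the sum-product and Elekes-type arguments.

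The hard part, and the reason this conjecture has resisted all approaches for nearly eighty years, is that the known incidence-theoretic tools seem to cap out precisely at $n^{4/3}$: generic point-curve configurations saturate Szemerédi-Trotter but are not realisable by translates of a fixed circle, and no one knows how to turn this non-realisability into a quantitative gain. Moreover, the extremal example is arithmetic rather than geometric: the $\sqrt{n}\times\sqrt{n}$ integer grid achieves $n^{1+c/\log\log n}$ unit distances via classical estimates on the number of representations as a sum of two squares, so any proof of the conjectured $n^{1+\varepsilon}$ bound must detect number-theoretic structure and not merely incidence-geometric structure. Bridging this gap between the algebraic tools we have and the arithmetic nature of the extremal configuration is the principal obstacle, and I do not see a concrete route through it; accordingly my proposal is explicit about its scope: the incidence route reproduces the Spencer-Szemerédi-Trotter bound $O(n^{4/3})$, and the full conjecture appears to require a genuinely new idea.
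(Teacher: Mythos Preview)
Your assessment is correct: the statement is a \emph{conjecture}, not a theorem, and the paper does not attempt to prove it. The paper states the Weak Unit Distance Conjecture purely as a hypothesis, which it then uses (via the colouring argument in Proposition~4.7) to derive the Weak Pinned Distance Conjecture in Theorem~\ref{euclideanpinned}. There is no ``paper's own proof'' of this statement to compare against.

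Your proposal is therefore the appropriate response: you correctly identify the conjecture as open, you recover the best known bound $O(n^{4/3})$ via the standard incidence argument (which is exactly the Spencer--Szemer\'edi--Trotter result the paper cites), and you give a reasonable account of why the $n^{4/3}$ barrier has resisted improvement. The discussion of the Elekes--Sharir framework and the arithmetic nature of the grid example is accurate context, though of course none of it constitutes progress toward the conjecture itself. In short, there is nothing to fault here --- you recognised that the target is an open problem and responded accordingly.
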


We now prove that the latter implies the former.
To show this, we need the following variant of \Cref{prop:pinnednormcolour}.

\begin{prop}\label{prop:pinnedeuclideancolour}
    Let $d \geq 2$ be an integer and $C>0$ a fixed scalar, and suppose that the edges of $K_n$ are coloured such that any monochromatic subgraph on $m$ vertices contains at most $C m^{1+\delta}$ edges. Then there exists a vertex $v$ such that $v$ has edges from $\Omega(n^{1-\varepsilon})$ colour classes,
    where $\varepsilon = \frac{2\delta}{1+\delta}$.
\end{prop}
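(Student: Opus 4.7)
The plan is to mimic the proof of \Cref{prop:pinnednormcolour} by finding an appropriate pair of functions $f,g$ to feed into \Cref{lem:generalcolour}. Here we naturally set $f(m) = Cm^{1+\delta}$, which is strictly increasing and invertible on $(0,\infty)$ with inverse $f^{-1}(t) = (t/C)^{1/(1+\delta)}$. The job is then to produce a subadditive $g:[1,\infty) \rightarrow (0,\infty)$ with $g(t) \leq f^{-1}(t)$ for all $t \geq 1$ that is as large as possible.

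The crucial observation is that since $\delta \geq 0$, the exponent $\alpha := 1/(1+\delta)$ lies in $(0,1]$, and the function $t \mapsto t^{\alpha}$ is subadditive on $[0,\infty)$ (this is the standard inequality $(a+b)^\alpha \leq a^\alpha + b^\alpha$ for $\alpha \in (0,1]$). Therefore I will take
\begin{equation*}
    g(t) = C^{-1/(1+\delta)} \, t^{1/(1+\delta)},
\end{equation*}
which satisfies both $g(t) = f^{-1}(t)$ (with equality, in fact) and subadditivity on $[1,\infty)$.

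Applying \Cref{lem:generalcolour} with these choices, there must exist a vertex of $K_n$ incident to at least
\begin{equation*}
    \frac{1}{n} g\!\left(\binom{n}{2}\right) = \frac{1}{n} \cdot C^{-1/(1+\delta)} \binom{n}{2}^{1/(1+\delta)} \gg n^{\frac{2}{1+\delta} - 1} = n^{\frac{1-\delta}{1+\delta}}
\end{equation*}
distinct colour classes. Since $1 - \varepsilon = 1 - \frac{2\delta}{1+\delta} = \frac{1-\delta}{1+\delta}$, this is exactly the bound claimed.

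There is essentially no obstacle here beyond spotting the right choice of $g$; the parameter $d \geq 2$ in the hypothesis plays no role in the argument (it is presumably present only for consistency with the dimension appearing later when this proposition is combined with the unit distance conjecture in $\mathbb{R}^d$). The only mild care needed is to check that the argument goes through for all $n \geq 2$ so that $\binom{n}{2} \geq 1$ lies in the domain of $g$; for small $n$ the conclusion $\Omega(n^{1-\varepsilon})$ is trivial by adjusting the implicit constant.
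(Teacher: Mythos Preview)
Your proof is correct and essentially identical to the paper's own argument: both set $f(m)=Cm^{1+\delta}$, take $g=f^{-1}(t)=C^{-1/(1+\delta)}t^{1/(1+\delta)}$, note that $g$ is subadditive because the exponent lies in $(0,1]$, and apply \Cref{lem:generalcolour} to obtain $\Omega(n^{1-\varepsilon})$ colour classes at some vertex. Your additional remarks about the irrelevance of $d$ and the treatment of small $n$ are accurate but not needed for the formal argument.
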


\begin{proof} 
    Let $f(m)= Cm^{1+\delta}$ and $g(m)=f^{-1}(m)=C^{-1/(1+\delta)} m^{1/(1+\delta)}$. Since $g(m)$ is subadditive, we can apply \Cref{lem:generalcolour}, and obtain that there exists a vertex of $G$ touching
    \begin{equation*}
       \frac{1}{n}g\left (\binom{n}{2}\right )=\Omega(n^{1- 2\delta/(1+\delta)})=\Omega (n^{1-\varepsilon})
    \end{equation*}
    different colour classes. 
\end{proof}

\begin{theorem}\label{euclideanpinned}
    The weak unit distance conjecture implies the weak pinned distance conjecture.
\end{theorem}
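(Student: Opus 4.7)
The plan is to mirror the strategy used in the proofs of \Cref{mainNormedresult} and \Cref{pinnedmostnorms}: translate the distance problem into a graph colouring problem on $K_n$ (with $n = |P|$), where the colour of an edge corresponds to the Euclidean distance between its endpoints, and then apply the colouring lemma \Cref{prop:pinnedeuclideancolour}. The only input needed from the Euclidean side is an upper bound on the size of any monochromatic subgraph, which is exactly what the weak unit distance conjecture supplies (after rescaling any fixed distance to be the unit distance).

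First, fix $\varepsilon > 0$; we wish to find $C_\varepsilon > 0$ and a pinned point $q \in P$ with at least $C_\varepsilon |P|^{1-\varepsilon}$ distinct distances to the other points of $P$. Choose $\delta > 0$ small enough that $\frac{2\delta}{1+\delta} \leq \varepsilon$; concretely $\delta := \frac{\varepsilon}{2 - \varepsilon}$ works. Invoking \Cref{conj:udc} with parameter $\delta$ produces a constant $C_\delta > 0$ such that any finite planar point set of size $m$ determines at most $C_\delta m^{1+\delta}$ unit-distance pairs. By the scaling invariance of the Euclidean plane, the same bound applies to the number of pairs at any fixed distance $r > 0$ among any $m$ points.

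Now colour the edges of $K_n$ by the Euclidean distance between the corresponding pair of points of $P$. By the previous paragraph, any colour class whose vertex support has size $m$ contains at most $C_\delta m^{1+\delta}$ edges. Applying \Cref{prop:pinnedeuclideancolour} with this $\delta$ yields a vertex $v$ of $K_n$ incident with edges from $\Omega(n^{1-2\delta/(1+\delta)}) = \Omega(n^{1-\varepsilon})$ distinct colour classes. The point $q \in P$ corresponding to $v$ therefore determines $\Omega(|P|^{1-\varepsilon})$ distinct distances to the remaining points of $P$, establishing the weak pinned distance conjecture with an appropriate constant $C_\varepsilon$.

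There is no real obstacle here beyond the bookkeeping: the only non-trivial ingredient, namely the colouring-to-pin conversion, is already isolated in \Cref{prop:pinnedeuclideancolour}, so the argument reduces to checking that the hypothesis of that proposition is satisfied under the weak unit distance conjecture and that the parameters can be chosen to produce any prescribed $\varepsilon > 0$ on the pinned side. The only point deserving care is the use of scale invariance to pass from a bound on unit distances to a bound on occurrences of an arbitrary fixed distance within a single colour class.
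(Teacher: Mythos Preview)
Your proof is correct and follows essentially the same approach as the paper: reinterpret the distance problem as an edge-colouring of $K_n$, use the weak unit distance conjecture (via scale invariance) to bound monochromatic subgraphs, and apply \Cref{prop:pinnedeuclideancolour}. Your version is in fact slightly more explicit than the paper's in justifying the scaling step and in matching the parameter $\delta$ to the exponent in \Cref{prop:pinnedeuclideancolour}.
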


\begin{proof}
    We first fix $\delta > 0$ such that $\varepsilon = \frac{\delta (2 + \delta)}{1+\delta}$.
    We begin by reinterpreting this geometric distance problem as a problem about coloured graphs; 
    the $n=|P|$ points of $P$ will correspond to the vertex set of the complete graph $K_n$, and the colour of an edge of $K_n$ is simply the distance between the two endpoints in $P$ (and hence a colour class will correspond to all edges of the same length). By the weak unit distance conjecture the number of occurrences of any colour in a subgraph on $m$ vertices is at most $C_{\varepsilon}m^{1+\delta}$.
    Thus, the result now holds by \Cref{prop:pinnedeuclideancolour}.
\end{proof}

Interestingly, this all ties back to the graphical version of the Erd\H{o}s distinct distance problem described in \Cref{prob:maineuclidproblem}.
Using a similar proof technique as was described in \Cref{mainNormedresult},
Iosevich and Passant \cite{ip18} proved that the weak pinned distance conjecture implies the following conjecture. 

\begin{conjecture}\label{Conj: GraphDistanceSharp}
    Let $P$ be a finite set in $\mathbb{R}^2$ and suppose that $G=(V,E)$ is a finite, simple, connected graph. 
    Then, for all $\varepsilon>0$, there exists some positive constant $C_\varepsilon$ independent of the choice of $P$ so that
    \[\Big|f_G\left(P^V\right)\Big| \geq C_\varepsilon|P|^{|V|-1-\varepsilon}.\]
\end{conjecture}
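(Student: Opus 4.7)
The plan is to establish the stated bound under the hypothesis of the weak pinned distance conjecture (the implication mentioned in the paragraph preceding the statement), following the same tree-building strategy used in the proof of \Cref{mainNormedresult}, with the weak pinned distance conjecture replacing \Cref{pinnedmostnorms}. First, I would reduce to the case where $G$ is a tree: any spanning tree $T$ of $G$ satisfies $|f_G(P^V)| \geq |f_T(P^V)|$ because $f_T$ factors through $f_G$ via the coordinate projection onto the edges of $T$, and the conjectural exponent depends only on $|V|$. So it suffices to handle trees.

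Next, fix $\varepsilon>0$, and pick the auxiliary parameter $\varepsilon' = \varepsilon/(|V|-1)$. The weak pinned distance conjecture yields a constant $C_{\varepsilon'}>0$ such that every finite $Q\subset \mathbb{R}^2$ contains a point $x\in Q$ with $|\Delta_x(Q)|\geq C_{\varepsilon'}|Q|^{1-\varepsilon'}$. Setting $H(n) := C_{\varepsilon'}n^{1-\varepsilon'}$ (which is increasing for large $n$), an application of \Cref{lem:ManyRichPins} upgrades this to the statement that at least half of the points of $Q$ are ``rich pins'' realising at least $H(|Q|/2)$ distinct distances to $Q$.

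Now root the tree $G$ at an arbitrary vertex $v_0$ and partition $V$ into depth layers $V_0 = \{v_0\}, V_1, \ldots, V_t$, where $t \leq |V|-1$ is a constant depending only on $G$. Iteratively construct a descending chain $P = P_t \supseteq P_{t-1} \supseteq \cdots \supseteq P_0$ where $P_i$ is the rich-pin subset of $P_{i+1}$, so that $|P_i|\geq |P|/2^{t-i}$. Fix any $x_0\in P_0$ and consider the subset of realisations $X = \{p \in P^V : p(v_0)=x_0,\ p(v)\in P_i \text{ whenever } v\in V_i\}$. Ordering the edges of $G$ so that each new edge $vw$ connects an already-placed parent $v \in V_i$ to a child $w\in V_{i+1}$, the defining property of $P_i$ as rich pins of $P_{i+1}$ shows that varying $p(w)\in P_{i+1}$ produces at least $H(|P_{i+1}|/2)\geq H(|P|/2^{t-i})$ distinct values of $\|p(v)-p(w)\|^2$; since each extension alters exactly one coordinate of $f_G$, these values contribute multiplicatively to $|f_G(X)|$. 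Multiplying over the $|V|-1$ edges of the tree,
\begin{equation*}
    |f_G(P^V)| \;\geq\; |f_G(X)| \;\geq\; H(|P|/2^t)^{|V|-1} \;\gg\; |P|^{(|V|-1)(1-\varepsilon')} \;=\; |P|^{|V|-1-\varepsilon},
\end{equation*}
with implicit constant depending only on $G$ and $\varepsilon$.

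The principal obstacle is that the argument is entirely conditional on the weak pinned distance conjecture, which is itself a famous open problem; proving it unconditionally is far out of reach of the present techniques. A secondary but structurally essential point is that the multiplicativity in the edge-counting step genuinely requires $G$ to be a tree, because only then can we freely pick each new edge-length coordinate of $f_G$ independently of the ones already fixed. For cyclic graphs, any improvement that exploits the extra edges would require an honest rigidity-theoretic input rather than the purely combinatorial pin-and-count scheme used here.
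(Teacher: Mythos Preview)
Your proposal is correct and matches exactly what the paper indicates: the statement is a conjecture, and the paper does not prove it unconditionally but rather cites Iosevich and Passant \cite{ip18} for the implication from the weak pinned distance conjecture, noting that the argument uses ``a similar proof technique as was described in \Cref{mainNormedresult}.'' Your write-up reconstructs precisely that argument---reduce to a spanning tree, set $H(n)=C_{\varepsilon'}n^{1-\varepsilon'}$, apply \Cref{lem:ManyRichPins} to build the chain of rich-pin sets, and multiply over the edges of the rooted tree---so there is nothing to add.
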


It now follows from \Cref{euclideanpinned} that the weak unit distance conjecture also implies \Cref{Conj: GraphDistanceSharp}.

\section{What happens if the rigidity condition is dropped?}\label{sec:flexible}

In this section we consider relaxing the rigidity assumption. We will focus only on the setting of distance constraints in the Euclidean plane for $g$-rigidity in this section. As a result, for simplicity, we will refer to $f_{g,G}$ by $f_G$ in this section.
It would be interesting to extend the results below to the more general context, but that is left open for the interested reader.

\subsection{Lower bounds for flexible graphs}

For special families of flexible graphs, similar results to \Cref{thm:Hypergraph2D} can be obtained with a slightly weaker lower bound. 
In particular, Rudnev \cite{RudnevHinge} proved that if $G$ is a path with three vertices,
then for any finite set $P \subset \mathbb{R}^2$,
we have
\begin{equation}\label{eq:rudnevhinge}
    \Big| f_{G}(P^3)\Big| \gg \frac{|P|^2}{(\log |P|)^3}.
\end{equation}
Rudnev's bound is almost tight, as if $P$ is a square grid then 
\begin{equation*}
    \Big| f_{G}(P^3)\Big| \ll \frac{|P|^2}{\log |P|}.
\end{equation*}
Passant \cite{passantChains} later extended Rudnev's result to paths of arbitrary length.

\begin{theorem}[Passant \cite{passantChains}]\label{thm:passantChains}
    Let $G$ be a path with $n$ vertices.
    Then for any finite set $P \subset \mathbb{R}^2$,
    \begin{equation}\label{eq:passantchain}
        \Big| f_{G}(P^n)\Big| \gg \frac{|P|^{n-1}}{(\log |P|)^{\frac{13}{2}(n-2)}}.
    \end{equation}
\end{theorem}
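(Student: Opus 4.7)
The natural approach is induction on $n$, with base case $n=3$ being Rudnev's hinge bound in \eqref{eq:rudnevhinge}. The claim at $n=3$ asks for $|f_G(P^3)| \gg |P|^2/(\log|P|)^{13/2}$, which follows immediately from the stronger hinge bound $\gg |P|^2/(\log|P|)^3$. For general $n$, the plan is to peel off the last edge of the path and leverage the inductive hypothesis together with an energy / Cauchy-Schwarz argument.

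For the inductive step, write $P_n$ for the path $v_1 v_2 \cdots v_n$ and define the ``distance energy'' $E_n(P) := \bigl|\{(p,q) \in (P^{V})^2 : f_{P_n}(p) = f_{P_n}(q)\}\bigr|$. Cauchy-Schwarz gives $|P|^{2n} \leq |f_{P_n}(P^n)| \cdot E_n(P)$, so it suffices to bound $E_n(P)$ recursively. The condition $f_{P_n}(p) = f_{P_n}(q)$ factors as (i) the restrictions to $\{v_1,\dots,v_{n-1}\}$ giving the same $f_{P_{n-1}}$-image, and (ii) the last-edge equality $\|p(v_{n-1})-p(v_n)\| = \|q(v_{n-1})-q(v_n)\|$. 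For each fixed choice of the first $n-1$ vertices, I would perform a dyadic decomposition on the common value of the terminal edge length and bound the number of admissible pairs $(p(v_n), q(v_n)) \in P^2$ at each scale using a Szemer\'edi--Trotter style incidence estimate between points of $P$ and circles of fixed radius centred at $p(v_{n-1})$ and $q(v_{n-1})$. Summing over dyadic scales and combining with the inductive bound on $E_{n-1}(P)$ yields a recursion of the shape $E_n(P) \ll E_{n-1}(P) \cdot |P| \cdot (\log|P|)^{\alpha}$ for some explicit $\alpha$, and inserting this into Cauchy-Schwarz produces the desired lower bound on $|f_{P_n}(P^n)|$.

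The main obstacle will be controlling the accumulated polylogarithmic losses across the $n-2$ inductive extensions. Each step contributes $\log$ factors at three places: the dyadic decomposition on the new edge length, the application of the incidence bound at each dyadic scale, and the Cauchy-Schwarz passage from energy to distinct-realization counts. The constant $\tfrac{13}{2}$ per extra vertex should emerge from an optimized tracking of these contributions, starting from the $\log^3$ baseline of Rudnev's hinge estimate. A subtlety is that the incidence estimate for circles degenerates badly when many of the relevant circles are concentric or share a common chord, which corresponds to configurations where the intermediate vertex positions cluster on a line or common circle; such degenerate configurations would need to be isolated (via a popularity / pigeonhole argument on the distance values at internal vertices) and handled separately before the clean incidence bound is applied, with the outputs recombined to yield the stated exponent.
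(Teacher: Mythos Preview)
This theorem is not proved in the paper; it is quoted from Passant \cite{passantChains}. The only place the paper touches the argument is inside the proof of \Cref{0extthm}, where it runs exactly your Cauchy--Schwarz reduction
\[
|P|^{2n} \ll \Big|f_G(P^V)\Big| \cdot \Big|\{(p,p')\in P^V\times P^V : f_G(p)=f_G(p')\}\Big|
\]
and then imports the energy bound $|\{(p,p'):f_G(p)=f_G(p')\}|\ll |P|^{n+1}(\log|P|)^{\frac{13}{2}(n-2)}$ from \cite{passantChains} as a black box. So your high-level strategy --- Cauchy--Schwarz plus an energy estimate --- is the right one and agrees with how the paper (and Passant) proceed.

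The gap is in how you propose to prove the energy bound. Your recursion $E_n(P)\ll E_{n-1}(P)\cdot |P|\cdot(\log|P|)^\alpha$ does not follow from the factorisation you describe. After fixing $(p|_{[n-1]},q|_{[n-1]})$ in the $E_{n-1}$-set, the number of admissible pairs $(p(v_n),q(v_n))$ is
\[
N\big(p(v_{n-1}),q(v_{n-1})\big) \;=\; \#\big\{(x,y)\in P^2 : \|p(v_{n-1})-x\|=\|q(v_{n-1})-y\|\big\},
\]
which depends on the specific endpoint pair, not just on membership in $E_{n-1}$. A uniform bound on $N(a,b)$ is far too weak (it can be as large as $|P|^{4/3}$), and there is no reason the popular endpoint pairs in $E_{n-1}$ should avoid the pairs $(a,b)$ with large $N(a,b)$; these two weightings are correlated, not independent. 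Circle--point incidences in the plane do not decouple this interaction.

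Passant's actual route is different: it passes through the Elekes--Sharir/Guth--Katz framework, encoding each ordered pair of points as a line in $\mathbb{R}^3$ (so that $|L|\sim|P|^2$) and bounding the path energy by an incidence quantity among these lines; the paper's footnote records the resulting bound as $|L|^{(n+1)/2}(\log|L|)^{\frac{13}{2}(n-2)}$. The $\tfrac{13}{2}$ exponent arises from that line-incidence machinery, not from iterating planar circle bounds. If you want to salvage a direct inductive approach you would need, at minimum, a weighted energy that tracks the endpoint distribution through the induction --- the plain recursion you wrote does not close.
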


We now extend this using the following concept:
a \emph{0-extension} of a graph $G$ is an operation which creates a new graph $G'$ from $G$ by adding one new vertex, and connecting the new vertex to two vertices of $G$. 
Doing so allows us to prove the following two results.

\begin{theorem}\label{0extthm}
    Let $G = (V,E)$ be a graph that is obtained from a path with $k$ vertices, using a sequence of $t$ 0-extensions. 
    Then for any finite set $P \subset \mathbb{R}^2$,
    \[ \Big| f_{G}(P^V) \Big| \gg \frac{|P|^{k-1 + t}}{(\log|P|)^{\frac{13}{2}(k-2)}}.\]
\end{theorem}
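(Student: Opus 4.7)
My plan is to proceed by induction on the number $t$ of 0-extensions, with \Cref{thm:passantChains} serving as the base case. Write $G_0 \subset G_1 \subset \cdots \subset G_t = G$, where $G_0$ is the initial $k$-vertex path and each $G_i$ arises from $G_{i-1}$ by adding a new vertex $v_i$ joined to two distinct existing vertices $u_i, w_i \in V(G_{i-1})$. Let $\mathcal{I}(G_i) \subseteq P^{V(G_i)}$ denote the set of \emph{injective} realisations (those sending distinct vertices to distinct points of $P$), and set $A_i := |f_{G_i}(\mathcal{I}(G_i))|$. Since $|f_G(P^V)| \geq A_t$, it suffices to lower-bound $A_t$.

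For the inductive step, the key observation is that injectivity automatically ensures $p(u_i) \neq p(w_i)$ in $\mathbb{R}^2$. So for each $\tau \in f_{G_{i-1}}(\mathcal{I}(G_{i-1}))$, I fix an injective representative $p_\tau$ with $f_{G_{i-1}}(p_\tau) = \tau$ and extend by placing $v_i$ at any point $x \in P \setminus p_\tau(V(G_{i-1}))$. The resulting $g$-value of $G_i$ is determined by $\tau$ together with the pair $(\|x - p_\tau(u_i)\|^2, \|x - p_\tau(w_i)\|^2)$; since two circles in $\mathbb{R}^2$ with distinct centres meet in at most two points, the map sending $x$ to this pair is at most $2$-to-$1$, so over the $\geq |P| - |V(G_{i-1})|$ admissible choices of $x$ one harvests $\gg |P|$ distinct extensions of $\tau$. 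Different $\tau$'s contribute disjoint sets of $g$-values (their $G_{i-1}$-restrictions already differ), yielding $A_i \gg |P| \cdot A_{i-1}$. Iterating $t$ times and multiplying by $A_0$ delivers the claimed bound.

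The base case amounts to the claim that \Cref{thm:passantChains} remains valid when restricted to injective realisations of the path:
\[ A_0 \gg \frac{|P|^{k-1}}{(\log|P|)^{13(k-2)/2}}. \]
This is the main obstacle and is obtained by a minor adaptation of Passant's proof. His argument constructs $g$-distinct path realisations greedily by pinning one endpoint and iteratively selecting the next vertex at one of $\gg |P|/(\log |P|)^{13/2}$ distinct pinned distances from its predecessor; because $k$ is fixed, excluding the at most $k-1$ previously placed points at each step preserves injectivity while leaving the asymptotic count unchanged. Without this refinement, the bare \Cref{thm:passantChains} is too close to the trivial upper bound $|P|^{k-1}$ for non-injective $g$-classes to be discarded by direct subtraction: one could otherwise be forced to choose a degenerate representative $p_\tau$ with $p_\tau(u_i) = p_\tau(w_i)$, in which case the two new edges always have equal length and the $2$-to-$1$ bound (and hence the gain of a factor of $|P|$) fails.
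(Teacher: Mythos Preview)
Your inductive step is essentially the paper's: the paper packages it as a separate lemma (\Cref{lem:injectiveRealisations}), but the content---fix an injective representative of each class, extend by a point of $P$ not yet used, and observe that two circles with distinct centres meet in at most two points---is identical to what you wrote. The divergence is in the base case.

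You describe Passant's proof of \Cref{thm:passantChains} as a greedy pinned-distance construction yielding $\gg|P|/(\log|P|)^{13/2}$ choices per edge, and propose to thread injectivity through it. This is not how Passant's argument works: it proceeds via an \emph{energy} bound
\[
\big|\{(p,p')\in P^V\times P^V : f_G(p)=f_G(p')\}\big| \ll |P|^{k+1}(\log|P|)^{\frac{13}{2}(k-2)},
\]
and the exponent $13/2$ is an artefact of that estimate, not a pinned-distance result. Your arithmetic also does not close: $k-1$ greedy steps at $|P|/(\log|P|)^{13/2}$ each would give a log-exponent of $\tfrac{13}{2}(k-1)$, not $\tfrac{13}{2}(k-2)$. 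So the base case as you have written it does not go through.

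The paper handles the base case by applying Cauchy--Schwarz directly to the injective count,
\[
\binom{|P|}{k} = \sum_{\lambda\in f_G(P^V_{\mathrm{inj}})} \big|\{p\in P^V_{\mathrm{inj}}:f_G(p)=\lambda\}\big|
\ \le\ \big|f_G(P^V_{\mathrm{inj}})\big|^{1/2}\cdot \big|\{(p,p')\in P^V: f_G(p)=f_G(p')\}\big|^{1/2},
\]
and then inserting Passant's energy bound. This recovers the injective path bound in one line without any need to revisit the internals of \cite{passantChains}; you correctly identified that the injective version is the obstacle, but the clean fix is Cauchy--Schwarz plus the energy bound, not a greedy adaptation.
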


\begin{theorem}\label{thm:bipartite}
    Let $K_{2,n}$ be the complete bipartite graph with parts of size 2 and $n$. 
    Then for any finite set $P \subset \mathbb{R}^2$,
    \[ \Big| f_{K_{2,n}}(P^{n+2}) \Big| \gg \frac{|P|^{n+1}}{(\log|P|)^3}.\]
\end{theorem}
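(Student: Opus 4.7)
The plan is to apply a Cauchy--Schwarz energy argument, exploiting the fact that $K_{2,n}$ decomposes as $n$ hinges $u_1v_iu_2$ all sharing the same base pair $\{u_1,u_2\}$. The key input will be Rudnev's hinge result \eqref{eq:rudnevhinge} in its (strictly stronger) energy form.

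Writing $a=p(u_1)$, $b=p(u_2)$, $x_i=p(v_i)$, I would first restrict to the $N'=(|P|^2-|P|)|P|^n\gg|P|^{n+2}$ non-degenerate realizations with $a\neq b$. Letting $r'(\mathbf v)$ denote the number of such realizations mapping to $\mathbf v$, Cauchy--Schwarz gives
\begin{equation*}
\big|f_{K_{2,n}}(P^{n+2})\big|\geq\frac{(N')^2}{\sum_{\mathbf v}r'(\mathbf v)^2}\gg\frac{|P|^{2n+4}}{\sum_{\mathbf v}r'(\mathbf v)^2}.
\end{equation*}
Since the measurement conditions decouple across the index $i\in[n]$, the denominator factorises as
\begin{equation*}
\sum_{\mathbf v}r'(\mathbf v)^2=\sum_{\substack{(a,b),(a',b')\in P^2\\ a\neq b,\,a'\neq b'}}M\bigl((a,b),(a',b')\bigr)^n,
\end{equation*}
where $M((a,b),(a',b')):=\big|\{(x,x')\in P^2:\|a-x\|=\|a'-x'\|,\ \|b-x\|=\|b'-x'\|\}\big|$ counts ``hinge correspondences'' between the two bases.

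Two estimates close the argument. Pointwise, $M((a,b),(a',b'))\leq 2|P|$ whenever $a'\neq b'$: each $x\in P$ forces $x'$ to lie on the intersection of the two distinct circles centred at $a'$ and $b'$ of radii $\|a-x\|$ and $\|b-x\|$, which contains at most two points. Globally, the hinge energy bound $\sum_{(a,b),(a',b')\in P^2}M((a,b),(a',b'))\ll|P|^4(\log|P|)^3$ is the form of Rudnev's theorem proved in \cite{RudnevHinge}, from which \eqref{eq:rudnevhinge} follows by one application of Cauchy--Schwarz. Combining, $\sum_{\mathbf v}r'(\mathbf v)^2\leq(2|P|)^{n-1}\cdot|P|^4(\log|P|)^3\ll|P|^{n+3}(\log|P|)^3$, which plugged back gives exactly $|f_{K_{2,n}}(P^{n+2})|\gg|P|^{n+1}/(\log|P|)^3$. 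The main obstacle is simply accessing Rudnev's bound in its energy form rather than the image form \eqref{eq:rudnevhinge} (these are not Cauchy--Schwarz-equivalent); apart from that, the argument is a short and entirely formal application of Cauchy--Schwarz together with the two-circle intersection bound.
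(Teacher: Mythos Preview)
Your argument is correct. Both your proof and the paper's rely on Rudnev's hinge energy bound and on the two-circle intersection fact, but they are organised differently. The paper proceeds inductively: it first uses Cauchy--Schwarz on injective hinge realisations together with Rudnev's energy estimate to get $|f_{P_3}(P^3_{\mathrm{inj}})|\gg |P|^2/(\log|P|)^3$, and then applies the $0$-extension lemma (\Cref{lem:injectiveRealisations}) $n-1$ times---each new vertex $v_j$ attached to $u_1,u_2$ gains a factor of $|P|$ at the image level. You instead compute the $K_{2,n}$ energy in one shot: the factorisation $\sum_{\mathbf v} r'(\mathbf v)^2=\sum M^n$ together with the pointwise bound $M\le 2|P|$ collapses the problem back to the hinge energy $\sum M$. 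The two-circle bound is doing the same work in both proofs, just at different levels (image in the paper's \Cref{lem:injectiveRealisations}, energy in your $M\le 2|P|$). Your route is slightly more direct and avoids the injectivity bookkeeping; the paper's route is more modular and is what yields the general $0$-extension statement \Cref{0extthm}. Your caveat about needing Rudnev's result in its energy form is exactly right, and the paper makes the same appeal (``the energy result for hinges \ldots\ from \cite[Theorem 1]{RudnevHinge}'').
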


It will be useful for us to work with injective realisations of $G$.
\begin{definition}
    Let $G=(V,E)$ be any graph and $P$ a finite subset of $\mathbb{R}^2$. We denote by $P^V_{\text{inj}}$ the injective realisations of $G$ on $P$ i.e.
    \[ P^V_{\text{inj}} = \left\{ p \in P^V: p \text{ injective} \right\}.\]
\end{definition}
The following lemma implies that if $G$ has many inequivalent injective realisations within a point set $P$, then after performing a 0-extension there are still many inequivalent injective realisations.
\begin{lemma}\label{lem:injectiveRealisations}
    Suppose that $G=(V,E)$ is any finite graph. Suppose that $G'=(V',E')$ is a graph that is formed from $G$ by a 0-extension. Then,
    \[ \Big|f_{G'}\left(P^{V'}_{\text{inj}}\right)\Big| \gg  \Big|f_{G}\left(P^V_{\text{inj}}\right)\Big|\big|P\big|.\]
\end{lemma}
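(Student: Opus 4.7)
\medskip

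The plan is to bound $|f_{G'}(P^{V'}_{\text{inj}})|$ from below by constructing many extensions of realisations of $G$ that yield distinct $f_{G'}$-images. Let $w$ denote the new vertex introduced by the 0-extension and let $u_1, u_2 \in V$ (with $u_1 \neq u_2$, since $G'$ is simple) denote the two vertices to which $w$ is joined, so that $V' = V \cup \{w\}$ and $E' = E \cup \{u_1 w, u_2 w\}$.

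For each image value $r \in f_G(P^V_{\text{inj}})$, fix a single representative $p_r \in P^V_{\text{inj}}$ with $f_G(p_r) = r$. For every $z \in P \setminus p_r(V)$, define the extension $p_r^z \in P^{V'}_{\text{inj}}$ by setting $p_r^z(v) = p_r(v)$ for $v \in V$ and $p_r^z(w) = z$. Since $p_r$ is injective and $z$ avoids $p_r(V)$, the extension $p_r^z$ is injective. There are at least $|P| - |V|$ such choices of $z$.

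The image $f_{G'}(p_r^z)$ splits naturally into the $E$-coordinates, which equal $r$, and the two new coordinates corresponding to the edges $u_1 w$ and $u_2 w$, namely the pair
\[
\Phi_r(z) := \bigl(\|z - p_r(u_1)\|^2,\, \|z - p_r(u_2)\|^2\bigr).
\]
For a fixed pair $(\ell_1, \ell_2)$, the preimage $\Phi_r^{-1}(\ell_1, \ell_2)$ lies in the intersection of two circles with distinct centres $p_r(u_1) \neq p_r(u_2)$, and so contains at most two points. Hence the number of distinct values of $\Phi_r(z)$ as $z$ ranges over $P \setminus p_r(V)$ is at least $(|P| - |V|)/2$. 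Consequently, the extensions $\{p_r^z : z \in P \setminus p_r(V)\}$ contribute at least $(|P| - |V|)/2$ distinct values to $f_{G'}(P^{V'}_{\text{inj}})$.

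Finally, observe that extensions of different representatives cannot collide under $f_{G'}$, because the $E$-coordinates of $f_{G'}(p_r^z)$ equal $r$, and different choices of $r$ give different $E$-coordinates by construction. Summing the lower bound over all $r \in f_G(P^V_{\text{inj}})$ yields
\[
\bigl|f_{G'}(P^{V'}_{\text{inj}})\bigr| \;\geq\; \frac{|P| - |V|}{2} \cdot \bigl|f_G(P^V_{\text{inj}})\bigr| \;\gg\; |P| \cdot \bigl|f_G(P^V_{\text{inj}})\bigr|,
\]
since $|V|$ is a constant independent of $|P|$. The argument is essentially routine once the two-circle observation is in place; the only subtlety is keeping track of injectivity when choosing $z \notin p_r(V)$, which costs only the harmless additive constant $|V|$ absorbed into the asymptotic notation.
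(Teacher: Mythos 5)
Your proof is correct and follows essentially the same route as the paper's: fix one representative realisation per image value of $f_G$, extend by a new point $z$, and observe that the two new measured distances to $p_r(u_1)$ and $p_r(u_2)$ pin $z$ down to at most two points (the paper phrases this as the reflection across the line through $p_r(u_1)$ and $p_r(u_2)$; your intersection-of-two-circles observation is the same fact). Your write-up is if anything a touch more explicit than the paper's, since you spell out both that representatives are chosen per image class and that extensions of different representatives cannot collide because their $E$-coordinates already differ.
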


\begin{proof}
    Each realisation $p$ in $P^V_{\text{inj}}$ can be extended to a realisation of $G'$, by choosing any point in $P \setminus p(V)$ as the new vertex $v$, which is connected to $u$ and $w$. The key observation is that for each such newly created injective realisation, there is only at most one other new realisation equivalent to it, namely the one given by reflecting the position of $v$ in the line connecting $p(u)$ and $p(w)$. This is shown in \Cref{fig:audie}; the blue injective realisation has been extended to a larger realisation, with the red and orange edges showing two possible equivalent extensions. Performing this for each injective realisation gives
    \[ \Big|f_{G'}\left(P^{V'}_{\text{inj}}\right)\Big|\geq \Big|f_{G}\left(P^V_{\text{inj}}\right)\Big|\left( \frac{|P|-|V|}{2}\right) \gg  \Big|f_{G}\left(P^V_{\text{inj}}\right)\Big|\big|P\big|\]
    as needed.
\end{proof}    

\begin{figure}[ht]  
\begin{minipage}{0.45 \textwidth}
\centering
    \begin{tikzpicture}
        \draw[blue, ultra thick] (0.5,0) -- (1,0);
    \draw[blue, ultra thick] (0.5,0) -- (0,0.75);
    \draw[blue, ultra thick] (0,0.75) -- (1,0);
    \draw[blue, ultra thick] (0.5,0) -- (1,0);
    \draw[blue, ultra thick] (1.5,1) -- (1,0);
    \draw[blue, ultra thick] (1.5,1) -- (2,-1);
    \draw[blue, ultra thick] (0.5,0) -- (2,-1);
    \draw[blue, ultra thick] (0.5,0) -- (-1,0);
    \draw[blue, ultra thick] (0,0.75) -- (-1,0);
        \filldraw[black] (-1,0) circle (2pt);
        \filldraw[black] (1,0) circle (2pt);
        \filldraw[black] (-2,-1) circle (2pt);
        \filldraw[black] (-2,1) circle (2pt);
        \filldraw[black] (2,-1) circle (2pt);
        \filldraw[black] (1.5,1) circle (2pt);
        \filldraw[black] (0.5,0) circle (2pt);
        \filldraw[black] (0,0.75) circle (2pt);
        \filldraw[black] (0,-1) circle (2pt);
    \end{tikzpicture}
\end{minipage}
\begin{minipage}{0.45 \textwidth}
\centering
    \begin{tikzpicture}
    \draw[blue, ultra thick] (0.5,0) -- (1,0);
    \draw[blue, ultra thick] (0.5,0) -- (0,0.75);
    \draw[blue, ultra thick] (0,0.75) -- (1,0);
    \draw[blue, ultra thick] (0.5,0) -- (1,0);
    \draw[blue, ultra thick] (1.5,1) -- (1,0);
    \draw[blue, ultra thick] (1.5,1) -- (2,-1);
    \draw[blue, ultra thick] (0.5,0) -- (2,-1);
    \draw[blue, ultra thick] (0.5,0) -- (-1,0);
    \draw[blue, ultra thick] (0,0.75) -- (-1,0);
    \draw[orange, ultra thick] (0,0.75) -- (-2,1);
    \draw[orange, ultra thick] (-2,1) -- (-1,0);
    \draw[red, ultra thick] (0,0.75) -- (0,-1);
    \draw[red, ultra thick] (0,-1) -- (-1,0);
         \filldraw[black] (-1,0) circle (2pt);
        \filldraw[black] (1,0) circle (2pt);
        \filldraw[black] (-2,-1) circle (2pt);
        \filldraw[black] (-2,1) circle (2pt);
        \filldraw[black] (2,-1) circle (2pt);
        \filldraw[black] (1.5,1) circle (2pt);
        \filldraw[black] (0.5,0) circle (2pt);
        \filldraw[black] (0,0.75) circle (2pt);
        \filldraw[black] (0,-1) circle (2pt);
    \end{tikzpicture}
\end{minipage}
\caption{Left: A possible realisation $p \in P^V$.
Right: Two equivalent realisations (with edge colours red-blue and orange-blue respectively) that can be obtained via 0-extension.}
\label{fig:audie}
\end{figure}
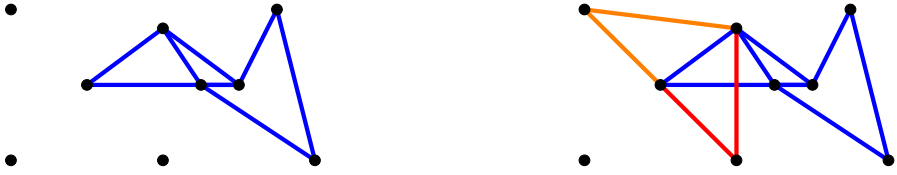 

We can now prove both \Cref{0extthm} and \Cref{thm:bipartite}.

\begin{proof}[Proof of \Cref{0extthm}]
    Using \Cref{lem:injectiveRealisations}, it suffices to show that when $G$ is a path of length $k$, we have $\big|f_G(P^V_{\text{inj}})\big|\gg\frac{|P|^{k-1}}{\log^{\frac{13}{2}(k-2)}|P|}$. Fortunately, we can show this using ideas from \cite{passantChains, RudnevHinge}. Specifically, for a path of length $k$ we have that
    \begin{align*}
        \binom{|P|}{k} &= \sum_{\lambda \in f_G(P^V_{\text{inj}})} \left|\left\{p \in P^V_{\text{inj}} : f_G(p) = \lambda \right\}\right|
        \\ &\leq \Big|f_G(P^V_{\text{inj}})\Big|^{1/2} \left( \sum_{\lambda \in f_G(P^V_{\text{inj}})} \left|\left\{p \in P^V_{\text{inj}} : f_G(p) = \lambda \right\}\right|^2 \right)^{1/2} \quad \text{(by the Cauchy-Schwarz inequality)} \\
        & \leq  \Big|f_G(P^V_{\text{inj}})\Big|^{1/2} \left|\left\{(p,p') \in P^V : f_G(p) = f_G(p') \right\} \right|^{1/2},
    \end{align*}
    and so the result follows from the energy bound for paths used to prove \cite[Theorem 2.1]{passantChains}\footnote{The energy upper bound stated in \cite{passantChains} is actually $|L|^{(k+1)/2}(\log|L|)^{\frac{13}{2}(k-2)}$, where $L$ is the set of lines in $\mathbb{R}^3$ formed from pairs of distinct points in $P$ using the Guth and Katz's line construction \cite[Eq.~(2.3)]{guth2015erdHos}. Our energy upper bound then follows from the simple observation that $|L| = \binom{|P|}{2} \ll |P|^2$.}; namely, that
    \begin{equation*}
        |\{(p,p') \in P^V : f_G(p) = f_G(p') \} | \ll |P|^{k+1}(\log|P|)^{\frac{13}{2}(k-2)}.\qedhere
    \end{equation*}
\end{proof}

\begin{proof}[Proof of \Cref{thm:bipartite}]
    This proof follows in the same way as the above, but instead making use of the energy result for hinges (paths with three vertices) from \cite[Theorem 1]{RudnevHinge}.
\end{proof}

It now follows from \Cref{cor:euclidean} ($n=3$), \Cref{0extthm} and \Cref{thm:bipartite} that if $C_n$ is the cycle with $n \geq 3$ vertices and $P\subset \mathbb{R}^2$,
we now have that
\begin{equation*}
    \Big| f_{C_n} (P^n ) \Big| \gg
    \begin{cases}
        |P|^{2} &\text{if } n =3,\\
        \frac{|P|^{3}}{(\log|P|)^{3}} &\text{if } n =4,\\
        \frac{|P|^{n-1}}{(\log|P|)^{\frac{13}{2}(n-3)}} &\text{if } n \geq 5.
    \end{cases}
\end{equation*}
When $P$ is contained in the integer line, we see that $\big| f_{C_n} (P^n ) \big| \ll |P|^{n-1}$. It is unclear where the lower bound should fit in this interval.

\subsection{Exotic behaviour of flexible graphs}\label{subsec:genus}

In this section we illustrate the importance of the hypothesis that the graph is rigid with 3 or more vertices. Extending our techniques to graphs which are not rigid seems 
to require some serious algebraic number theory.
This is hinted at with the behaviour of the complete graph $K_2$.
Unlike with all other rigid graphs,
the graph $K_2$ gives the bound $\big|f_{K_2}(P^2)\big| \gg (|P|/\log |P|)$,
which is (almost) witnessed on the grid by a lower bound of $\Omega(|P|/\log \sqrt{|P|})$.
This logarithmic loss for the grid stems from an important property of $K_2$:
if we fix one vertex to a point in the grid, then we can often rotate the other vertex in a circle to find other realisations of $K_2$ in the grid with the same edge length.
This property is not true for larger rigid graphs,  however it is known to hold for the hinge, which is a flexible graph with 1 degree of freedom when considered as a linkage.

We now wish to explore the more general family of such flexible graphs.
To do so, we first need to complexify our framework model.
The complexification of the measurement map $f_G$ given in \cref{eq:measurementmap} is the polynomial map
\begin{equation*}
    h_G : (\mathbb{C}^2)^V \rightarrow \mathbb{C}^E, ~ p \mapsto \left( \big( p_1(v) - p_1(w)  \big)^2 + \big( p_2(v) - p_2(w)  \big)^2 \right)_{vw \in E}.
\end{equation*}
We similarly complexify $\SE$ to the connected algebraic group $\SE_\mathbb{C}$ by allowing any complex map $x \mapsto A x +b$ where $A$ is a complex $2 \times 2$ matrix satisfying $A^T A = I_2$ and $\det A=1$, and $b \in \mathbb{C}^2$.

Whenever $(G,p)$ is a generic framework, our fibre $h_G^{-1}(h_G(p))$ is a smooth algebraic set.
Because of this, we can begin to look at various invariant properties of the space $h_G^{-1}(h_G(p))$, particularly those that are invariant to our choice of generic $p$.
The invariant that we now wish to focus on is the genus of $h_G^{-1}(h_G(p))$, or more accurately, the genus of the quotient space $h_G^{-1}(h_G(p))/\SE_\mathbb{C}$.

To describe this concept in more detail, we require the following technical lemma.
Here we recall that the \emph{genus} of a smooth algebraic curve is equal to the geometric genus of any smooth projective closure of the curve when considered as a Riemann surface\footnote{That this definition is well-defined follows from three classical facts from algebraic geometry: (i) every smooth algebraic set has a smooth projective closure; (ii) any affine variety is birationally equivalent to its projective closure; (iii) any two birationally equivalent smooth projective algebraic curves are isomorphic (and hence are homeomorphic as surfaces)}.
We also recall that a polynomial map $f : \mathbb{C}^m \rightarrow \mathbb{C}^n$ is \emph{dominant} if $f(\mathbb{C}^m)$ is dense in $\mathbb{C}^n$.
The proof of the following lemma can be found in \Cref{appendix}.

\begin{restatable}{lemma}{Genus}\label{lem:generalfibre}
    Let $f : \mathbb{C}^{n+1} \rightarrow \mathbb{C}^n$ be a dominant polynomial map.
    Then there exists a non-empty Zariski open set $U \subset \mathbb{C}^{n+1}$ and constants $k,g$ such that for each $p \in U$ the Jacobian ${\rm J} f(p)$ has rank $n$ and the set $f^{-1}(f(p))$ consists of $k$ disjoint smooth algebraic curves,
    each having genus $g$.
\end{restatable}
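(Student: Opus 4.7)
The plan is to prove \Cref{lem:generalfibre} by combining three standard ingredients from algebraic geometry: generic smoothness for dominant morphisms between smooth varieties in characteristic zero, the field-theoretic analogue of Stein factorisation, and constancy of the genus in a smooth proper family of curves (via Ehresmann's fibration theorem, or equivalently via upper-semicontinuity of $h^{1}(\mathcal O)$ combined with local constancy of the Euler characteristic in a flat family).

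First, I would apply generic smoothness to obtain a Zariski open $V_0 \subset \mathbb{C}^n$ such that $f^{-1}(V_0) \to V_0$ is a smooth morphism; in particular $\mathrm{J}f(p)$ has rank $n$ for every $p\in f^{-1}(V_0)$, and every fibre over $V_0$ is a smooth $1$-dimensional affine variety. Next, I would decompose $f$ into a ``finite part'' and a ``geometrically connected part''. Let $K$ be the algebraic closure of $f^{*}\mathbb{C}(y_1,\ldots,y_n)$ inside $\mathbb{C}(x_1,\ldots,x_{n+1})$; since we are in characteristic zero, $\mathbb{C}(x_1,\ldots,x_{n+1})$ is a regular extension of $K$. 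Letting $Z$ be the normalisation of $\mathbb{C}^n$ in $K$, one obtains a factorisation $f = g \circ h$, where $g\colon Z\to\mathbb{C}^n$ is finite of some degree $k$ and $h\colon\mathbb{C}^{n+1}\dashrightarrow Z$ is a dominant rational map whose generic fibre is geometrically integral. After further shrinking to a Zariski open $V_1\subseteq V_0$, one may arrange that $g$ is \'{e}tale of degree $k$ over $V_1$ and that $h$ is defined and smooth on $h^{-1}(g^{-1}(V_1))$ with geometrically connected fibres. Consequently, for each $q\in V_1$, the fibre $f^{-1}(q)$ decomposes as the disjoint union of the $k$ smooth connected affine curves $\{h^{-1}(z) : z \in g^{-1}(q)\}$.

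The remaining task is to show that all $k$ of these smooth affine curves have the same genus. For this, I would build a smooth projective compactification of $h$: choose any proper extension $\bar h\colon \bar X \to Z$ of $h$ (possible since the fibres are curves and $Z$ is quasi-projective), and apply resolution of singularities together with a second round of generic smoothness to obtain a Zariski open $V_2\subseteq V_1$ over which $\bar h$ is smooth and proper, with every fibre $\bar h^{-1}(z)$ equal to the (unique) smooth projective completion of $h^{-1}(z)$. In such a smooth proper family of curves, the genus of the fibre is locally constant by the semi-continuity theorem together with local constancy of the Euler characteristic. Because $Z$ is irreducible, the Zariski-open subset $g^{-1}(V_2)$ is irreducible, and hence connected in the complex topology, so the genus takes a single value $g$ throughout. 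Setting $U := f^{-1}(V_2)$, which is a non-empty Zariski open subset of $\mathbb{C}^{n+1}$ by dominance of $f$, then completes the proof.

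The main obstacle I anticipate is the construction of the relative smooth projective compactification $\bar h$ and verifying that, over a sufficiently small base open, its generic fibre coincides with the smooth projective completion of the corresponding affine fibre of $h$. This step invokes Hironaka's resolution of singularities and requires some care to ensure that the ``divisor at infinity'' $\bar X \setminus h^{-1}(g^{-1}(V_1))$ meets the generic fibre transversally, so that no extra singularities or gluings are introduced. Once this is in place, the other ingredients --- generic smoothness, the field-theoretic Stein factorisation, and semi-continuity of cohomological invariants in flat families --- are routine applications of standard theorems.
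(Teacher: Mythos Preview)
Your proposal is correct and follows a genuinely different route from the paper's own proof. The paper decomposes the lemma into three pieces: a compactification via the graph closure in $\mathbb{P}^m \times \mathbb{P}^n$ together with Ehresmann's theorem to get topological constancy of the compactified fibres; a scheme-theoretic result from the Stacks Project (\texttt{Lemma 055H}) for the constancy of the number of connected components of the affine fibres; and an external reference (\cite[Proposition 38]{lubbes2024irreduciblecomponentssetspoints}) asserting that all irreducible components of a single generic fibre share the same genus. Your argument replaces the last two of these with a single conceptual device, namely the field-theoretic Stein factorisation $f = g\circ h$, which simultaneously pins down the component count $k = \deg g$ and reduces the genus question to a family $h$ whose generic fibre is geometrically integral. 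Constancy of the genus then follows from building a relative smooth projective model of $h$ over an irreducible base, rather than from combining Ehresmann with the cited proposition. The trade-off is that your route is more self-contained --- it avoids the external black box for the ``equal genus across components'' step --- at the cost of the technical work you flag: one must check that after resolving and shrinking, the compactified fibre $\bar h^{-1}(z)$ is still connected (which follows since the generic fibre of $\bar h$ is birational to a geometrically integral curve and hence is itself integral) so that it really is \emph{the} smooth projective completion of $h^{-1}(z)$. The paper's graph-closure compactification is more concrete and sidesteps Hironaka, but leans on the outside reference to finish.
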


The following definition can now be seen to be well-defined as a consequence of \Cref{lem:generalfibre}.

\begin{definition}\label{def:flexgenus}
    We say a graph $G$ with 3 or more vertices has \emph{1 degree of freedom} (in $\mathbb R^2$) if $|E|= 2|V|-4$ and for any generic realisation $p$ in $\mathbb{R}^2$,
    we have $\rank \mathrm{J} f_G (p) = 2|V| - 4$.
    The \emph{flex-genus} of a graph with 1 d.o.f.~is the topological genus of any irreducible component of the smooth algebraic curve $C_{G,vw}(p)$,
    defined to be the algebraic set
    \begin{equation*}
        C_{G,vw}(p) := h_G^{-1}(h_G(p)) \cap \{ q \in (\mathbb{C}^2)^V : q(v) = p(v), ~ q(w) = p(w) \}
    \end{equation*}
    where $vw$ is an arbitrary edge of $G$ and $p$ is any realisation of $G$ contained in the Zariski open set $U \subset (\mathbb{C}^2)^V$ guaranteed by \Cref{lem:generalfibre}.
\end{definition}

Flex-genus was recently investigated by Schicho, Tewari and Warren in \cite{schicho2024genusdegreefreedomplanar}.
Some examples of 1 d.o.f.~graphs and their corresponding flex-genus can be seen in \Cref{fig:flexgenusexamples}.

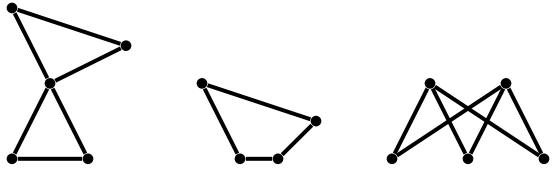
\begin{figure}[ht]
    \centering
    \begin{tikzpicture}[scale=0.5]
        \begin{scope}
            \node[vertex] (1) at (-1,0) {};
            \node[vertex] (2) at (1,0) {};
            \node[vertex] (3) at (0,2) {};
            \node[vertex] (4) at (2,3) {};
            \node[vertex] (5) at (-1,4) {};
            
            \draw[edge] (1) -- (2);
            \draw[edge] (2) -- (3);
            \draw[edge] (1) -- (3);
            \draw[edge] (4) -- (5);
            \draw[edge] (4) -- (3);
            \draw[edge] (5) -- (3); 
        \end{scope}
        \begin{scope}[xshift=5cm]
            \node[vertex] (1) at (2,1) {};
            \node[vertex] (2) at (1,0) {};
            \node[vertex] (3) at (-1,2) {};
            \node[vertex] (4) at (0,0) {};
            
            \draw[edge] (4) -- (2);
            \draw[edge] (3) -- (1);
            \draw[edge] (2) -- (1);
            \draw[edge] (4) -- (3);
        \end{scope}
        \begin{scope}[xshift=9cm]
            \node[vertex] (1) at (0,0) {};
            \node[vertex] (2) at (2,0) {};
            \node[vertex] (3) at (4,0) {};
            \node[vertex] (4) at (1,2) {};
            \node[vertex] (5) at (3,2) {};
            
            \draw[edge] (1) -- (4);
            \draw[edge] (2) -- (4);
            \draw[edge] (3) -- (4);
            \draw[edge] (1) -- (5);
            \draw[edge] (2) -- (5);
            \draw[edge] (3) -- (5);
        \end{scope}
    \end{tikzpicture}
    \caption{Three 1 d.o.f.~graphs with (from left to right) flex-genus 0, 1 and 5.}
    \label{fig:flexgenusexamples}
\end{figure}

We now say that any realisation $p$ of $G$ contained in the Zariski open set $U$ mentioned in \Cref{def:flexgenus} is \emph{general}.
The properties of the set $C_{vw}(p)$ when $p$ is a general realisation are summarised in the following result.

\begin{lemma}\label{lem:rationalpoints1dof}
    Let $G$ be a 1 d.o.f.~graph with an edge $vw \in E$ and at least 4 vertices.
    Let $k$ and $g$ be the number of connected components and the genus of $C_{vw}(p)$ for any choice of general $p$.
    Then for each general $p$, the quotient space $h_G^{-1}(h_G(p)) / \SE_\mathbb{C}$ is a 1-dimensional complex manifold consisting of $k$ disjoint connected components of genus $g$.
    Furthermore; given general $p \in (\mathbb{Q}^2)^V$ and the set
    \begin{equation*}
        \mathbb{Q}_G(p) := \Big\{ [q] \in h_G^{-1}(h_G(p)) / \SE_\mathbb{C} : [q] \cap (\mathbb{Q}^2)^V \neq \emptyset  \Big\},
    \end{equation*}
    we have
    \begin{equation*}
        \big|\mathbb{Q}_G(p) \big| = \Big| C_{G,vw}(p) \cap (\mathbb{Q}^2)^V \Big|.
    \end{equation*}
\end{lemma}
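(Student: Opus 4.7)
The plan is to exhibit $C_{G,vw}(p)$ as a global slice for the $\SE_\mathbb{C}$-action on $h_G^{-1}(h_G(p))$ and then to transfer both the complex-geometric structure and the rational point count through the resulting bijection. For general $p$ the complex squared length
\[ \ell := (p_1(v)-p_1(w))^2 + (p_2(v)-p_2(w))^2 \]
is non-zero, and a direct computation shows that $\SE_\mathbb{C}$ acts simply transitively on the variety of ordered pairs $(x,y) \in (\mathbb{C}^2)^2$ with $(x_1-y_1)^2+(x_2-y_2)^2 = \ell$. (Transitivity is made explicit below; freeness follows because a complex rotation fixing a non-isotropic vector is the identity.) Hence every $q \in h_G^{-1}(h_G(p))$ is moved into $C_{G,vw}(p)$ by a unique element of $\SE_\mathbb{C}$, so the natural map $C_{G,vw}(p) \to h_G^{-1}(h_G(p))/\SE_\mathbb{C}$ is bijective, and in fact a biholomorphism of smooth complex manifolds since $\SE_\mathbb{C}$ acts freely on this (affinely spanning) fibre.

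Next I restrict $h_G$ to the affine slice $\{q(v) = p(v), q(w) = p(w)\}$ and discard the (now constant) $vw$-entry; this yields a polynomial map $\tilde h : \mathbb{C}^{2(|V|-2)} \to \mathbb{C}^{|E|-1}$ with source dimension $2|V|-4$ and target dimension $2|V|-5$. The map $\tilde h$ is dominant: the trivial flex space $\mathfrak{se}_\mathbb{C} \cdot p$ is $3$-dimensional and injects into the $(v,w)$-coordinates $\mathbb{C}^4$ (no non-identity trivial flex fixes both $p(v)$ and $p(w)$ infinitesimally), so the $4$-dimensional kernel of $\mathrm{J} h_G(p)$ meets the slice tangent space in a $1$-dimensional subspace, forcing $\mathrm{J} \tilde h(p)$ to have full rank $2|V|-5$. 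Lemma \ref{lem:generalfibre} then supplies constants $k,g$, depending only on $G$ and $vw$, such that for every general $p$ the fibre $C_{G,vw}(p) = \tilde h^{-1}(\tilde h(p))$ is a disjoint union of $k$ smooth curves each of genus $g$; the biholomorphism of the previous paragraph then gives the same description for $h_G^{-1}(h_G(p))/\SE_\mathbb{C}$.

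For the rational point equality, take $p \in (\mathbb{Q}^2)^V$ general. The biholomorphism above restricts to an injection $C_{G,vw}(p) \cap (\mathbb{Q}^2)^V \hookrightarrow \mathbb{Q}_G(p)$, so only surjectivity remains. Given $[q] \in \mathbb{Q}_G(p)$ with a rational representative $q$, let $\gamma = (A,b) \in \SE_\mathbb{C}$ be the unique element with $\gamma(q(v)) = p(v)$ and $\gamma(q(w)) = p(w)$; I claim $\gamma$ is defined over $\mathbb{Q}$. Setting $u = p(v)-p(w)$, $u' = q(v)-q(w) \in \mathbb{Q}^2$ with $u_1^2+u_2^2 = (u'_1)^2+(u'_2)^2 = \ell \in \mathbb{Q}^*$ and writing $A = \begin{pmatrix}\cos\theta & -\sin\theta \\ \sin\theta & \cos\theta\end{pmatrix}$, the equation $Au' = u$ gives
\[ \cos\theta = \frac{u_1 u'_1 + u_2 u'_2}{\ell}, \qquad \sin\theta = \frac{u_2 u'_1 - u_1 u'_2}{\ell}, \]
both rational, and $(u_1^2+u_2^2)((u'_1)^2+(u'_2)^2) = \ell^2$ automatically yields $\cos^2\theta + \sin^2\theta = 1$. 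Hence $A$ is rational, $b = p(v) - Aq(v) \in \mathbb{Q}^2$, and $\gamma(q) \in C_{G,vw}(p) \cap (\mathbb{Q}^2)^V$ is the required rational slice representative. The main obstacle is precisely this explicit rationality of $\gamma$; every other step is a formal consequence of the free-action slice theorem and of Lemma \ref{lem:generalfibre}.
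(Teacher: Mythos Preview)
Your proof is correct and follows the same high-level strategy as the paper --- identify $C_{G,vw}(p)$ as a slice for the $\SE_\mathbb{C}$-action --- but your execution is more elementary and in one place more complete. The paper invokes reductivity of $\SO(2)_\mathbb{C}$ and a GIT-style quotient result to conclude that $h_G^{-1}(h_G(p))/\SE_\mathbb{C}$ is an algebraic curve, and then separately argues that the restricted quotient map $\tilde\pi : C_{vw}(p) \to h_G^{-1}(h_G(p))/\SE_\mathbb{C}$ is a bijection (via a somewhat opaque detour through the reflection $R$). You bypass all of this by observing directly that $\SE_\mathbb{C}$ acts simply transitively on ordered pairs at fixed non-zero complex squared length, which immediately gives both the bijection and freeness of the action; applying \Cref{lem:generalfibre} to the sliced map $\tilde h$ then handles the manifold and genus statements in one stroke.

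The more substantive difference is in the rational-point equality. The paper's proof here is a single sentence (``by applying rotations and translations to move $v$ to $p(v)$ and $w$ to $p(w)$'') that asserts the slice representative is rational without justification. Your explicit formula for $\cos\theta,\sin\theta$ in terms of $u,u',\ell \in \mathbb{Q}$, together with the identity $(u_1^2+u_2^2)((u_1')^2+(u_2')^2)=\ell^2$ verifying $\cos^2\theta+\sin^2\theta=1$, is exactly what is needed to close this gap, and is the genuine content of the ``furthermore'' clause.
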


\begin{proof}
    As $p$ is general, $\dim \ker {\rm J} h_G (q)=4$ for each $q \in h_G^{-1}(h_G(p))$ and the algebraic set $h_G^{-1}(h_G(p))$ is smooth.
    We note here that the set $h_G^{-1}(h_G(p))$ is $\SE_\mathbb{C}$-invariant, in that the group action of $\SE_\mathbb{C}$ on $h_G^{-1}(h_G(p))$ is performed by morphisms of algebraic sets.
    Since $G$ has at least 4 vertices and is a 1 d.o.f.~graph,
    no realisation $q \in h_G^{-1}(h_G(p))$ can have colinear vertices.
    Indeed, if one such $q$ did have colinear vertices, then $\dim \ker {\rm J} h_G (q) \geq |V| + 1 >4$, contradicting that $\dim \ker {\rm J} h_G (q)=4$ for each $q \in h_G^{-1}(h_G(p))$.
    It hence follows that our group action is also free (in that $\theta p = p$ if and only if $\theta$ is the identity map).
    Since $\SO(2)_\mathbb{C}$ is a reductive linear algebraic group\footnote{A linear algebraic group is reductive if and only if it contains no closed normal subgroup isomorphic to the additive group $\mathbb{C}^n$ for any $n \geq 1$ \cite[Theorem 1.23]{brion}. As the proper closed subgroups of $\SO(2)_\mathbb{C}$ are finite and $\SO(2)_\mathbb{C}$ is not isomorphic to any additive group $\mathbb{C}^n$, it follows that $\SO(2)_\mathbb{C}$ is reductive.} and $\SE_\mathbb{C} = \mathbb{R}^2 \rtimes \SO(2)_\mathbb{C}$, a minor alteration of \cite[Proposition 1.2]{brion} shows that there exists an algebraic curve $C$ and a diffeomorphism $\phi : h_G^{-1}(h_G(p)) / \SE_\mathbb{C} \rightarrow C$ such that, given the quotient map $\pi : h_G^{-1}(h_G(p)) \rightarrow h_G^{-1}(h_G(p)) / \SE_\mathbb{C}$, the composition map $\phi \circ \pi$ is a surjective morphism of algebraic sets.

    Fix $R$ to be the affine reflection that fixes the line through the points $p(v),p(w)$.
    For each $q \in h_G^{-1}(h_G(p))$,
    there exists exactly one other realisation $q' \in h_G^{-1}(h_G(p))$ where $Rq' = q$ (as otherwise $q$ would have colinear vertices).
    As any other non-trivial map contained in $\SE_\mathbb{C}$ does not have both $p(v)$ and $p(w)$ as invariant points,
    the domain-restricted map $\tilde{\pi} : C_{vw}(p) \rightarrow h_G^{-1}(h_G(p)) / \SE_\mathbb{C}$ is a bijection.
    As genus is preserved by rational maps,
    it now follows that $C$ (and hence $h_G^{-1}(h_G(p)) / \SE_\mathbb{C}$) and $C_{G,vw}(p)$ have the same genus.
    For the final claim regarding $\mathbb{Q}_G(p)$,
    we note that if $[q] \in \mathbb{Q}_G(p)$,
    then (by applying rotations and translations to move $v$ to $p(v)$ and $w$ to $p(w)$)
    there exists $q' \in [q] \cap C_{vw}(p)$.
\end{proof}

Our plan is to now combine \Cref{lem:rationalpoints1dof} with the following refinement of Faltings' theorem \cite{Faltings1983,Faltings1984}.

\begin{conjecture}[Uniform boundedness conjecture for rational points]\label{conj:uniformbounded}
    For every genus $g \geq 2$ and every number field $K$, there exists $c>0$ such that any smooth algebraic curve of genus $g$ defined over $K$ has at most $c$ rational points. 
\end{conjecture}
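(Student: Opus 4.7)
Let me be upfront: the statement is a famous open problem in arithmetic geometry (widely attributed to Caporaso--Harris--Mazur, and often just called ``uniform boundedness''). I do not expect to settle it in a proof sketch. What I can do is describe the most credible conditional route, due to Caporaso--Harris--Mazur, which reduces the conjecture to a higher-dimensional diophantine conjecture.

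The plan is to work in families rather than one curve at a time. First I would let $\mathcal{M}_g$ denote the (coarse) moduli space of smooth projective curves of genus $g \geq 2$, and consider the universal family $\mathcal{C}_g \to \mathcal{M}_g$. Given $N \geq 1$, form the $N$-fold symmetric (or fibred) power $\mathcal{C}_g^{(N)} \to \mathcal{M}_g$ whose fibre over $[C] \in \mathcal{M}_g(K)$ parametrises $N$-tuples of $K$-rational points on $C$. The key structural input I would invoke is that for $N$ sufficiently large depending on $g$, the variety $\mathcal{C}_g^{(N)}$ is of general type. This is a nontrivial geometric fact (proved by Caporaso--Harris--Mazur using Kodaira dimension computations and properties of the Deligne--Mumford compactification), and it is the first nontrivial step I would carefully verify rather than hand-wave.

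Next I would invoke the Bombieri--Lang conjecture: on a variety of general type over a number field $K$, the $K$-rational points are not Zariski dense. Applied inductively to $\mathcal{C}_g^{(N)}$ and its exceptional subvarieties, this yields a uniform bound $c = c(g,K)$ on the number of $K$-rational points of any smooth genus-$g$ curve over $K$, giving exactly the conjecture. The induction is delicate because one has to control the contribution of the exceptional locus (Lang's exceptional set), and one has to rule out that the exceptional locus itself contains a ``flat'' family of curves contributing unbounded numbers of points; here Caporaso--Harris--Mazur handle this with a careful stratification argument.

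The main obstacle, plainly, is that Bombieri--Lang is itself wide open, and even the weaker geometric Lang conjecture is unresolved. A direct unconditional approach seems out of reach of current technology: Faltings' theorem (via Vojta's method or the Chabauty--Coleman method) gives finiteness on each curve but does not give uniformity across the moduli space. Recent progress of Dimitrov--Gao--Habegger gives uniform bounds of the shape $c(g,K,\operatorname{rk}\operatorname{Jac}(C))$, but these depend on the Mordell--Weil rank and so do not suffice here. So the honest verdict is: the conjecture is equivalent (modulo the deep geometric input above) to a case of Bombieri--Lang, and I would present it that way rather than claim an independent proof.
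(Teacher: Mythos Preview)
The paper does not prove this statement: it is explicitly labelled as a \emph{conjecture} (the Uniform Boundedness Conjecture for rational points) and is never given a proof. Instead, the paper uses it as a \emph{hypothesis} in the subsequent Proposition~\ref{prop:genusgrid}, where the phrase ``Suppose that the uniform boundedness conjecture for rational points holds'' appears in the statement. So there is no ``paper's own proof'' to compare your proposal against.

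Your write-up correctly identifies the statement as a well-known open problem and accurately sketches the Caporaso--Harris--Mazur reduction to the Bombieri--Lang conjecture, including the honest admission that this route is conditional. That is exactly the right attitude here: the conjecture is not something one proves in this paper, and your assessment that an unconditional proof is out of reach with current methods is correct. In short, your proposal is appropriate as context, but the task of comparing it to the paper's proof is moot because the paper offers none.
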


If the uniform boundedness conjecture holds, then for any 1 d.o.f.~graph $G$ with a flex-genus of at least 2, there exists a constant $c>0$ such that the cardinality of the set $\mathbb{Q}_G(p)$ given in \Cref{lem:rationalpoints1dof} is at most $c$ for any choice of general $p$.
We can exploit this fact to show the following result.

\begin{prop}\label{prop:genusgrid}
    Let $G=(V,E)$ be a fixed graph with 1 d.o.f.~and flex-genus at least 2.
    For each square number $n$,
    fix 
    \begin{equation*}
        \mathcal{G}_n := \left\{ \left(x/\sqrt{n},y/\sqrt{n}\right) : x,y \in \left\{0,\ldots,\sqrt{n}\right\} \right\}.
    \end{equation*}
    Suppose that the uniform boundedness conjecture for rational points holds.
    Then
    \begin{equation*}
        \left|f_G\left(\mathcal{G}_n^V\right)\right| \gg n^{|V|-1}.
    \end{equation*} 
\end{prop}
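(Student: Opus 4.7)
The plan is to bound, for each general realisation $p \in \mathcal{G}_n^V$, the size of the fibre $f_G^{-1}(f_G(p)) \cap \mathcal{G}_n^V$ by combining \Cref{lem:rationalpoints1dof} with the uniform boundedness conjecture. First I would note that \emph{general} realisations (those in the Zariski open set $U$ of \Cref{def:flexgenus}) account for almost all of $\mathcal{G}_n^V$: the complement of $U$ is cut out by a non-zero polynomial of degree depending only on $G$, so Schwartz--Zippel applied to the grid gives at most $O(n^{|V|-1/2})$ non-general realisations, which is negligible compared with $n^{|V|}$.

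Next, fix any general $p \in \mathcal{G}_n^V \subset (\mathbb{Q}^2)^V$ and any edge $vw \in E$. The algebraic curve $C_{G,vw}(p)$ is defined over $\mathbb{Q}$, and each of its irreducible components has genus at least $2$ by the flex-genus hypothesis. Applying \Cref{conj:uniformbounded} to each component supplies a constant $c_G$ depending only on $G$ with $|C_{G,vw}(p) \cap (\mathbb{Q}^2)^V| \leq c_G$; by \Cref{lem:rationalpoints1dof} this says $|\mathbb{Q}_G(p)| \leq c_G$, so all elements of $\mathcal{G}_n^V$ sharing the $f_G$-value of $p$ lie in at most $c_G$ distinct $\SE_{\mathbb{C}}$-orbits. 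A short linear-algebra argument (using that $\SO(2,\mathbb{C})$ is uniquely determined by its action on any pair of linearly independent real vectors, and must itself be real if the images are real) then shows that each such orbit through a non-colinear rational point meets $(\mathbb{R}^2)^V$ in a single $\SE$-orbit.

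Finally, I would bound the number of elements of a single $\SE$-orbit in $\mathcal{G}_n^V$. Fixing non-colinear $q \in \mathcal{G}_n^V$ and two vertices $v_1 \neq v_2$ with $q(v_1) \neq q(v_2)$, the map $\theta \mapsto (\theta q(v_1), \theta q(v_2))$ is injective on $\SE$ and the pair of images lies in $\mathcal{G}_n^2$ at the prescribed squared Euclidean distance, so
\begin{equation*}
  \big|\SE \cdot q \cap \mathcal{G}_n^V\big| \;\leq\; n \cdot r_2\!\big(n \|q(v_1) - q(v_2)\|^2\big),
\end{equation*}
where $r_2(k)$ counts representations of $k$ as a sum of two integer squares. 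For a typical configuration this right-hand side is $O(n)$, and the configurations where it is larger form a negligible fraction (using $\sum_{k\leq N} r_2(k)^2 = O(N\log N)$ to bound the number of exceptional pairs $(q(v_1), q(v_2))$). Combining with the preceding steps gives
\begin{equation*}
  \big|f_G(\mathcal{G}_n^V)\big| \;\geq\; \frac{n^{|V|} - o(n^{|V|})}{c_G \cdot O(n)} \;\gg\; n^{|V|-1}.
\end{equation*}
The main obstacle I anticipate is the uniformity in $p$ of the UBC bound: \Cref{conj:uniformbounded} strictly concerns one curve over $\mathbb{Q}$ at a time, whereas $C_{G,vw}(p)$ varies with $p$. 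This is circumvented because the curves $C_{G,vw}(p)$ all arise as fibres of a single algebraic family with constant generic genus and defining equations over $\mathbb{Q}$, so a family version of UBC (which follows from UBC applied to the relevant moduli space) suffices.
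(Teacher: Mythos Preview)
Your reduction to bounding $\SE$-orbits in $\mathcal{G}_n^V$ is sound through step 3, and the Schwartz--Zippel count of non-general realisations is fine. The gap is in step 4. Your orbit bound $|\SE\cdot q\cap\mathcal{G}_n^V|\le n\cdot r_2(k_q)$ with $k_q=n\|q(v_1)-q(v_2)\|^2$ is correct, but the claim that after discarding $o(n^{|V|})$ exceptional realisations the remaining fibres are $O(n)$ does \emph{not} follow from $\sum_{k\le N}r_2(k)^2=O(N\log N)$. That second-moment bound gives
\[
\#\{q\in\mathcal{G}_n^V:r_2(k_q)>T\}\ \ll\ n^{|V|-2}\cdot n\sum_{k:r_2(k)>T}r_2(k)\ \le\ \frac{n^{|V|}\log n}{T},
\]
so to make the exceptional set $o(n^{|V|})$ you must take $T$ growing faster than $\log n$, and then the fibre bound for the surviving realisations is $c_G\,nT\gg n\log n$, not $O(n)$. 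Your final inequality therefore yields only $|f_G(\mathcal{G}_n^V)|\gg n^{|V|-1}/\log n$. A dyadic refinement improves this to $n^{|V|-1}/\sqrt{\log n}$ (via Landau's count of sums of two squares), but an $L^\infty$ fibre bound built from $r_2$ cannot remove the logarithm entirely.

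The paper avoids this by passing to an $L^2$ estimate. Having shown (exactly as you do) that each fibre $\nu(\mathbf t)$ splits into at most $ck$ congruence classes under $\SE$, it applies Cauchy--Schwarz:
\[
n^{2|V|}\ \ll\ \Big(\sum_{\mathbf t}|\nu_{\max}(\mathbf t)|\Big)^2\ \le\ |f_G(\mathcal{G}_n^V)|\sum_{\mathbf t}|\nu_{\max}(\mathbf t)|^2\ \le\ |f_G(\mathcal{G}_n^V)|\cdot|\mathcal{E}_{V,\SE}(\mathcal{G}_n)|,
\]
and then invokes the Guth--Katz energy bound $|\mathcal{E}_{V,\SE}(P)|\ll|P|^{|V|+1}$ for $|V|\ge 3$ (which holds for any planar $P$ and carries no logarithm once $|V|\ge 3$). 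This gives the sharp $n^{|V|-1}$. So the missing ingredient in your argument is precisely this energy bound; your pointwise $r_2$ estimate is a special case of it but is too lossy in isolation.

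A minor remark: your worry about uniformity of UBC is not actually an obstacle. As stated in \Cref{conj:uniformbounded}, the bound depends only on the genus $g$ and the field $K=\mathbb{Q}$, not on the individual curve, so it is already uniform over the family $\{C_{G,vw}(p)\}_p$; no moduli-space argument is needed.
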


\begin{proof}
    Since $G$ is a 1 d.o.f.~graph with flex-genus at least 2,
    it must have at least 4 vertices;
    indeed, the only 1 d.o.f.~graph with 3 vertices is the hinge which has a flex-genus of 0.
    Let $U \subset (\mathbb{C}^2)^V$ be the Zariski open set of realisations where \Cref{lem:rationalpoints1dof} holds.
    Choose $p \in U$ where (by scaling if necessary) every vertex is contained in the square $[0,1]^2$.
    Then there exists $0 < \varepsilon < 1$ such that,
    if $q \in (\mathbb{R}^2)^V$ satisfies $\|p(v)-q(v)\| < \varepsilon$ for all $v \in V$, then $q \in U$.
    We now fix $\varepsilon$ and define for each $n$ the set
    \begin{equation*}
        S_n := \left\{ q \in (\mathcal{G}_n)^V : \|p(v)-q(v)\| < \varepsilon \text{ for all } v \in V  \right\}.
    \end{equation*}
    By adapting bounds from the Gauss circle problem (for example, \cite{hux}),
    we observe that\footnote{Here we use the convention that $f(n) \sim g(n)$ if and only if $\lim_{n \rightarrow \infty} f(n)/g(n) = 1$.} $|S_n| \sim \pi \varepsilon^2 n^{|V|}$, and so $|S_n| \gg n^{|V|}$.
    For each point $\mathbf{t}$ in $\mathbb{R}^E$, we define its set of representations $\nu(\mathbf{t})$ as
    \[ \nu(\mathbf{t}) = \{ p \in S_n: f_G(p) = \mathbf{t} \}.\]
    Then, we have that
    \[\left(n^{|V|}\right)^2  \ll |S_n|^2 = \left( \sum_{t \in f_G(S_n)}|v(\mathbf{t})|\right)^2.\]
    For each non-empty $\nu(\mathbf{t})$, select one $p \in \nu(\mathbf{t})$ and fix it, calling this fixed element $p_{\mathbf{t}}$.
    As all realisations in $\nu(\mathbf{t})$ have the same edge lengths (by definition), for any realisation $(G, q) \in \nu(\mathbf{t})$ we have that $(G,p_{\mathbf{t}}) \sim (G,q)$. 

    By \Cref{lem:rationalpoints1dof},
    there exists $k,g$ that are independent of $p$ such that each quotient space $h_G^{-1}(\mathbf{t})/ \SE_{\mathbb{C}}$ consists of $k$ smooth irreducible algebraic curves, here labelled $\mathcal{S}_1(\mathbf{t}),\ldots, \mathcal{S}_k(\mathbf{t})$, each with genus $g$.
    Under the assumption of the uniform boundness conjecture in conjunction with \Cref{lem:rationalpoints1dof},
    there exists $c>0$ independent of our choice of $p$ such that each set
    \begin{equation*}
        \mathbb{Q}_i(t) := \left\{ [q] \in \mathcal{S}_i(\mathbf{t}) : [q] \cap (\mathbb{Q}^2)^V \neq \emptyset \right\}
    \end{equation*}
    contains $c_i(\mathbf{t}) \leq c$ points.
    For each $i \in [k]$, define the set
    \begin{equation*}
        \nu_i(\mathbf{t}) := \left\{ q \in \nu(\mathbf{t}) : q \in [q'] \text{ for some } [q'] \in \mathcal{S}_i(\mathbf{t}) \right\}.
    \end{equation*}
    Then for each $i \in [k]$, $\nu_i(\mathbf{t})$ can be partitioned into $c_i(\mathbf{t})$ subsets $\nu_{i,1}(\mathbf{t}), \ldots, \nu_{i,c_i(\mathbf{t})}(\mathbf{t})$ where each $\nu_{i,j}(\mathbf{t})$ contains only congruent realisations.
    Hence,
    \begin{equation*}
        n^{2|V|} \ll \left(\sum_{\mathbf{t}\in f_G(S_n)}|\nu(\mathbf{t})|\right)^2 = \left(\sum_{\mathbf{t}\in f_G(S_n)}\sum_{i=1}^{k} \sum_{j=1}^{c_i(\mathbf{t})}|\nu_{i,j}(\mathbf{t})|\right)^2.
    \end{equation*}
    Upper bounding each $|\nu_{i,j}(\mathbf{t})|$ in this final line with the largest set of representatives, denoted $|\nu_{\max}(\mathbf{t})|$, gives us
        \[ n^{2|V|} \ll \left(\sum_{\mathbf{t}\in f_G(S_n)}  c_i(\mathbf{t}) k |\nu_{\max}(\mathbf{t})|\right)^2 \leq \left(\sum_{\mathbf{t}\in f_G(S_n)}  c k |\nu_{\max}(\mathbf{t})|\right)^2.\]
    By applying the Cauchy-Schwarz inequality we obtain
    \begin{equation*}
            n^{2|V|} \ll c^2k^2 \cdot\Big|f_G(S_n)\Big|  \sum_{\mathbf{t}\in f_G(S_n)}|\nu_{\max}(\mathbf{t})|^2.
    \end{equation*}
    As $S_n \subseteq P^V$, we have that $\big|f_G(S_n)\big| \leq \big|f_G\left(\mathcal{G}_n^V\right)\big|$. As $ck = O_{|P|}(1)$, the constant can be suppressed in the asymptotic notation. Putting this together gives
    \begin{equation}\label{eq:genus1}
        n^{2|V|} \ll \Big|f_G\left(\mathcal{G}_n^V\right) \Big|  \sum_{\mathbf{t}\in f_G(S_n)}|\nu_{\max}(\mathbf{t})|^2.
    \end{equation}
    We observe now that
    \begin{equation}\label{eq:genus2}
        \sum_{\mathbf{t}\in f_G(S_n)}|\nu_{\max}(\mathbf{t})|^2 \leq \big|\left\{(p,q) \in S_n\times S_n: \exists \theta \in \SE, ~ \theta p = q.\right\}\big| \leq |\mathcal{E}_{V,\SE}(\mathcal{G}_n)|.
    \end{equation}
    By combining \eqref{eq:genus1} and \eqref{eq:genus2},
    we have that
        \begin{equation*}
            n^{2|V|} \ll \Big|f_G(\mathcal{G}_n^V) \Big| \big|\mathcal{E}_{V,\SE}(\mathcal{G}_n)\big|.
        \end{equation*}    
    The result now follows from \Cref{lem:rotations}.
\end{proof}

So, why is \Cref{prop:genusgrid} interesting? A common heuristic in the study of Erd\H{o}s distance problems is that the correct lower bound for $\big| f_G(P^V)\big|$ is witnessed when $P$ is a grid.
Indeed, it was shown by Mansfield and Passant \cite{MP2024} that when $G=K_3$, a point set satisfies $\big| f_G(P^V)\big| = O(|P|^2)$ only if a positive proportion of $P$ is contained in either a grid-like structure or a circle.
By \Cref{thm:Hypergraph2D}, the circle will always give a lower bound of $\big| f_G(P^V) \big| \gg |P|^{|V|-1}$ so long as $G$ is connected,
and so point sets contained within circles will only be a witness for the lower bound of $\big| f_G(P^V) \big|$ when $\big| f_G(P^V) \big| \gg |P|^{|V|-1}$ holds for $P \subset \mathbb{R}^2$.
The grid seems like a much more likely witness for the lower bound of $\big| f_G(P^V) \big|$,
as can be seen to (almost) hold when $G=K_2$ or $G$ is a path.
If we assume that grids are the correct witnesses for all graphs (rigid or not), then \Cref{prop:genusgrid} suggests that weaker graph properties than rigidity (in this case, a flex genus of at least 2) can guarantee a lower bound of $\big| f_G(P^V) \big| \gg |P|^{|V|-1}$.

\section{Concluding remarks}\label{sec:conclude}

\subsection{Higher dimensions}

A major road-block to extending \Cref{cor:euclidean}, \Cref{cor:dothigher} and \Cref{cor:skewsymm} to higher dimensions is the absence of an analogous result to \Cref{thm: GuthKatz} regarding energy.
Without this, our results in these cases is rather limited.

Two cases where this is not an issue are $\ell_p$ distances ($p \geq 4$) and the `symmetric tensor completion' metric given by \cref{eq:sym_tensor_g}.
For both of these, the only limiting factor is understanding the behaviour of point sets contained in low degree curves.
While the latter case seems likely to be difficult,
the former seems like it could be approachable if we are able to prove three things:
\begin{itemize}
    \item If a graph is rigid in $\ell_p^d$, then it is rigid with respect to the $\ell_p$ metric when restricted to any irreducible algebraic variety contained in $\mathbb{R}^d$. Surface rigidity has been previously investigated \cite{nixowenpower}, however such a result is not even known for the case where $p=2$ (i.e., the Euclidean case).
    \item An analogue of \Cref{thm:energyg} for point sets contained within algebraic sets,
    assuming the graph is rigid with respect to being constrained to the hypersurface.
    \item Extending \Cref{thm:Hypergraph2D} to allow for the curve $C$ to sit in a higher dimensional space.
\end{itemize}
If all of the above are proven, it seems likely that some type of inductive argument could be implemented to prove the following conjecture:

\begin{conjecture}\label{con:lphigher}
    Let $G=(V,E)$ be a graph that is rigid in $\ell_p^d$ for even $p \geq 4$ and $d\geq 1$,
    and let $g$ be the $\ell_p$ metric given in \cref{eq:lp} for the chosen $p,d$.
    Then for any finite set $P \subset \mathbb{R}^d$,
    \begin{equation*}
       \Big| f_{g,G}\left(P^V\right) \Big| \gg |P|^{|V|-1},
    \end{equation*}
    and this bound is tight.
\end{conjecture}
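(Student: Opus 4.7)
The plan is to induct on the minimum dimension $\mathcal{V}(P)$ of an irreducible algebraic variety $C \subset \mathbb{R}^d$ of degree bounded by a function of $\Delta_G$ and $p$ such that $|P \cap C| \geq |P|/100$, thereby following the roadmap laid out in the three bullets preceding the statement. If $\mathcal{V}(P) = d$, then by definition no low-degree proper subvariety captures a positive proportion of $P$, and \Cref{cor:lphigher} immediately gives $|f_{g,G}(P^V)| \gg |P|^{|V|-1}$. Otherwise, let $C$ be an irreducible variety with $\dim C < d$ witnessing $\mathcal{V}(P)$ and set $P' := P \cap C$, which has size $\Theta(|P|)$. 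It then suffices to show $|f_{g,G}((P')^V)| \gg |P'|^{|V|-1}$.

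For this, first I would establish a \emph{rigidity-on-varieties} lemma: if $G$ is rigid in $\ell_p^d$ and $C \subset \mathbb{R}^d$ is a sufficiently regular irreducible variety of positive dimension, then generic realisations $p \in C^V$ satisfy $\ker \mathrm{J}(f_{g,G}|_{C^V})(p) = \mathrm{triv}_g(p) \cap (T_p C)^V$, where the trivial motions are those induced by the stabiliser $\Gamma_g^C := \{\theta \in \Gamma_g : \theta(C) = C\}$. The core idea is that restricting the domain to $C^V$ only reduces the kernel of the Jacobian, so ambient rigidity transfers to $C$ provided the dimension count matches. For the $\ell_p$ metric with $p \geq 4$ this is favourable: since $\Gamma_g$ is translations composed with signed permutations, the stabiliser $\Gamma_g^C$ typically takes the simple form $\mathbb{R}^e \rtimes S$ where $\mathbb{R}^e$ is a translation subgroup preserving $C$ and $S$ is a finite group.

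Next I would prove a variety-constrained version of \Cref{thm:energyg}. The Cauchy--Schwarz argument in the existing proof carries over verbatim once one has (i) an analogue of \Cref{lem:denseInfRigidg} establishing density of rigid, $C$-affinely-spanning realisations in $(P')^V$, which follows from a Bezout-on-$C$ argument in place of the ambient Bezout used in \Cref{lem:allflexcurveg}, and (ii) an analogue of \Cref{prop:fewCongClassesg} bounding the number of $\Gamma_g^C$-equivalence classes of rigid realisations in a fibre, which is given by the same constant-rank plus Milnor--Thom argument applied inside $C^V$. This yields $|P'|^{2|V|} \ll |f_{g,G}((P')^V)| \cdot |\mathcal{E}_{V,\Gamma_g^C}(P')|$, and because $\Gamma_g^C$ has the form handled by \Cref{lem:energytranslationonly}, one obtains $|\mathcal{E}_{V,\Gamma_g^C}(P')| \ll |P'|^{|V|+1}$, completing the inductive step provided no lower-dimensional subvariety of $C$ captures a positive proportion of $P'$. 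Otherwise one recurses on that subvariety. The base case $\dim C = 1$ uses an extension of \Cref{thm:Hypergraph2D} to curves in $\mathbb{R}^d$: the tree-partitioning argument transfers unchanged, the only new input being that generic fibres of $g(x_0, \cdot)|_C$ have size bounded by $\deg C \cdot \deg g$, which follows from Bezout applied to $C$ as a curve in $\mathbb{R}^d$.

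The principal obstacle will be the rigidity-on-varieties lemma. Even in the Euclidean case this question is largely open, and for $p \neq 2$ it is further complicated by the absence of a clean combinatorial characterisation of $\ell_p^d$-rigidity in dimensions $d \geq 3$. A promising workaround would be to avoid proving rigidity for all regular varieties and instead restrict attention to those $C$ that can actually arise as concentration sets in our recursion: an analysis in the spirit of \Cref{lem:allflexcurveg} should show that any such $C$ must be cut out by polynomials of controlled degree whose vanishing supports a non-trivial $\ell_p$-flex of $G$, and this substantially restricted class of varieties may prove considerably more tractable than the general problem.
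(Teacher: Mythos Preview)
There is no proof in the paper to compare against: the statement is explicitly a \emph{conjecture}, placed in the concluding remarks. The paper does not prove it, and indeed flags that the key ingredient is open even in the Euclidean case.

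Your proposal is not a proof either, and you implicitly acknowledge this. What you have written is a faithful elaboration of exactly the roadmap the paper sketches in the three bullet points immediately preceding the conjecture: (i) a rigidity-on-varieties lemma, (ii) an analogue of \Cref{thm:energyg} for point sets constrained to a variety, and (iii) an extension of \Cref{thm:Hypergraph2D} to curves in $\mathbb{R}^d$, assembled into an induction on the dimension of the lowest-degree variety capturing a positive proportion of $P$. Your identification of the principal obstacle matches the paper's own assessment: the paper remarks that the rigidity-on-varieties statement ``is not even known for the case where $p=2$''.

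The one place where your sketch goes beyond the paper's outline is the suggested workaround of restricting attention to only those varieties $C$ that can arise as concentration sets in the recursion, via an analysis in the spirit of \Cref{lem:allflexcurveg}. This is a reasonable heuristic but is not obviously a genuine simplification: the varieties cut out by vanishing of Jacobian minors of $f_{g,G}$ can still be quite wild, and you give no argument that they form a tractable class for the $\ell_p$ rigidity question. So the proposal remains a plan with a known gap, aligned with the paper's own plan with the same gap.
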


It is possible that using a similar apporach as given above for the dot product,
we could provide a full solution to \Cref{cor:dothigher}.

\begin{conjecture}\label{con:dot}
    Let $G=(V,E)$ be a semisimple graph with $|V| \geq 4$ that is locally completable in $\mathbb{R}^3$,
    and let $g$ be the dot product given in \cref{eq:dot} with $d=3$.
    Then for any finite set $P \subset \mathbb{R}^3$,
    we have
    \begin{equation*}
       \Big| f_{g,G}\left(P^V\right) \Big|  \gg |P|^{|V|-1},
    \end{equation*}  
    and this bound is tight.
\end{conjecture}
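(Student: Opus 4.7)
}
The plan is a case analysis on how concentrated $P$ is on low-dimensional algebraic subsets of $\mathbb{R}^3$, following the three-step program outlined in \Cref{sec:conclude}. If no algebraic surface $C \subset \mathbb{R}^3$ of degree at most $\Delta_G$ contains more than $\tfrac{1}{100}|P|$ points of $P$, then \Cref{cor:dothigher} applies directly and gives $|f_{g,G}(P^V)| \gg |P|^{|V|-1}$. Otherwise, we may pass to a subset $P' \subseteq P \cap S$ with $|P'| \gg |P|$, where $S$ is an irreducible algebraic surface of bounded degree, and it suffices to prove $|f_{g,G}((P')^V)| \gg |P|^{|V|-1}$.

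The core step is then to establish the two missing ingredients mentioned in \Cref{sec:conclude}: (i) if $G$ is locally completable in $\mathbb{R}^3$ then $G$ remains rigid when realisations are constrained to $S$, with respect to the dot product restricted to $S$, and (ii) an analogue of \Cref{thm:energyg} for point sets on $S$, in which the role of $\Gamma_g = \Othree$ is replaced by the stabiliser $\Gamma_g^S \leq \Othree$ of $S$. Running the same argument as in the proof of \Cref{thm:energyg} on $S$ then reduces matters to an energy bound for $\Gamma_g^S$ acting on $P'$. For a generic surface the stabiliser is finite and one obtains the stronger bound $|P|^{|V|}$ via \Cref{cor:finiteisom}; the only surfaces whose stabiliser has positive dimension under $\Othree$ are surfaces of revolution about a line through the origin, for which $\Gamma_g^S$ is a subgroup of $\Othree$ and \Cref{lem:energyrotationonly} already yields $|\mathcal{E}_{V,\Gamma_g^S}(P')| \ll |P'|^{|V|+1}$. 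In the subsequent recursion, if $P'$ itself concentrates on a curve $C \subset S$, we would apply a version of \Cref{thm:Hypergraph2D} for space curves: as long as $g|_{C \times C}$ is not constant, the same pigeonhole/tree-pruning argument works verbatim, and the only constant case is a circle lying on a sphere centred at the origin, which can be parametrised and handled explicitly.

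The main obstacle will be ingredient (i), namely proving that local completability in $\mathbb{R}^3$ implies rigidity when constrained to an arbitrary irreducible surface $S$. Surface rigidity for the Euclidean metric is already delicate (see \cite{nixowenpower}) and the analogous statement for the dot product is not, to our knowledge, recorded in the literature; the issue is that restricting to $S$ adds constraints which could generate new non-trivial infinitesimal flexes, and ruling this out seems to require a careful analysis of how the Jacobian of $f_{g,G}$ interacts with the tangent bundle of $S^V$. Assuming that step, tightness follows from the construction in \Cref{cor:dothigher}: the set $\{(\cos(2\pi k/n),\sin(2\pi k/n),0) : 0 \leq k < n\}$ lies on the unit circle in the plane $x_3=0$, is invariant under a $1$-parameter subgroup of $\Othree$, and therefore witnesses $|f_{g,G}(P^V)| \ll |P|^{|V|-1}$.
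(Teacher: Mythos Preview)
The statement you are attempting to prove is labelled \textbf{Conjecture} in the paper (\Cref{con:dot}), and the paper does not supply a proof; it appears in \Cref{sec:conclude} precisely as an open problem. So there is no ``paper's own proof'' to compare against, and your proposal should be read as a strategy rather than a completed argument.

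That said, your outline tracks the paper's own discussion almost exactly. Immediately before stating \Cref{con:dot}, the paper lists (for the $\ell_p$ case, then says ``using a similar approach \ldots\ for the dot product'') the same three missing ingredients you identify: surface rigidity inherited from ambient rigidity, an analogue of \Cref{thm:energyg} restricted to a surface, and a space-curve version of \Cref{thm:Hypergraph2D}. You correctly single out ingredient~(i) --- that local completability in $\mathbb{R}^3$ forces rigidity on an arbitrary irreducible surface $S$ --- as the genuine obstruction, and the paper agrees: it explicitly notes that surface rigidity ``is not even known for the case where $p=2$''. Your handling of the energy side (finite stabiliser for generic $S$, and \Cref{lem:energyrotationonly} for surfaces of revolution) is a reasonable refinement not spelled out in the paper.

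The honest assessment is therefore: you have not proved the conjecture, and you say as much. The step you flag as ``the main obstacle'' is exactly the step the paper leaves open, and there is no known argument that fills it. Until a surface-rigidity result for the dot product is established, this remains a plan rather than a proof.
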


\subsection{Rational measurement maps}

Many of the ideas used throughout \Cref{sec:graphicalgrigid} seem to allow for the polynomial $g :(\mathbb{R}^d)^k \rightarrow \mathbb{R}$ to be replaced with a rational function $g = g_1/g_2$ for some polynomials $g_1,g_2 :(\mathbb{R}^d)^k \rightarrow \mathbb{R}$ where $g_2$ is non-zero and does not divide $g_1$.
Great care needs to be taken here however, since the map $g$ can now have problematic singularities that need to be dealt with in a sensible way.

One particularly interesting example of a rational $g$ map is the map
\begin{equation*}
    g: (\mathbb{R}^d)^k \dashrightarrow \mathbb{R}, ~ (p_1,\ldots,p_{d+1}) \mapsto \frac{-\det \Delta(p_1,\ldots,p_{d+1}) }{2 \det \Lambda(p_1,\ldots,p_{d+1})}
\end{equation*}
where (given $\mathbf{1} = ( 1,\ldots,1)$)
\begin{align*}
    \Lambda(p_1,\ldots,p_{d+1}) := \left( \|p_i-p_j\|^2 \right)_{i,j \in [d+1]}, \qquad \Delta(p_1,\ldots,p_{d+1}) := 
    \begin{pmatrix}
        0 & \mathbf{1}^T \\
        \mathbf{1} & \Lambda(p_1,\ldots,p_{d+1})
    \end{pmatrix}.
\end{align*}
When the points $p_1,\ldots,p_{d+1}$ are affinely independent,
$g(p_1,\ldots,p_{d+1})$ is the square of the circumradius of the points $p_1,\ldots,p_{d+1}$,
i.e., the radius of the unique hypersphere containing the points $p_1,\ldots,p_{d+1}$.

\subsection{Improvements on typical normed space results}

In comparison to \Cref{ABS}, the pinned variant given by \Cref{pinnedmostnorms} contains a notably weaker asymptotic term.
We believe that this can be removed in the following sense.

\begin{conjecture}\label{conj:pinnedmostnorms}
 The following is true for most $d$-norms $\|\cdot\|$. 
 For any finite point set $P \subseteq \mathbb R^d$, there exists a point $x\in P$, which determines $(1-o(1))|P|$ distances to the other points of $P$ with respect to $\|\cdot\|$,
 where the rate of decay $o(1)$ depends only on the norm $\|\cdot\|$.
\end{conjecture}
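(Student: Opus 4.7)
The plan is to work in the same graph-colouring framework used for \Cref{pinnedmostnorms}: view $P$ as the vertex set of $K_n$ with the edge $\{x,y\}$ coloured by $\|x-y\|$. Writing $V_d \subseteq P$ for the set of vertices incident to some edge of colour $d$, and $c(v)$ for the number of colour classes touching $v$, one has the identity $\sum_{v \in P} c(v) = \sum_d |V_d|$, so some vertex $v$ satisfies
\begin{equation*}
    c(v) \geq \frac{1}{n} \sum_d |V_d|.
\end{equation*}
Since $c(v) \leq |D(P)|$ and $|D(P)| = (1 - o(1))n$ by \Cref{ABS}, to prove the conjecture it suffices to show that for most $d$-norms and every $n$-point set $P$, the sum $\sum_d |V_d|$ is at least $(1 - o(1)) n^2$. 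Because $|V_d| \leq n$ always, this is equivalent to asking that all but a $o(1)$-fraction of colour classes have vertex support $|V_d|$ of size $(1-o(1))n$.

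This reduces the conjecture to a mixing-type claim for typical norms: in the edge-coloured graph above, almost every colour class touches almost every vertex. The plan to prove such a claim would be to imitate the Baire-category architecture of \cite{Alon2025}, showing that the set of $d$-norms for which there exists some finite $P$ with a positive proportion of colour classes of vertex support below $(1 - \varepsilon)n$ is meagre in $B_d$. Heuristically, for a typical norm no single point of $P$ should be cut off from many distinct distances, since this would force a strong coincidence between many translates of the unit sphere of $\|\cdot\|$.

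The main obstacle is establishing this mixing property at the Baire-category level. The covering argument used in the proof of \Cref{ABS} controls only the \emph{edge count} within a colour class, not the \emph{vertex support}; ruling out colour classes with small vertex support requires a strictly stronger quantitative form of ``non-degeneracy'' for typical norms. A natural line of attack is to show that for a typical norm and every unit vector $u$, the sphere $\{y : \|y\| = 1\}$ is locally non-cylindrical in the direction $u$, and combine this with a perturbation argument to force a large proportion of the points of any configuration $P$ to realize every common distance. A second, more technical obstacle is promoting such a pointwise non-degeneracy statement to a uniform bound over all $P$ with an explicit $o(1)$ rate depending only on $\|\cdot\|$, as demanded by the conjecture; this will likely require a refinement of the Baire-category step that tracks an effective modulus of typicality, rather than merely comeagreness.
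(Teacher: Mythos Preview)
The statement you are attempting to prove is labelled \texttt{conjecture} in the paper and appears in the concluding remarks (\Cref{conj:pinnedmostnorms}); the paper does \emph{not} supply a proof, so there is nothing to compare your attempt against. Your write-up is honest about this: you offer a reduction and then explicitly flag the ``main obstacle'' and a ``second, more technical obstacle'', so what you have is a proof \emph{plan}, not a proof.

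On the plan itself: the averaging identity $\sum_{v} c(v) = \sum_d |V_d|$ and the conclusion that it suffices to show $\sum_d |V_d| \geq (1-o(1))n^2$ are correct. One small slip: your claimed \emph{equivalence} between this inequality and ``all but a $o(1)$-fraction of colour classes have $|V_d| = (1-o(1))n$'' is not quite right in general, since the number of colour classes need not be $\Theta(n)$; the implication you actually need (the latter implies the former) is fine, but the reverse can fail when there are many more than $n$ colours. More importantly, the substantive step you isolate---a Baire-category argument forcing almost every colour class to have near-full vertex support---is genuinely open and is precisely why the paper states this as a conjecture rather than a theorem. The covering machinery of \cite{Alon2025} bounds edges per colour, not vertex support per colour, and the strengthening you describe is exactly the missing ingredient.
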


If \Cref{conj:pinnedmostnorms} is true,
then we can prove the following stronger version of \Cref{mainNormedresult} with relatively little effort.

\begin{prop}
    Suppose that \Cref{conj:pinnedmostnorms} is true.
    Then following is true for most $d$-norms $\|\cdot\|$. 
    Let $G = (V,E)$ be a connected graph, and let $P$ be any finite set of points in $\mathbb R^d$.
    Then 
    \begin{equation*}
       \Big|f_{\|\cdot\|, G}\left(P^{V}\right)\Big| \gg |P|^{|V|-1},
    \end{equation*}
    where the explicit constant depends only on the graph $G$ and the norm $\|\cdot\|$.
\end{prop}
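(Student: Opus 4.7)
The plan is to repeat the proof of \Cref{mainNormedresult} essentially verbatim, replacing the function $H(n) = Cn/(\log n)^2$ coming from \Cref{pinnedmostnorms} with the improved $H(n) = (1-o(1))n$ granted by \Cref{conj:pinnedmostnorms}. As in that argument, I first reduce to the case where $G$ is a tree: the map $f_{\|\cdot\|,G}(P^V)$ projects surjectively onto $f_{\|\cdot\|,T}(P^V)$ for any spanning tree $T \subseteq G$ via the coordinate projection $\mathbb{R}^E \twoheadrightarrow \mathbb{R}^{E(T)}$, so a lower bound for $T$ transfers to $G$. I then fix a root $v_0 \in V$, write $V_i$ for the depth-$i$ vertices, and let $t$ denote the maximum depth.

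Since \Cref{lem:ManyRichPins} requires $H$ to be monotone, I first pass to a non-decreasing minorant $\tilde H$ still satisfying $\tilde H(n)/n \to 1$; this is straightforward, as $H(n) \geq n/2$ for $n$ above a threshold $N_0 = N_0(\|\cdot\|)$, so one may take $\tilde H(n) = 1$ for $n < N_0$ and $\tilde H(n) = \lfloor n/2 \rfloor$ otherwise. The layer-by-layer construction from the proof of \Cref{mainNormedresult} then applies without modification: set $P_t = P$ and iteratively extract $P_i \subseteq P_{i+1}$ with $|P_i| \geq |P_{i+1}|/2$ such that every $x \in P_i$ satisfies $|\Delta_x(P_{i+1})| \geq \tilde H(|P_{i+1}|/2)$. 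Selecting a pin $x_i \in P_i$ of minimal richness and restricting to realisations placing the depth-$i$ vertices inside $P_i$, the telescoping calculation from \cref{eq:mainNormedresult} yields
\begin{equation*}
\left|f_{\|\cdot\|,G}(P^V)\right| \;\geq\; \tilde H\!\left(|P|/2^{t}\right)^{|V|-1}.
\end{equation*}

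Because $\tilde H(n)/n \to 1$ and $2^t$ depends only on $G$, whenever $|P|$ exceeds a threshold depending on $G$ and $\|\cdot\|$ we have $\tilde H(|P|/2^t) \geq |P|/2^{t+1}$, which gives $|f_{\|\cdot\|,G}(P^V)| \geq |P|^{|V|-1}/2^{(t+1)(|V|-1)} \gg |P|^{|V|-1}$ with implied constant depending on $G$ and $\|\cdot\|$; the finitely many small-$|P|$ cases are absorbed into this constant. The entire argument is thus a mechanical substitution of the stronger pinned bound into the framework of \Cref{sec:norm}, and accordingly there is no genuine obstacle in the proof itself — all of the difficulty is concentrated in \Cref{conj:pinnedmostnorms}, whose resolution would presumably demand substantial new ideas beyond those deployed in \cite{Alon2025}.
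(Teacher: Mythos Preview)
Your proof is correct and follows essentially the same approach as the paper: substitute $H(n) = (1-o(1))n$ into the machinery of \Cref{mainNormedresult} and read off the improved bound from \cref{eq:mainNormedresult}. Your extra care in passing to a monotone minorant $\tilde H$ so that \Cref{lem:ManyRichPins} applies cleanly is a point the paper glosses over; note, though, that the explicit choice $\tilde H(n)=\lfloor n/2\rfloor$ gives $\tilde H(n)/n\to 1/2$ rather than $1$ as you claim, but this is inconsequential since any linear lower bound suffices.
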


\begin{proof}
    Set $f(n)$ to be the explicit function defining the rate of decay for the $o(1)$ term in \Cref{conj:pinnedmostnorms}.
    Following the same method given in \Cref{mainNormedresult} with $H(n) := (1-f(n))n$,
    we get from \cref{eq:mainNormedresult} that 
    \begin{equation*}
        \left| f_{\|\cdot\|,G}(P^V) \right| \geq (H(|P|/2^t)^{|V|-1} = \frac{(1-f(|P|))^{|V|-1}|}{2^{t(|V|-1)}}P|^{|V|-1} \gg |P|^{|V|-1}
    \end{equation*}
    as claimed.
\end{proof}

\subsection{Flexible graphs}

When dealing with anything of the same flavour as the Erd\H{o}s distinct distance problem, whatever happens on the grid is usually the `correct' answer.
In view of this ethos and \Cref{prop:genusgrid}, we conjecture the following.

\begin{conjecture}\label{conj:genus}
    Let $G=(V,E)$ be a fixed graph with 1 d.o.f.~in $\mathbb{R}^2$ and flex-genus at least 2.
    Then, for any finite set $P \subset \mathbb{R}^2$, we have
    \[ \Big|f_G\left(P^V\right) \Big| \geq |P|^{|V|-1}.\]
\end{conjecture}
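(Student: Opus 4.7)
The plan is to extend the proof of Proposition~\ref{prop:genusgrid} from the grid to an arbitrary finite set $P \subset \mathbb{R}^2$. The structural skeleton of that proof, namely a density argument producing many general realizations together with a Cauchy-Schwarz reduction that combines an energy bound with a fibre-wise bound on equivalence classes, should carry over essentially verbatim. What must be replaced is the rationality-dependent input, which is where the grid was essential.

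First, I would mimic Lemma~\ref{lem:denseInfRigidg} in the 1 d.o.f.~setting to produce a subset $\mathcal{P} \subseteq P^V$ with $|\mathcal{P}| \gg |P|^{|V|}$ consisting of general realizations in the sense of Definition~\ref{def:flexgenus}. The case where $P$ is largely supported on a low-degree irreducible algebraic curve can be dispatched directly by Theorem~\ref{thm:Hypergraph2D}, which already yields $|f_G(P^V)| \gg |P|^{|V|-1}$ since $G$ is connected; so assume we are outside this exceptional case. Then general realizations are abundant in $P^V$, and Lemma~\ref{lem:rationalpoints1dof} applies to each such $p$: the quotient $h_G^{-1}(h_G(p))/\SE_\mathbb{C}$ is a disjoint union of $k_G$ smooth irreducible complex curves, each of genus $g_G \geq 2$, with both $k_G$ and $g_G$ depending only on $G$.

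Next, I would run the Cauchy-Schwarz argument from the proof of Proposition~\ref{prop:genusgrid} on $\mathcal{P}$ in place of $S_n$ to obtain
\begin{equation*}
    |P|^{2|V|} \ll C_G(P)^2 \cdot \Big|f_G(P^V)\Big| \cdot \Big|\mathcal{E}_{V,\SE}(P)\Big|,
\end{equation*}
where $C_G(P)$ is the maximum, over $\mathbf{t} \in f_G(\mathcal{P})$, of the number of $\SE_\mathbb{C}$-equivalence classes in $h_G^{-1}(\mathbf{t})$ that meet $P^V$. Lemma~\ref{lem:rotations} gives $|\mathcal{E}_{V,\SE}(P)| \ll |P|^{|V|+1}$, so the whole conjecture reduces to proving $C_G(P) = O_G(1)$ uniformly in $P$.

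The hard part, and the genuine obstacle, is establishing this uniform bound on $C_G(P)$. For the grid, uniformity followed from the uniform boundedness conjecture for rational points on curves of genus $\geq 2$, and crucially the relevant curves were all defined over $\mathbb{Q}$. For an arbitrary $P$, the curves cut out by the fibres of $h_G$ pass through points whose coordinates lie in field extensions of $\mathbb{Q}$ whose transcendence degree can grow with $|P|$, so Faltings-type results do not apply directly. Plausible routes forward include Pila--Wilkie counting in suitable o-minimal structures, a Bombieri--Lang statement in families (exploiting that all fibres arise from a single algebraic morphism defined by $f_G$), or a direct incidence-geometric analysis tuned to the specific curves produced by the measurement map. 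I expect that overcoming this step requires genuinely new input beyond both classical diophantine geometry and the rigidity-theoretic machinery developed in this paper.
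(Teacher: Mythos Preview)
The statement you are addressing is \Cref{conj:genus}, which the paper explicitly leaves as an open \emph{conjecture}; there is no proof in the paper to compare your proposal against. The authors arrive at it precisely by the heuristic you describe: \Cref{prop:genusgrid} handles the grid case (conditionally on uniform boundedness), and the folklore principle that the grid should witness the true lower bound motivates the conjecture for general $P$.

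Your outline is a reasonable summary of why the conjecture is plausible and where the genuine difficulty lies. You correctly isolate the obstruction: the uniform bound $C_G(P) = O_G(1)$ on the number of $\SE_\mathbb{C}$-orbits in a fibre that meet $P^V$. For the grid this reduces to counting rational points on a genus $\geq 2$ curve over $\mathbb{Q}$, but for arbitrary $P$ the relevant curves are defined over fields whose transcendence degree grows with $|P|$, and no Faltings-type statement applies. Your own final sentence concedes that this step ``requires genuinely new input,'' which is an accurate assessment --- but it also means your proposal is not a proof, only a reduction to a problem the paper does not know how to solve. The suggested routes (Pila--Wilkie, Bombieri--Lang in families, bespoke incidence arguments) are speculative and none of them is known to yield the required uniformity; indeed, the paper offers no such route and simply records the statement as a conjecture.
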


The genus requirement described in \Cref{conj:genus} is needed. For example, if $G$ is a hinge, then it is 1 d.o.f.~graph with flex-genus 0,
but for any $\varepsilon > 0$, the equality holds $\big|f_G(P^V)\big| = O_{|P|}( |P|^{|V|-1-\varepsilon})$ when $P$ is a grid.

The only graphs left to then deal with are those with flex-genus 0 and 1.
We summarise our thoughts on the behaviour of $\big|f_G(P^V)\big|$ for each case below:
\begin{itemize}
    \item Flex-genus 0: we expect that the graph comprises of two graphs that are rigid in $\mathbb{R}^2$ joined at a vertex. If one of these rigid components is a single edge (equivalently, the original graph had a leaf vertex) then we know that we have logarithmic loss. Otherwise, we would actual expect no logarithmic loss.
    \item Flex-genus 1: each algebraic set $C_{G,vw}(p)$ when $p$ is contained in the integer grid is now an elliptic curve.
    It was conjectured by Goldfeld \cite{goldfeld} that a positive proportion of elliptic curves have only finitely many points.
    It is possible that one could leverage this to show that enough realisations on the grid have few equivalent realisations, which would then imply no logarithmic loss.
\end{itemize}
Because of this, we would expect that for 1 d.o.f.~graphs, the only criteria for having a logarithmic loss is the existence of a leaf vertex.
In fact, we would be even more bold to suggest the following:

\begin{conjecture}
    Let $G=(V,E)$ be a fixed connected graph with minimum degree 2.
    Then for any finite set $P \subset \mathbb{R}^2$, we have
    \[ \Big|f_G\left(P^V\right) \Big| \gg |P|^{|V|-1}.\]
\end{conjecture}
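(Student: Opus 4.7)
The plan is strong induction on $|V|$, reducing first to the 2-connected case via block decomposition: in a minimum-degree-$2$ graph every block contains at least three vertices, so $G$ is built by attaching 2-connected blocks along single cut vertices, and the lower bounds multiply (with a compensating factor of $|P|$ for the shared vertex position). The base case $|V|=3$ is the triangle, handled by \Cref{cor:euclidean}. For 2-connected $G$, I would invoke an open ear decomposition $G = C_0 \cup P_1 \cup \cdots \cup P_k$, where $C_0$ is an initial cycle and each $P_i$ is a path attached at its endpoints to the previously built subgraph; the aim is to produce $\gg |P|^{|V|-1}$ distinct $f_G$-values by multiplying the realisation count through successive ears.

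For an ear with $m$ internal vertices and pinned endpoints $u,w$, we want the number of distinct edge-length tuples along the ear (over placements of the internal vertices in $P$) to be $\gg |P|^m$. The case $m=1$ is precisely the two-circle argument underlying \Cref{lem:injectiveRealisations}: the map $x \mapsto (\|u-x\|, \|x-w\|)$ on $P$ is at most $2$-to-$1$, giving $\gg |P|$ distinct pairs. For $m \geq 2$ one would place the internal vertices sequentially, but a naive iteration is exactly the scenario that produces the logarithmic loss in \Cref{thm:passantChains} and would be inherited here. To avoid this, one could instead count global distance-tuples along the ear directly by combining Cauchy--Schwarz with the $\E(2)$-energy bound $|\mathcal{E}_{V,\E(2)}(P)| \ll |P|^{|V|+1}$ from \Cref{lem:rotations}, exploiting that a 2-connected graph with an ear attached has many rigid-like constraints despite not being globally rigid.

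The main obstacle is the initial cycle $C_0$. For $|C_0|=3$ the triangle is rigid and \Cref{cor:euclidean} applies, but for $|C_0| \geq 4$ the best currently available bounds carry logarithmic factors. Removing them will likely require either a refined energy estimate specifically for cycles, or an appeal to the flex-genus framework of \Cref{subsec:genus}: cycles $C_n$ with $n \geq 5$ have flex-genus at least $2$, so \Cref{conj:genus} (itself conditional on \Cref{conj:uniformbounded}) would dispatch them, while $C_4$ has flex-genus $1$ and requires a genuinely new argument, plausibly via the Goldfeld-style observation that a positive proportion of elliptic curves have only finitely many rational points. Absent such an input, the truly novel component needed is a log-free lower bound for cycles $C_n$ with $n \geq 4$, which appears to be the real bottleneck of both the present conjecture and \Cref{conj:genus}.
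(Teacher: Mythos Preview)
This statement is a \emph{conjecture} in the paper; there is no proof to compare against. The paper presents it as a bold extrapolation from the 1 d.o.f.\ heuristics in \Cref{subsec:genus}, with no argument beyond the discussion of flex-genus for 1 d.o.f.\ graphs. Your proposal is likewise not a proof --- you correctly flag the cycle case as the real obstruction --- but two of your reductions have genuine problems beyond the gaps you already acknowledge.

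First, the block-decomposition step does not go through as stated. If $G=B_1\cup_v B_2$ with cut vertex $v$, knowing $|f_{B_i}(P^{V_i})|\gg|P|^{|V_i|-1}$ does not let you ``multiply the lower bounds'': the images $f_{B_1}$ and $f_{B_2}$ are coupled through the common placement $p(v)$, and to combine them you would need a \emph{pinned} bound of the form $|f_{B_i}(\{p:p(v)=x\})|\gg|P|^{|V_i|-1}$ for many $x\in P$. That is strictly stronger than the conjecture itself (indeed, it is essentially the content of \Cref{lem:ManyRichPins} in the normed setting, and is not available here without log loss).

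Second, your appeal to the flex-genus framework for cycles $C_n$ with $n\ge 5$ is misplaced. The flex-genus is defined only for graphs with exactly one degree of freedom, i.e.\ $|E|=2|V|-4$; among cycles this forces $n=4$. For $n\ge 5$ the cycle $C_n$ has $n-3\ge 2$ degrees of freedom, so neither \Cref{conj:genus} nor \Cref{prop:genusgrid} says anything about it, and the configuration curve $C_{G,vw}(p)$ is not a curve at all. Thus even granting the uniform boundedness conjecture, your route handles only $C_3$ (rigid) and at best $C_4$ (flex-genus~1, itself unresolved), leaving every longer cycle untouched. The paper's own current best for $C_n$, $n\ge 5$, is the log-lossy bound via \Cref{0extthm}, and nothing in your outline improves on this.
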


\section*{Acknowledgements}

This project arose from a focussed research group funded by the Heilbronn Institute for Mathematical Research (HIMR) and the UKRI/EPSRC Additional Funding Programme for Mathematical Sciences.
S.\,D.\ and J.\,P. are supported by HIMR.
N.F. was partially supported by ERC Advanced Grant ``GeoScape".
A.\,N.\ was partially supported by EPSRC grant EP/X036723/1.  A.W. is partially supported by FWF project PAT2559123.

\bibliography{realisationsinpointsets}

\begin{thebibliography}{10}

\bibitem{abrahammarsden}
Ralph Abraham and Jerrold~E. Marsden.
\newblock {\em Foundations of mechanics}.
\newblock Addison-Wesley Publishing Company, Inc., 2nd edition, 1987.

\bibitem{manifold}
Ralph Abraham, Jerrold~E. Marsden, and Tudor Ratiu.
\newblock {\em Manifolds, tensor analysis, and applications}.
\newblock Applied Mathematical Sciences. Springer New York, NY, 2nd edition,
  1988.

\bibitem{Alon2025}
Noga Alon, Matija Buci{\'{c}}, and Lisa Sauermann.
\newblock Unit and distinct distances in typical norms.
\newblock {\em Geometric and Functional Analysis}, 35:1--42, 2025.
\newblock \href {https://doi.org/10.1007/s00039-025-00698-x}
  {\path{doi:10.1007/s00039-025-00698-x}}.

\bibitem{AsimowRothI}
Leonard Asimow and Ben Roth.
\newblock {The rigidity of graphs}.
\newblock {\em {Transactions of the American Mathematical Society}},
  245:279--289, 1978.
\newblock \href {https://doi.org/10.2307/1998867} {\path{doi:10.2307/1998867}}.

\bibitem{boch}
Jacek Bochnak, Michel Coste, and Marie-Fran{\c{c}}oise Roy.
\newblock {\em Real algebraic geometry}, volume~36 of {\em Ergebnisse der
  Mathematik und ihrer Grenzgebiete. 3. Folge}.
\newblock Springer Berlin, Heidelberg, 1998.

\bibitem{brion}
Michel Brion.
\newblock Introduction to actions of algebraic groups.
\newblock In {\em Actions hamiltoniennes: invariants et classification}, Les
  cours du CIRM, pages 1--22. CIRM, 2010.
\newblock URL: \url{http://www.numdam.org/articles/10.5802/ccirm.1/}.

\bibitem{cruik}
James Cruickshank, Fatemeh Mohammadi, Anthony Nixon, and {Shin-ichi} Tanigawa.
\newblock Identifiability of points and rigidity of hypergraphs under algebraic
  constraints, 2024.
\newblock \href {http://arxiv.org/abs/2305.18990} {\path{arXiv:2305.18990}}.

\bibitem{dewar21}
Sean Dewar.
\newblock Equivalence of continuous, local and infinitesimal rigidity in normed
  spaces.
\newblock {\em Discrete {\&} Computational Geometry}, 65(3):655--679, 2021.
\newblock \href {https://doi.org/10.1007/s00454-019-00135-5}
  {\path{doi:10.1007/s00454-019-00135-5}}.

\bibitem{Dewar2022}
Sean Dewar, Derek Kitson, and Anthony Nixon.
\newblock Which graphs are rigid in $\ell_p^d$?
\newblock {\em Journal of Global Optimization}, 83(1):49--71, 2022.
\newblock \href {https://doi.org/10.1007/s10898-021-01008-z}
  {\path{doi:10.1007/s10898-021-01008-z}}.

\bibitem{Ehresmann}
Charles Ehresmann.
\newblock Les connexions infinitésimales dans un espace fibré
  différentiable.
\newblock {\em In: Colloque de Topologie (Espaces Fibrés)}, pages 29--55,
  1950.

\bibitem{erdos1946sets}
Paul Erd\H{o}s.
\newblock On sets of distances of n points.
\newblock {\em The American Mathematical Monthly}, 53(5):248--250, 1946.

\bibitem{erdHos1990some}
Paul Erd{\H{o}}s.
\newblock Some of my favourite unsolved problems.
\newblock In {\em A tribute to Paul Erd{\H{o}}s}, pages 467--478. Cambridge
  University Press, 1990.

\bibitem{Faltings1983}
Gerd Faltings.
\newblock Endlichkeitss{\"a}tze f{\"u}r abelsche variet{\"a}ten {\"u}ber
  zahlk{\"o}rpern.
\newblock {\em Inventiones mathematicae}, 73(3):349--366, 1983.
\newblock \href {https://doi.org/10.1007/BF01388432}
  {\path{doi:10.1007/BF01388432}}.

\bibitem{Faltings1984}
Gerd Faltings.
\newblock Endlichkeitss{\"a}tze f{\"u}r abelsche variet{\"a}ten {\"u}ber
  zahlk{\"o}rpern.
\newblock {\em Inventiones mathematicae}, 75(2), 1984.
\newblock \href {https://doi.org/10.1007/BF01388572}
  {\path{doi:10.1007/BF01388572}}.

\bibitem{juliaGaribaldiThesis}
Julia Garibaldi.
\newblock {\em Erd\H{o}s distance problem for convex metrics}.
\newblock PhD thesis, University of California, Los Angeles, 2004.

\bibitem{garibaldi2011erdos}
Julia Garibaldi, Alex Iosevich, and Steven Senger.
\newblock {\em The Erdos distance problem}, volume~56.
\newblock American Mathematical Society, 2011.

\bibitem{goldfeld}
Dorian Goldfeld.
\newblock Conjectures on elliptic curves over quadratic fields.
\newblock In Melvyn~B. Nathanson, editor, {\em Number Theory Carbondale 1979:
  Proceedings of the Southern Illinois Number Theory Conference Carbondale,
  March 30 and 31, 1979}, pages 108--118. Springer Berlin, Heidelberg, 1979.
\newblock \href {https://doi.org/10.1007/BFb0062705}
  {\path{doi:10.1007/BFb0062705}}.

\bibitem{GortzWedhorn}
Ulrich G\"{o}rtz and Torsten Wedhorn.
\newblock {\em Algebraic Geometry I: Schemes}.
\newblock Springer Studium Mathematik - Master. Springer Spektrum Wiesbaden,
  2nd edition, 2020.

\bibitem{gls19}
Georg Grasegger, Jan Legersk{\'y}, and Josef Schicho.
\newblock Graphs with flexible labelings.
\newblock {\em Discrete {\&} Computational Geometry}, 62(2):461--480, 2019.
\newblock \href {https://doi.org/10.1007/s00454-018-0026-9}
  {\path{doi:10.1007/s00454-018-0026-9}}.

\bibitem{guth2015erdHos}
Larry Guth and Nets~Hawk Katz.
\newblock On the {E}rd{\H{o}}s distinct distances problem in the plane.
\newblock {\em Annals of mathematics}, pages 155--190, 2015.

\bibitem{hanson2023convexity}
Brandon Hanson, Oliver Roche-Newton, and Steven Senger.
\newblock Convexity, superquadratic growth, and dot products.
\newblock {\em Journal of the London Mathematical Society}, 107(5):1900--1923,
  2023.
\newblock \href {https://doi.org/10.1112/jlms.12728}
  {\path{doi:10.1112/jlms.12728}}.

\bibitem{hux}
Martin~Neil Huxley.
\newblock Exponential sums and the riemann zeta function {V}.
\newblock {\em Proceedings of the London Mathematical Society}, 90(1):1--41,
  2005.
\newblock \href {https://doi.org/10.1112/S0024611504014959}
  {\path{doi:10.1112/S0024611504014959}}.

\bibitem{ip18}
Alex Iosevich and Jonathan Passant.
\newblock Finite point configurations in the plane, rigidity and {E}rd{\H{o}}s
  problems.
\newblock {\em Proceedings of the Steklov Institute of Mathematics},
  303(1):129--139, 2018.
\newblock \href {https://doi.org/10.1134/S0081543818080114}
  {\path{doi:10.1134/S0081543818080114}}.

\bibitem{iosevich2015discrete}
Alex Iosevich, Oliver Roche-Newton, and Misha Rudnev.
\newblock On discrete values of bilinear forms.
\newblock {\em Sbornik: Mathematics}, 209(10):1482, 2018.
\newblock \href {https://doi.org/10.1070/SM8966} {\path{doi:10.1070/SM8966}}.

\bibitem{kitsonpower}
Derek Kitson and Stephen~C. Power.
\newblock Infinitesimal rigidity for non-{E}uclidean bar-joint frameworks.
\newblock {\em Bulletin of the London Mathematical Society}, 46(4):685--697,
  2014.
\newblock \href {https://doi.org/10.1112/blms/bdu017}
  {\path{doi:10.1112/blms/bdu017}}.

\bibitem{lubbes2024irreduciblecomponentssetspoints}
Niels Lubbes, Mehdi Makhul, Josef Schicho, and Audie Warren.
\newblock Irreducible components of sets of points in the plane that satisfy
  distance conditions, 2024.
\newblock \href {http://arxiv.org/abs/2403.00392} {\path{arXiv:2403.00392}}.

\bibitem{MP2024}
Sam Mansfield and Jonathan Passant.
\newblock A structural theorem for sets with few triangles.
\newblock {\em Combinatorica}, 44(1):155--178, 2024.
\newblock \href {https://doi.org/10.1007/s00493-023-00066-z}
  {\path{doi:10.1007/s00493-023-00066-z}}.

\bibitem{MATOUSEK}
Jiří Matoušek.
\newblock The number of unit distances is almost linear for most norms.
\newblock {\em Advances in Mathematics}, 226(3):2618--2628, 2011.
\newblock \href {https://doi.org/10.1016/j.aim.2010.09.009}
  {\path{doi:10.1016/j.aim.2010.09.009}}.

\bibitem{matthews2022distinct}
Polly Matthews~Jr.
\newblock Distinct distances with $\ell_p$ metrics.
\newblock {\em Computational Geometry}, 100:101785, 2022.

\bibitem{milnor}
John~W. Milnor.
\newblock On the {Betti} numbers of real varieties.
\newblock {\em Proceedings of the American Mathematical Society}, 15:275--280,
  1964.
\newblock \href {https://doi.org/10.2307/2034050} {\path{doi:10.2307/2034050}}.

\bibitem{nixowenpower}
Anthony Nixon, John~C. Owen, and Stephen~C. Power.
\newblock Rigidity of frameworks supported on surfaces.
\newblock {\em SIAM Journal on Discrete Mathematics}, 26(4):1733--1757, 2012.
\newblock \href {https://doi.org/10.1137/110848852}
  {\path{doi:10.1137/110848852}}.

\bibitem{passantChains}
Jonathan Passant.
\newblock On {E}rd{\H{o}}s chains in the plane.
\newblock {\em Bulletin of the Korean Mathematical Society}, 58(5):1279--1300,
  2021.
\newblock \href {https://doi.org/10.4134/BKMS.b200937}
  {\path{doi:10.4134/BKMS.b200937}}.

\bibitem{rnr15}
Oliver Roche-Newton and Misha Rudnev.
\newblock On the {M}inkowski distances and products of sum sets.
\newblock {\em Israel Journal of Mathematics}, 209(2):507--526, 2015.
\newblock \href {https://doi.org/10.1007/s11856-015-1227-z}
  {\path{doi:10.1007/s11856-015-1227-z}}.

\bibitem{rudnev2012triangle}
Misha Rudnev.
\newblock On the number of classes of triangles determined by {$N$} points in
  {$\R^2$}, 2012.
\newblock \href {http://arxiv.org/abs/1205.4865} {\path{arXiv:1205.4865}}.

\bibitem{RudnevHinge}
Misha Rudnev.
\newblock Note on the number of hinges defined by a point set in {$\mathbb
  R^2$}.
\newblock {\em Combinatorica}, 40(5):749--757, 2020.
\newblock \href {https://doi.org/10.1007/s00493-020-4171-4}
  {\path{doi:10.1007/s00493-020-4171-4}}.

\bibitem{sw07}
Franco~V. Saliola and Walter Whiteley.
\newblock Some notes on the equivalence of first-order rigidity in various
  geometries, 2007.
\newblock \href {http://arxiv.org/abs/0709.3354} {\path{arXiv:0709.3354}}.

\bibitem{schicho2024genusdegreefreedomplanar}
Josef Schicho, Ayush~Kumar Tewari, and Audie Warren.
\newblock On the genus of one degree of freedom planar linkages via tropical
  geometry, 2024.
\newblock \href {http://arxiv.org/abs/2408.00449} {\path{arXiv:2408.00449}}.

\bibitem{Sheffer_2022}
Adam Sheffer.
\newblock {\em Polynomial Methods and Incidence Theory}.
\newblock Cambridge Studies in Advanced Mathematics. Cambridge University
  Press, 2022.

\bibitem{singercucu}
Amit Singer and Mihai Cucuringu.
\newblock Uniqueness of low-rank matrix completion by rigidity theory.
\newblock {\em {SIAM} Journal on Matrix Analysis and Applications},
  31(4):1621--1641, 2010.
\newblock \href {https://doi.org/10.1137/090750688}
  {\path{doi:10.1137/090750688}}.

\bibitem{solymositardos}
Jozsef Solymosi and Gabor Tardos.
\newblock On the number of k-rich transformations.
\newblock In {\em Proceedings of the Twenty-Third Annual Symposium on
  Computational Geometry}, SCG '07, pages 227--231. Association for Computing
  Machinery, 2007.
\newblock \href {https://doi.org/10.1145/1247069.1247111}
  {\path{doi:10.1145/1247069.1247111}}.

\bibitem{SpencerUnit}
Joel Spencer, Endre Szemeredi, and William~T. Trotter.
\newblock Unit distances in the {E}uclidean plane.
\newblock In Béla Bollobás, editor, {\em Graph theory and combinatorics},
  pages 294--304. Academic Press, 1984.

\bibitem{stacksproject}
The {Stacks project authors}.
\newblock The stacks project, 2025.
\newblock URL: \url{https://stacks.math.columbia.edu}.

\bibitem{taoSphericalBlog}
Terence Tao.
\newblock Lines in the {E}uclidean group {$\SE$}, 2011.
\newblock URL:
  \url{https://terrytao.wordpress.com/2011/03/05/lines-in-the-euclidean-group-se2/}.

\bibitem{thom}
René Thom.
\newblock Sur l'homologie des varietes algebriques réelles.
\newblock In {\em Differential and Combinatorial Topology: A Symposium in Honor
  of Marston Morse}, pages 255--265. Princeton University Press, 1965.
\newblock \href {https://doi.org/10.1515/9781400874842-016}
  {\path{doi:10.1515/9781400874842-016}}.

\end{thebibliography}

\appendix 

\section{Flex-genus is well-defined}\label{appendix}

In this section we prove the following technical lemma:

\Genus*

We break \Cref{lem:generalfibre} into two smaller results that are easier to prove.
The first we prove is the following:

\begin{lemma}\label{lem:app1}
    Let $f : \mathbb{C}^{m} \rightarrow \mathbb{C}^n$ be a dominant polynomial map.
    Then there exist smooth projective varieties $X,Y$, a non-empty Zariski open set $U \subset \mathbb{C}^{n}$ and a homogenous polynomial map $\tilde{f} :X \rightarrow Y$ such that the following holds:\setlength{\parskip}{0pt}
    \begin{enumerate}[label=(\roman*)]
        \item $\mathbb{C}^{m}$ is a Zariski open subset of $X$ and $\mathbb{C}^{n}$ is a Zariski open subset of $Y$;
        \item $f$ is exactly the map formed from restricting the domain and codomain of $\tilde{f}$ to $\mathbb{C}^{m}$ and $\mathbb{C}^{n}$ respectively;
        \item for any $\lambda,\mu \in U$, the fibres $\tilde{f}^{-1}(\lambda)$ and $\tilde{f}^{-1}(\mu)$ are homeomorphic smooth manifolds.
    \end{enumerate}
\end{lemma}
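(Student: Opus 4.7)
The plan is to compactify both source and target into projective space, extend $f$ to a morphism between smooth projective varieties by resolving indeterminacy \`a la Hironaka, and finally apply generic smoothness together with Ehresmann's fibration theorem to obtain diffeomorphic (hence homeomorphic) fibres over a Zariski open subset of the base. This is a standard pipeline, and the role of each hypothesis (dominance and characteristic zero) is clear: dominance guarantees a non-empty open locus of smooth values, while characteristic zero is needed both for generic smoothness and for Hironaka.

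For the construction, I would take $Y = \mathbb{P}^n$ with its standard affine chart $\mathbb{C}^n = \{y_0 \neq 0\}$; writing $f = (f_1,\ldots,f_n)$ and $d := \max_i \deg f_i$, homogenisation yields the rational map
\[
\varphi : \mathbb{P}^m \dashrightarrow \mathbb{P}^n, \qquad [x_0 : \bar{x}] \mapsto \bigl[x_0^d : x_0^d f_1(\bar{x}/x_0) : \cdots : x_0^d f_n(\bar{x}/x_0)\bigr],
\]
which agrees with $f$ on $\{x_0 \neq 0\} \cong \mathbb{C}^m$ and whose indeterminacy locus lies entirely in the hyperplane at infinity $\{x_0 = 0\}$. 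Applying Hironaka's resolution of indeterminacy then produces a smooth projective variety $X$, obtained by a sequence of blow-ups of smooth centres contained in $\{x_0 = 0\}$, and a birational morphism $\pi : X \to \mathbb{P}^m$ that is an isomorphism over $\mathbb{C}^m$, such that $\tilde{f} := \varphi \circ \pi : X \to Y$ is a genuine morphism. Identifying $\mathbb{C}^m$ with $\pi^{-1}(\mathbb{C}^m)$ realises $\mathbb{C}^m$ as a Zariski open in $X$ on which $\tilde{f}$ restricts to $f$, establishing (i) and (ii).

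For the fibre analysis, dominance of $f$ implies dominance of $\tilde{f}$, so generic smoothness in characteristic zero supplies a non-empty Zariski open $V \subset Y$ such that the restriction $\tilde{f} : \tilde{f}^{-1}(V) \to V$ is a smooth morphism between smooth complex varieties. Properness, inherited from projectivity of $X$, is preserved under base change, so this restriction is a proper holomorphic submersion in the classical topology. Ehresmann's fibration theorem then realises it as a locally trivial $C^\infty$ fibre bundle, whence all fibres over a given connected component of $V$ are mutually diffeomorphic. Setting $U := V \cap \mathbb{C}^n$ gives a Zariski open subset of $\mathbb{C}^n$ which is non-empty (any two non-empty opens in the irreducible variety $\mathbb{P}^n$ meet) and connected in the classical topology (removing a proper algebraic subset of real codimension at least two from $\mathbb{C}^n$ does not disconnect it), so (iii) follows.

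The only subtle point will be arranging that the resolution $\pi$ is an isomorphism over $\mathbb{C}^m$; this is automatic, since the standard form of Hironaka's theorem allows the centres of blow-up to be chosen inside the indeterminacy locus of $\varphi$, and that locus already sits entirely in $\{x_0 = 0\}$. Everything else reduces to classical algebraic and differential-geometric facts, so I do not anticipate further obstacles.
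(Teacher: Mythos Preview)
Your proof is correct and follows the same endgame as the paper's---restrict to the preimage of the smooth locus, apply Ehresmann's fibration theorem to the resulting proper submersion, and invoke analytic connectedness of non-empty Zariski opens in $\mathbb{P}^n$ to deduce that all fibres over $U$ are homeomorphic---but the compactification of the source is built differently. The paper takes $X$ to be the Zariski closure of the graph $\{(x,f(x))\}$ inside $\mathbb{P}^m\times\mathbb{P}^n$, with $\tilde{f}$ the projection to the second factor; this is elementary and avoids Hironaka entirely, though the paper simply asserts that this closure is smooth ``by construction,'' a claim that is not automatic for general $f$. Your route through resolution of indeterminacy is heavier machinery, but smoothness of $X$ is part of Hironaka's output, so condition~(i) holds without further justification. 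Each approach buys something: the graph closure is concrete and keeps the proof self-contained, while your construction is the one that transparently delivers a smooth $X$.
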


We will require the following theorem regarding when a map is a \emph{locally trivial fibration}:
a continuous surjective map $\pi : X \rightarrow B$ where every fibre $\pi^{-1}(b)$ is homeomorphic to some space $F$,
and for every point $b \in B$ there exists an open neighbourhood $U \subset B$ of $b$ and a homeomorphism $\phi : U \times F \rightarrow \pi^{-1}(U)$ such that $\pi \circ \phi (u,f) = u$ for all $u \in U$ and $f \in F$.
Note that in this theorem, a map is \textit{proper} if preimages of compact sets are compact.

\begin{theorem}[Ehresmann's Fibration Theorem, \cite{Ehresmann}]\label{thm:Ehresmann}
    Let $M,N$ be smooth manifolds, and $f:M \rightarrow N$ a proper surjective submersion, then $f$ is a locally trivial fibration.
\end{theorem}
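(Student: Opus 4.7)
The plan is to compactify $f$ to a proper morphism between smooth projective varieties, apply generic smoothness to obtain a Zariski open locus over which the compactified map is a proper submersion, and then invoke Ehresmann's Fibration Theorem (\Cref{thm:Ehresmann}) to conclude that the fibres are pairwise homeomorphic.

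First I would construct a rational extension of $f$ to projective space. Write $f = (f_1,\ldots,f_n)$ and let $d = \max_i \deg f_i$. Embed $\mathbb{C}^m \hookrightarrow \mathbb{P}^m$ via $(x_1,\ldots,x_m) \mapsto [x_1:\cdots:x_m:1]$ and $\mathbb{C}^n \hookrightarrow \mathbb{P}^n$ via the analogous map, and for each $i$ let $F_i$ be the degree-$d$ homogenisation of $f_i$ with respect to the extra coordinate $x_0$. This yields a rational map
\begin{equation*}
    \phi: \mathbb{P}^m \dashrightarrow \mathbb{P}^n, \qquad [x_0:x_1:\cdots:x_m] \mapsto [F_1:\cdots:F_n:x_0^d],
\end{equation*}
whose base locus lies inside the hyperplane $\{x_0 = 0\}$ at infinity and whose restriction to $\mathbb{C}^m$ agrees with $f$. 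By Hironaka's theorem on the resolution of indeterminacies (applied as a sequence of blow-ups supported in the base locus), there is a smooth projective variety $X$ and a birational morphism $\pi: X \to \mathbb{P}^m$ that is an isomorphism over $\mathbb{C}^m$, together with an everywhere-defined morphism $\tilde{f}: X \to \mathbb{P}^n$ with $\tilde{f} = \phi \circ \pi$ wherever the right hand side is defined. Setting $Y := \mathbb{P}^n$ and identifying $\mathbb{C}^m$ with its isomorphic image under $\pi^{-1}$, the first two conclusions of the lemma hold by construction.

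Next I would pick the open set $U$. Since $f$ is dominant, so is $\tilde{f}$. Because we are working in characteristic zero, generic smoothness gives a non-empty Zariski open subset $U \subseteq Y$ such that the restriction $\tilde{f}|_{\tilde{f}^{-1}(U)}: \tilde{f}^{-1}(U) \to U$ is a smooth morphism of smooth varieties; viewed in the analytic category, this is a submersion of smooth complex manifolds. Because $X$ is projective, $\tilde{f}$ is proper, and hence so is its restriction to $\tilde{f}^{-1}(U)$. Applying \Cref{thm:Ehresmann} to this proper submersion shows that $\tilde{f}|_{\tilde{f}^{-1}(U)}$ is a locally trivial fibration; since $U$ is a non-empty Zariski open subset of the irreducible variety $\mathbb{P}^n$ it is path connected, so all fibres over $U$ are pairwise homeomorphic smooth manifolds, giving (iii).

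The main obstacle is the second step: producing a compactification $X$ that is simultaneously smooth, projective and an isomorphism over $\mathbb{C}^m$ with the rational map extending to a genuine morphism. This is handled by Hironaka's resolution of indeterminacies, which is a deep but entirely off-the-shelf result; the rest of the argument is a routine application of generic smoothness and Ehresmann.
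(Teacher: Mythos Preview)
Your proposal does not address the stated theorem. The statement in question is Ehresmann's Fibration Theorem itself---a classical differential-topology result asserting that any proper surjective submersion between smooth manifolds is a locally trivial fibration. The paper does not prove this; it simply cites Ehresmann's original work and uses the theorem as a black box.

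What you have written is instead a proof of \Cref{lem:app1}, the lemma that \emph{applies} Ehresmann's theorem to extend a dominant polynomial map $f:\mathbb{C}^m\to\mathbb{C}^n$ to a morphism $\tilde f:X\to Y$ between smooth projective varieties with homeomorphic generic fibres. You explicitly invoke \Cref{thm:Ehresmann} as a tool, so your argument is circular as a proof of the stated theorem.

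If your intent was \Cref{lem:app1}, your route is correct but heavier than the paper's. You resolve the indeterminacies of the homogenised rational map $\mathbb{P}^m\dashrightarrow\mathbb{P}^n$ via Hironaka to obtain a smooth projective $X$; the paper instead takes $X$ to be the Zariski closure of the graph of $f$ inside $\mathbb{P}^m\times\mathbb{P}^n$ and projects to $Y=\mathbb{P}^n$, avoiding resolution of singularities entirely. Both then finish identically: properness from projectivity, a generic-smoothness/Sard argument to find the open set $U$, Ehresmann to get a locally trivial fibration, and connectedness of Zariski-open subsets of $\mathbb{P}^n$ to conclude all fibres over $U$ are homeomorphic. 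Your use of Hironaka buys a guaranteed smooth $X$ at the cost of a deep theorem; the paper's graph-closure construction is more elementary.
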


\begin{proof}[Proof of \Cref{lem:app1}]
    Take $X$ to be the Zariski closure of the set $\graph(f):=\{(x,y)\in \mathbb{C}^{m} \times \mathbb{C}^{n} : y=f(x) \}$ in $\mathbb P^m \times \mathbb P^n$ (here the Cartesian product can be given via the Segre embedding; we will refer to points in $\mathbb P^m \times \mathbb P^n$ as pairs $(x,y)$ for simplicity).
    We observe that the map
    \begin{equation*}
        \phi : \mathbb{C}^{m} \rightarrow \graph(f), ~ x \mapsto (x, f(x))
    \end{equation*}
    is an isomorphism, and hence $\mathbb{C}^{m}$ is embedded in $X$. 
    By construction, the variety $X$ is both smooth and irreducible.
    Set $Y = \mathbb{P}^n$. With this, we define the projection
    \begin{equation*}
        \tilde{f} : X \rightarrow Y, ~ (x,y) \mapsto y.
    \end{equation*}
    With this, we note that $f = \tilde{f} \circ \phi$.
    By identifying $\graph(f)$ with $\mathbb{C}^m$ using the isomorphism $\phi$,
    we easily see that $f$ is the restriction of $\tilde{f}$ to the domain $\mathbb{C}^m$ and codomain $\mathbb{C}^n$.
    Since all complete varieties are compact spaces with respect to the analytic topology, and since $X$ is a closed subset of a complete variety, it follows that $\tilde{f}$ is a proper map. 
    
    Let $C$ be the Zariski closure of the critical values of $\tilde{f}$, and let $O$ be a dense open subset of $Y$ which is contained within the image of $\tilde{f}$, which exists due to \cite[Theorem 10.19]{GortzWedhorn}.
    As $f$ is dominant, so too is the map $\tilde{f}$,
    and so $\dim O = n$.
    By Sard's theorem, the set $C$ is a proper algebraic subset of $Y$,
    and so the set $\tilde{U} := O \setminus C$ is a non-empty Zariski open subset of $Y$.
    By restricting the domain of $\tilde{f}$ to the non-empty Zariski open set
    \begin{equation*}
        Z := X \setminus \left(\tilde{f}^{-1}\big((Y \setminus O) \cup C \big) \right)
    \end{equation*}
    and the codomain to $\tilde{U}$,
    we find a proper surjective submersion $h : Z  \rightarrow \tilde{U}$.
    Therefore by \Cref{thm:Ehresmann} this map is a locally trivial fibration.
    The locally trivial fibration property then directly implies that for every $\lambda, \mu$ contained in the same connected component of $\tilde{U}$, the fibres $h(\lambda) = \tilde{f}^{-1}(\lambda)$ and $h(\mu) = \tilde{f}^{-1}(\mu)$ are homeomorphic. However since $\tilde{U}$ is a Zariski open subset of projective space, it is connected in the analytic topology; see \Cref{lem:opensetsconnected}.
    We conclude the proof by setting $U = \tilde{U} \cap \mathbb{C}^m$.
\end{proof}

\begin{lemma}\label{lem:app2}
    Let $f : \mathbb{C}^{m} \rightarrow \mathbb{C}^n$ be a dominant polynomial map.
    Then there exists a non-empty Zariski open set $U \subset \mathbb{C}^{n}$ and an integer $k$ such that for any $\lambda \in U$, the fibre $f^{-1}(\lambda)$ consists of $k$ smooth manifolds,
    each of dimension $m-n$.
\end{lemma}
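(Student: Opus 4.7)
My plan is to compactify $f$ via Lemma \ref{lem:app1}, pass to a Stein factorization of the resulting proper morphism, and then use generic smoothness to control both the number of connected components and the codimension of the ``boundary'' $X \setminus \mathbb{C}^m$ on a Zariski open subset of the base. First, Lemma \ref{lem:app1} applied to $f$ produces a smooth projective variety $X$ containing $\mathbb{C}^m$ as a Zariski open subset, a projective variety $Y$ containing $\mathbb{C}^n$ as a Zariski open subset, and a proper morphism $\tilde{f}: X \to Y$ extending $f$, together with a Zariski open $\tilde{U}_0 \subset \mathbb{C}^n$ over which the fibres of $\tilde{f}$ are pairwise homeomorphic smooth manifolds. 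I will shrink $\tilde{U}_0$, by generic smoothness applied to $f$, so that $f$ is also smooth of relative dimension $m - n$ over $\tilde{U}_0$.

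Next, I will take the Stein factorization $\tilde{f} = h \circ g$, with $g: X \to Z$ proper surjective having geometrically connected fibres and $h: Z \to Y$ finite. Irreducibility of $X$ together with surjectivity of $g$ forces $Z$ to be irreducible as well. Applying generic smoothness to $h$ gives a Zariski open $\tilde{U}_1 \subset \mathbb{C}^n$ over which $h$ is étale of constant degree $k$, and applying Chevalley's constructibility theorem to the dominant map $g|_{\mathbb{C}^m}: \mathbb{C}^m \to Z$ yields a Zariski open $V \subset Z$ such that $g^{-1}(z) \cap \mathbb{C}^m \neq \emptyset$ for every $z \in V$. I will then set
\begin{equation*}
U := \tilde{U}_0 \cap \tilde{U}_1 \cap \big(\mathbb{C}^n \setminus h(Z \setminus V)\big),
\end{equation*}
a non-empty Zariski open subset of $\mathbb{C}^n$ (using that $h$ is finite, hence proper, so $h(Z \setminus V)$ is Zariski closed in $Y$).

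For any $\lambda \in U$, the fibre $\tilde{f}^{-1}(\lambda) = \bigsqcup_{z \in h^{-1}(\lambda)} g^{-1}(z)$ is a disjoint union of exactly $k$ smooth connected (hence irreducible) complex varieties of dimension $m - n$. Since each $z \in h^{-1}(\lambda)$ lies in $V$, the Zariski closed subset $g^{-1}(z) \cap (X \setminus \mathbb{C}^m) \subsetneq g^{-1}(z)$ has complex codimension at least $1$, so removing it from the irreducible smooth variety $g^{-1}(z)$ leaves $g^{-1}(z) \cap \mathbb{C}^m$ connected in the analytic topology (removing a set of real codimension $\geq 2$ cannot disconnect a connected manifold). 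Therefore $f^{-1}(\lambda)$ is a disjoint union of exactly $k$ smooth connected complex manifolds of dimension $m - n$, as required. The main obstacle I expect is the bookkeeping: verifying that $Z$ is irreducible, that the various Zariski open sets in $Z$ and $Y$ are non-empty and intersect properly, and that the codimension condition holds uniformly on $V$ so the final removal of $X \setminus \mathbb{C}^m$ preserves connectedness in each fibre.
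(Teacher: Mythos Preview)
Your argument is correct and takes a genuinely different route from the paper's proof. The paper proceeds directly on the affine map $f$: Sard's theorem gives a Zariski open $U_1$ over which the fibres are smooth of dimension $m-n$, and then the constancy of the number of connected components is obtained by invoking a scheme-theoretic black box, namely \cite[\href{https://stacks.math.columbia.edu/tag/055H}{Lemma 055H}]{stacksproject}, applied at the generic point of $\mathbb{C}^n$. The translation back from connected components of scheme-theoretic fibres to connected components of the variety-theoretic fibres is done via the idempotent criterion.

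Your approach instead compactifies via \Cref{lem:app1} and uses the Stein factorization $\tilde f = h \circ g$ to disentangle the component count (governed by the finite map $h$) from the connectedness of each piece (built into $g$). Generic \'etaleness of $h$ fixes the degree $k$; Chevalley's theorem together with the finiteness of $h$ lets you excise from the base those $\lambda$ for which some compactified component might lie entirely in the boundary $X\setminus\mathbb{C}^m$; and the real-codimension-$\ge 2$ argument ensures that intersecting the smooth irreducible fibres $g^{-1}(z)$ with $\mathbb{C}^m$ does not disconnect them. The bookkeeping points you flag (irreducibility of $Z$, non-emptiness of $U$, uniformity of the codimension condition) all go through as you describe. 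What your route buys is a more geometric, self-contained argument built from classical tools rather than a single cited lemma; the cost is that it leans on \Cref{lem:app1} (in particular on $X$ being smooth and the fibres of $\tilde f$ being smooth over $\tilde U_0$) and is longer. The paper's proof is shorter and independent of \Cref{lem:app1}, at the price of the Stacks citation.
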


\begin{proof}
    We first construct an open set in $\mathbb{C}^n$ such that each fibre $f^{-1}(\lambda)$ of the set is a smooth $(m-n)$-dimensional manifold.
    Take $C \subset \mathbb{C}^n$ be the Zariski closure of the critical values of $f$.
    The set $U_1 := \mathbb{C}^n \setminus C$ is a Zariski open set which is non-empty due to $f$ being dominant and Sard's theorem.
    It now follows from the constant rank theorem that each irreducible component of the fibre $f^{-1}(y)$ is a smooth $(m-n)$-dimensional manifold.
    
    We now turn to scheme theory to prove the remainder of the result.
    Fix the irreducible and reduced affine schemes $X = \Spec ( \mathbb{C}[x_1,\ldots,x_m])$ and $Y = \Spec ( \mathbb{C}[y_1,\ldots,y_n])$,
    and fix $h:X \rightarrow Y$ to be the domain and codomain extension\footnote{This can be achieved by setting $h(\mathfrak{p}) := (f^*)^{-1}(\mathfrak{p})$ for every prime ideal $\mathfrak{p}$ of $\mathbb{C}[x_1,\ldots,x_m]$, where $f^* :\mathbb{C}[y_1,\ldots,y_n] \rightarrow \mathbb{C}[x_1,\ldots,x_m]$ is the $\mathbb{C}$-algebra homomorphism that maps $g$ to $f \circ g$.} of $f$.
    In the language of schemes, $h$ is a morphism of finite type between affine schemes.
    It is known that for any point $\mu \in Y$, there exists an open set $O \subset \overline{\{\mu\}}$ such that the number of connect components of each fibre $h^{-1}(\lambda)$ with $\lambda \in O$ is constant; see \cite[\href{https://stacks.math.columbia.edu/tag/055H}{Lemma 055H}]{stacksproject}.
    By taking $\mu$ to be the generic point\footnote{An affine scheme $\Spec(R)$ is irreducible and reduced if and only if the set $\{0\}$ is a prime ideal of the ring $R$. Since the closure of $\{0\}$ is the entirety of $\Spec(R)$, we say in this case that $\{0\}$ is the \emph{generic point} of $\Spec (R)$.} of $Y$,
    we see that there exists an open set $\tilde{U}_2 \subset Y$ and $k \in \{0,1,\ldots, \infty\}$ such that each fibre $h^{-1}(\lambda)$ with $\lambda \in \tilde{U}_2$ has $k$ connected components. 
    Now fix $U_2$ to be the closed points contained in $\tilde{U}_2$.
    Since $\mathbb{C}^n$ with the Zariski topology is exactly the subspace of $Y$ formed by restricting to the closed points,
    the set $U_2$ is a Zariski open subset of $\mathbb{C}^n$.

    Now take $U := U_1 \cap U_2$ and choose any $\lambda \in U$.
    We note here that the fibre $f^{-1}(\lambda)$ is exactly the set of closed points in $h^{-1}(\lambda)$.
    Choose any connected component $C$ of $h^{-1}(\lambda)$ and let $C'$ be the closed points of $C$.
    We now take note of the following two facts:
    (i) an algebraic variety $X \subset \mathbb{C}^d$ is connected if and only if the coordinate ring $\mathbb{C}[X]$ has no non-trivial (i.e., neither 0 nor 1) idempotent elements;
    (ii) an affine scheme $\Spec (R)$ is connected if and only if the ring $R$ has no non-trivial idempotent elements.
    Hence $C$ is connected.
    It now follows that the fibre $f^{-1}(\lambda)$ contains exactly $k$ connected components, each of which being a $(m-n)$-dimensional smooth manifold.  
\end{proof}

We also require the following result.

\begin{prop}[see {\cite[Proposition 38]{lubbes2024irreduciblecomponentssetspoints}}]\label{prop:app3}
    Let $f : \mathbb{C}^{n+1} \rightarrow \mathbb{C}^n$ be a dominant polynomial map.
    Then there exists a non-empty Zariski open set $U \subset \mathbb{C}^{n}$ such that the following holds:
    for any $\lambda \in U$, any irreducible component of $f^{-1}(\lambda)$ is a smooth algebraic curve;
    moreover, any two irreducible components of $f^{-1}(\lambda)$ have the same genus.
\end{prop}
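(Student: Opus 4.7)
The plan is to deduce \Cref{lem:generalfibre} by combining \Cref{lem:app2} and \Cref{prop:app3}, both of which supply Zariski open sets living in the codomain $\mathbb{C}^n$, and then pulling these sets back along $f$ to obtain the required Zariski open set in the domain $\mathbb{C}^{n+1}$.

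First, I would apply \Cref{lem:app2} with $m = n+1$: this produces a non-empty Zariski open set $V_1 \subset \mathbb{C}^n$ and an integer $k$ such that for every $\lambda \in V_1$, the fibre $f^{-1}(\lambda)$ is a disjoint union of $k$ smooth $1$-dimensional manifolds. Inspecting the construction in the proof of \Cref{lem:app2}, $V_1$ is contained in the complement of the Zariski closure of the critical values of $f$, and in particular $\mathrm{J} f(p)$ has full rank $n$ at every $p \in f^{-1}(V_1)$. Next, I would apply \Cref{prop:app3} to produce a non-empty Zariski open set $V_2 \subset \mathbb{C}^n$ and an integer $g$ such that for every $\lambda \in V_2$, each irreducible component of $f^{-1}(\lambda)$ is a smooth algebraic curve of genus $g$.

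Set $V := V_1 \cap V_2$, which is a non-empty Zariski open subset of $\mathbb{C}^n$ since $\mathbb{C}^n$ is an irreducible variety. For any $\lambda \in V$, the fibre $f^{-1}(\lambda)$ is a $1$-dimensional smooth algebraic set, and for such a set the connected components coincide with the irreducible components: indeed, two distinct irreducible components of a reduced algebraic set can only intersect at singular points, of which there are none here, and conversely every irreducible component is itself connected. Consequently $f^{-1}(\lambda)$ decomposes into exactly $k$ pairwise disjoint irreducible components, each a smooth algebraic curve of genus $g$, which is the content of the second and third conclusions of the lemma.

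Finally, I would take $U := f^{-1}(V) \subset \mathbb{C}^{n+1}$. This is Zariski open as the preimage of a Zariski open set under a polynomial map, and it is non-empty because $f$ is dominant: its image is Zariski dense in $\mathbb{C}^n$ and therefore meets the non-empty Zariski open set $V$. For every $p \in U$ we have $f(p) \in V$, so $\mathrm{J} f(p)$ has rank $n$ and $f^{-1}(f(p))$ satisfies the stated conditions. The proof is largely a bookkeeping exercise once \Cref{lem:app2} and \Cref{prop:app3} are in hand; the one step I would be most careful about is the identification of connected components with irreducible components, which is what allows the integer $k$ coming from \Cref{lem:app2} to coincide with the number of irreducible components to which the genus statement of \Cref{prop:app3} applies.
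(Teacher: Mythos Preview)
Your proposal does not address \Cref{prop:app3} at all: it is an attempt to prove \Cref{lem:generalfibre}, using \Cref{prop:app3} as a black box. In the paper, \Cref{prop:app3} carries no proof; it is quoted from an external reference, so there is nothing to compare against for that statement.

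Read as a proof of \Cref{lem:generalfibre}, your argument has a genuine gap. You interpret \Cref{prop:app3} as producing a single integer $g$ valid for every $\lambda \in V_2$, but the statement only asserts that, for each fixed $\lambda$, the irreducible components of $f^{-1}(\lambda)$ share a common genus; it does not say this genus is independent of $\lambda$. The paper's proof of \Cref{lem:generalfibre} uses \Cref{lem:app1} precisely to bridge this: the projective compactification $\tilde f$ together with Ehresmann's theorem yields homeomorphic compact fibres over a Zariski open set, and since the genus of an affine curve is read off its smooth projective closure, this transports the genus from $f^{-1}(f(p))$ to $f^{-1}(f(q))$. By omitting \Cref{lem:app1} you have not established that the constant $g$ in the conclusion of \Cref{lem:generalfibre} is actually constant.

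A smaller point: you obtain the Jacobian rank condition by peeking inside the proof of \Cref{lem:app2} rather than from its statement. The paper instead intersects with an explicit full-rank locus $U_4$; this is cosmetic, but cleaner.
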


With this, we are now ready to prove the key lemma of the section.

\begin{proof}[Proof of \Cref{lem:generalfibre}]
    Take $U_1,U_2,U_3 \subset \mathbb{C}^n$ to be the non-empty Zariski open sets guaranteed by \Cref{lem:app1}, \Cref{lem:app2} and \Cref{prop:app3} respectively,
    and take $U_4 \subset \mathbb{C}^{n+1}$ to be the Zariski open set of points for which the rank of the Jacobian of $f$ is $n$.
    Now set $U := f^{-1}(U_1 \cap U_2 \cap U_3) \cap U_4$.
    It is clear that for each $p \in U$, the Jacobian ${\rm J} f(p)$ has rank $n$ and the fibre $f^{-1}(f(p))$ consists of $k$ disjoint smooth algebraic curves.
    Now choose any $p \in U$.
    Each connected component of $f^{-1}(f(p))$ is a Zariski open subset of a connected component of $\tilde{f}^{-1}(f(p))$.
    Hence by \Cref{prop:app3}, all the connected components of $f^{-1}(f(p))$ have the same genus $g$.
    If we choose some other point $q \in U$, then it follows from \Cref{lem:app1} that all connected components of $f^{-1}(f(q))$ also have genus $g$.
    This now concludes the proof.
\end{proof}

Finally, we record a proof that Zariski open subsets of complex projective spaces are connected with the analytic topology.

\begin{lemma} \label{lem:opensetsconnected}
        Let $U \subseteq \mathbb P^n$ be a Zariski open set. Then $U$ is connected in the analytic topology.
    \end{lemma}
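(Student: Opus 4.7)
The plan is to show that any two points of $U$ can be joined inside a subset of $U$ that is manifestly connected in the analytic topology, namely a projective line with finitely many points deleted.

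First I would set $Z := \mathbb{P}^n \setminus U$, which is a proper Zariski closed subset of $\mathbb{P}^n$ (assuming $U$ is non-empty; otherwise the claim is trivial). Given two distinct points $p, q \in U$, I would consider the unique projective line $L \subset \mathbb{P}^n$ through $p$ and $q$. Because $p \in L$ and $p \notin Z$, the line $L$ is not contained in $Z$. Since $L \cong \mathbb{P}^1$ is irreducible as a variety, $L \cap Z$ must be a proper Zariski closed subset of $L$, and hence finite.

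Next, I would observe that $L \cap U = L \setminus (L \cap Z)$ is $\mathbb{P}^1$ with finitely many points removed. Viewing $\mathbb{P}^1$ as the Riemann sphere, this is a path-connected subset in the analytic topology, and it contains both $p$ and $q$. Therefore $p$ and $q$ lie in the same analytic connected component of $U$. Since the pair was arbitrary, $U$ is connected.

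I do not anticipate a real obstacle: the argument is essentially the standard fact that a Zariski open subset of an irreducible variety is connected in the analytic topology, specialised to $\mathbb{P}^n$. The only point to verify carefully is that the unique line through any two points of $U$ is automatically not contained in $Z$, which is immediate from the fact that it already meets $U$. No sophisticated input (Ehresmann's theorem, Sard, stratifications) is needed here, precisely because $\mathbb{P}^n$ carries enough linear subvarieties to reduce connectedness to the one-dimensional case.
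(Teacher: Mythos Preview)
Your argument is correct and in fact cleaner than the paper's. Both proofs ultimately rest on the same base fact, that $\mathbb{P}^1$ with finitely many points removed is connected in the analytic topology, but you reach it in one step by taking the line through the two given points, whereas the paper argues by induction on $n$, passing through a hyperplane $H \cong \mathbb{P}^{n-1}$ containing the two points and showing that $H \cap U$ would inherit the disconnection. Your route is shorter and gives path-connectedness directly; the inductive route has the mild advantage of exhibiting the hyperplane-slice structure explicitly, but for this lemma that buys nothing extra. The only small point worth stating is the trivial case $p=q$ (or $n=0$), but that is immediate.
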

    
    \begin{proof}
    We assume $U$ is non-empty and not $\mathbb P^n$ itself, otherwise the result is trivial. We now use induction. For the base case, $U \subseteq \mathbb P^1$ is the complement of the zeroes of a homogeneous polynomial $f \in \mathbb C[x,y]$, and is therefore simply $\mathbb P^1$ with finitely many points removed, which is connected (one can see this by picking charts so that $U$ is equivalent to $\mathbb R^2$ with finitely many points removed).

    Our induction hypothesis is that any open subset $U \subseteq \mathbb P^k$ is analytically connected. Now take any open subset $U \subseteq \mathbb P^{k+1}$. Suppose that $U$ is not connected, and write $U = U_1 \sqcup U_2$. As $U$ is Zariski open, it can be written as $U = \mathbb P^{k+1} \setminus Z(f_1,...,f_t)$, where $Z(f_1,...,f_t) \subset \mathbb P^{k+1}$ is the zero locus of homogeneous polynomials $f_1,...,f_t \in \mathbb C[x_1,...,x_{k+1}]$. Now take any two points $p \in U_1$ and $q \in U_2$, and take any hyperplane $H$ containing both of these points. By making a change of basis, we can assume that this hyperplane is given by $x_{k+1}=0$. Now consider the set $V :=H \cap U$. This is an open subset of $\mathbb P^{k}$ as it can be written as $V = \mathbb P^{k} \setminus Z(f_1',...,f_t')$, where $f_i'(x_1,...,x_k) := f_i(x_1,...,x_k,0)$. Furthermore, $V$ is not analytically connected, since it can be written as 
    $$V = H \cap U = H \cap (U_1 \sqcup U_2) = (H\cap U_1) \sqcup (H \cap U_2)$$
    and so $V$ can be written as the union of two analytically open sets via the induced topology (note that the two sets are not empty as $p \in H\cap U_1$ and $q \in H \cap U_2$). This contradicts our inductive hypothesis, finishing the proof.
    \end{proof}

\end{document}